\numberwithin{equation}{section}
\definecolor{chartgray}{gray}{0.5}
\definecolor{darkcyan}{rgb}{0, 0.7, 0.7}
\definecolor{truecyan}{rgb}{0, 1, 1}
\definecolor{darkgreen}{rgb}{0, 0.65, 0}
\definecolor{truemagenta}{rgb}{1, 0, 1}
\newtheorem{thm}{Theorem}[section]
\newtheorem*{thm*}{Theorem}
\newtheorem{thmx}{Theorem}
\newtheorem*{cor*}{Corollary}
\newtheorem*{prop*}{Proposition}
\newtheorem{cor}[thm]{Corollary}
\newtheorem*{notation*}{Notation}
\newtheorem{example}[thm]{Example}
\newtheorem{defn}[thm]{Definition}
\newtheorem*{defn*}{Definition}
\newtheorem{prop}[thm]{Proposition}
\newtheorem{lem}[thm]{Lemma}
\newtheorem*{conj*}{Conjecture}
\newtheorem*{quest*}{Question}
\newtheorem{quest}[thm]{Question}
\theoremstyle{definition}
\newtheorem{rem}[thm]{Remark}
\newtheorem{met}[thm]{Method}
\definecolor{violet}{rgb}{0.56, 0.0, 1.0}
\newcommand{\RNum}[1]{\uppercase\expandafter{\romannumeral #1\relax}}
\newcommand{\Z}{\mathbb{Z}}
\newcommand{\W}{\mathbb{W}}
\newcommand{\F}{\mathbb{F}}
\newcommand{\E}{\mathbf{E}}
\newcommand{\usi}{u_{\sigma_i}}
\newcommand{\BP}{BP}
\newcommand{\BPG}[1][G]{\BP^{(\!(#1)\!)}}
\newcommand{\BPC}{\BPG[C_{4}]}
\newcommand{\BPone}{\BPC\!\langle 1 \rangle}
\newcommand{\done}{\bar{\mathfrak{d}}_1}
\newcommand{\usig}{{u_{\sigma_i}}}
\newcommand{\sone}{\bar{s}_1}
\DeclareMathOperator{\Ind}{Ind}
\DeclareMathOperator{\Gal}{Gal}
\DeclareMathOperator{\Res}{res}
\DeclareMathOperator{\Tr}{tr}
\title[$RO(G)$-Graded HFPSS for Height $2$ Morava $E$-Theory]{$RO(G)$-graded homotopy fixed point spectral sequence for height $2$ Morava $E$-theory}
\author{Zhipeng Duan $^{1}$}\address{$^{1}$ School of Mathematical Sciences, Nanjing Normal University}\email{$^{1}$ zhipeng@njnu.edu.cn}
\author{Hana Jia Kong $^{2}$} \address{$^{2}$ 
Mathematics Department, Harvard University}\email{$^{2}$ hana.jia.kong@gmail.com}
\author{Guchuan Li $^{3}$}\address{$^{3}$ School of Mathematical Sciences, Peking University}\email{$^{3}$ liguchuan@math.pku.edu.cn}
\author{Yunze Lu $^{4}$}\address{$^{4}$ Department of Mathematics, University of California San Diego}\email{$^{4}$ yul248@ucsd.edu}
\author{Guozhen Wang $^{5}$}\address{$^{5}$ Shanghai Center for Mathematical Sciences, Fudan University}\email{$^{5}$ wangguozhen@fudan.edu.cn}
\patchcmd{\abstract}{3pc}{0pt}{}{} 
\patchcmd{\abstract}{\scshape\abstractname}{\textbf{\abstractname}}{}{}
\begin{document}

\maketitle

\begin{abstract}
We consider $G=Q_8,SD_{16},G_{24},$ and $G_{48}$ as finite subgroups of the Morava stabilizer group which acts on the height $2$ Morava $E$-theory $\mathbf{E}_2$ at the prime $2$.
We completely compute the $G$-homotopy fixed point spectral sequences of $\mathbf{E}_2$.
Our computation uses recently developed equivariant techniques since Hill, Hopkins, and Ravenel. We also compute the $(*-\sigma_i)$-graded $Q_8$- and $SD_{16}$-homotopy fixed point spectral sequences, where $\sigma_i$ is a non-trivial one-dimensional representation of $Q_8$.
\end{abstract}
\smallskip
\noindent \textbf{Keywords}. {Morava $E$-theory, Topological modular forms,  $RO(G)$-graded homotopy groups}
\smallskip

\noindent \textbf{Mathematics Subject Classification}. 55P42, 20J06, 55Q91, 55P60

\tableofcontents

\section{Introduction and main results}

\subsection{Motivation and main results}

Chromatic homotopy theory studies large-scale phenomena in the stable homotopy category using the algebraic geometry of smooth $1$-parameter formal groups \cite{Qui69,Mor85}. The moduli stack of formal groups has a stratification by heights, which in the stable homotopy category corresponds to localizations with respect to the Morava $E$-theories $\mathbf{E}_n$ of height $n \geq 0$.

We fix a prime $p$.
Let $\Gamma_n$ be the $p$-typical height-$n$ Honda formal group law over $\mathbb{F}_p$ and let $\mathbb{S}_n$ be the automorphism group of $\Gamma_n$ (extended to $\mathbb{F}_{p^n}$). Let $\mathbb{G}_n = \mathbb{S}_n \rtimes \text{Gal}(\mathbb{F}_{p^n}/\mathbb{F}_p)$ be the (extended) Morava stabilizer group. Goerss--Hopkins--Miller showed that the continuous action of $\mathbb{G}_n$ on $\pi_* \mathbf{E}_n$ can be refined to a unique $\mathbb{E}_\infty$-action of $\mathbb{G}_n$ on $\mathbf{E}_n$ \cite{Rez98, GH04,Lur18}.

At a prime $p$, one can assemble the information of $\mathbf{E}_n$ with the $\mathbb{G}_n$-action of height $n$ for all $n\geqslant 0$ to recover the $p$-local sphere. More precisely, the chromatic convergence theorem due to Hopkins and Ravenel \cite{Rav92} exhibits the $p$-local sphere spectrum $S^0_{(p)}$ as the homotopy inverse limit of the $\mathbf{E}_n$-local spheres (in the sense of Bousfield \cite{Bou79})
\[\cdots \longrightarrow L_{\mathbf{E}_n}S^0 \longrightarrow \cdots \longrightarrow L_{\mathbf{E}_1}S^0 \longrightarrow L_{\mathbf{E}_0}S^0.\]
Furthermore, these localizations can be built inductively  via the following homotopy pullback square (the chromatic fracture square)
\[\begin{tikzcd}
L_{\mathbf{E}_n}S^0 \ar[r] \ar[d] & L_{K(n)}S^0 \ar[d] \\
L_{\mathbf{E}_{n-1}}S^0 \ar[r] & L_{\mathbf{E}_{n-1}}L_{K(n)}S^0,
\end{tikzcd}
\]
where $L_{K(n)}$ denotes the localization functor with respect to $K(n)$, the $n^\text{th}$ Morava K-theory.
From this perspective, the $K(n)$-local spheres $ L_{K(n)}S^0 $ are the building blocks of the $p$-local stable homotopy category. Devinatz and Hopkins showed that $ L_{K(n)}S^0 $ is equivalent to the homotopy fixed point spectrum $\mathbf{E}_n^{h\mathbb{G}_n}$ \cite{DH04}.

A framework for building the $K(n)$-local sphere from more computable spectra is developed in \cite{GHMR05,Hen07}. The more computable spectra are of the form $\mathbf{E}_n^{hG}$ for various finite subgroups $G$ of the Morava stabilizer group $\mathbb{G}_n$. This generalizes the height $1$ resolution 
$$L_{K(1)}S^0\simeq \mathbf{E}_1^{h\mathbb{G}_1} \rightarrow \mathbf{E}_1^{hG} \rightarrow \mathbf{E}_1^{hG}$$
where $G$ is a certain finite subgroup of $\mathbb{G}_1$ (see \cite{HMS94,GHMR05}). Explicit resolutions of the $K(2)$-local sphere from assembling various $\mathbf{E}_2^{hG}$ at the prime $2$ \cite{Bea15,BG18,Hen19} and the prime $3$ \cite{GHMR05} have led to important progress in the study of $K(2)$-local category including the chromatic splitting of the $K(2)$-local sphere \cite{Bea17',GHM14,BGH22}. From this finite resolution perspective, the spectra $\mathbf{E}_n^{hG}$ are the building blocks of the $K(n)$-local stable homotopy category. In particular, the homotopy groups $\pi_* \mathbf{E}_n^{hG}$ detect important families of classes in the stable homotopy groups of spheres \cite{HHR16, LSWX19, BMQ20}. Therefore, computations with $\mathbf{E}_n^{hG}$ constitute a central topic in chromatic homotopy theory and in general are extremely challenging.

Hewett classified all the finite subgroups of $\mathbb{S}_n$ \cite{Hew95} (see also \cite{Buj12}). From now on, we focus on the prime $p=2$, which is the only prime $p$ that there are non-cyclic finite $p$-subgroups in the Morava stabilizer group. If $n=2^{m-1}\ell$ where $\ell$ is odd, then when $m\neq 2$, the maximal finite $2$-subgroups of $\mathbb{G}_n$ are isomorphic to $C_{2^m}$, the cyclic group of order $2^m$; when $m=2$, $n$ is of the form $4k+2$, and the maximal finite $2$-subgroups are isomorphic to $Q_8$, the quaternion group.  

There are breakthroughs of computations of $\mathbf{E}_n^{hG}$ when $G$ is cyclic due to the recent development of equivariant methods \cite{HHR17,HSWX2018,BBHS20,HS20}. These computations are done by a new tool called the slice spectral sequence. The slice spectral sequence computations of the norm of real cobordism theories induce computations of $\mathbf{E}_n^{hG}$ at the prime $2$ for the case $G=C_{2^m}$. As far as the authors are aware, there are no such computations for the case $G=Q_8$ due to the lack of the slice information.

At height $2$, the group $Q_8$ first appears as a subgroup of the (small) Morava stabilizer group $\mathbb{S}_2$. Maximal finite subgroups of $\mathbb{S}_2$ are isomorphic to $G_{24} = Q_8 \rtimes C_3$. Similarly, in the (extended) Morava stabilizer group $\mathbb{G}_2$, there are subgroups isomorphic to $SD_{16}$ and $G_{48}$. Homotopy fixed points of $\mathbf{E}_2$ with respect to the above subgroups appear in the finite resolution of $\mathbf{E}_2^{h\mathbb{G}_2}$, the $K(2)$-local sphere at the prime 2, as building blocks \cite{Bea15, BG18}. Moreover, they also appear in the interplay between chromatic layer 2 and the theory of elliptic curves (see for example \cite{Hop02,HM14,BO16,HL16}). Important examples such as $tmf$ are related to computations of $\mathbf{E}_2^{hG_{48}}$.

In this paper, we use equivariant methods and a new method, which we called ``the vanishing line method", to compute the $G$-homotopy fixed point spectral sequence ($G$-HFPSS) of the height $2$ Morava $E$-theory $\mathbf{E}_2$ at the prime $2$ for $G=Q_8, SD_{16}, G_{24}$ and $G_{48}$.

Let $\sigma_i$ (resp. $\sigma_j$, $\sigma_k$) be the one-dimensional non-trivial representation of $Q_8$ that $i\in Q_8$ (resp. $j, k \in Q_8$) acts trivially. We compute the integer-graded as well as $(*-\sigma_i)$-graded $G$-HFPSS for $\E_2$. By symmetry, the $(*-\sigma_i)$-graded $G$-HFPSS gives the $(*-\sigma_j)$-graded and the $(*-\sigma_k)$-graded $G$-HFPSS for $\E_2$. 

\begin{thmx}\label{theorem:A}
\begin{enumerate}
    \item The integer-graded $Q_8$-$\mathrm{HFPSS}$ for $\mathbf{E}_2$ has differentials as listed in \cref{table:HPFSS_integer_diff} (also see \cref{fig:integerE2,fig:integerE5,fig:integerE9,fig:integerE11}). The $E_\infty$-page with all $2$ extensions is presented in \cref{fig:integerEinf}.
    
Furthermore, we have 
$$SD_{16}\text{-}\mathrm{HFPSS}(\mathbf{E}_2) \otimes_{\mathbb Z_2}\mathbb W(\mathbb F_4) = Q_{8}\text{-}\mathrm{HFPSS}(\mathbf{E}_2),$$
where the tensor products happen on $E_r$ and $d_r$ for every $2\leq r\leq \infty$.

\item 
The $(*-\sigma_i)$-graded $Q_8$-HFPSS for $\mathbf{E}_2$ has differentials in \cref{table:HPFSS_sigmai_diff} (also see \cref{fig:sigmaE2,fig:sigmaE5,fig:sigmaE7,fig:sigmaE9}) and the $E_\infty$-page is presented as \cref{fig:sigmaEinf}. 

Furthermore, we have 
$$SD_{16}\text{-}\mathrm{HFPSS}(\mathbf{E}_2) \otimes_{\mathbb Z_2}\mathbb W(\mathbb F_4) = Q_{8}\text{-}\mathrm{HFPSS}(\mathbf{E}_2),$$
where the tensor products happen on $E_r$ and $d_r$ for every $2\leq r\leq \infty$.
\end{enumerate}

\end{thmx}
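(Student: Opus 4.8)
The plan is to compute the integer-graded $Q_8$-HFPSS first, deduce the $(*-\sigma_i)$-graded one from it, and derive the $SD_{16}$ statements by Galois descent. Throughout, the $E_2$-page is $H^s_c(Q_8;(\E_2)_t)$ (and its twisted analogue in the $\sigma_i$-graded case); these $E_2$-pages, their ring structure, the four-fold periodicity in the cohomological direction coming from the periodic cohomology of $Q_8$, and the distinguished classes (the Euler and orientation classes associated to the $\sigma$'s, a periodicity class, and the Hurewicz images of classes from $\pi_*S^0$) are recorded in the earlier sections.

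The first wave of differentials is forced by comparison maps. The group $Q_8$ has exactly three $C_4$-subgroups $\langle i\rangle,\langle j\rangle,\langle k\rangle$, each containing the center, and the order-$3$ automorphism realized inside $G_{24}$ permutes them cyclically. Since restriction commutes with differentials, the known $C_4$- and $C_2$-HFPSS for $\E_2$, obtained from the equivariant slice computations in \cite{HHR17, HSWX2018, BBHS20, HS20}, determine the restriction of each $d_r$ to every cyclic subgroup; combined with the $C_3$-symmetry, the Leibniz rule, and the known permanent cycles this pins down $d_3$ and the next few differentials. Moreover, since the $C_3$- and Galois-actions have order prime to $2$ we have $G_{24}\text{-HFPSS}=(Q_8\text{-HFPSS})^{C_3}$ and $\E_2^{hG_{48}}=(\E_2^{hG_{24}})^{h\Gal(\F_4/\F_2)}$, so comparison with the classical $tmf$-type computation \cite{Bea15, BO16} constrains the differentials on the $C_3$-invariant part.

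What remains are the differentials that restrict to zero on every cyclic subgroup and are invisible from $G_{48}$; these are handled by the vanishing-line method. Once a horizontal vanishing line on the $E_\infty$-page at a finite filtration $N$ is in place (established in the earlier sections), every infinite tower in the $E_2$-page lying above filtration $N$ must be truncated by a differential. Running through these towers and applying the Leibniz rule against the differentials already obtained, one checks that exactly one pattern of higher differentials is compatible with the vanishing line, namely the one in \cref{table:HPFSS_integer_diff}; the hidden $2$-extensions on $E_\infty$ are then resolved by multiplicativity together with comparison with $\pi_*\E_2^{hC_4}$ and $\pi_*\E_2^{hG_{48}}$.

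For the $(*-\sigma_i)$-graded HFPSS, the key point is that $\sigma_i$ restricts to the trivial representation of $\langle i\rangle=\ker\sigma_i$, so restriction to $\langle i\rangle$ identifies the $(*-\sigma_i)$-graded $Q_8$-HFPSS, up to the evident degree shift, with the integer-graded $C_4$-HFPSS; combining this with the isotropy-separation cofiber sequence $S(\infty\sigma_i)_+\to S^0\to S^{\infty\sigma_i}$, which relates the $\sigma_i$-graded and integer-graded spectral sequences by a long exact sequence, and with the module structure over the integer-graded $Q_8$-HFPSS via the Euler class $a_{\sigma_i}$ and the orientation class $u_{2\sigma_i}$, one obtains the $\sigma_i$-graded differentials of \cref{table:HPFSS_sigmai_diff}. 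Finally, $SD_{16}\subset\mathbb G_2$ meets $\mathbb S_2$ in $Q_8$ with $SD_{16}/Q_8\cong\Gal(\F_4/\F_2)$, so $\E_2^{hSD_{16}}=(\E_2^{hQ_8})^{h\Gal(\F_4/\F_2)}$ with the Galois group acting semilinearly over $\Z_2=\W(\F_2)\hookrightarrow\W(\F_4)$; Galois descent (Hilbert $90$) exhibits the whole $Q_8$-HFPSS as the base change along $\Z_2\to\W(\F_4)$ of the $SD_{16}$-HFPSS, and since $\W(\F_4)$ is finite free over $\Z_2$ this base change is exact and compatible with every $E_r$ and $d_r$ in both gradings, which is the asserted equality. \textbf{The main obstacle} is the third step: establishing the vanishing line at the precise filtration, and verifying that the four-periodic tower combinatorics genuinely force a unique set of higher differentials rather than merely constraining them — this is where most of the casework, Massey-product arguments, and cross-checks against $\pi_*\E_2^{hG_{48}}$ reside.
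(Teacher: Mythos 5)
Your outline for the integer-graded part is broadly parallel to the paper (restriction to the $C_4$-subgroups plus the strong vanishing line, with the splitting of the $Q_8$-spectral sequence into three $G_{24}$-copies), but the step on which part (2) of the theorem rests is wrong as stated. You claim that because $\sigma_i$ restricts trivially to $\langle i\rangle=\ker\sigma_i$, restriction to $\langle i\rangle$ ``identifies the $(*-\sigma_i)$-graded $Q_8$-HFPSS, up to the evident degree shift, with the integer-graded $C_4$-HFPSS.'' Restriction is only a comparison map $H^*(Q_8,\pi_{*-\sigma_i}\E_2)\to H^*(C_4,\pi_{*-1}\E_2)$; it is neither injective nor surjective, and the two $E_2$-pages are not abstractly isomorphic (the relevant structural fact, used in \cref{sec:E2page}, is that \emph{mod $2$} the class $u_{\sigma_i}$ identifies the twisted coefficients with the untwisted ones as $Q_8$-modules, so the $\sigma_i$-graded $E_1$-page of the $2$-Bockstein spectral sequence matches the integer-graded $Q_8$ one — not the $C_4$ one — and the integral answers then diverge because $d_1(u_{\sigma_i})=2(x+y)u_{\sigma_i}$, cf.\ \cref{prop:2bss-diff-usig}). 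Consequently the $\sigma_i$-graded differentials cannot be read off from the $C_4$-HFPSS plus a long exact sequence: the paper has to recompute the twisted $E_2$-page, and its higher differentials require inputs invisible to your scheme — norm differentials pushed forward from $C_4$ via \cref{thm:normdiff} (e.g.\ \cref{prop:d9norm}, \cref{prop:sigmad13}), hidden $2$-extension arguments transported through the restriction of the $C_4$ Mackey-functor structure (\cref{cor:sigmad9one}), and vanishing-line arguments producing a $d_{17}$ (\cref{prop:sigmad17}) that has no counterpart in the integer-graded or $C_4$ pictures. Also note that $\sigma_i$ does not extend to $G_{24}$, so your proposed cross-checks against $\pi_*\E_2^{hG_{48}}$ give no constraint at all in this grading. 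A smaller technical slip: the cofiber sequence you want is the finite one $S(\sigma_i)_+\to S^0\to S^{\sigma_i}$ (with $S(\sigma_i)\simeq Q_8/\langle i\rangle$), not the isotropy-separation sequence $S(\infty\sigma_i)_+\to S^0\to S^{\infty\sigma_i}$, which instead governs $a_{\sigma_i}$-inverted/geometric-fixed-point information.

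For part (1) your route differs from the paper mainly in that you allow importing the classical $tmf/G_{48}$ computation to fix the differentials on the $C_3$-invariant summand; that is legitimate (the paper itself notes the integer-graded answer is deducible from \cite{Bau08}) but it sacrifices the paper's point that everything follows from the $C_4$ input alone. Even granting that, your claim that ``multiplicativity plus comparison'' settles the remaining higher differentials and the $2$-extensions is under-specified: in the paper the decisive inputs are the hidden $2$-extension imported from the $C_4$ Mackey functor via exotic restriction/transfer (\cref{lem:hidden2C_4}, \cref{lem:hiiden2Q8}), which forces the $d_{13}$'s (\cref{prop:d13two}), and the sharpened vanishing line of filtration $23$ (\cref{thm:sharpvanishingline}) — strictly better than the general bound of \cite{DLS2022} and needed to force the $d_7$ on $D^4$ and the $d_{23}$'s; neither is supplied by restriction to cyclic subgroups, the $C_3$-symmetry, or generic Massey-product reasoning, so as written the ``unique pattern'' assertion is exactly the unproved core of the theorem rather than a verification.
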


\begin{thmx}\label{theorem:B}
 The integer-graded $G_{24}$-HFPSS for $\mathbf{E}_2$ is a subobject of the integer-graded $Q_8$-HFPSS for $\mathbf{E}_2$ which consists of classes with $D^m$ where $3\mid m$, and the differentials are the same. The $E_\infty$-page with all $2$ extensions is presented as in \cref{fig:G24integerEinf1}. 
 Furthermore, we have
$$G_{48}\text{-}\mathrm{HFPSS}(\mathbf{E}_2)\otimes_{\mathbb Z_2}\mathbb W(\mathbb F_4)= G_{24}\text{-}\mathrm{HFPSS}(\mathbf{E}_2),$$
where the tensor products happen on $E_r$ and $d_r$ for every $2\leq r\leq \infty$.
\end{thmx}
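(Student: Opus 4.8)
The plan is to build the $G_{24}$-HFPSS out of the $Q_8$-HFPSS using that $|C_3|$ is invertible, and to build the $G_{48}$-HFPSS out of the $G_{24}$-HFPSS by Galois descent along $\mathbb{W}(\mathbb{F}_4)/\mathbb{Z}_2$, exactly as in the $SD_{16}$-versus-$Q_8$ statement of \cref{theorem:A}. For the first assertion, I would start from $G_{24}=Q_8\rtimes C_3$, the equivalence $\mathbf{E}_2^{hG_{24}}\simeq(\mathbf{E}_2^{hQ_8})^{hC_3}$, and the restriction map $\mathbf{E}_2^{hG_{24}}\to\mathbf{E}_2^{hQ_8}$. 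By naturality of the homotopy fixed point spectral sequence in the group this induces a map of multiplicative spectral sequences which on $E_2$-pages is the restriction $H^*(G_{24};\pi_*\mathbf{E}_2)\to H^*(Q_8;\pi_*\mathbf{E}_2)$. Since $3$ is a unit in $\mathbb{W}(\mathbb{F}_4)\subseteq\pi_0\mathbf{E}_2$, the averaging idempotent $\tfrac{1}{3}(1+g+g^2)$ for a generator $g$ of $C_3$ makes $(-)^{C_3}$ exact and degenerates the Lyndon--Hochschild--Serre spectral sequence of $1\to Q_8\to G_{24}\to C_3\to 1$ (only the $p=0$ row survives), so the $E_2$-level restriction is injective with image the $C_3$-invariants $H^*(Q_8;\pi_*\mathbf{E}_2)^{C_3}$. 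Exactness of $(-)^{C_3}$ then propagates this identification through the entire spectral sequence: $E_r(G_{24})=E_r(Q_8)^{C_3}$ with $d_r$ the restriction of the $Q_8$-differentials of \cref{theorem:A}(1), $E_\infty(G_{24})=E_\infty(Q_8)^{C_3}$, and the $2$-extensions are inherited. The remaining point is to identify the invariant subring explicitly: $C_3$ cyclically permutes $i,j,k$ and the corresponding generators of $H^*(Q_8;\pi_*\mathbf{E}_2)$, and a check on generators — using the explicit $C_3$-action computed alongside the $Q_8$-answer, under which $D$ is rescaled by a primitive cube root of unity and the remaining generators are fixed — shows the fixed subring consists exactly of the classes involving $D^m$ with $3\mid m$. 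Combined with \cref{theorem:A}(1), this yields \cref{fig:G24integerEinf1}.

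For the second assertion the argument is formally the same as the $SD_{16}$-versus-$Q_8$ statement, now applied to $G_{48}=G_{24}\rtimes C_2$ with $C_2=\mathrm{Gal}(\mathbb{F}_4/\mathbb{F}_2)$: one has $\mathbf{E}_2^{hG_{48}}\simeq(\mathbf{E}_2^{hG_{24}})^{hC_2}$ with residual action the Galois action, and since $\mathbb{W}(\mathbb{F}_4)\cong\mathbb{Z}_2[C_2]$ as a $\mathbb{Z}_2[C_2]$-module (normal basis theorem), every $\mathbb{W}(\mathbb{F}_4)$-module with compatible semilinear $C_2$-action is induced from $\mathbb{Z}_2$, hence cohomologically trivial for $C_2$. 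Therefore the $C_2$-descent spectral sequence collapses at filtration $0$, $\pi_*\mathbf{E}_2^{hG_{48}}=(\pi_*\mathbf{E}_2^{hG_{24}})^{C_2}$, and by faithfully flat descent $\pi_*\mathbf{E}_2^{hG_{24}}\cong\pi_*\mathbf{E}_2^{hG_{48}}\otimes_{\mathbb{Z}_2}\mathbb{W}(\mathbb{F}_4)$. Consequently, after the base change $-\otimes_{\mathbb{Z}_2}\mathbb{W}(\mathbb{F}_4)$ the restriction map $\mathrm{HFPSS}(\mathbf{E}_2^{hG_{48}})\to\mathrm{HFPSS}(\mathbf{E}_2^{hG_{24}})$ is an isomorphism on $E_2$-pages, and by the comparison theorem for spectral sequences it is then an isomorphism of $E_r$-pages and $E_\infty$-pages, compatibly with differentials, products, and the $2$-extensions.

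I expect the only genuine work to be the explicit determination of the $C_3$-fixed subring in the last step of the first paragraph: one must track the $C_3$-action not only on the periodicity class $D$ but on every algebra generator and every class in positive cohomological degree of $H^*(Q_8;\pi_*\mathbf{E}_2)$, and verify that restricting the differentials and $2$-extensions of \cref{theorem:A}(1) to this subring reproduces precisely the pattern in \cref{fig:G24integerEinf1}. Everything else — the passage from $Q_8$ to $G_{24}$ and from $G_{24}$ to $G_{48}$ at the level of spectral sequences — is a formal consequence of naturality of the HFPSS, the invertibility of $3$, and Galois descent along $\mathbb{W}(\mathbb{F}_4)/\mathbb{Z}_2$.
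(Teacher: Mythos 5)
Your overall route is the same as the paper's. For the $Q_8$-to-$G_{24}$ passage the paper uses the invertibility of $3$ in the form $\Tr\circ\Res=3$ (so $\E_2^{hG_{24}}$ splits off $\E_2^{hQ_8}$, \cref{prop:G24Q8}), together with an eigenspace argument at the $E_1$-page of the $2$-BSS showing that $H^*(Q_8,\mathbb F_4[v_1,u^{-1}])[D^{-1}]$ is free of rank $3$ over $H^*(G_{24},-)[\Delta^{-1}]$ on $\{1,D,D^2\}$, using $\omega_*(D)=\zeta^2D$; your version via exactness of $(-)^{C_3}$ and collapse of the Lyndon--Hochschild--Serre spectral sequence is the same mechanism, and it even handles the ``no differentials cross the three summands'' point (\cref{cor:split}) more directly, since differentials are $C_3$-equivariant and preserve eigenspaces. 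For the $G_{48}$-to-$G_{24}$ passage the paper simply quotes the Galois-descent lemma of Behrens--Goerss/Beaudry--Goerss--Henn (stated in \cref{subsec:LubinTate}); your normal-basis/cohomological-triviality argument is essentially the standard proof of that lemma, so this part is fine.

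There is, however, one concrete error in the step you yourself single out as the only genuine work: the claim that under the residual $C_3$-action ``$D$ is rescaled by a primitive cube root of unity and the remaining generators are fixed.'' This is false for the $2$-BSS generators $x,y$ at $(-1,1)$: they span $H^1(Q_8,\mathbb F_4)\cong\mathbb F_4^2$, on which $\omega$ acts with eigenvalues $\zeta,\zeta^2$ and with no nonzero invariants (consistent with $H^1(G_{24},\mathbb F_4)=\mathrm{Hom}(C_3,\mathbb F_4)=0$), while $v_1,k,h_1,h_2$ are fixed. Taken literally, your description of the action would identify the fixed subring as the span of monomials $x^ay^bk^ch_1^eh_2^fv_1^gD^m$ with $3\mid m$, which wrongly excludes $c=Dxh_1$ and $d=D^2x^2$ (these detect $\epsilon$ and $\kappa$ and certainly lie in the $G_{24}$-part) and wrongly includes $x$, $xh_1$, $x^2$, etc. The eigenvalues $\zeta$ for $x$ and $\zeta^2$ for $y$ are exactly what makes $Dxh_1$, $D^2x^2$, $Dy^2$ and $kD^3$ invariant. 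The slogan ``classes with $D^m$, $3\mid m$'' in \cref{theorem:B} is correct only after classes are written as ($G_{24}$-class)$\cdot D^m$ with $m\in\{0,1,2\}$, i.e.\ with respect to the paper's renamed generators $c,d,g,\dots$ coming from the rank-$3$ freeness statement, not with respect to the raw $2$-BSS generators under your claimed action. So your framework is right, but the generator-by-generator check must be redone with the correct $C_3$-eigenvalues before the identification of the subobject, its differentials, and \cref{fig:G24integerEinf1} can be asserted.
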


\cref{theorem:A} gives the complete computation of the $Q_{8}$-HFPSS of $\mathbf{E}_2$ for the integer-graded part\footnote{While the integer-graded result can be deduced from the $tmf$ computation \cite{Bau08} and should be known to experts, it was not written down in the literature as far as the authors are aware.}, and the $(*-\sigma_i)$-graded part\footnote{
The $Q_8$-representation $\sigma_i$ is not a restriction of any $G_{24}$-representation.
}.

Our methods for $Q_8$-HFPSS computations are independent of previous computations and can potentially work for higher heights. 
The first method is the recently developed equivariant method which uses the restriction, transfer, and norm structures of the spectral sequence to deduce differentials and hidden extensions. More precisely, we deduce differentials and hidden extensions in the $Q_8$-HFPSS for $\mathbf{E}_2$ from differentials in the $C_4$-HFPSS for $\mathbf{E}_2$ (computed in \cite{HHR17,BBHS20}) via restrictions, transfers, and norms. For example, the restriction functor from $Q_8$ to $C_4$ implies a hidden $2$-extension from a class at $(54,2)$ to a class at $(54,10)$ in the $Q_8$-HFPSS for $\E_2$ (See \cref{lem:hiiden2Q8}) which is crucial to deduce the $d_{13}$-differential proved in \cref{prop:d13two}. This exempts us from using the Toda-bracket-shuffling method as in \cite[Proposition~ 8.5(3)]{Bau08}. Moreover, $RO(G)$-gradings have been proven to be helpful in computations \cite{HHR17,BBHS20}. For example, for groups $H\subset G$, the norm map from the $H$-HFPSS to the $G$-HFPSS on the $E_2$-page is only defined after extending to $RO(G)$-gradings \cite{Ull13,HHR17,MSZ20}. 
Norm maps allow us to pull back and push forward known differentials for new differential information. For example, our computation of the $(*-\sigma_i)$-graded $G$-HFPSS for $\mathbf{E}_2$ gives an alternative proof of the existence of a $d_9$-differential in the integer-graded $Q_8$-HFPSS for $\mathbf{E}_2$ using the norm method (See \cref{prop:d9fivenorm}).

We also introduce a new method: ``the vanishing line method". The vanishing line result \cite[Theorem~6.1]{DLS2022} states that at the prime $2$, the $G$-HFPSS for $\mathbf{E}_n$ admits a strong vanishing line of filtration $N$, an explicit number depending on height $n$ and $G$. Recall that having a strong horizontal vanishing line of filtration $f$ means that the spectral sequence collapses after the $E_f$-page, and any element of filtration greater than or equal to $f$ supports a differential or is hit. In the case for $n=2$ and $G=Q_8$, the number $N$ is $25$ and therefore all permanent cycles in filtration $\geq 25$ must be hit, which forces differentials to happen in many cases. For example, in \cref{prop:d13one} the vanishing line method forces three differentials, including the longest $d_{23}$-differentials, just from the $E_2$-page information.

Along the way, we proved the following theorem for general heights.

\begin{thmx}[\cref{thm:sharpvanishingline}]\label{theorem:C}
The $Q_8$-$\mathrm{HFPSS}$ for $\mathbf{E}_{4k+2}$ admits a strong vanishing line of filtration $2^{4k+5}-9$.
\end{thmx}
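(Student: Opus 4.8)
The plan is to bootstrap from the non-optimal vanishing line of \cite{DLS2022} and then sharpen the constant, combining periodicity with the equivariant structure relating $Q_8$ to its three cyclic subgroups of order $4$. By \cite[Theorem~6.1]{DLS2022} the $Q_8$-HFPSS for $\mathbf{E}_{4k+2}$ already admits \emph{some} strong horizontal vanishing line, so it collapses after a finite page and every permanent cycle of sufficiently high filtration is hit; the task is to push the line down to filtration $2^{4k+5}-9$. First I would record the exact periodicity: the $E_2$-page $H^*(Q_8;\pi_*\mathbf{E}_{4k+2})$ is periodic in the internal degree with period $2^{4k+5}$. For $k=0$ this is the $32$-fold periodicity behind \cite{HHR17}; for general $k$ one extracts it, after restricting along any $C_4\subset Q_8$, from the Hill--Hopkins--Ravenel periodicity theorem applied to the relevant truncation of $\BPC$---only the periodicity statement is needed, not a full slice computation. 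Together with the $4$-periodicity of $H^*(Q_8;-)$ in the cohomological degree, this reduces the problem to finitely many bidegrees in filtrations $\ge 2^{4k+5}-9$.

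Next I would clear the bulk of those bidegrees with the equivariant (restriction--transfer) method. Each $C_4$ is normal in $Q_8$ with quotient $C_2$ acting by an inner automorphism, so $\Tr\circ\Res=2$ on the $Q_8$-HFPSS, compatibly with every $d_r$. Hence a class $x$ surviving to $E_\infty$ with $2x\ne 0$ restricts to a nonzero permanent cycle that is not a boundary in the $C_4$-HFPSS---were it a boundary, $2x=\Tr\Res(x)$ would be a nonzero boundary---so its filtration lies below the $C_4$-vanishing line, which is comfortably below $2^{4k+5}-9$ (for $k=0$ by \cite{HHR17}; in general the $C_4$-line is of polynomial size in $k$, whereas $2^{4k+5}$ is exponential). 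This disposes of every surviving class above filtration $2^{4k+5}-9$ except the $2$-torsion ones restricting to zero on all three cyclic subgroups, and those I would identify as a small family governed by $H^*(Q_8;\mathbb F_2)$: the classes built from $x^2,xy\in H^2(Q_8;\mathbb F_2)$ (and the degree-$3$ class $x^2y$) and their multiples by the degree-$4$ periodicity class, all of which lie in the kernel of restriction to the $C_4$'s since $\Res\colon H^2(Q_8;\mathbb F_2)\to\prod_{C_4}H^2(C_4;\mathbb F_2)$ vanishes.

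The crux is this last family, which is untouched by restriction and transfer. For it I would reduce coefficients mod $2$ and work in the mod-$2$ $Q_8$-HFPSS, where the relevant classes are the above $\mathbb F_2$-generators tensored with the comparatively simple mod-$2$ Morava module; using the $\eta$- and $\nu$-module structure and the differentials already forced---inherited from the integral spectral sequence and, by restriction, from the $C_4$-HFPSS---one checks that this family supports or receives a differential everywhere above filtration $2^{4k+5}-10$. To pin the constant down exactly, rather than to a slightly larger number, I would either compute $H^*(Q_8;\pi_*\mathbf{E}_{4k+2})$ directly in the relevant window---using the uniform description of the Morava module---and check that the vanishing-line-forced differentials clear it, or, more conceptually, invoke the $K(n)$-local Gross--Hopkins self-duality of $\mathbf{E}_{4k+2}^{hQ_8}$, whose reflection of the $E_\infty$-page matches the top surviving filtration with $2^{4k+5}-9$ and in particular produces the longest differential $d_{2^{4k+5}-9}$, just as in the height-$2$ picture underlying \cref{theorem:A}. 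The main obstacle is precisely this step: being invisible to the induction machinery, the degree-$\equiv 2\pmod 4$ torsion classes require explicit mod-$2$ bookkeeping together with a duality input, and arranging this uniformly in $k$---rather than only at $k=0$, where \cref{theorem:A} settles everything outright---is the delicate point.
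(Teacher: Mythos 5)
There is a genuine gap, and the crux you flag yourself is exactly where the proposal fails. Your strategy reduces to: (a) a claimed $2^{4k+5}$-periodicity of the $E_2$-page and an explicit description of $H^*(Q_8;\pi_*\mathbf{E}_{4k+2})$ ``in the relevant window'', and (b) a mod-$2$ bookkeeping plus Gross--Hopkins duality argument to clear the $2$-torsion classes invisible to restriction and transfer. Neither input is available: the paper explicitly records (Question 1.4) that no computable description of the $Q_8$-action on $\pi_*\mathbf{E}_{4k+2}$ is known for $k\ge 1$, so the ``uniform description of the Morava module'' you want to feed into the window computation does not exist, and the duality step is not carried out at all --- you acknowledge it as the main obstacle. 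Moreover, your restriction--transfer step only constrains classes surviving to $E_\infty$ (and even there, only those with $2x\neq 0$); a \emph{strong} vanishing line also requires collapse of the spectral sequence after the stated page, i.e.\ a bound on the lengths of all differentials, which your argument never produces. (A smaller error: the $C_4$-vanishing line is $2^{4k+4}-3$, exponential in $k$, not polynomial --- it is still below $2^{4k+5}-9$, so that part of the comparison survives, but the reasoning given for it is wrong.)

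The paper's proof is much shorter and avoids any case analysis of surviving classes. It works in the $RO(Q_8)$-graded Tate spectral sequence, where by \cref{thm:normdiff} the norm of the $C_2$-level slice/HFPSS differential predicts
\[
d_{2^{4k+5}-7}\bigl(N_{C_2}^{Q_8}(\bar v_h^{-1}u_{2\sigma_2}^{2^h-1}a_{\sigma_2}^{1-2^{h+1}})\,a_{\bar\rho}\bigr)=1,\qquad h=4k+2,
\]
so the unit must be hit by some $d_r$ with $r\le 2^{4k+5}-7$ (this is the argument of \cite[Theorem~6.1]{DLS2022}). The improvement by $2$ comes from comparing with the $Q_8$-HFPSS of $H\underline{\mathbb{Z}}$: since $H^3(Q_8;\mathbb{Z})=0$, one has $a_{\bar\rho}=a_{\sigma_i}a_{\sigma_j}a_{\sigma_k}=0$ (\cref{prop:aclasstrivial}), so the predicted source is zero and, for degree reasons, the unit must already die by the $E_{2^{4k+5}-9}$-page. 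Because the Tate spectral sequence is multiplicative, once $1$ is hit everything vanishes, and \cref{lem:tateisorange} transports this to the strong vanishing line of filtration $2^{4k+5}-9$ in the HFPSS. Note that this mechanism pins down the exact constant with no knowledge of $H^*(Q_8;\pi_*\mathbf{E}_{4k+2})$ beyond the single class $a_{\sigma_i}a_{\sigma_j}a_{\sigma_k}$, which is precisely what your approach is missing; if you want to salvage your outline, the step to import is the norm-differential-on-the-unit argument in the Tate spectral sequence rather than a classification of surviving classes.
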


The above theorem improves the number $N$ of the vanishing result in \cite[Theorem~6.1]{DLS2022} for the $Q_8$ case at all possible heights. This improvement makes the vanishing line sharp for all known cases.

We conclude this introduction with two questions related to the computation of $Q_8$-HFPSS for $\mathbf{E}_{4k+2}$ at higher heights. The equivariant methods and ``the vanishing line method'' work for higher heights. Nevertheless, a computable description of the $Q_8$-action on $\pi_*\mathbf{E}_{4k+2}$ like (\ref{eq:typeone}) for $k=0$ is not known for $k\geq 1$.

\begin{quest}
How to give a computable description of the $Q_8$-action on Morava $E$-theory $\mathbf{E}_{4k+2}$ for $k\geq 1$?
\end{quest}

\begin{quest}
Is the vanishing line result \Cref{theorem:C} of $Q_8$-HFPSS for Morava $E$-theory $\mathbf{E}_{4k+2}$ sharp for $k\geq 1$?
\end{quest}

\subsection{Summary of the contents}
This paper is organized as follows. \cref{section:preliminaries} provides a necessary background for the computational tools for the
$RO(G)$-graded homotopy fixed point spectral sequence, and the input for the computation of the $Q_8$-HFPSS for $\mathbf{E}_2$. In particular, we review the norm structure in $RO(G)$-graded homotopy fixed point spectral sequences (\cref{thm:normdiff}) and the interplay between the homotopy fixed point spectral sequences and the Tate spectral sequences in general (\cref{lemma:isorange}). We briefly review the $Q_8$-action on $\pi_* \mathbf{E}_2$ (\ref{eq:typeone}) and the computation of $RO(C_4)$-graded Mackey-functor-valued $C_4$-HFPSS for $\mathbf{E}_2$ (\cref{subsec:Mackeycomputation}). We take these as the input for the $Q_8$-HFPSS for $\mathbf{E}_2$. In \cref{sec:E2page} we compute the $E_2$-page of the integer-graded and $(*-\sigma_i)$-graded $Q_8$-HFPSS$(\mathbf{E}_2)$ by Bockstein spectral sequences. 

In \cref{section:differentials}, we derive all differentials in the integer-graded $Q_8$-HFPSS for $\mathbf{E}_2$ via equivariant methods and the method of \cref{thm:sharpvanishingline}. In \Cref{subsection:generalpropertyQ8HFPSS}, we prove the properties of the $Q_8$-HFPSS for $\mathbf{E}_2$ that we need for our computation. The vanishing line (\cref{thm:sharpvanishingline}) works for general heights and is of its own interest. In \Cref{subsection:differentials}, we give a complete computation of all differentials in the logical order. The vanishing line method gives some difficult differentials (for example \cref{prop:d13one}). In \cref{sub:2ext}, we solve all $2$ extensions. In \Cref{subsection:differentialsalternative}, we present alternative ways to compute some differentials.

In \cref{section:sigma}, we also apply equivariant methods and the vanishing line method to compute the  $(*-\sigma_i)$-graded $Q_8$-HFPSS for $\mathbf{E}_2$. In particular, this computation gives an alternative proof of a $d_9$-differential in the integer-graded part. In \cref{section:charts}, we list figures that present our computation. In \cref{sec:groupcoho}, we explain algebraic computations of the $Q_8$ group cohomology. In addition, we explain how the Hurewicz image of $\mathbf{E}_2^{hC_4}$ helps to compute the restriction map from $Q_8$-HFPSS to $C_4$-HFPSS.

\subsection{Acknowledgements}
The authors would like to thank Agn\`es Beaudry, Mark Behrens, Paul Goerss, Bert Guillou, XiaoLin Danny Shi, Zhouli Xu, and Mingcong Zeng for helpful conversations. We would also like to thank the anonymous referees for their careful reading and the many useful suggestions. The second author was supported
by the National Science Foundation under Grant No. DMS-1926686. The third author is grateful to the Max Planck Institute for Mathematics in Bonn for its hospitality and financial support. The fifth author is partially supported by grant NSFC-12226002, the Shanghai Rising--Star Program under Agreement No. 20QA1401600, and Shanghai
Pilot Program for Basic Research--Fudan University 21TQ1400100(21TQ002).


\section{Preliminaries}\label{section:preliminaries}
\subsection{\texorpdfstring{$RO(G)$-graded homotopy fixed point spectral sequences and Tate spectral sequences}{}}   \label{subsec:HFPSSTateSS} \hfill

Let $X$ be a (genuine) $G$-spectrum. Denote the ring of real orthogonal virtual representations of the group $G$ by $RO(G)$. The equivariant homotopy groups of $X$ can be organized into an $RO(G)$-graded Mackey functor $\underline{\pi}_{\bigstar}X$ as follows
\[
\underline{{\pi}}_{V}(X)(G/H) = \pi_{V}^H(X) =[S^V, X]^H
\]
where $V$ is a virtual $G$-representation and $[S^V, X]^H$ denotes the genuine $H$-equivariant homotopy classes of maps (see \cite[Section~2.1]{BBHS20}).

We briefly review the Tate diagram of a $G$-spectrum $X$ from \cite{GM95b}. Recall the cofree replacement map $X\rightarrow F(EG_+,X)$ and the isotropy separation sequence $EG_+\rightarrow S^0\rightarrow \tilde{E}G$. Smashing them  together gives the following diagram:
\begin{equation*}
    \begin{gathered}\xymatrix{EG_+\wedge X \ar[r]\ar[d]^{\simeq}& X\ar[r]\ar[d] & \tilde{E}G\wedge X \ar[d]\\
    EG_+\wedge F(EG_+,X)\ar[r]& F(EG_+,X)\ar[r]  & \tilde{E}G\wedge F(EG_+,X)
    }
    \end{gathered}
\end{equation*}
where the left vertical map is an underlying weak equivalence. Taking $G$-fixed points gives us the following Tate diagram: 
\begin{equation*}
    \begin{gathered}\xymatrix{X_{hG} \ar[r]\ar[d]^{\simeq}& X^G\ar[r]\ar[d] & \Phi^GX \ar[d]\\
   X_{hG}\ar[r]& X^{hG}\ar[r]  & X^{tG}.
    }
    \end{gathered}
\end{equation*}
The Adams isomorphism \cite[Theorem~5.4]{Ada84} shows that the fixed point $(EG_+\wedge X)^G$ is weak equivalent to the homotopy orbit $X_{hG}=(EG_+\wedge X)/G$. The left bottom map is the norm map and its cofiber $X^{tG}$ is the Tate construction. Moreover, the right square is a homotopy pullback square which relates the information of the homotopy fixed point $X^{hG}$ , the geometric fixed point $\Phi^GX$ and the Tate construction $X^{tG}$ with the actual fixed point $X^G$. This diagram is useful in both theoretical and computational applications (see for examples \cite{GM95b,Gre18}).

We shall consider an analog of the Tate diagram for the slice tower of $X$ as in \cite[Section~2]{DLS2022}. 
Let $P^{\bullet}X=\{P^n X\}_{n\in \mathbb{Z}}$ be the slice tower of $X$. The diagram of the towers
\begin{equation*}
    \begin{gathered}\xymatrix{EG_+\wedge P^{\bullet}X \ar[r]\ar[d]^{\simeq}& P^{\bullet}X\ar[r]\ar[d] & \tilde{E}G\wedge P^{\bullet}X \ar[d]\\
    EG_+\wedge F(EG_+,P^{\bullet}X)\ar[r]& F(EG_+,P^{\bullet}X)\ar[r]  & \tilde{E}G\wedge F(EG_+,P^{\bullet}X).
    }
    \end{gathered}
\end{equation*}
induces a Tate diagram of spectral sequences for computing each entry in the Tate diagram
\begin{equation} \label{diag:TateDiagramSS}
    \begin{gathered}\xymatrix{
    \text{HOSS}(X)\ar[r]\ar[d]^{=}&\text{SliceSS}(X)\ar[d]\ar[r]& \text{LSliceSS}(X)\ar[d]\\
    \text{HOSS}(X)\ar[r]& \text{HFPSS}(X)\ar[r] &\text{TateSS}(X).
    }
     \end{gathered}
\end{equation}
We now explain the above notations. We use $*$ to denote an integer and $\bigstar$ to denote an $RO(G)$-grading.  We denote the underling homotopy group  $\pi_0^e(X\wedge S^{-\bigstar})$ as a $G$-module by $\pi_{\bigstar}(X)$.
\begin{itemize}
    \item  The spectral sequence associated to the tower  $EG_{+}\wedge P^{\bullet}X$ is the $RO(G)$-graded homotopy orbit point spectral sequence (HOSS) of $X$ with the $E_2$-page as
    \[
    \underline{H}_*(G,\pi_{\bigstar}(X))
    \]
    which converges to $\underline{\pi}_{\bigstar-*}EG_+\wedge X$. In particular, in integer-graded part and $G/G$-level, this spectral sequence converges to $\pi_{*}^GEG_+\wedge X=\pi_*X_{hG}$. 
    
    \item The spectral sequence  associated to the tower $P^{\bullet}X$ is the slice spectral sequence (SliceSS) of $X$ with the $E_2$-page as 
    \[
    \underline{\pi}_{\bigstar-*}P^{|\bigstar|}_{|\bigstar|}X
    \]
    which converges to $\underline{\pi}_{\bigstar-*}X$. Here $P^{|\bigstar|}_{|\bigstar|}X$ is the fiber of $P^{|\bigstar|}X\rightarrow P^{|\bigstar|-1}X$ and $|\bigstar|$ is the underlying dimension of $\bigstar$. In particular, in integer-graded part and $G/G$-level, this spectral sequence converges to $\pi_*^G X=\pi_* X^G.$    
    \item The spectral sequence associated to the tower $F(EG_+,P^{\bullet}X)$ is the $RO(G)$-graded homotopy fixed point spectral sequence (HFPSS) of $X$ \cite[Section~2]{BM94} with the $E_2$-page as
    \[
    \underline{H}^*(G,\pi_{\bigstar}(X))
    \]
    which converges to $\underline{\pi}_{\bigstar-*} F(EG_+,X)$. In particular, in integer-graded part and $G/G$-level, this spectral sequence converges to $\pi_*^GF(EG_+,X)=\pi_* X^{hG}$    
    \item  The spectral sequence  associated to the tower $\tilde{E}G\wedge P^{\bullet}X$ is the $RO(G)$-graded localized slice spectral sequence (LSliceSS) of $X$ introduced in \cite{LSDZ23}.  In many cases, including $G=Q_8$, smashing with $\tilde{E}G$ is equivalent to inverting a certain Euler class $a_V$ (see \cref{defn:aclass} for the definition of Euler classes) for a specific $G$-representation $V$. Therefore, the $E_2$-page of this spectral sequence is 
    \[
    \underline{\pi}_{\bigstar-*}a_V^{-1}P^{|\bigstar|}_{|\bigstar|}X
    \]
    which converges to $\underline{\pi}_{\bigstar-*}a_V^{-1}X$. In particular, in integer-graded part and $G/G$-level, this spectral sequence converges to $\pi_*^G a_V^{-1}X=\pi_* \Phi^G X$. 
    \item The spectral sequence associated to the tower $\tilde{E}G\wedge F(EG_+,P^{\bullet}X)$ is the $RO(G)$-graded Tate spectral sequence (TateSS) of $X$ with the $E_2$-page as
    \[
    \underline{\widehat{H}}^*(G,\pi_{\bigstar}(X))
    \]
which converges to $\underline{\pi}_{\bigstar -*}\tilde{E}G\wedge F(EG_+,X)$.  In particular, in integer-graded part and $G/G$-level, this spectral sequence converges to $\pi_*^G \tilde{E}G\wedge F(EG_+,X)=\pi_* X^{tG}$
\end{itemize}

The natural map $P^{\bullet} X\rightarrow F(EG_+,P^{\bullet}X)$ induces a comparison map between spectral sequences SliceSS$(X)$ and HFPSS$(X)$. The following lemma states that this comparison map is an isomorphism in a certain range (\cite[Theorem~9.4]{Ull13} for the integer-graded part and \cite[Theorem~3.3]{DLS2022}]
for $RO(G)$-gradings).
\begin{lem}[\cite{Ull13},\cite{DLS2022}]
\label{lemma:isorange}
The map from the $RO(G)$-graded slice spectral sequence to the $RO(G)$-graded homotopy fixed point spectral sequence 
\[\begin{tikzcd}
\pi_{V-s}^G P^{|V|}_{|V|} X \ar[r] \ar[d, Rightarrow] & \pi_{V-s}^G F(EG_+, P^{|V|}_{|V|} X) \ar[d, Rightarrow ] \\ 
\pi_{V-s}^G X \ar[r] & \pi_{V-s}^G F(EG_+, X)
\end{tikzcd}\]
induces an isomorphism on the $E_2$-page in the region defined by the inequality 
\[\tau(V-s-1) > |V|,\hspace{8pt}
\tau(V): = \min_{\{e\} \subsetneq H \subset G} |H| \cdot \dim V^H. 
\]
Furthermore, the map induces a one-to-one correspondence between the differentials in this isomorphism region.  

\end{lem}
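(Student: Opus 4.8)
The plan is to build the comparison map at the level of slice towers and then compare the associated spectral sequences degree by degree, using the fact that in the relevant range the slice associated graded agrees on fixed points with its cofree replacement. First I would recall from the Tate diagram of towers (\ref{diag:TateDiagramSS}) that the natural map $P^{\bullet}X \to F(EG_+, P^{\bullet}X)$ is a map of towers of $G$-spectra, hence induces a map of spectral sequences $\SliceSS(X) \to \HFPSS(X)$ that is compatible with the maps on abutments $\pi_{V-s}^G X \to \pi_{V-s}^G F(EG_+,X)$; this is exactly the square displayed in the statement, read on $E_\infty$. So the content is entirely about the $E_2$-page and the differentials.

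Next I would analyze the $E_2$-page. The $E_2$-term of $\SliceSS(X)$ in degree $(V,s)$ is $\underline{\pi}_{V-s}^G P^{|V|}_{|V|}X$ and that of $\HFPSS(X)$ is $\underline{\pi}_{V-s}^G F(EG_+, P^{|V|}_{|V|}X)$, and the comparison map is induced by the cofree replacement on the single slice $P^{|V|}_{|V|}X$. A slice of dimension $n=|V|$ is, by the structure theory of slices, built from suspensions $\Sigma^{n\rho_H} H\underline{M}$ type pieces (more precisely, $P^n_n X$ is an $n$-slice, so it is a wedge of spectra of the form $S^{n\rho_{G/K}} \wedge H\underline{M}$ for subgroups $K$, up to the usual indexing conventions), so it suffices to understand when $Y \to F(EG_+, Y)$ is a $\pi_{V-s}^G$-isomorphism for $Y$ an $n$-slice. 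For such $Y$ the cofiber $\tilde EG \wedge Y$ has homotopy groups that are $a$-locally trivial in a range: $\pi_{W}^G \tilde EG \wedge Y$ vanishes whenever $|W| < \tau(V)$ roughly speaking, because $\tilde EG \wedge Y$ is concentrated (after inverting Euler classes) on strata where the dimension gets multiplied by the order of a nontrivial subgroup. Setting $W = V - s - 1$ and $W = V-s$ and using the long exact sequence of the isotropy separation cofiber sequence on $Y = P^{|V|}_{|V|}X$, one gets that $\pi_{V-s}^G Y \to \pi_{V-s}^G F(EG_+,Y)$ is an isomorphism precisely when $\tau(V-s-1) > |V|$, which is the stated inequality. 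I would cite \cite{Ull13,DLS2022} for the precise bookkeeping rather than reproduce it.

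For the differentials, once the $E_2$-pages agree in the region $\mathcal{R} = \{\, (V,s) : \tau(V-s-1) > |V| \,\}$, I would argue that the region is ``closed under the differential pattern'' in the appropriate sense: a slice differential $d_r$ out of degree $(V,s)$ lands in degree $(V, s+r)$ with underlying total degree unchanged, and since the map of towers is natural, $d_r^{\SliceSS}$ and $d_r^{\HFPSS}$ correspond under the $E_2$-isomorphism whenever both source and target of the differential lie in $\mathcal{R}$. The key point to check — and this is where I expect the main obstacle — is that the isomorphism region is preserved appropriately under $d_r$, i.e. that if the source $(V,s)$ satisfies $\tau(V-s-1) > |V|$ then so does a target or a relevant neighbor, and more subtly that a class in $\mathcal{R}$ cannot support or receive a differential whose other end is genuinely outside the comparison range in a way that breaks the correspondence; handling the boundary of $\mathcal{R}$ carefully, and tracking the $\bigstar$ versus $|\bigstar|$ dependence in $\tau$, is the delicate part. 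Once that is settled, naturality of the tower map upgrades the $E_2$-isomorphism to a bijection of differentials in the region, which is the second assertion of the lemma. I would organize the write-up as: (1) reduce to a single slice via the tower map; (2) prove the $\pi_{V-s}^G$-isomorphism for $n$-slices via the isotropy separation sequence and the vanishing of $a$-locally trivial homotopy in the range; (3) deduce the $E_2$-statement and then the differential statement from naturality, being careful at the boundary of $\mathcal{R}$.
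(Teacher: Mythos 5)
The paper does not prove this lemma: it is quoted verbatim from the literature, with the integer-graded case cited to \cite{Ull13} and the $RO(G)$-graded case to \cite{DLS2022}, so the relevant comparison is with those arguments. Your outline follows their general shape (reduce via the map of towers to a single slice, show the cofree replacement map is a $\pi^G_{V-s}$-isomorphism in a range governed by $\tau$, then deduce the statement about differentials), but two of your steps are wrong as written. First, a general $n$-slice $P^{|V|}_{|V|}X$ is \emph{not} a wedge of spectra of the form $S^{n\rho_{G/K}}\wedge H\underline{M}$; only $0$- and $(-1)$-slices admit such a clean description, and the lemma is stated for arbitrary $X$. The actual proofs use only the $\pi^H_k$-connectivity and coconnectivity estimates characterizing slice $n$-connective and $n$-coconnective spectra, not any decomposition. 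Second, the long exact sequence you invoke is the wrong one: the isotropy separation sequence $EG_+\wedge Y\to Y\to \tilde{E}G\wedge Y$ compares $Y$ with its homotopy orbits, whereas the map you must control is $Y\to F(EG_+,Y)$, whose cofiber is the Tate-type object $\tilde{E}G\wedge F(EG_+,Y)$ (it is $\tilde{E}G$-local because the map is an underlying equivalence). One must therefore prove vanishing of $\pi^G_{V-s}$ and $\pi^G_{V-s-1}$ of that object (or of both $\tilde{E}G\wedge Y$ and $\tilde{E}G\wedge F(EG_+,Y)$) when $\tau(\cdot)>|V|$; this is where the definition of $\tau$ and the fact that $\tilde{E}G$ is built from cells with nontrivial isotropy actually enter, and it is not supplied by the sequence you wrote down.

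The more serious gap is the second assertion. Your claim that the region $\mathcal{R}=\{(V,s):\tau(V-s-1)>|V|\}$ is ``closed under the differential pattern'' fails for targets: if the source bidegree $(V,s)$ lies in $\mathcal{R}$, the target of a $d_r$ sits in bidegree $(V+r-1,s+r)$, and its membership in $\mathcal{R}$ requires $\tau(V-s-2)>|V|+r-1$, which is strictly stronger than the source condition. (Only the reverse closure is automatic: since $\tau(W+1)\geq\tau(W)+2$, the source of any differential hitting a class in $\mathcal{R}$ lies in $\mathcal{R}$.) Consequently the one-to-one correspondence of differentials cannot be obtained from the $E_2$-isomorphism and naturality alone; it requires the page-by-page comparison with injectivity/surjectivity control just outside the region, which is precisely the content of the cited theorems and which your proposal explicitly defers (``this is where I expect the main obstacle''). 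As written, the second half of the lemma is not proved.
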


We recall two kinds of  distinguished classes in the $RO(G)$-graded homotopy groups that are useful for naming the relevant classes on the $E_2$-page of the slice spectral sequence (see \cite[Section~3.4]{HHR16} and \cite[Section~2.2]{HSWX2018}) and the homotopy fixed point spectral sequence.  
\begin{defn}\rm\label{defn:aclass}
Let $V$ be a $G$-representation.  We denote the inclusion of the fixed points $S^0\rightarrow S^{V}$ by $a_{V}$.  This is a class in $\pi_{-V}^GS^0$.  For a ring spectrum $X$ with $G$-action, we abuse notation to denote the image of $a_V$ by $a_V$ under the map $S^0\rightarrow X$. We will also denote the class on the $E_2$-page of the $G$-HFPSS$(S^0)$ or the $G$-HFPSS$(X)$ that detects the image of $a_V$ by $a_V$. 
\end{defn}
By construction, we have the following property.
\begin{prop}\label{prop:aVHFPSS}
With the above notation, the class $a_V$ on the $E_2$-page of the $G$-$\mathrm{HFPSS}(X)$ is a permanent cycle.
\end{prop}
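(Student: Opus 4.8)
The proposition to prove is \cref{prop:aVHFPSS}: the class $a_V$ on the $E_2$-page of the $G$-HFPSS$(X)$ is a permanent cycle.

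\textbf{Plan.} The statement is a formal consequence of the fact that $a_V$ is the image, under the Hurewicz-type map on $E_2$-pages, of a genuine homotopy class living on the sphere. The plan is to exhibit $a_V$ as coming from an honest map of spectra, so that it necessarily survives the spectral sequence. First I would recall the construction: $a_V \colon S^0 \to S^V$ is an actual $G$-equivariant map, hence so is its image under the unit $S^0 \to X$, giving an element of $\pi_{-V}^G X$, and in fact of $\pi_{-V}^G F(EG_+, X) = \pi_{-V}(X^{hG})$ after composing with the cofree replacement $X \to F(EG_+, X)$. The $G$-HFPSS$(X)$ is the spectral sequence of the tower $F(EG_+, P^\bullet X)$ converging to $\underline{\pi}_{\bigstar - *} F(EG_+, X)$, and the class on the $E_2$-page called $a_V$ is, by \cref{defn:aclass}, precisely the one detecting this genuine homotopy element. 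A class on the $E_2$-page that detects a permanent homotopy class — i.e. that is the leading term of an element already present in the abutment — is automatically a permanent cycle: any nonzero differential $d_r(a_V)$ would contradict the fact that $a_V$ has an infinite-stage representative.

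To make this precise I would argue at the level of the tower. The map $a_V \colon S^{-V} \to S^0$ of $G$-spectra induces, after smashing with $F(EG_+, P^\bullet X)$ and shifting, a map of towers, hence a map of spectral sequences from the (trivial, single-column) HFPSS of $S^{-V}$ — equivalently $H^*(G; \pi_{\bigstar} S^0)$ with $a_V$ in filtration $0$ as a permanent cycle by triviality — to the HFPSS of $X$ in degree shifted by $-V$. Naturality of differentials then transports the fact that $a_V$ is a permanent cycle in the source to the statement that its image $a_V$ in the target is a permanent cycle. Alternatively, and perhaps cleanest: $a_V$ lies in filtration $0$ of the HFPSS (it is in $\underline{H}^0(G; \pi_{-V} X)$), differentials $d_r$ raise filtration, and the target groups $E_r^{0 - r, \ast}$ (negative filtration) vanish, so $d_r(a_V) = 0$ for all $r \geq 2$ for trivial degree reasons — the HFPSS is a first-quadrant-type (cohomological, nonnegatively graded in filtration) spectral sequence since $\underline{H}^s(G; -) = 0$ for $s < 0$.

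\textbf{Main obstacle.} There is essentially no obstacle; the content is entirely bookkeeping about which class on the $E_2$-page the symbol $a_V$ denotes and the observation that filtration-$0$ classes in a cohomologically-graded spectral sequence support no differentials. The one point requiring a sentence of care is that the HFPSS as set up here is indeed concentrated in non-negative filtration — this follows because $F(EG_+, P^\bullet X)$ is a tower whose associated spectral sequence has $E_2$-page $\underline{H}^*(G; \pi_{\bigstar} X)$ with $* \geq 0$ — after which $d_r \colon E_r^{0,t} \to E_r^{r, t+r-1}$ lands in a zero group only once we know the relevant $\underline{H}^{<0}$ vanish; since $r \geq 2 > 0$ the target filtration $r$ is positive, so that is automatic and the argument is complete.
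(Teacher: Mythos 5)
Your first paragraph contains the correct argument, and it is exactly the paper's (unwritten) one: the paper offers no proof beyond the phrase ``by construction,'' because by \cref{defn:aclass} the symbol $a_V$ on the $E_2$-page is \emph{defined} to be the class detecting the image of the genuine homotopy class $a_V\in\pi_{-V}^G S^0$ under $S^0\to X\to F(EG_+,X)$, and any class that detects an element of the abutment survives to $E_\infty$, hence is a permanent cycle. Had you stopped there, the proposal would simply agree with the paper.

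However, the route you single out as ``cleanest'' --- and which you rely on again in your closing ``main obstacle'' paragraph --- is wrong. In this cohomologically indexed HFPSS the differential is $d_r\colon E_r^{s,t}\to E_r^{s+r,t+r-1}$: it \emph{raises} filtration, so $d_r$ applied to a filtration-$0$ class lands in filtration $r>0$, which is in general nonzero; the vanishing of $\underline{H}^{<0}$ only shows that filtration-$0$ classes cannot be \emph{hit}, not that they support no differentials. Your final sentence (``since $r\geq 2>0$ the target filtration $r$ is positive, so that is automatic'') concludes the opposite of what positivity of the target filtration gives you. The principle ``filtration-$0$ classes are permanent cycles'' is false, and this very paper is full of counterexamples: $v_1^6$ at $(12,0)$ supports a $d_3$ (\cref{prop:d3}), $D$ at $(8,0)$ supports a $d_5$ (\cref{prop:d5}), and $u_{2\sigma}$ in filtration $0$ of the $C_4$-HFPSS supports a $d_7$. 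Relatedly, your naturality variant asserts that the HFPSS of $S^{-V}$ (equivalently of $S^0$) is ``trivial, single-column,'' which is also false --- $H^*(G;\pi_\bigstar S^0)$ has plenty of positive-filtration classes --- and in any case naturality adds nothing: the fact that $a_V$ is a permanent cycle there would itself require the detection argument. So the proof should be carried solely by the detection argument of your first paragraph; the degree-reasons shortcut must be deleted.
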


If the representation $V$ has non-trivial fixed points (i.e. $V^{G}\neq \{0\}$), then $a_V=0$.  Moreover, for any two $G$-representations $V$ and $W$, we have the relation $a_{V\oplus W}=a_{V}a_{W}$ in $\pi^{G}_{-V-W}(S^0)$. When $X=H\underline{\mathbb{Z}}$, the $a_V$-class in $\pi_{-V}^{G}(H\underline{\mathbb{Z}})$ is always a torsion class, according to \cite[Lemma~3.6]{HHR17}
\[
|G/G_V|a_V=0
\]
where $G_V$ is the isotropy subgroup of $V$.

For an orientable $G$-representation $V$, a choice of orientation for $V$ gives an isomorphism $H_{|V|}^{G} (S^V; \underline{\mathbb{Z}}) \cong \mathbb{Z}$.  In particular, the restriction map 
\begin{equation}\label{eq:res}
H^{G}_{|V|}(S^V,\underline{\mathbb{Z}})\longrightarrow H_{|V|}(S^{|V|},\mathbb{Z})
\end{equation}
is an isomorphism.  
\begin{defn}\rm
Let $V$ be an orientable $G$-representation. We define the orientation class of $V$ $u_V\in H_{|V|}^{G}(S^V; \underline{\mathbb{Z}})$ to be the generator that maps to $1$ under the above restriction isomorphism ~\ref{eq:res}.
\end{defn}

The orientation class $u_V$ is stable in $V$ in the sense that if 1 is the trivial representation, then $u_{V\oplus 1}=u_{V}$.  If $V$ and $W$ are two orientable $G$-representations, then $V \oplus W$ is also orientable with the direct sum orientation, and $u_{V\oplus W}=u_{V}u_{W}$.  

Norms of $a_V$ classes and $u_V$ classes are given as follows. 
\begin{prop}{\rm({\cite[Lemma~3.13]{HHR16}}\rm)}
\label{prop:auclassnorm}
Let $H\subset G$ be a subgroup and $V$ is a $G$-representation
\begin{align*}
    N_H^G(a_V)&= a_{\Ind V};\\
   u_{\Ind |V|} N_H^G(u_V)&=u_{\Ind V}
\end{align*}
where $\Ind $ means $\Ind_H^G$.
\end{prop}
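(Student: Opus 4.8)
\textbf{Proof proposal for \cref{prop:auclassnorm}.}

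The plan is to prove the two identities by carefully unwinding the definitions of $a_V$, $u_V$, and the norm functor $N_H^G$ at the level of $H\underline{\mathbb Z}$-homology of representation spheres, and then checking the claimed equalities after restriction to the underlying group $\{e\}$, where they become transparent statements about ordinary (non-equivariant) homology.

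First I would recall that $N_H^G$ is a symmetric monoidal functor from $H$-spectra to $G$-spectra satisfying $N_H^G S^V = S^{\Ind_H^G V}$ (this is the defining compatibility of the norm with representation spheres, using that $\Ind_H^G$ on representations corresponds to the induced $G$-set structure on the indexing). Applying $N_H^G$ to the map $a_V\colon S^0 \to S^V$ and using $N_H^G S^0 = S^0$ therefore yields a map $S^0 \to S^{\Ind_H^G V}$; the content of the first identity is that this map agrees with $a_{\Ind_H^G V}$, i.e.\ with the inclusion of fixed points. This I would verify by noting that $a_{\Ind V}$ is characterized (up to the relevant indeterminacy, which is trivial since $\pi_{-\Ind V}^G S^0$ in the appropriate degree is detected on fixed points) by what it does on $G$-fixed points: $(\Ind_H^G V)^G = (V^H)$ as a vector space, and the fixed-point inclusion $S^0 \to S^{\Ind V}$ restricts along $H \to G$ and on geometric/categorical fixed points to the corresponding fixed-point inclusions for $V$; the diagonal formula for the norm (the ``norm diagonal'' $X^G \to (N_H^G X)^G$, or rather the compatibility of $N_H^G$ with fixed points) shows $N_H^G(a_V)$ has the same behavior, hence the two maps coincide. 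Alternatively, and more cleanly, one invokes \cite[Lemma~3.13]{HHR16} directly as cited — but since the statement is being reproved here, I would spell out the fixed-point comparison.

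For the second identity, the subtlety is that $N_H^G(u_V)$ need not itself be an orientation class of $\Ind_H^G V$, because the norm of an $H\underline{\mathbb Z}$-module map need not land in the integrally-graded orientation class; instead it differs from $u_{\Ind V}$ by the orientation class $u_{\Ind_H^G |V|}$ of the induced \emph{trivial}-underlying representation $\Ind_H^G \underline{\mathbb R}^{|V|}$ (which is $|V|$ copies of the regular-type permutation representation $\Ind_H^G \underline{\mathbb R}$, an orientable but nontrivial $G$-representation). The strategy is: $u_V \in H^H_{|V|}(S^V;\underline{\mathbb Z})$ is the generator restricting to $1 \in H_{|V|}(S^{|V|};\mathbb Z)$; applying $N_H^G$ and using that the norm is multiplicative and compatible with the Hill--Hopkins--Ravenel diagonal on $H\underline{\mathbb Z}$-homology produces a class in $\widetilde H^G_{*}(S^{\Ind V};\underline{\mathbb Z})$ in the degree $|\Ind V| = |H/H|\cdot\ldots$ — actually in degree $[G:H]\cdot|V|$, which is the underlying dimension, so it lands in the correct integral grading. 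Then I would compare $u_{\Ind |V|}\cdot N_H^G(u_V)$ with $u_{\Ind V}$ by restricting both to $\{e\}$: both restrict to $1$ in $H_{[G:H]|V|}(S^{[G:H]|V|};\mathbb Z)$ (using that restriction of a norm to the underlying group is the $[G:H]$-fold smash power followed by the fold map, which on the orientation generator gives the generator), and by \cref{lemma:isorange}-type reasoning — or more directly, by the fact that in this degree $H^G_{|W|}(S^W;\underline{\mathbb Z}) \to H_{|W|}(S^{|W|};\mathbb Z)$ is injective for orientable $W$ (it is an isomorphism, as recalled in \eqref{eq:res}) — equality on the underlying group forces equality equivariantly. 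The factor $u_{\Ind|V|}$ is exactly what makes the degrees and the $H\underline{\mathbb Z}$-module structure match up: without it, $N_H^G(u_V)$ sits in a twisted grading.

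The main obstacle I expect is bookkeeping the interaction of the norm with the $H\underline{\mathbb Z}$-module (as opposed to sphere-spectrum) structure: $u_V$ is a homology class, i.e.\ an element of $\pi_{|V|}^H(H\underline{\mathbb Z}\wedge S^V)$, and to norm it one must use that $N_H^G(H\underline{\mathbb Z})$ receives a map from $H\underline{\mathbb Z}$ (the norm-counit / the fact that $H\underline{\mathbb Z}$ is a ``Tambara-functor-like'' commutative ring for which norms are defined on homology), together with the multiplicativity $N_H^G(xy) = N_H^G(x)N_H^G(y)$ and the behavior on the generator. Keeping track of precisely which induced representation appears — $\Ind_H^G|V|$ meaning $\Ind_H^G(\underline{\mathbb R}^{|V|})$, not the trivial $G$-representation $\mathbb R^{[G:H]|V|}$ — and why its orientation class is the correct correction term, is the crux; once the degrees are pinned down, the equality itself follows from injectivity of restriction to the underlying group on these orientation classes.
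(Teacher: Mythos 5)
The paper gives no proof of this proposition at all: it is quoted verbatim from \cite[Lemma~3.13]{HHR16}. Your sketch essentially reconstructs the standard argument behind that citation — for the $a$-classes, monoidality of $N_H^G$ together with $N_H^G S^V=S^{\Ind_H^G V}$; for the $u$-classes, pinning down the $RO(G)$-degree of $N_H^G(u_V)$ and then detecting both sides under the restriction isomorphism $H^G_{|W|}(S^W;\underline{\mathbb{Z}})\xrightarrow{\ \cong\ }H_{|W|}(S^{|W|};\mathbb{Z})$ for orientable $W$ — and the crux you isolate at the end (that $\Ind_H^G|V|$ means the induced permutation-type representation, not the trivial one, so $u_{\Ind|V|}$ is precisely the degree correction) is the right one. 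So the strategy is sound and matches the source.

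A few local repairs are needed, though none is fatal. (1) Read $V$ as an $H$-representation (the statement's ``$G$-representation'' is a slip), as you implicitly do; otherwise $\Ind_H^G V$ in the formulas makes no sense. (2) For the first identity you do not need, and should not invoke, a detection/indeterminacy argument in $\pi^G_{-\Ind V}S^0$ — that group need not be detected on fixed points, so the claim as written is unjustified. The clean statement is that the norm of the based $H$-map $S^0\to S^V$ is, by the description of the norm as the indexed smash product over $G/H$, literally the fixed-point inclusion $S^0\to S^{\Ind_H^G V}$; the first formula is essentially definitional. (3) Your parenthetical that $\Ind_H^G\mathbb{R}$ is orientable is false in general: already for $e\subset C_2$ the regular representation $1\oplus\sigma$ has determinant the sign character, and likewise $\Ind_{C_4}^{Q_8}\mathbb{R}=1\oplus\sigma_i$ is non-orientable. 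Orientability of the representations entering $u_{\Ind|V|}$ and $u_{\Ind V}$ must be assumed or arranged (e.g.\ even multiplicity), exactly as in \cite{HHR16}. Relatedly, the sentence asserting that $N_H^G(u_V)$ ``lands in the correct integral grading'' contradicts your own correct conclusion two sentences later: it sits in degree $\Ind_H^G|V|-\Ind_H^G V$, which is why the $u_{\Ind|V|}$ factor is needed. (4) The restriction of a norm to the trivial subgroup is the $[G:H]$-fold external smash power (no ``fold map''); on the orientation generator this still gives the generator, so your conclusion stands once the description is fixed.
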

Given a $G$-oriented representation $V$ and a $G$-equivariant commutative ring spectrum $X$, by \cite[Corollary~4.54]{HHR16} and the unit map $S^0\rightarrow X$, Hill--Hopkins--Ravenel defines the $u_V$ classes on the $E_2$-page of the slice spectral sequence for $X$ via the following map on $0^\text{th}$ slices 
\[
H\underline{\mathbb{Z}}=P^0_0 S^0 \rightarrow P^0_0 X.
\]
With \cref{lemma:isorange}, we can define $u_V$ classes in the $RO(G)$-graded HFPSS for $X$.

The computation of the TateSS and the HFPSS are closely related. In any $RO(G)$-graded page the natural map from HFPSS$(X)$ to TateSS$(X)$ is an isomorphism in positive filtration (\cite[Theorem~3.6]{DLS2022}. See also \cite[Lemma~2.12]{BM94}).

\begin{lem}\label{lem:tateisorange}
The map from the $RO(G)$-graded homotopy fixed point spectral sequence to the $RO(G)$-graded Tate spectral sequence induces an isomorphism on the $E_2$-page for classes in filtration $s >0$, and a surjection for classes in filtration $s = 0$.  Furthermore, there is a one-to-one correspondence between differentials whose source is in non-negative filtrations.
\end{lem}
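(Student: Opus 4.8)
\textbf{Proof proposal for \Cref{lem:tateisorange}.}

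The plan is to obtain this comparison directly from the Tate diagram of towers introduced above, specialized to the homotopy-fixed-point side of the slice filtration. Recall that the TateSS$(X)$ is the spectral sequence associated to the tower $\tilde{E}G\wedge F(EG_+,P^{\bullet}X)$ and the HFPSS$(X)$ is associated to $F(EG_+,P^{\bullet}X)$; the comparison map of spectral sequences is induced by the localization map $F(EG_+,P^{\bullet}X)\to \tilde{E}G\wedge F(EG_+,P^{\bullet}X)$ of towers, whose fiber is $EG_+\wedge F(EG_+,P^{\bullet}X)\simeq EG_+\wedge P^{\bullet}X$ by the left equivalence in the Tate diagram. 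So the comparison fits into a cofiber sequence of towers whose associated spectral sequence is exactly the HOSS$(X)$. The strategy is then purely formal: a long exact sequence of $E_2$-pages, together with the location of the HOSS in the plane, pins down where the comparison map is iso/surjective, and then a standard zig-zag/filtration argument upgrades this to a bijection on differentials.

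First I would write down the long exact sequence on $E_2$-pages coming from this cofiber sequence of towers: for each $RO(G)$-grading $\bigstar$ and filtration $s$,
\[
\cdots \to \underline{H}_{s-?}(G,\pi_{\bigstar}X) \to \underline{H}^{s}(G,\pi_{\bigstar}X) \to \underline{\widehat{H}}^{s}(G,\pi_{\bigstar}X) \to \cdots,
\]
i.e. the usual long exact sequence relating group homology, group cohomology, and Tate cohomology of the $G$-module $\pi_{\bigstar}X$. (On the $E_2$-page the HFPSS contributes $\underline{H}^*(G,\pi_{\bigstar}X)$ concentrated in filtrations $s\geq 0$, the TateSS contributes $\underline{\widehat{H}}^*(G,\pi_{\bigstar}X)$, and the HOSS contributes group homology which, after the reindexing that makes it the fiber, sits in filtrations $s\leq 0$ — negative filtration for $s<0$ and filtration $0$ for $H_0$.) The key structural fact is Tate cohomology: $\widehat{H}^s = H^s$ for $s>0$, $\widehat{H}^s = H_{-s-1}$ for $s<-1$, and the interesting gluing happens only in degrees $s=0,-1$. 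Reading the long exact sequence in filtration $s>0$ shows the homology term vanishes on both sides, so $\underline{H}^{s}(G,\pi_{\bigstar}X)\xrightarrow{\cong}\underline{\widehat{H}}^{s}(G,\pi_{\bigstar}X)$; in filtration $s=0$ the map $H^0 = (\pi_{\bigstar}X)^G \twoheadrightarrow \widehat{H}^0 = (\pi_{\bigstar}X)^G/\mathrm{Nm}$ is the surjection onto the Tate quotient, with kernel the image of the norm $H_0 = (\pi_{\bigstar}X)_G$. This gives the $E_2$-statement.

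Next I would propagate this to the spectral sequences and their differentials. Since the HOSS (as the fiber) lives entirely in filtrations $\leq 0$ and the comparison map is an iso on $E_2$ in all filtrations $s>0$, an induction on pages shows the comparison is an iso on $E_r$ in filtrations $s>0$ for all $r$: any differential into or out of a class in positive filtration on one side matches a differential on the other side because the only possible discrepancy would come from a differential whose source or target lies in filtration $\leq 0$, and a differential $d_r$ has source and target differing in filtration by $r\geq 2$, so a differential landing in filtration $>0$ has source in filtration $>0$ as well, while a differential supported in filtration $>0$ can only fail to be shared if its target is in filtration $\leq 0$ — impossible since $d_r$ raises filtration. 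Hence there is a one-to-one correspondence between differentials whose source is in non-negative filtration (the $s=0$ case is handled by the surjectivity and the fact that a $d_r$ out of filtration $0$ lands in filtration $r\geq 2>0$ where the map is already an iso, so the differential is detected faithfully on the Tate side). This is essentially the argument of \cite[Lemma~2.12]{BM94} transported to the $RO(G)$-graded slice-tower setting, exactly as in \cite[Theorem~3.6]{DLS2022}.

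The main obstacle, such as it is, is bookkeeping rather than mathematics: one must be careful about the indexing convention under which $EG_+\wedge P^{\bullet}X$ becomes the \emph{fiber} of the localization map of towers — in particular that its associated spectral sequence is the HOSS sitting in filtrations $\leq 0$ with the group homology $H_n$ in filtration $-n$ (so $H_0$ in filtration $0$, overlapping the bottom of the HFPSS) — and to check that the connecting maps in the long exact sequence of $E_2$-pages are precisely the norm and restriction maps identifying the degree-$0$ and degree-$(-1)$ gluing with the classical Tate long exact sequence. Once the indexing is fixed, every step is a formal consequence of the cofiber sequence of towers and the elementary structure of Tate cohomology, and there is no genuine analytic difficulty.
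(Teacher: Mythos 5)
Your treatment of the $E_2$-page is fine: the comparison map on $E_2$ is the canonical map $\underline{H}^s(G,\pi_\bigstar X)\to \underline{\widehat H}^s(G,\pi_\bigstar X)$, which is an isomorphism for $s>0$ and a surjection for $s=0$ by the structure of Tate cohomology, and your cofiber-sequence-of-towers setup is a reasonable way to see this. (For the record, the paper does not prove this lemma itself; it quotes \cite[Theorem~3.6]{DLS2022}, cf.\ \cite[Lemma~2.12]{BM94}.)

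The differential statement, however, is where your argument has a genuine gap. You assert that ``a differential landing in filtration $>0$ has source in filtration $>0$ as well,'' and from this you conclude by induction that the map $E_r(\mathrm{HFPSS})\to E_r(\mathrm{TateSS})$ is an isomorphism in \emph{all} positive filtrations on \emph{every} page. The assertion is false: $d_r$ raises filtration by $r$, so a Tate differential whose target sits in filtration $f$ with $0<f\le r-1$ has source in filtration $f-r<0$, which exists on the Tate side but has no HFPSS counterpart. Consequently positive-filtration Tate classes in the band $0<s\le r-2$ can die by differentials from negative filtration while the corresponding HFPSS classes survive, and injectivity fails there on later pages. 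This is not hypothetical — it happens in this very paper: in \cref{lem:dpc} the class $d$ (filtration $2$) is hit in the $Q_8$-TateSS by a $d_{13}$ from filtration $-11$, yet $d$ is a nonzero permanent cycle in the HFPSS; so on $E_{14}$ the comparison is not injective in filtration $2$. The correct inductive statement is that on the $E_r$-page the map is surjective in all filtrations $s\ge 0$ and an isomorphism only in filtrations $s\ge r-1$ (the isomorphism region grows with the page): surjectivity propagates because a lift of a $d_r$-cycle is again a $d_r$-cycle (its $d_r$ lands in filtration $s+r\ge r$, inside the injectivity range), and injectivity in filtration $s\ge r$ on $E_{r+1}$ uses surjectivity at the source filtration $s-r\ge 0$ to lift the Tate-side bounding element. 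The one-to-one correspondence of differentials with source in non-negative filtration then follows, since a $d_r$ with source in filtration $s\ge 0$ has target in filtration $s+r\ge r$, which lies in the isomorphism region; note that your own $s=0$ case tacitly uses surjectivity at filtration $0$ on the $E_r$-page, which is exactly the part of the induction your argument never establishes. With the refined region in place of your blanket claim, the rest of your outline goes through and is essentially the cited argument of \cite{DLS2022} and \cite{BM94}.
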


One advantage of considering Tate spectral sequences is that they are whole-plane spectral sequences with more invertible classes. This feature makes the calculations more accessible.
 
If $V$ is a $G$-representation such that its fixed point set $V^{H}$ is trivial for any non-trivial subgroup $H$ of $G$, then $S^{\infty V}$ is a geometric model for $\tilde{E}G$. If $X$ is a $G$-spectrum, we have
\[
\tilde{E}G\wedge X\simeq S^{\infty V}\wedge X=a_V^{-1}X
\]
This implies that for such representation $V$, the class $a_V$ is invertible in the Tate spectral sequence. 

\begin{met}\label{met:Tatemethod}

When $X$ is a commutative ring spectrum, its $G$-TateSS is multiplicative, which is extremely useful for proving permanent cycles and determining differentials in its  $G$-HFPSS.

Assume that we find a  non-trivial differential $d_r(a)=b$ in the $G$-HFPSS. Then there is a corresponding differential $d_r(a')=b'$ in the $G$-TateSS by \cref{lem:tateisorange}. We can move this differential by some $r$-cycle $c'$ in the $G$-TateSS such that $d_r(c'a')=c'b'$ is a differential with the source $c'a'$ in a negative filtration and the target $c'b'$ in a non-negative filtration. (One can choose $c'=a_V^{-k}$ for proper integer $k$ where $a_V$ is an invertible class as above.) Then $c'b'$ is a permanent cycle in the $G$-TateSS and hence the corresponding class of $c'b'$ in the $G$-HFPSS is also a permanent cycle by \cref{lem:tateisorange}. This method allows us to identify permanent cycles at $E_r$-page for $r< \infty$. Moreover, if $c'$ is an invertible permanent cycle in the $G$-TateSS for $X$, then the class $c'b'$  will survive to the $E_r$-page and the differential $d_r(c'a')=c'b'$ happens on the $E_r$-page in the $G$-TateSS. If it is not,  then there is a shorter differential, say $d_{r'}(d)=c'b'$, which kills the class $c'b'$ on the $E_r'$-page for $r'<r$. Then the Leibniz rule forces $d_{r'}((c')^{-1}d)=b'$, which is a contradiction. Furthermore, if this differential  $d_r(c'a')=c'b'$
completely locates in filtration greater than $0$, then \cref{lem:tateisorange} shows that there is a corresponding differential $d_r(ca)=cb$ in $G$-HFPSS.

\end{met}

Now we focus on $G=Q_8$ and its subgroups.  We will use the following notations for representations of $C_2,C_4$ and $Q_8$.
\begin{itemize}
\item When $G=C_2$, $RO(C_2)=\mathbb{Z}\{1,\sigma_2\}$ where $\sigma_2$ is the sign representation. 

\item When $G=C_4$, $RO(C_4)=\mathbb{Z}\{1,\sigma,\lambda\}$. The representation $\sigma$ is the sign representation and $\lambda$ is the $2$-dimensional representation by rotating the plane $\mathbb{R}^2$ by $\frac{\pi}{2}$.
\item When $G = Q_8$, $RO(Q_8) = \mathbb{Z}\{1, \sigma_i, \sigma_j, \sigma_k, \mathbb{H}\}$.  The representations $\sigma_i$, $\sigma_j$, and $\sigma_k$ are one-dimensional representations whose kernels are $C_4\langle i \rangle$, $C_4 \langle j \rangle$, and $C_4 \langle k \rangle$, i.e, the three $C_4$ subgroups generated by $i,j$ and $k$, respectively. The representation $\mathbb{H}$ is a four-dimensional irreducible representation, obtained by the action of $Q_8$ on the quaternion algebra $\mathbb{H} = \mathbb{R} \oplus \mathbb{R}i \oplus \mathbb{R}j \oplus \mathbb{R}k$ by left multiplication.
\end{itemize}

 By the above discussion, $S^{\infty \mathbb H}$ is a model of $\tilde{E}Q_8$. Therefore, the class $a_{\mathbb{H}}$ is invertible in any $Q_8$-Tate spectral sequence.

\subsection{Norm differentials and strong vanishing lines in spectral sequences}\label{subsec:normdiff}

The Hill--Hopkins--Ravenel norm structure holds in nice equivariant spectral sequences. Let $H\subset G$ be a subgroup. Consider the following diagram of $G$-spectra
\begin{align*}
\cdots\rightarrow P^{n+1}\rightarrow P^n \rightarrow P^{n-1}\rightarrow \cdots
\end{align*}
Recall that $P^m_n$ denotes the fiber of $P^m\rightarrow P^{n-1}$ and $P_n=P^{\infty}_n.$

We denote the spectral sequence associated to this tower by $\{E_r^{n,\bigstar},d_r\}$, where $n$ denotes the filtration and the second grading denotes the $RO(G)$-graded stem. 
We say the spectral sequence has a norm structure if there are two types of maps $N_H^G P_n\rightarrow P_{|G/H|n}$ and $N_H^G P_n^n\rightarrow P_{|G/H|n}^{|G/H|n}$ such that the following two diagrams commute up to homotopy.
\begin{equation*}
    \xymatrix{N_H^G P_n \ar[r]\ar[d] &P_{|G/H|n}\ar[d]\\
    N_H^GP_{n-1}\ar[r]& P_{|G/H|(n-1)}
    } \qquad    \xymatrix{N_H^G P_n \ar[r]\ar[d] &P_{|G/H|n}\ar[d]\\
    N_H^GP_{n}^n \ar[r]& P_{|G/H|n}^{|G/H|n}
    }   
\end{equation*}
The norm structure induces a map between the towers
 \begin{center}
\begin{tikzcd}[row sep=1em, column sep=1em]
				&\cdots \arrow[r] & N_H^G P_n\ar[d]  \arrow[rrrr] &&&& N_H^G P_{n-1} \ar[d]\arrow[r]&\cdots\\
				&\cdots \arrow[r] &P_{|G/H|n}\ar[r]&P_{|G/H|n-1}\ar[r]&\cdots \arrow[r]& P_{|G/H|(n-1)+1} \arrow[r] & P_{|G/H|(n-1)}\arrow[r] &\cdots
\end{tikzcd}
\end{center}
which induces a map from the $E_2$-page of the
$H$-level spectral sequence $H$-$E_2^{*,\bigstar}$ to the $E_2$-page of the $G$-level spectral sequence $G$-$E_2^{*,\bigstar}$ as follows

\[N_H^G: H\text{-}E_2^{n, V+n} \longrightarrow G\text{-}E_2^{|G/H|n, \Ind_H^GV +|G/H|n}.\]
It is proved in \cite{MSZ20} that if $X$ is a commutative $G$-ring spectrum then its slice spectral sequence, homotopy fixed point spectral sequence, and Tate spectral sequence (at least for $H\neq e$) have a norm structure.

One consequence of having a norm structure is that we can predict differentials in the $G$-level from differentials in the $H$-level.

\begin{thm}{\rm({\cite[Proposition~I.5.17]{Ull13}\cite[Theorem~4.7]{HHR17}}\rm )} \label{thm:normdiff}
In a spectral sequence with norm structures, if we have a differential $d_r(x)=y$ in the spectral sequence of a $H$-spectrum $X$. Then in the spectral sequence for $Y=N_H^G(X)$ there is a predicted differential
\[
d_{|G/H|(r-1)+1}(a_{\bar{\rho}}N_H^G(x))=N_H^G(y)
\]
where $\rho=\Ind_H^G(1)$ and $\bar{\rho}$ is the reduced representation of $\rho$.
\end{thm}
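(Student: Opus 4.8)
\emph{Proof idea.} The plan is to lift the $H$-differential into the slice towers, apply $N_H^G$, and track filtrations; the factor $a_{\bar\rho}$ is forced by the fact that $N_H^G$ multiplies filtration jumps by $|G/H|$ while a single tower connecting map only shifts by one slice. First, since $x$ survives to $E_r$, the obstructions $d_2(x),\dots,d_{r-1}(x)$ vanish, so the map $S^V\to P^n_nX$ representing $x$ lifts along the tower of $H$-slices to a map $\tilde x\colon S^V\to P^{n+r-1}_nX$. By construction of the spectral sequence, $y=d_r(x)$ is represented by the composite
\[
S^V\xrightarrow{\ \tilde x\ } P^{n+r-1}_nX\xrightarrow{\ \delta_X\ }\Sigma P^{n+r}_{n+r}X,
\]
where $\delta_X$ is the connecting map of the cofiber sequence $P^{n+r}_{n+r}X\to P^{n+r}_nX\to P^{n+r-1}_nX$ (note $\mathrm{fib}(P^{n+r}_nX\to P^{n+r-1}_nX)=P^{n+r}_{n+r}X$).

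Next I would use the norm structure. The maps $N_H^GP_n\to P_{|G/H|n}$ and $N_H^GP^n_n\to P^{|G/H|n}_{|G/H|n}$, being compatible with the towers, assemble by a diagram chase (cf.\ \cite{Ull13,MSZ20}) into compatible maps $\nu\colon N_H^GP^m_\ell X\to P^{|G/H|m}_{|G/H|\ell}Y$ for all $\ell\le m$, where $Y=N_H^GX$. Applying $N_H^G$ to $\tilde x$ and composing with $\nu$ gives
\[
S^{\Ind_H^GV}=N_H^GS^V\xrightarrow{\ N_H^G\tilde x\ }N_H^GP^{n+r-1}_nX\xrightarrow{\ \nu\ }P^{|G/H|(n+r-1)}_{|G/H|n}Y,
\]
a lift of $N_H^G(x)$ up the $G$-tower of slices by $|G/H|(n+r-1)-|G/H|n=|G/H|(r-1)$ steps. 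Hence $N_H^G(x)$, and therefore $a_{\bar\rho}N_H^G(x)$ (using that $a_{\bar\rho}$ is a permanent cycle, \cref{prop:aVHFPSS}), survives to the $E_{|G/H|(r-1)+1}$-page, which is the correct page for the asserted differential: a quick bookkeeping with $\dim\bar\rho=|G/H|-1$ shows that both sides of $d_{|G/H|(r-1)+1}(a_{\bar\rho}N_H^G(x))=N_H^G(y)$ land in filtration $|G/H|(n+r)$ and in the grading $\Ind_H^GV-\bar\rho+|G/H|(n+r)-1$.

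To identify the target I would feed the cofiber sequence of the first step through $N_H^G$. Since $N_H^GS^1=S^{\rho}=S^1\wedge S^{\bar\rho}$ and $N_H^G$ is symmetric monoidal, one obtains a natural finite filtration on $N_H^GP^{n+r}_nX$ whose intermediate layers are induced up from proper subgroups of $G$ and whose extreme layer is $S^{\bar\rho}\wedge N_H^GP^{n+r}_{n+r}X$. The associated connecting map, composed with $a_{\bar\rho}\wedge\mathrm{id}$ and with $\nu$, agrees by naturality of connecting maps with the $G$-tower connecting map applied to $\nu\circ N_H^G\tilde x$; its value is $a_{\bar\rho}\cdot\nu N_H^G(\delta_X\tilde x)$. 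Since $\delta_X\tilde x$ represents $y$ and $N_H^G$ is multiplicative on $E_2$-pages, $N_H^G(\delta_X\tilde x)$ represents $N_H^G(y)$, so $d_{|G/H|(r-1)+1}(a_{\bar\rho}N_H^G(x))=N_H^G(y)$ as claimed; the cross terms in the norm of the cofiber sequence, being induced from proper subgroups, contribute in strictly lower filtration and do not disturb the leading term.

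The main obstacle is this last step. Because $N_H^G$ is not exact, it does not send the cofiber sequence $P^{n+r}_{n+r}X\to P^{n+r}_nX\to P^{n+r-1}_nX$ to a cofiber sequence, and one must control the intermediate induced-up layers precisely enough to see that the only correction to the comparison of connecting maps is multiplication by $a_{\bar\rho}$, with no surviving contribution from the mixed terms. This is where the argument genuinely uses the fine structure of the norm (the diagonal/distribution formula for $N_H^G$ of a cofiber, together with $\Phi^GN_H^G\simeq\Phi^H$) rather than the mere formal existence of a norm structure; the details are carried out in \cite[Proposition~I.5.17]{Ull13} and \cite[Theorem~4.7]{HHR17}.
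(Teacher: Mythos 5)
This statement is quoted in the paper from \cite[Proposition~I.5.17]{Ull13} and \cite[Theorem~4.7]{HHR17}; the paper supplies no proof of its own, so there is no internal argument to compare against. Your sketch follows the same strategy as those cited proofs---lift a representative of $x$ up the $H$-tower, push it through the norm structure maps, and compare connecting maps, with $a_{\bar\rho}$ accounting for $N_H^G(S^1)\simeq S^{\rho}$---and your filtration and degree bookkeeping for the source and target of $d_{|G/H|(r-1)+1}$ is correct. Since you explicitly defer the decisive point (controlling the failure of exactness of $N_H^G$ on the cofiber sequence $P^{n+r}_{n+r}X\to P^{n+r}_nX\to P^{n+r-1}_nX$, and showing the only surviving correction to the comparison of connecting maps is multiplication by $a_{\bar\rho}$) to the very same references, what you have is an accurate outline of the standard argument rather than an independent proof; given that the paper itself treats the theorem purely as a citation, this is the same level of reliance on \cite{Ull13} and \cite{HHR17}.
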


In \cite{DLS2022} the authors use the norm structures to show that every class in $G$-TateSS$(\mathbf{E}_n)$ is hit before a specific page depending on $n$ and $G$.
\begin{thm}
{\rm(\cite[Theorem~5.1]{DLS2022}\rm)}\label{thm:tatevanishing}
At the prime $2$, for any height $n$ and any $G \subset \mathbb{G}_n$ a finite subgroup, let $H$ be a Sylow 2-subgroup of $G$.  All the classes in the $RO(G)$-graded Tate spectral sequence of $\mathbf{E}_n$ vanish after the $E_{N_{n, H}}$-page.  Here  $N_{n, H}$ is a positive integer defined as follows: 
\begin{itemize}
\item when $(n, H) = (2^{m-1}\ell, C_{2^m})$, $N_{n, H} = 2^{n+m} - 2^m + 1$;
\item when $(n, H) = (4k+2, Q_8)$, $N_{n, H} = 2^{n+3}-7$.
\end{itemize}
\end{thm}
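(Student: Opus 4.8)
The plan is to produce, in the $G$-$\TateSS(\E_n)$, a single differential of length exactly $N_{n,H}$ whose target is an \emph{invertible} class; multiplicativity together with invertibility of the relevant Euler class then forces every class to die by that page. I would organize this in three stages. First, reduce to a Sylow $2$-subgroup: let $H$ be a Sylow $2$-subgroup of $G$. On $E_2$-pages, restriction and transfer induce maps of spectral sequences $\Res^G_H,\Tr^G_H$ relating the $G$- and $H$-$\TateSS(\E_n)$, and the composite $\Tr^G_H\circ\Res^G_H$ is multiplication by the index $[G:H]$ on Tate cohomology. Since $[G:H]$ is odd while $\E_n$ is $2$-local, this composite is an isomorphism, so the $G$-$\TateSS(\E_n)$ is a retract of the $H$-$\TateSS(\E_n)$ as spectral sequences. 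A retract of a spectral sequence that vanishes after the $E_N$-page also vanishes after the $E_N$-page, so it suffices to treat $G=H\in\{C_{2^m},Q_8\}$.

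The second stage produces the ``fundamental'' differential at the $C_2$-level. View $\E_n$ as a real-oriented $C_2$-spectrum and let $C_2\subset H$ be the order-two subgroup (the center, in the $Q_8$ case). The real orientation and the Hill--Hopkins--Ravenel slice differentials for $\E_n$ produce a differential of length $2^{n+1}-1$ whose target is, up to a unit, $v_n\cdot a_{\sigma_2}^{\,2^{n+1}-1}$. Here $v_n$ is a unit in $\pi_*\E_n$ and $a_{\sigma_2}$ is invertible in the $C_2$-Tate spectral sequence (as $\sigma_2$ has no nonzero fixed vectors under the only nontrivial subgroup), so the target is invertible. This is the longest differential at the $C_2$-level because $v_n$ is the top generator while all higher slice generators become determined, rather than free, in height $n$.

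In the third stage I apply the norm-differential theorem \cref{thm:normdiff} via the internal norm $N^H_{C_2}$ on the multiplicative $\TateSS(\E_n)$ for the subgroup $C_2\subset H$. Writing $r=2^{n+1}-1$, the predicted differential has length $|H/C_2|(r-1)+1$, which equals $2^{n+m}-2^m+1$ when $H=C_{2^m}$ (where $|H/C_2|=2^{m-1}$) and $2^{n+3}-7$ when $H=Q_8$ (where $|H/C_2|=4$); these are exactly the claimed numbers $N_{n,H}$. By \cref{prop:auclassnorm} the target of this differential is a unit times $a_{\Ind^H_{C_2}\sigma_2}^{\,r}$. A representation-theoretic check gives $\Ind^{Q_8}_{C_2}\sigma_2\cong\mathbb{H}$, and shows $\Ind^{C_{2^m}}_{C_2}\sigma_2$ is a sum of faithful rotation representations; in either case the induced Euler class is invertible in the $H$-Tate spectral sequence. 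Writing this invertible target as $y$ and the source as $x$, so $d_{N_{n,H}}(x)=y$, I set $z=x\cdot y^{-1}$, using that $y^{-1}$ is a permanent cycle by \cref{prop:aVHFPSS}; then $d_{N_{n,H}}(z)=1$. By the Leibniz rule, every permanent cycle $w$ on the $E_{N_{n,H}}$-page satisfies $d_{N_{n,H}}(zw)=w$ and is hence hit, while every non-permanent class already supports a differential of length at most $N_{n,H}$. Therefore $E_{N_{n,H}+1}=0$, as required, and this simultaneously rules out any longer differential.

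The main obstacle is the second stage: establishing that the longest differential in the $C_2$-$\TateSS(\E_n)$ is precisely $d_{2^{n+1}-1}$ with an invertible target. This rests on the real-orientation and slice structure of $\E_n$ and on the dichotomy that $v_n$ is a unit whereas the higher generators become determined in height $n$, so that no longer $C_2$-differential can occur. Once this input, together with the invertibility of the induced Euler classes $a_{\mathbb{H}}$ and $a_{\Ind^{C_{2^m}}_{C_2}\sigma_2}$, is in hand, the norm and propagation steps are formal and yield the exact page bounds $N_{n,H}$.
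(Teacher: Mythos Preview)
Your proposal is correct and follows the same strategy the paper reviews from the cited reference: reduce to a Sylow $2$-subgroup, take the length-$(2^{n+1}-1)$ differential in the $C_2$-Tate spectral sequence for $\E_n$ arising from the slice differentials, and norm it up via \cref{thm:normdiff} to obtain a differential of length $N_{n,H}$ hitting an invertible class, which forces the whole spectral sequence to vanish. The only cosmetic difference is that the paper pre-divides at the $C_2$-level so that the normed target is literally $1$ (see equation~\eqref{eq:predicted}), whereas you post-divide by the invertible target $y$ at the $H$-level; note that your appeal to \cref{prop:aVHFPSS} for $y^{-1}$ being a permanent cycle also tacitly uses that the norm of the unit $\bar v_n\in\pi_\bigstar^{C_2}\E_n$ is itself a unit on the $0$-line, not just an Euler class.
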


The isomorphism range of the natural map $G\text{-HFPSS}(\mathbf{E}_n)\rightarrow G\text{-TateSS}(\mathbf{E}_n)$ implies there is a strong horizontal vanishing line in $E_{\infty}$-page of $G$-HFPSS$(\mathbf{E}_n)$.

\begin{thm}{\rm(\cite[Theorem~6.1]{DLS2022}\rm)}\label{thm:vanishinglines}
At the prime $2$, for any height $n$ and any $G \subset \mathbb{G}_n$ a finite subgroup, let $H$ be a Sylow 2-subgroup of $G$.  There is a strong horizontal vanishing line of filtration $N_{n,H}$ in the $RO(G)$-graded homotopy fixed point spectral sequence of $\mathbf{E}_n$.

\end{thm}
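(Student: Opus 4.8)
The plan is to deduce the strong horizontal vanishing line in $G\text{-HFPSS}(\mathbf{E}_n)$ from the vanishing result for the Tate spectral sequence (\cref{thm:tatevanishing}) via the comparison map $G\text{-HFPSS}(\mathbf{E}_n) \to G\text{-TateSS}(\mathbf{E}_n)$ of \cref{lem:tateisorange}. Recall that \cref{lem:tateisorange} says this comparison map is an isomorphism on $E_2$-pages in all positive filtrations, and induces a bijection on differentials whose source lies in non-negative filtration. Hence everything that happens strictly above the zero line in the HFPSS is faithfully recorded in the TateSS. Since every class in $G\text{-TateSS}(\mathbf{E}_n)$ is killed by the page $N_{n,H}$ (meaning: it either supports a differential or is hit by one by that page), every class in $G\text{-HFPSS}(\mathbf{E}_n)$ of filtration $\geq N_{n,H}$ — being a class in positive filtration — must do likewise. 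This is exactly the definition of a strong horizontal vanishing line at filtration $N_{n,H}$: the spectral sequence collapses at $E_{N_{n,H}}$ and no class survives at or above that filtration.

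First I would set up the two spectral sequences and invoke \cref{lem:tateisorange} to identify, for $s > 0$, the $E_r$-pages $E_r^{s,\bigstar}$ of the HFPSS with those of the TateSS, compatibly with all differentials that originate in filtration $\geq 0$. Second, I would take an arbitrary class $x \in E_{N_{n,H}}^{s,\bigstar}$ in the HFPSS with $s \geq N_{n,H} > 0$; under the comparison map it corresponds to a nonzero class $x'$ in the TateSS at the same bidegree. Third, apply \cref{thm:tatevanishing}: since all classes of $G\text{-TateSS}(\mathbf{E}_n)$ vanish after the $E_{N_{n,H}}$-page, $x'$ cannot survive past $E_{N_{n,H}}$, so either $x'$ supports a nonzero differential $d_r(x') \neq 0$ for some $r \geq N_{n,H}$, or $x'$ is the target of such a differential. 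Fourth, transport this back: if $x'$ supports a differential, then (its source being in positive filtration) \cref{lem:tateisorange} gives a corresponding differential on $x$ in the HFPSS; if instead $x'$ is hit by $d_r(y') = x'$ for some $y'$, note $y'$ lies in filtration $s - r < s$; when $s - r \geq 0$ the differential is detected in the HFPSS directly, and when $s - r < 0$ one observes that any class of the TateSS that hits $x'$ can, after multiplying by an appropriate power of the invertible Euler class $a_{\mathbb{H}}$ (for $G = Q_8$) or the relevant periodicity class, be slid into non-negative filtration, at which point \cref{lem:tateisorange} again produces a differential hitting $x$ in the HFPSS. Either way $x$ does not survive to $E_\infty$, proving the vanishing line.

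The main obstacle is the last point: making precise that classes of the HFPSS in filtration $\geq N_{n,H}$ really are \emph{hit or support differentials within the HFPSS itself}, rather than merely having their Tate-counterparts killed by differentials whose sources live in negative TateSS filtration — negative filtration being a phenomenon with no counterpart in the (first-quadrant-in-filtration) HFPSS. This is handled exactly by the sliding argument of \cref{met:Tatemethod}: one uses the invertibility of $a_{\mathbb{H}}$ in the $Q_8$-TateSS (and the analogous invertible classes for the cyclic Sylow cases) to translate a differential in the TateSS with negative-filtration source into one with non-negative-filtration source, after which \cref{lem:tateisorange} applies. Once one has carefully checked that this translation can always be arranged so that both source and target of the translated differential stay in the isomorphism range — which follows from the multiplicative structure and the fact that $a_{\mathbb{H}}$ raises filtration — the argument closes. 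The numerical values of $N_{n,H}$ play no role in the present proof; they are simply inherited verbatim from \cref{thm:tatevanishing}.
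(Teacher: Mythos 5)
Your proposal follows the same route the paper takes: the theorem is imported from \cite[Theorem~6.1]{DLS2022}, and the deduction indicated there (and used verbatim in the proof of \cref{thm:sharpvanishingline}(2)) is exactly "Tate vanishing (\cref{thm:tatevanishing}) plus the comparison isomorphism of \cref{lem:tateisorange}." Your core argument is correct.

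One remark: the "main obstacle" you single out is vacuous, and your proposed fix for it is not quite right. If $x$ has filtration $s\geq N_{n,H}$ and its Tate counterpart $x'$ is hit by $d_r(y')=x'$, then $r\leq N_{n,H}$ because the Tate spectral sequence vanishes after the $E_{N_{n,H}}$-page, so the source filtration is $s-r\geq s-N_{n,H}\geq 0$ automatically; the negative-filtration case never occurs, and \cref{lem:tateisorange} applies directly. Moreover, the sliding argument of \cref{met:Tatemethod} would not rescue that case as you state it: multiplying by $a_{\mathbb{H}}^k$ translates the \emph{target} as well, so it would only show that $a_{\mathbb{H}}^k x$ (a class in a different $RO(G)$-degree) is hit, not $x$ itself, and the invertibility of $a_{\mathbb{H}}$ is specific to $G$ with Sylow subgroup $Q_8$. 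Dropping that paragraph and inserting the one-line filtration count above makes the proof clean and matches the intended argument.
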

It turns out that the existence of such horizontal vanishing lines is extremely helpful for determining higher differentials in homotopy fixed point spectral sequences. In particular,  for our computation in $Q_8$-HFPSS$(\mathbf{E}_2)$, the vanishing line gives an independent proof of several higher differentials in the integer gradings. Moreover, this vanishing line plays a crucial role in the computation of $(*-\sigma_i)$-graded $Q_8$-HFPSS for $\mathbf{E}_2$.

  \subsection{\texorpdfstring{Morava $E$-theory $\mathbf{E}_2$ with $G_{24}$-action}{}} \label{subsec:LubinTate}

We fix a pair $(\mathbb{F}_{p^n},\Gamma_n)$ where $\Gamma_n$ is the height-$n$ Honda formal group law over $\mathbb{F}_p$ extended to $\mathbb{F}_{p^n}$. Then Lubin and Tate \cite{LT65} show that there is a universal deformation $F_n$ defined over a complete local ring 
\[
\W(\mathbb{F}_{p^n})[\![u_1,\dots,u_{n-1}]\!]
\]
where $\W(\mathbb{F}_{p^n})$ is the $p$-typical ring of Witt vectors of $\mathbb{F}_{p^n}$. The Landweber exactness theorem shows that this ring can be realized by a complex-oriented ring spectrum $\mathbf{E}_n$.

Let $\mathbb{S}_n$ be the automorphism group of $\Gamma_n$, namely the small $n^\text{th}$ Morava stabilizer group. Let $\mathbb{G}_n=\mathbb{S}_n \rtimes \Gal(\mathbb{F}_{p^n}/\mathbb{F}_p)$ be the automorphism group of $(\mathbb{F}_{p^n},\Gamma_n)$, namely the (extended) $n^\text{th}$ Morava stabilizer group. By universality, $\pi_* \mathbf{E}_n$ admits a $\mathbb{G}_n$-action. The Goerss--Hopkins--Miller theorem \cite{Rez98,GH04,Lur18} lifts this action uniquely to an $\mathbb{E}_{\infty}$-action on $\mathbf{E}_n$.

 We are interested in computing $\pi_*\mathbf{E}_n^{hG}$ for $G$ a finite subgroup of $\mathbb{G}_n$ via $G$-homotopy fixed point spectral sequences. For these computations, the action of the Galois group $ \Gal(\mathbb{F}_{p^n} /\mathbb{F}_p)$ will not change the differential pattern. More precisely, we review the following result.

\begin{lem}{\rm(\cite[Lemma 1.32]{BG18}\cite[Lemma 2.2.6, Lemma 2.2.7]{BGH22})}
Let $F\subset \mathbb{G}_n$ be a closed subgroup and let $F_0=F\cap \mathbb{S}_n$. Suppose the following canonical map is an isomorphism
$$F/F_0 \rightarrow \mathbb{G}_n/\mathbb{S}_n\cong \Gal(\mathbb{F}_{p^n}/\mathbb{F}_p).$$
Then there is a commutative diagram of homotopy fixed point spectral sequences
\[\begin{tikzcd}
\mathbb{W}(\mathbb{F}_{p^n}) \otimes_{\mathbb{Z}_p} H^*(F,\pi_* \mathbf{E}_n) \ar[r,Rightarrow]\ar[d, rightarrow,"\cong"] & \mathbb{W}(\mathbb{F}_{p^n})\otimes_{\mathbb{Z}_p} \pi_*\mathbf{E}_n^{hF}\ar[d, rightarrow,"\cong"] \\ 
H^*(F_0,\pi_* \mathbf{E}_n) \ar[r,Rightarrow] & \pi_*\mathbf{E}_n^{hF_0}.
\end{tikzcd}\]
\end{lem}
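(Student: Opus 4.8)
The plan is to show that the Galois descent isomorphism $\mathbb{W}(\mathbb{F}_{p^n}) \otimes_{\mathbb{Z}_p} \mathbf{E}_n^{hF_0} \simeq \mathbf{E}_n^{hF}$ already holds at the level of spectra, and that it is suitably multiplicative and filtered so that it induces the claimed commutative diagram of spectral sequences. First I would recall that $\mathbf{E}_n^{hF_0}$ carries a residual action of the finite quotient $F/F_0 \cong \Gal(\mathbb{F}_{p^n}/\mathbb{F}_p)$, since $F_0$ is normal in $F$. The key input is the classical fact that for the profinite Galois extension $\mathbb{W}(\mathbb{F}_{p^n})$ of $\mathbb{Z}_p$, smashing $\mathbf{E}_n$ (whose coefficient ring already contains $\mathbb{W}(\mathbb{F}_{p^n})$) with the appropriate Galois-cohomological gadget realizes étale descent: concretely, $\mathbf{E}_n^{hF}$ is the homotopy fixed points of the $\Gal$-action on $\mathbf{E}_n^{hF_0}$, and a finite Galois homotopy fixed point spectral sequence for a Galois extension of the base degenerates because $H^{>0}(\Gal(\mathbb{F}_{p^n}/\mathbb{F}_p), \mathbb{W}(\mathbb{F}_{p^n}) \otimes_{\mathbb{Z}_p} M) = 0$ for any $\mathbb{Z}_p$-module $M$ (normal basis theorem / vanishing of Galois cohomology of an induced module). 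This gives $(\mathbf{E}_n^{hF_0})^{h(F/F_0)} \simeq \mathbf{E}_n^{hF}$ together with the splitting $\mathbf{E}_n^{hF_0} \simeq \mathbb{W}(\mathbb{F}_{p^n}) \otimes_{\mathbb{Z}_p} \mathbf{E}_n^{hF}$ as modules over $\mathbf{E}_n^{hF}$.

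The next step is to lift this identification to the spectral sequence level. Both spectral sequences in the diagram arise as the homotopy fixed point spectral sequences associated to the Postnikov/$\mathbb{F}_p$-Adams-type filtration, or equivalently to the standard $F$- (resp. $F_0$-) homotopy fixed point filtration on $\mathbf{E}_n$. Tensoring a tower of spectra with the flat $\mathbb{Z}_p$-module $\mathbb{W}(\mathbb{F}_{p^n})$ is exact and commutes with the formation of homotopy groups, so $\mathbb{W}(\mathbb{F}_{p^n}) \otimes_{\mathbb{Z}_p} F\text{-HFPSS}(\mathbf{E}_n)$ is again a spectral sequence, with $E_2$-page $\mathbb{W}(\mathbb{F}_{p^n}) \otimes_{\mathbb{Z}_p} H^*(F, \pi_* \mathbf{E}_n)$ and converging to $\mathbb{W}(\mathbb{F}_{p^n}) \otimes_{\mathbb{Z}_p} \pi_* \mathbf{E}_n^{hF}$. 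I would then produce a map of towers inducing the map of spectral sequences: the inclusion $F_0 \hookrightarrow F$ gives a restriction map $\mathbf{E}_n^{hF} \to \mathbf{E}_n^{hF_0}$ compatible with filtrations, hence a map $F\text{-HFPSS}(\mathbf{E}_n) \to F_0\text{-HFPSS}(\mathbf{E}_n)$; extending scalars along $\mathbb{Z}_p \to \mathbb{W}(\mathbb{F}_{p^n})$ and using the $\mathbb{W}(\mathbb{F}_{p^n})$-linear structure of everything in sight (since $\mathbb{W}(\mathbb{F}_{p^n}) \subset \pi_0 \mathbf{E}_n$) yields the vertical maps of the diagram. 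On $E_2$-pages it is the shearing/Shapiro isomorphism $\mathbb{W}(\mathbb{F}_{p^n}) \otimes_{\mathbb{Z}_p} H^*(F, \pi_*\mathbf{E}_n) \xrightarrow{\cong} H^*(F_0, \pi_*\mathbf{E}_n)$, which one checks using $H^*(F, -) = H^*(F/F_0, H^*(F_0, -))$ together with the cohomology-vanishing of the induced $\Gal$-module above; on abutments it is the spectrum-level splitting from the previous paragraph.

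Once we have a morphism of spectral sequences that is an isomorphism on $E_2$-pages and compatible with the isomorphism on abutments, a standard comparison argument (induct up the pages) shows it is an isomorphism on every page, hence matches up all differentials, which is exactly the assertion. The main obstacle, and the place requiring care, is the compatibility of the two different descriptions of the vertical maps — the algebraic $E_2$-level shearing isomorphism versus the topological restriction-plus-base-change map — i.e. verifying that the restriction map $\mathbf{E}_n^{hF} \to \mathbf{E}_n^{hF_0}$ really does induce the shearing isomorphism on cohomology and that the $\mathbb{W}(\mathbb{F}_{p^n})$-module structures are compatible through the filtration; this is where the hypothesis $F/F_0 \xrightarrow{\cong} \Gal(\mathbb{F}_{p^n}/\mathbb{F}_p)$ and the fact that $\pi_0 \mathbf{E}_n$ is the corresponding Witt ring are used essentially. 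Since this result is quoted from \cite{BG18} and \cite{BGH22}, I would present the argument at the level of a sketch, citing those sources for the detailed verification of the descent and the filtration-compatibility, and emphasize only the conceptual point that everything reduces to the vanishing of higher Galois cohomology of an induced module.
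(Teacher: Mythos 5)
The paper does not actually prove this lemma---it is quoted with references to \cite{BG18} and \cite{BGH22}---and your sketch reproduces the standard Galois-descent argument used in those sources (iterated homotopy fixed points for the closed normal subgroup $F_0\subset F$, collapse of the resulting $\Gal(\mathbb{F}_{p^n}/\mathbb{F}_p)$-spectral sequence by vanishing of higher Galois cohomology, and a map of filtered towers inducing the vertical isomorphisms), so it is correct in outline and essentially the same approach. The one point to phrase carefully is that the cohomology vanishing must be invoked for $\mathbb{W}(\mathbb{F}_{p^n})$-modules with \emph{semilinear} Galois action, such as $\pi_*\mathbf{E}_n^{hF_0}$ and $H^*(F_0,\pi_*\mathbf{E}_n)$ (this is exactly the descent statement of \cite[Lemma~2.2.6]{BGH22}), rather than only for modules already presented in the induced form $\mathbb{W}(\mathbb{F}_{p^n})\otimes_{\mathbb{Z}_p}M$, since the latter presentation is part of what is being proved.
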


In this paper, we will focus on the case $p=2$ and $n=2$. The Galois group $\Gal(\mathbb{F}_4/\mathbb{F}_2)$ is isomorphic to $C_2$ and we write $\mathbb{W}$ for the Witt vectors $\mathbb{W}(\mathbb{F}_4)$. There are finite subgroups $Q_8$ and $G_{24}\cong Q_8 \rtimes C_3$ in the small Morava stabilizer group $\mathbb{S}_2$ and $SD_8=Q_8\rtimes \Gal$ and $G_{48}\cong G_{24} \rtimes \Gal$ in the extended Morava stabilizer group $\mathbb{G}_2$. The subgroups $Q_8$, $G_{24}$ are unique up to conjugacy in $\mathbb{S}_2$ \cite{Buj12} (see also \cite[Remark 2.4.5]{BGH22}). Therefore, there is no ambiguity of the notation $\pi_*\mathbf{E}_2^{hQ_8}$ or $\pi_*\mathbf{E}_2^{hG_{24}}$. The subgroup $Q_8$ and complex orientation coordinates can be chosen specifically from the theory of elliptic curves at the prime $2$ so that the action has explicit formulas as follows (see \cite[Section 2]{Bea17} for more details).

We recall the action of $G_{24}$ on $\pi_* \mathbf{E}_2$ \cite[Lemma A.1]{Bea17}. The coefficient ring is a complete local ring $\pi_* \mathbf{E}_2=\W[\![u_1]\!][u^{\pm 1}]$ with $|u|=2$, $|u_i|=0$ and a maximal ideal $I=(2,u_1)$. Denote $u_1u^{-1}$ by $v_1$, the generator of the quaternion group $Q_8$ by $i,j,k$ and the generator of $C_3$ by $\omega$. We regard the third root of unity $\zeta$ as a class in the Witt vectors $\mathbb{W}$.
The $G_{24}$-actions on $u^{-1}$ and $v_1$ are
\begin{equation}\label{eq:typeone}
    \begin{aligned}
  \omega_*(u^{-1})&=\zeta^2 u^{-1},& \omega_*(v_1)&=v_1,\\
 i_*(u^{-1})&=\frac{v_1-u^{-1}}{\zeta^2-\zeta}, & i_*(v_1)&=\frac{v_1+2u^{-1}}{\zeta^2-\zeta},\\
    j_*(u^{-1})&=\frac{\zeta v_1-u^{-1}}{\zeta^2-\zeta}, & j_*(v_1)&=\frac{v_1+2\zeta^2 u^{-1}}{\zeta^2-\zeta},\\
    k_*(u^{-1})&=\frac{\zeta^2 v_1-u^{-1}}{\zeta^2-\zeta}, & k_*(v_1)&=\frac{v_1+2\zeta u^{-1}}{\zeta^2-\zeta}.
    \end{aligned}
\end{equation}

We define $D$ to be $\prod\limits_{g\in Q_8/C_2} g_*(u^{-1})$ which is $Q_8$-invariant. Then $(\mathbf{E}_2)_*$ could be expressed as  
  \[
 \pi_* \mathbf{E}_2\cong (\mathbb{W}[v_1,u^{-1}][D^{-1}])^{\wedge}_{I},
  \]
which is more convenient for the $Q_8$-cohomology computation.

\begin{lem}\label{lem:E2completion}
There is an isomorphism
\[
H^*(Q_8,\pi_* \E_2) \cong (H^*(Q_8,\mathbb{W}[v_1,u^{-1}])[D^{-1}])^\wedge_J
\]
where $J=(2,v_1^4)$.
\end{lem}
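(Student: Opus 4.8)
The plan is to compute $H^*(Q_8, \pi_* \mathbf{E}_2)$ in two stages, first inverting $D$ and then completing, using that $Q_8$-cohomology commutes with filtered colimits and that the relevant modules are finitely generated in each degree so that completion commutes with taking cohomology. Recall that we have written $\pi_* \mathbf{E}_2 \cong (\mathbb{W}[v_1, u^{-1}][D^{-1}])^\wedge_I$ where $I = (2, u_1) = (2, v_1 u)$; the first task is to understand the ideal of definition for the completion in a $Q_8$-equivariant and $D$-inverted setting, which is why the statement replaces $I$ by $J = (2, v_1^4)$. The key observation is that $D = \prod_{g \in Q_8/C_2} g_*(u^{-1})$ is a $Q_8$-invariant element, and after inverting $D$, the ideals $(v_1)$ and $(v_1^4)$ and indeed $(v_1^N)$ for any $N$ generate the same topology (up to $2$-adic issues) on the relevant ring; more precisely $v_1^4$ and $D$ differ by a unit times powers of $u^{-1}$, so completing at $J = (2, v_1^4)$ after inverting $D$ is the natural $Q_8$-invariant reformulation of the maximal-ideal completion.

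First I would establish that $H^*(Q_8, -)$ commutes with inverting $D$: since $D$ is $Q_8$-invariant, multiplication by $D$ is a map of $\mathbb{Z}[Q_8]$-modules, localization at $D$ is a filtered colimit of the system $M \xrightarrow{D} M \xrightarrow{D} \cdots$, and group cohomology of a finite group commutes with filtered colimits of coefficient modules. This gives
\[
H^*(Q_8, \mathbb{W}[v_1, u^{-1}][D^{-1}]) \cong H^*(Q_8, \mathbb{W}[v_1, u^{-1}])[D^{-1}].
\]
Second, I would handle the completion. The point is that $\mathbb{W}[v_1, u^{-1}][D^{-1}]$, graded by the internal degree $|u| = 2$, is in each fixed total degree a finitely generated module over $\mathbb{W}[v_1^{\pm}]$ or over the appropriate Noetherian subring (here one uses that $D$ has internal degree a multiple of that of $u^{-1}$, and $v_1$ has degree $0$), and $Q_8$-cohomology of such a module is again finitely generated over the invariant subring in each degree. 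For finitely generated modules over a Noetherian ring, completion at an ideal is exact, hence commutes with the (finite-dimensional in each degree) cohomology groups $H^*(Q_8, -)$. One then needs to check the $J$-adic completion on $H^*(Q_8, \mathbb{W}[v_1, u^{-1}][D^{-1}])$ agrees with what one gets from the $I$-adic completion of $\pi_* \mathbf{E}_2$ itself; this is where one invokes that $D^{-1}$ is a unit and $v_1^4 = (v_1 u)^4 u^{-4}$ differs from a product of $Q_8$-translates of $u^{-1}$ (i.e.\ from $D$) by a unit, so $(2, v_1^4)$ and the image of $(2, u_1)$ define the same topology after inverting $D$.

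The main obstacle I expect is the bookkeeping around completions and gradings: one must be careful that the $I$-adic completion appearing in $\pi_* \mathbf{E}_2 = (\mathbb{W}[v_1, u^{-1}][D^{-1}])^\wedge_I$ is compatible, after inverting $D$, with the $J$-adic one, and that the $\lim^1$ terms vanish so that $H^*(Q_8, \varprojlim) = \varprojlim H^*(Q_8, -)$. The cleanest way to arrange this is to exhibit $\mathbb{W}[v_1, u^{-1}][D^{-1}]$, in each internal degree, as a finitely generated module over a Noetherian ring on which $J$ (equivalently the relevant image of $I$) acts, apply the Artin–Rees lemma and exactness of completion for finitely generated modules over Noetherian rings, and observe that the Milnor sequence degenerates because the pro-system of cohomology groups is Mittag-Leffler (indeed eventually constant in each bidegree, as $Q_8$-cohomology is annihilated by $8$ in positive degrees and the filtration quotients stabilize). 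Everything else — the commutation of cohomology with the colimit defining $D^{-1}$ — is formal.
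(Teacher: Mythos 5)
Your overall skeleton is the same as the paper's: commute $H^*(Q_8,-)$ with inverting the invariant element $D$ (a filtered-colimit argument), then commute the adic completion with group cohomology using Noetherianness and exactness of completion on finitely generated modules. However, the justification of the crucial comparison of topologies is wrong. You assert that $v_1^4$ differs from $D$ by a unit (times powers of $u^{-1}$); taken literally this is fatal, since $D$ is invertible after localization, so your identity would make $v_1^4$ a unit, hence $J=(2,v_1^4)$ the unit ideal and the $J$-adic completion zero, contradicting the very statement being proved. What is true is only that $D\equiv(\mathrm{unit})\cdot u^{-4}$ modulo $(v_1)$. The correct and elementary reason the topologies agree is: after inverting $D$ the element $u=D^{-1}\prod_{g\neq e}g_*(u^{-1})$ lies in $\mathbb{W}[v_1,u^{-1}][D^{-1}]$ and is a unit there, so $(2,u_1)=(2,v_1)$, which has the same radical as $J=(2,v_1^4)$; since the ring is Noetherian, ideals with the same radical define the same adic topology. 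This is exactly the paper's observation $\sqrt{I}=\sqrt{J}$, which you never actually establish.

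Two further points need repair. Your finiteness bookkeeping is off: $v_1$ has internal degree $2$ (it is $u_1$ that has degree $0$), $v_1$ is not invertible so ``$\mathbb{W}[v_1^{\pm}]$'' is not available, and the fixed-degree pieces of $\mathbb{W}[v_1,u^{-1}][D^{-1}]$ are not finitely generated $\mathbb{W}$-modules (the degree-$0$ part already contains the polynomial algebra on $v_1^4D^{-1}$). The clean statement, used in the paper, is that $\mathbb{W}[v_1,u^{-1}][D^{-1}]$ is a finitely generated $\mathbb{W}$-algebra, hence Noetherian; completion at a finitely generated ideal is then exact on finitely generated modules, and since the standard cochain complex computing $H^*(Q_8,-)$ consists of finite sums of copies of the ring, completion passes through cohomology with no separate $\lim^1$/Mittag--Leffler discussion. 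Finally, you do not address why the $J$-adic completion makes sense on $H^*(Q_8,\mathbb{W}[v_1,u^{-1}])[D^{-1}]$ at all: $v_1^4$ is not $Q_8$-invariant, but $v_1^4\equiv N_{C_2}^{Q_8}(v_1)\bmod 2$, which is the paper's device for making $J$ act on the cohomology. With these repairs your argument becomes essentially the paper's proof.
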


\begin{proof}
We observe that two ideals $I$ and $J$ share the same radical ideal, i.e., $\sqrt{I}=\sqrt{J}$.  Therefore, we have the isomorphism
\[
H^*(Q_8,(\mathbb{W}[v_1,u^{-1}][D^{-1}]^{\wedge}_I)\cong H^*(Q_8,\mathbb{W}[v_1,u^{-1}][D^{-1}]^{\wedge}_J).
\]
 Because $D$ is  $Q_8$-invariant, we have
\[
H^*(Q_8,\mathbb{W}[v_1,u^{-1}][D^{-1}]) \cong H^*(Q_8,\mathbb{W}[v_1,u^{-1}])[D^{-1}].
\]
Moreover, the ideal $J$ is actually $Q_8$-invariant, since 
\[
v_1^4\equiv N_{C_2}^{Q_8}(v_1) \mod {2}
\]
according to (\ref{eq:typeone}). It implies $ H^*(Q_8,\mathbb{W}[v_1,u^{-1}])[D^{-1}]$ is a $J$-module. Note that $\mathbb{W}[v_1,u^{-1}][D^{-1}]$ is finitely generated as a $\mathbb{W}$-algebra. Therefore, the completion is an exact functor \cite[Theorem ~10.12]{AM69} \cite[Theorem ~A.1]{HS99} and we have
\[
H^*(Q_8,\pi_* \mathbf{E}_2) \cong (H^*(Q_8,\mathbb{W}[v_1,u^{-1}])[D^{-1}])^\wedge_J.
\]
\end{proof}


\subsection{\texorpdfstring{Mackey functor $C_4$-homotopy fixed point spectral sequence for $\mathbf{E}_2$}{}}\label{subsec:Mackeycomputation}

In this subsection, we recall some results on the Mackey-functor-valued $C_4$-HFPSS for $E_2$  in \cite{BBHS20}. See also the slice spectral sequence computation of the truncated $C_4$-normed Real Brown--Petersen spectrum $\BPone$ \cite{HHR17}\cite{HSWX2018}. 
\begin{prop}{\rm({\cite[Proposition~5.6]{BBHS20}}\rm)}\label{prop:E2C2Slicename}
There is an isomorphism 
\[
H^{*}(C_{2},\pi_{\bigstar}\mathbf{E}_2)\cong \mathbb{W}[\![\mu_{0}]\!][\bar{r}_{1}^{\pm 1}, a_{\sigma_2}, u_{2\sigma_2}^{\pm 1}]/(2a_{\sigma_2}),
\]
where the $( \bigstar-*, *)$-degree of the classes is given by \(|\mu_0|=(0,0)\), \(|\bar{r}_{1}|=(\rho_{2},0)\), \(|a_{\sigma_2}|=(-\sigma_2,1)\), and \(|u_{2\sigma_2}|=(2-2\sigma_2,0)\).
\end{prop}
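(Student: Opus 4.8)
The plan is a direct $RO(C_2)$-graded group-cohomology computation of the $E_2$-page of the $C_2$-HFPSS, where $C_2=\langle i^2\rangle$ is the central subgroup of $Q_8$. First I would pin down the $C_2$-module structure of $\pi_\bigstar\mathbf{E}_2$. Starting from the $G_{24}$-action formulas (\ref{eq:typeone}), square the action of $i$ using $\zeta^2+\zeta=-1$ (so $(\zeta^2-\zeta)^2=-3$): this gives $(i^2)_*(u^{-1})=-u^{-1}$ and $(i^2)_*(v_1)=-v_1$, while $i^2$ fixes $\mathbb{W}$ and $u_1$. Thus the central $C_2$ acts on $\pi_*\mathbf{E}_2=\mathbb{W}[\![u_1]\!][u^{\pm1}]$ by $u\mapsto-u$, $u_1\mapsto u_1$ — the ``Real'' action — so that, as a graded $\mathbb{Z}[C_2]$-module, $\pi_*\mathbf{E}_2=\bigoplus_{n\in\mathbb{Z}}\mathbb{W}[\![u_1]\!]\,u^n$ with $C_2$ acting on the $n$-th summand by $(-1)^n$.

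Next I would translate this into the $RO(C_2)$-graded module and locate the relevant classes. For $\bigstar=a+b\sigma_2$ one has $\pi_\bigstar\mathbf{E}_2\cong\pi^e_{a+b}\mathbf{E}_2$, which is $0$ when $a+b$ is odd and $\cong\mathbb{W}[\![u_1]\!]$ when $a+b$ is even, with residual $C_2$-action equal to multiplication by $(-1)^{b}\cdot(-1)^{(a+b)/2}$ (the first factor from $S^{b\sigma_2}$, the second from $u^{(a+b)/2}$). A short bookkeeping shows this action is trivial exactly when $a\equiv b\pmod4$, i.e.\ on the rank-$2$ sublattice of $RO(C_2)$ spanned by $\rho_2=1+\sigma_2$ and $2-2\sigma_2$, and is the sign action on the complementary even gradings. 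On the trivial-action sublattice I would place $\bar r_1$ (a unit multiple of $u$, in $\pi_{\rho_2}$; invertible because $u\in\pi_2\mathbf{E}_2$ is a unit), $u_{2\sigma_2}$ (the orientation class restricting to $1\in\pi_0^e$, in $\pi_{2-2\sigma_2}$; invertible via the underlying isomorphism $\pi_{\pm(2-2\sigma_2)}\mathbf{E}_2\cong\pi_0^e\mathbf{E}_2$ and triviality of the action there) and $\mu_0=u_1\in\pi_0$; on the sign part sits the Euler class $a_{\sigma_2}\in H^1(C_2,\pi_{1-\sigma_2}\mathbf{E}_2)$.

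Then I would run the cohomology computation and assemble. Since $\mathbb{W}[\![u_1]\!]$ is $2$-torsion-free, the standard periodic $C_2$-resolution gives, for the trivial module $A$, $H^0=A$ and $H^{2k}=A/2A\cong\mathbb{F}_4[\![u_1]\!]$ for $k\ge1$, with odd cohomology vanishing; and for the sign module, $H^{\mathrm{even}}=0$ while $H^{2k-1}\cong\mathbb{F}_4[\![u_1]\!]$ for $k\ge1$. Combined with the previous paragraph, filtration $0$ is the invariants $\mathbb{W}[\![\mu_0]\!][\bar r_1^{\pm1},u_{2\sigma_2}^{\pm1}]$, and each positive filtration $s$ contributes one copy of $\mathbb{F}_4[\![\mu_0]\!][\bar r_1^{\pm1},u_{2\sigma_2}^{\pm1}]$; identifying these copies with $\mathbb{F}_4[\![\mu_0]\!][\bar r_1^{\pm1},u_{2\sigma_2}^{\pm1}]\cdot a_{\sigma_2}^{s}$ — using that multiplication by $a_{\sigma_2}$ raises filtration by one and shifts the stem by $-\sigma_2$, exactly interchanging the trivial- and sign-action lattices, and that $a_{\sigma_2}$ squares to the polynomial class in $H^2$ — assembles the whole $E_2$-page into $\mathbb{W}[\![\mu_0]\!][\bar r_1^{\pm1},a_{\sigma_2},u_{2\sigma_2}^{\pm1}]/(2a_{\sigma_2})$ with the stated degrees. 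Finally, parallel to \cref{lem:E2completion}, one checks that passing from $\mathbb{W}[v_1,u^{\pm1}]$ to its $I$-completion is harmless, completion being exact on the finitely generated modules involved (equivalently, $\mathbb{W}[\![u_1]\!]$ is already $(2,u_1)$-complete).

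The hard part will not be the group cohomology, which is routine once the module is described, but the $RO(C_2)$-grading bookkeeping — correctly correlating the $\pm1$ weight of the $C_2$-action on each $\pi_\bigstar\mathbf{E}_2$ with the polynomial coordinates $\bar r_1$, $u_{2\sigma_2}$, $a_{\sigma_2}$ — together with pinning down the \emph{multiplicative} structure: verifying that every positive-filtration class is genuinely $a_{\sigma_2}$ times a filtration-$0$ class, with no extra polynomial generators, and that $2a_{\sigma_2}=0$ is the only relation, rather than merely recovering the additive answer.
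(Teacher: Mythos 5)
Your proposal is correct in substance, but note that the paper does not prove \cref{prop:E2C2Slicename} at all: it is recalled verbatim from \cite[Proposition~5.6]{BBHS20}, so there is no internal argument to compare against, and your write-up amounts to supplying the standard computation behind the citation. Your derivation of the central $C_2$-action is right and is the key input: squaring the formulas in (\ref{eq:typeone}) with $(\zeta^2-\zeta)^2=-3$ does give $(i^2)_*(u^{-1})=-u^{-1}$, $(i^2)_*(v_1)=-v_1$, hence $u\mapsto -u$, $u_1\mapsto u_1$, and the weight bookkeeping ($(-1)^{b+(a+b)/2}$, trivial exactly on the lattice $a\equiv b \pmod 4$ spanned by $\rho_2$ and $2-2\sigma_2$) matches the additive answer, with no completion subtleties since you work directly with $\mathbb{W}[\![u_1]\!][u^{\pm1}]$ and the action is diagonal. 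The one step you flag but do not actually carry out is the multiplicative identification: you need that the Euler class $a_{\sigma_2}$ is detected by the \emph{nonzero} element of $H^1(C_2,\pi_{1-\sigma_2}\mathbf{E}_2)\cong\mathbb{F}_4[\![u_1]\!]$, so that cup product with it implements the twisted periodicity $H^s\xrightarrow{\ \cong\ }H^{s+1}$ (for $s\geq 1$, and surjectively with kernel $2H^0$ for $s=0$); this follows from the cofiber sequence $C_{2+}\to S^0\xrightarrow{a_{\sigma_2}}S^{\sigma_2}$ smashed with $\mathbf{E}_2$, whose induced long exact sequence in Borel homotopy realizes, on $E_2$-pages, the connecting-map periodicity of $C_2$-cohomology between the trivial and sign lattices (equivalently, the generator of $\hat H^1(C_2,\mathbb{Z}^{\mathrm{sgn}})$ is invertible in Tate cohomology). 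With that supplied, the assembly into $\mathbb{W}[\![\mu_0]\!][\bar r_1^{\pm1},a_{\sigma_2},u_{2\sigma_2}^{\pm1}]/(2a_{\sigma_2})$ is complete; the only residual caveat is that your $\mu_0$, $\bar r_1$ are chosen generators (e.g.\ $u_1$ and a unit multiple of $u$), which proves the stated abstract isomorphism but does not by itself pin down the specific classes of \cite{BBHS20} used later, e.g.\ in the formulas $T_2=\Tr_{C_2}^{C_4}(\bar r_1^2u_{2\sigma_2})$ of \cref{prop:E2C4Slicename}.
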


We partially rewrite the names of classes on the $E_2$-page of $C_4$-HFPSS$(\mathbf{E}_2)$ in \cite[Proposition~5.10] {BBHS20} with slice names. For slice names, see \cite{HHR17,HSWX2018} for details. One advantage of using slice names is that it is better to organize differentials by the slice differential theorem \cite[Theorem~9.9]{HHR16}.

\begin{prop}{\rm(\cite[Proposition~5.10]{BBHS20}\rm)}\label{prop:E2C4Slicename}
There is an isomorphism
\[
H^*(C_4, \pi_{\bigstar}\mathbf{E}_2)\cong\W[\![\mu]\!][T_2,\eta,\eta',a_{\lambda},a_{\sigma}][\done^{\pm 1},u_{\lambda}^{\pm 1},u_{2\sigma}^{\pm 1}]/\sim
\]
where $\mu=\Tr_{C_2}^{C_4}(\mu_0)$, $T_2=\sone^2 u_{2\sigma_2}=\Tr_{C_2}^{C_4}(\bar{r}_1^2 u_{2\sigma_2})$, $\eta=\sone a_{\sigma_2}=\Tr_{C_2}^{C_4}(\bar{r}_1 a_{\sigma_2})$ and $\eta'=\sone u_{\sigma}a_{\sigma_2}=\Tr_{C_2}^{C_4}(\bar{r}_1 a_{\sigma_2} u_{\sigma})$. Although $\sigma$ is not an oriented $C_4$-representation, we apply $u_{\sigma}$ here indicating that $\eta'$ is transfered from $\bar{r}_1 a_{\sigma_2}$ from integer-graded part in $C_2$-level to $(1-\sigma)$-page in $C_4$-level. The relation $\sim$ is the ideal generated by the following relations
 \begin{align*}
    2\eta &=2\eta'=2a_{\sigma}=4a_{\lambda}=0, &  T_2^2 & =  \Delta_1((\mu-2)^2 + 4),  \\
    \eta^2 u_{2\sigma}&=\eta'^2=T_2u_{\lambda}^{-1}u_{2\sigma}a_{\lambda},& T_2\eta'&=\done \mu \eta u_{\lambda}u_{2\sigma},\\
    T_2\eta&=\done \mu\eta' u_{\lambda}, &\eta \eta'&=\mu u_{2\sigma}a_{\lambda},\\
    u_{\lambda}a_{2\sigma}&=2a_{\lambda}u_{2\sigma}, &\mu a_{\sigma}&=\eta a_{\sigma}=\eta' a_{\sigma}=T_2 a_{\sigma}=0.
\end{align*}
Here $\Delta_1=\done^2 u_{2\lambda}u_{2\sigma}$ at $(8,0)$ is an invertible class in $\pi_*\mathbf{E}_2^{hC_4}$.
\end{prop}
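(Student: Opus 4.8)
The plan is to recover the ring structure from the Lyndon--Hochschild--Serre spectral sequence of $1\to C_2\to C_4\to \overline{C_2}\to 1$ with input \cref{prop:E2C2Slicename}, to pin down the slice names $\done$, $u_\lambda$, $u_{2\sigma}$, $a_\lambda$, $a_\sigma$ via the slice/HFPSS comparison of \cref{lemma:isorange} applied to $\BPone$ (whose slice spectral sequence is computed in \cite{HHR17,HSWX2018}), and then to check each relation by transfer and Frobenius reciprocity from the $C_2$-level, importing the single arithmetic relation from the slice computation of \cite{BBHS20,HHR17}.

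First I would determine the residual $\overline{C_2}=C_4/C_2$-action on $H^*(C_2;\pi_\bigstar\E_2)=\W[\![\mu_0]\!][\bar r_1^{\pm1},a_{\sigma_2},u_{2\sigma_2}^{\pm1}]/(2a_{\sigma_2})$. Because $\Res^{C_4}_{C_2}\lambda=2\sigma_2$ and $\Res^{C_4}_{C_2}\sigma$ is trivial, the classes $u_{2\sigma_2}=\Res^{C_4}_{C_2}u_\lambda$ and $a_{\sigma_2}^2=\Res^{C_4}_{C_2}a_\lambda$ are $\overline{C_2}$-fixed, and $a_{\sigma_2}$ itself is fixed since the residual action changes it at most by a sign while $2a_{\sigma_2}=0$; the action on $\mu_0$ and $\bar r_1$ is then read off from the explicit $C_4$-action on $\pi_*\E_2$. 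Running the spectral sequence, $H^0(\overline{C_2};-)$ carries $\done$ (the invertible slice class, with $\Res^{C_4}_{C_2}\done=\bar r_1\gamma\bar r_1$), $u_{2\sigma}$ (with $\Res^{C_4}_{C_2}u_{2\sigma}=1$) and $a_\sigma$ (with $\Res^{C_4}_{C_2}a_\sigma=0$), while the transfers $\mu=\Tr_{C_2}^{C_4}(\mu_0)$, $T_2=\Tr_{C_2}^{C_4}(\bar r_1^2u_{2\sigma_2})$, $\eta=\Tr_{C_2}^{C_4}(\bar r_1a_{\sigma_2})$, $\eta'=\Tr_{C_2}^{C_4}(\bar r_1a_{\sigma_2}u_\sigma)$ occur as the images of $\mu_0$, $\bar r_1^2u_{2\sigma_2}$, and so on, under $x\mapsto x+\gamma x$. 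The comparison isomorphism of \cref{lemma:isorange} matches all of these with the Hill--Hopkins--Ravenel slice names in its range of validity, and multiplying by the invertible classes $\done^{\pm1}$, $u_\lambda^{\pm1}$, $u_{2\sigma}^{\pm1}$ propagates the identification to all $RO(C_4)$-gradings.

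The relations are then verified one at a time: $2\eta=2\eta'=2a_\sigma=4a_\lambda=0$ from $2a_{\sigma_2}=0$, linearity of the transfer, and the Euler-class torsion formula; $\mu a_\sigma=\eta a_\sigma=\eta'a_\sigma=T_2a_\sigma=0$ and $u_\lambda a_{2\sigma}=2a_\lambda u_{2\sigma}$ from $\Res^{C_4}_{C_2}a_\sigma=0$, Frobenius reciprocity $\Tr_{C_2}^{C_4}(x)\,a_\sigma=\Tr_{C_2}^{C_4}(x\,\Res^{C_4}_{C_2}a_\sigma)$, and $a_{V\oplus W}=a_Va_W$; the mixed relations $\eta^2u_{2\sigma}=(\eta')^2=T_2u_\lambda^{-1}u_{2\sigma}a_\lambda$, $\eta\eta'=\mu u_{2\sigma}a_\lambda$, $T_2\eta=\done\mu\eta'u_\lambda$, $T_2\eta'=\done\mu\eta u_\lambda u_{2\sigma}$ by expanding the transfers with the projection formula and substituting the $C_2$-level identities of \cref{prop:E2C2Slicename} together with $\Res^{C_4}_{C_2}a_\lambda=a_{\sigma_2}^2$ and $\Res^{C_4}_{C_2}u_\lambda=u_{2\sigma_2}$; and the remaining relation $T_2^2=\Delta_1((\mu-2)^2+4)$, $\Delta_1=\done^2u_{2\lambda}u_{2\sigma}$, which encodes the Weierstrass/formal-group gluing, is imported from the slice spectral sequence of $\BPone$ through \cref{lemma:isorange}. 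Finally, since $\W[v_1,u^{-1}][D^{-1}]$ is a finitely generated $\W$-algebra, $I$-adic completion is exact, so the completed $\W[\![\mu]\!]$ in the statement is harmless exactly as in the proof of \cref{lem:E2completion}, and a Poincar\'e-series count against the spectral-sequence output confirms that the listed data is a complete presentation.

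The main obstacle is computing the residual $\overline{C_2}$-action on $\bar r_1$ and $\mu_0$ and the ensuing differentials and multiplicative extensions in the Lyndon--Hochschild--Serre spectral sequence --- equivalently, arranging the bookkeeping so that the slice names are consistent with the arithmetic relation $T_2^2=\Delta_1((\mu-2)^2+4)$; the remaining verifications are routine transfer and Euler-class algebra.
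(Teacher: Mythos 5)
The paper offers no proof of this statement: it is imported verbatim from \cite[Proposition~5.10]{BBHS20}, and the only new content here is the renaming of the generators by slice names ($T_2=\sone^2u_{2\sigma_2}$, etc.). So your proposal has to stand on its own as a self-contained argument, and as written it is a program with genuine gaps rather than a complete alternative proof.

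The overall strategy (Lyndon--Hochschild--Serre spectral sequence for $C_2\subset C_4$ with input \cref{prop:E2C2Slicename}, transfers and the projection formula for the relations) is sensible and close in spirit to how the $C_4$ answer is in fact built from the $C_2$ answer. But the substance of the proposition is exactly the part you defer as ``the main obstacle'': the residual $C_4/C_2$-action on $\bar r_1$ and $\mu_0$, the differentials and multiplicative extensions in the LHS spectral sequence, and the explicit evaluation of products of transfers, which is what actually produces the relations $\eta^2u_{2\sigma}=\eta'^2=T_2u_\lambda^{-1}u_{2\sigma}a_\lambda$, $T_2\eta=\done\mu\eta'u_\lambda$, $\eta\eta'=\mu u_{2\sigma}a_\lambda$, and so on; none of this is carried out. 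Two specific steps are moreover mis-justified. First, the relation $T_2^2=\Delta_1((\mu-2)^2+4)$ cannot be ``imported from the slice spectral sequence of $\BPone$ through \cref{lemma:isorange}'': it is a filtration-$0$ statement in $H^0(C_4,\pi_*\E_2)$, i.e.\ about the $C_4$-invariants of the completed ring $\W[\![u_1]\!][u^{\pm1}]$, and it involves the power-series class $\mu=\Tr_{C_2}^{C_4}(\mu_0)$, which has no counterpart on the $E_2$-page of the $\BPone$ slice spectral sequence (integral, uncompleted coefficients); it must be verified algebraically from the explicit $C_4$-action on $\pi_*\E_2$, as is done in \cite{BBHS20}. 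Second, the gold relation $u_\lambda a_{2\sigma}=2a_\lambda u_{2\sigma}$ does not follow from $a_{V\oplus W}=a_Va_W$ and Frobenius reciprocity; it is a separate relation coming from $\pi_{\bigstar}^{C_4}H\underline{\mathbb{Z}}$ that has to be quoted (e.g.\ from \cite{HHR16,HHR17}) or proved. The routine restriction/transfer bookkeeping you use ($\Res^{C_4}_{C_2}a_\lambda=a_{\sigma_2}^2$, $\Res^{C_4}_{C_2}u_\lambda=u_{2\sigma_2}$, $\Res^{C_4}_{C_2}a_\sigma=0$, and the projection-formula consequences such as $\eta a_\sigma=T_2a_\sigma=\mu a_\sigma=0$) is correct, but with the central computations deferred and the quadratic relation unaccounted for, the proposal does not yet establish the presentation; the honest alternatives are to carry out those computations in full or, as the paper does, to cite \cite{BBHS20}.
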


\begin{rem}\rm

\cref{prop:E2C2Slicename} and \cref{prop:E2C4Slicename} give a full description of the Mackey functor  $\underline{H}^*(C_4,\pi_{\bigstar}\mathbf{E}_2)$ by the Frobenius relation \cite[Remark~5.17] {BBHS20} and the multiplicative property of restriction.
\end{rem}

\begin{rem}\rm
A warning is that one needs to be careful about the isomorphism range (see \cref{lemma:isorange}) to translate between the slice spectral sequence and the homotopy spectral sequence. For example, in the $C_4$-SliceSS$(\BPone)$, the class $u_{2\sigma}$ supports a non-trivial $d_5$-differential \cite[Theorem~3.4]{HSWX2018}, while in the corresponding $C_4$-HFPSS$(\mathbf{E}_2)$, the class $u_{2\sigma}$ actually supports a non-trivial $d_7$-differential \cite[Remark~5.23]{BBHS20}.
\end{rem}
The computation of the Mackey-functor-valued $C_4$-homotopy fixed point spectral sequence for $\mathbf{E}_2$ is explained in detail in \cite[Section~5]{BBHS20} and presented by \cite[Figure~5.8]{BBHS20} and \cite[Figure~5.14]{BBHS20}.

The $RO(G)$-graded Mackey functor computation is useful even if one only cares about the computation of the integer-graded part $\pi_*\mathbf{E}_n^{hG}$. The following discussion of hidden extensions is a good example.
We can use exotic operations (exotic transfers, exotic restrictions, and so on) in Mackey-functor-valued spectral sequences to deduce differentials and hidden extensions inside the spectral sequences. For more detailed definitions and properties of such phenomena, one could refer to \cite[Section~3.3]{MSZ20}.

In \cite[Lemma~4.2]{HHR17}, the authors introduce a useful trick to determine exotic restrictions and transfers on the $E_{\infty}$-page of Mackey-functor-valued $G$-HFPSS.

\begin{lem}{\rm(\cite[Lemma~4.2]{HHR17}\rm)}\label{lem:exoticrestr}
Let $G$ be a cyclic $2$-group and $G'$ be its index $2$ subgroup then in $\underline{\pi}_{\bigstar}(F(EG_+,X))$ we have
\begin{itemize}
 \item $\ker(\Res^G_{G'})=\text{im}(a_{\sigma})$
    \item $\text{im}(\Tr_{G'}^G)=\ker (a_{\sigma})$
\end{itemize}
where $\sigma$ is the sign representation of $G$.
\end{lem}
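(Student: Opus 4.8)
The plan is to deduce both identities from a single equivariant cofiber sequence together with the standard geometric descriptions of restriction and transfer; nothing in the argument is special to cofree spectra, so I would prove the statement for an arbitrary genuine $G$-spectrum $Y$ and only set $Y=F(EG_+,X)$ at the end. Since $G$ is a cyclic $2$-group, $G'$ is its unique index-$2$ subgroup and is precisely the kernel of the sign representation $\sigma$; hence the unit sphere $S(\sigma)$ is $G$-homeomorphic to $G/G'$, while the disk $D(\sigma)$ is $G$-equivariantly contractible onto its fixed point. The cofiber sequence $S(\sigma)_+\to D(\sigma)_+\to D(\sigma)/S(\sigma)$ therefore becomes a cofiber sequence of $G$-spectra
\[
(G/G')_+ \xrightarrow{\;c\;} S^0 \xrightarrow{\;a_\sigma\;} S^\sigma,
\]
where $c$ is the Pontryagin--Thom collapse and $a_\sigma$ is the Euler class of \cref{defn:aclass}. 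Smashing with $Y$ and applying $\underline{\pi}_{\bigstar}$, I would use the equivalence $(G/G')_+\wedge Y\simeq G_+\wedge_{G'}\Res^G_{G'}Y$ together with the Wirthm\"uller isomorphism to identify $\pi^G_V\big((G/G')_+\wedge Y\big)\cong \pi^{G'}_{\Res V}(Y)=\underline{\pi}_{V}(Y)(G/G')$, under which $c$ induces the transfer $\Tr^G_{G'}$. The long exact sequence of the cofiber sequence then contains
\[
\pi^{G'}_V(Y)\xrightarrow{\;\Tr^G_{G'}\;}\pi^G_V(Y)\xrightarrow{\;a_\sigma\;}\pi^G_{V-\sigma}(Y),
\]
and exactness in the middle is exactly $\operatorname{im}(\Tr^G_{G'})=\ker(a_\sigma)$, the second bullet.

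For the first bullet I would dualize. Since $G/G'$ is a finite discrete $G$-set, $(G/G')_+$ is Spanier--Whitehead self-dual, Spanier--Whitehead duality turns $c$ into the map $S^0\to (G/G')_+$ inducing $\Res^G_{G'}$ and fixes $a_\sigma$ up to sign under $D(S^\sigma)\simeq S^{-\sigma}$; equivalently, one may smash the displayed cofiber sequence with $S^{-\sigma}$ and use $\Res^G_{G'}\sigma=\mathbf 1$ (so $(G/G')_+\wedge S^{-\sigma}\simeq \Sigma^{-1}(G/G')_+$) and rotate. Either way one obtains a cofiber sequence $S^{-\sigma}\xrightarrow{\,a_\sigma\,} S^0\to (G/G')_+$, and smashing with $Y$ and taking $\underline{\pi}_{\bigstar}$ yields the exact sequence
\[
\pi^G_{V+\sigma}(Y)\xrightarrow{\;a_\sigma\;}\pi^G_V(Y)\xrightarrow{\;\Res^G_{G'}\;}\pi^{G'}_V(Y),
\]
so $\ker(\Res^G_{G'})=\operatorname{im}(a_\sigma)$. (The inclusion $\operatorname{im}(a_\sigma)\subseteq\ker(\Res^G_{G'})$ is in any case immediate, since $\Res^G_{G'}(a_\sigma)=a_{\Res^G_{G'}\sigma}=a_{\mathbf 1}=0$ because $\mathbf 1^{G'}\neq 0$.) Finally, taking $Y=F(EG_+,X)$ gives the stated identities in $\underline{\pi}_{\bigstar}(F(EG_+,X))$.

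Once the two cofiber sequences are in place the homological algebra is automatic, so I expect the only real work to be the careful matching of the topologically defined maps ($c$, its Spanier--Whitehead dual, and the boundary maps of the triangles) with the Mackey-functor operations $\Res^G_{G'}$ and $\Tr^G_{G'}$, and the bookkeeping of $RO(G)$-gradings across the equivalence $(G/G')_+\wedge S^\sigma\simeq (G/G')_+\wedge S^1$ and the Euler-class shift by $\sigma$. These identifications are standard but must be made precisely for the exact-sequence conclusions to be exactly the asserted ones.
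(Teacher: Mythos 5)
The paper gives no proof of this lemma (it is quoted directly from \cite[Lemma~4.2]{HHR17}), so there is no internal argument to compare against; your proof is correct and is essentially the standard one underlying the cited result, namely the cofiber sequence $(G/G')_+\to S^0\xrightarrow{a_\sigma}S^\sigma$ coming from $S(\sigma)_+\to D(\sigma)_+\to S^\sigma$, the identification of the induced maps with $\Tr_{G'}^G$ (via the untwisting/Wirthm\"uller isomorphism) and $\Res^G_{G'}$ (via Spanier--Whitehead duality, or equivalently by applying $F(-,Y)$ to the same sequence), and the two resulting long exact sequences. Your observation that cofreeness of $F(EG_+,X)$ plays no role is also correct: the exact sequences hold for an arbitrary genuine $G$-spectrum $Y$, and the lemma is stated for $F(EG_+,X)$ only because that is where it is applied.
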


The following hidden $2$ extension in stem $22$ is a good example showing that equivariant structures provide extra integer-graded information (see a similar $2$ extension in stem $2$ in \cite[Remark ~5.15]{MSZ20}). In \cite[Figure~15]{HHR17} and \cite[Figure~5.6]{BBHS20}, they drew all exotic restrictions and transfers in the $E_{\infty}$-page of the Mackey functor valued $C_4$-HFPSS$(\mathbf{E}_2)$. The $2$ extension follows from an exotic transfer and an exotic restriction in the $22$ stem. We spell out the details in \cref{lem:hidden2C_4}.

\begin{lem}\label{lem:hidden2C_4}
In the Mackey-functor-valued $C_4$-$\mathrm{HFPSS}$ for $\mathbf{E}_2$, there is an exotic restriction in stem $22$ from $\done^6 u_{6\lambda}u_{4\sigma}a_{2\sigma}$ to $\done^{4}\bar{r}_1^6 u_{8\sigma_2}a_{6\sigma_2}$ and there is an exotic transfer in stem $22$ from $\done^{4}\bar{r}_1^6 u_{8\sigma_2}a_{6\sigma_2}$ to $\done^8 u_{4\lambda} u_{6\sigma}a_{4\lambda}a_{2\sigma}$. As a consequence, there is a hidden $2$ extension from $\done^6 u_{6\lambda}u_{4\sigma}a_{2\sigma}$ to $\done^8 u_{4\lambda} u_{6\sigma}a_{4\lambda}a_{2\sigma}$.

\end{lem}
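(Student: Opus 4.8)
The plan is to deduce the hidden $2$-extension from the composite of the exotic restriction and exotic transfer, using the compatibility of transfer–restriction composites with the Mackey functor structure, together with \Cref{lem:exoticrestr} to pin down the exotic operations themselves. First I would identify the relevant classes on the $E_\infty$-page of the Mackey-functor-valued $C_4$-HFPSS for $\mathbf{E}_2$: the class $\done^6 u_{6\lambda}u_{4\sigma}a_{2\sigma}$ in stem $22$ at the $G/G$-level, the class $\done^{4}\bar{r}_1^6 u_{8\sigma_2}a_{6\sigma_2}$ at the $G/G'$-level with $G'=C_2$, and the target class $\done^8 u_{4\lambda} u_{6\sigma}a_{4\lambda}a_{2\sigma}$ at the $G/G$-level, checking that all three survive to $E_\infty$ (this is where one reads off \cite[Figure~5.6]{BBHS20}) and that they are the appropriate stem-$22$ classes in their filtrations. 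Since $\done^6 u_{6\lambda}u_{4\sigma}a_{2\sigma}$ is annihilated by $2$ on the $E_\infty$-page (its $2$-multiple is zero in the associated graded because $a_{2\sigma}$ is $2$-torsion there, being $a_\sigma^2$ with $2a_\sigma=0$), the hidden $2$-extension is exactly the statement that $2 x$ is detected one or more filtrations higher, and we must identify the target.

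The key step is the following diagram chase. By \Cref{lem:exoticrestr} applied with $G=C_4$ and $G'=C_2$, a class $y$ at the $G/G$-level lies in the kernel of $\Res^{C_4}_{C_2}$ on $E_\infty$ iff it is in the image of (multiplication by) $a_\sigma$, and lies in the image of $\Tr^{C_4}_{C_2}$ iff it is annihilated by $a_\sigma$. One then uses the double-coset / Mackey relation $\Tr^{C_4}_{C_2}\circ\Res^{C_4}_{C_2} = 1 + \gamma$ where $\gamma$ generates $C_4/C_2$; since $C_2 \subset C_4$ acts through the quotient and $\gamma$ acts trivially on the relevant $a_\sigma$-divisible classes of $\pi_* \mathbf{E}_2$ in the range in question (these classes are restrictions of $Q_8$-classes, equivalently are in the image of transfers and are $a_\sigma$-fixed), this composite is multiplication by $2$. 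Therefore, tracing $\done^6 u_{6\lambda}u_{4\sigma}a_{2\sigma}$ through the exotic restriction to $\done^{4}\bar{r}_1^6 u_{8\sigma_2}a_{6\sigma_2}$ at the $C_2$-level and then through the exotic transfer back to $\done^8 u_{4\lambda} u_{6\sigma}a_{4\lambda}a_{2\sigma}$ realizes, on homotopy, precisely the map $x \mapsto (\Tr\circ\Res)(x) = 2x$ (up to a unit, which we can absorb since all these groups are $\mathbb{Z}/2$ or $\mathbb{W}/2$). Both of these exotic operations are already recorded in \cite[Figure~5.6]{BBHS20}, so I would either cite them directly or re-derive the exotic transfer by noting that $\done^{4}\bar{r}_1^6 u_{8\sigma_2}a_{6\sigma_2}$ is $a_\sigma$-torsion on $E_\infty$ (again because of the $a_{\sigma_2}$-factor and $2 a_{\sigma_2}=0$) hence in the image of $\Tr$, and identifying the unique stem-$22$ target of the right filtration; the exotic restriction similarly follows from $\done^6 u_{6\lambda}u_{4\sigma}a_{2\sigma}$ being $a_\sigma$-divisible.

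The main obstacle I anticipate is bookkeeping rather than conceptual: one must verify that the intermediate $C_2$-level class $\done^{4}\bar{r}_1^6 u_{8\sigma_2}a_{6\sigma_2}$ is genuinely the restriction of $\done^6 u_{6\lambda}u_{4\sigma}a_{2\sigma}$ up to the exotic (filtration-shifting) correction — i.e., that there is no lower-filtration class interfering — and that the transfer of it lands in the filtration of $\done^8 u_{4\lambda} u_{6\sigma}a_{4\lambda}a_{2\sigma}$ and not something higher, so that the $2$-extension has exactly the claimed length. This amounts to checking the stem-$22$ column of the $C_4$-HFPSS$(\mathbf{E}_2)$ $E_\infty$-chart filtration by filtration, using \Cref{prop:E2C4Slicename} and \Cref{prop:E2C2Slicename} to enumerate the classes and the differentials computed in \cite[Section~5]{BBHS20}, and confirming the relevant groups are cyclic so the extension is forced. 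Once the exotic restriction and transfer are in hand, the consequence is immediate from the Mackey double-coset formula as above.
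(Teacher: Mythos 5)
Your overall skeleton is the same as the paper's (identify the three surviving stem-$22$ classes, apply \cref{lem:exoticrestr}, and conclude the hidden $2$ extension from $\Tr\circ\Res=2$), but both of your proposed re-derivations of the exotic operations are logically reversed or misapplied, and these are exactly the nontrivial steps. For the exotic restriction: \cref{lem:exoticrestr} says $\ker(\Res^{C_4}_{C_2})=\mathrm{im}(a_\sigma)$ in homotopy, so if the element detected by $\done^6 u_{6\lambda}u_{4\sigma}a_{2\sigma}$ were $a_\sigma$-divisible, as you assert, its restriction would be \emph{zero} and there would be no exotic restriction at all (and then $\Tr(\Res(x))=0$, collapsing the whole $2$-extension argument). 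What is actually needed is the opposite: one must show the element is \emph{not} in the image of $a_\sigma$ in $\pi_{22+\sigma}^{C_4}\E_2$, which requires inspecting the potential preimages in $(22+\sigma,1)$ and $(22+\sigma,0)$; the only $E_2$-page candidate $\done^4u_{4\lambda}u_{4\sigma}a_{\sigma}$ supports a $d_{13}$-differential (by \cite[Theorem~3.11]{HSWX2018}) and filtration $0$ is empty, so no such preimage exists. You never perform this check, and your stated justification would prove the negation of what you need. (Divisibility by $a_\sigma$ on the $E_2$-page is irrelevant; the lemma is a statement about homotopy.)

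For the exotic transfer, \cref{lem:exoticrestr} identifies $\mathrm{im}(\Tr^{C_4}_{C_2})$ with $\ker(a_\sigma)$ \emph{inside the $C_4$-level}, so the criterion must be applied to the filtration-$10$ class $\done^8 u_{4\lambda}u_{6\sigma}a_{4\lambda}a_{2\sigma}$, not to the $C_2$-level class $\done^{4}\bar{r}_1^6 u_{8\sigma_2}a_{6\sigma_2}$: a $C_2$-level element cannot be ``in the image of $\Tr^{C_4}_{C_2}$'', and being $2$-torsion is not the same as being killed by $a_\sigma$. The correct argument shows $a_\sigma\cdot\done^8 u_{4\lambda}u_{6\sigma}a_{4\lambda}a_{2\sigma}=0$ on the $E_2$-page via the gold relation $u_{\lambda}a_{2\sigma}=2u_{2\sigma}a_{\sigma}$ together with $2a_\sigma=0$, rules out a hidden $a_\sigma$-extension using the $(*-\sigma)$-graded $C_4$ computation, and only then identifies the transfer source by degree reasons. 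Finally, your Mackey identity is misquoted: the double coset formula gives $\Res\circ\Tr=1+\gamma$ at the $C_2$-level, not $\Tr\circ\Res$ at the top level; the input you actually need is $\Tr\circ\Res(1)=2$ in $\underline{\pi}_0\E_2$ (recorded in \cite{BBHS20}), after which the projection formula yields $\Tr(\Res(x))=2x$. Your fallback of citing the exotic restriction and transfer directly from \cite[Figure~5.6]{BBHS20} would make the final composite argument go through, but as written your own derivations of the two exotic operations do not establish them.
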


\begin{proof}
According to the computations in \cite{HHR17}\cite{BBHS20}, in stem $22$   there are only three classes who survive: $\done^6 u_{6\lambda}u_{4\sigma}a_{2\sigma}$ and $\done^8 u_{4\lambda} u_{6\sigma}a_{4\lambda}a_{2\sigma}$ in $C_4$-level and $\done^{4}\bar{r}_1^6 u_{8\sigma_2}a_{6\sigma_2}$ in $C_2$-level. We first claim the class $\done^6 u_{6\lambda}u_{4\sigma}a_{2\sigma}$ is not in the image of multiplication by $a_{\sigma}$. If there is some $x$ such that $a_\sigma x$ is $\done^6 u_{6\lambda}u_{4\sigma}a_{2\sigma}$, then $x$ is detected by classes at $(22+\sigma,1)$ or $(22+\sigma,0)$. There is only one class at $(22+\sigma,1)$ which is  $\done^4u_{4\lambda}u_{4\sigma}a_{\sigma}$ on $E_2$-page. According to \cite[Theorem~3.11]{HSWX2018}, this class supports a $d_{13}$-differential
\[
d_{13}(\done^4u_{4\lambda}u_{4\sigma}a_{\sigma})=\done^4u_{4\sigma}d_{13}(u_{4\lambda}a_{\sigma})=\done^7 u_{8\sigma}a_{7\lambda}.
\]
There is no non-trivial class at $(22+\sigma,0)$. Therefore, in homotopy the class $\done^6 u_{6\lambda}u_{4\sigma}a_{2\sigma}$ is not in the image of multiplication by $a_{\sigma}$.
By \cref{lem:exoticrestr}, this class must have a non-trivial restriction in $C_4$-Mackey functor $\underline{\pi}_*(\E_2)$, and the desired exotic restriction follows from degree reasons. 

On the other hand by the gold relation $u_{\lambda}a_{2\sigma}=2u_{2\sigma}a_{\sigma}$ and $2a_{\sigma}=0$  we know on $E_2$-page
\[
\done^8 u_{4\lambda} u_{6\sigma}a_{4\lambda}a_{2\sigma}\cdot a_{\sigma}=0.
\]
Moreover, according to the computation on $(*-\sigma)$-page of $C_4$-HFPSS$(\mathbf{E}_2)$ \cite{BBHS20},  there is no hidden $a_{\sigma}$-extension from $\done^8 u_{4\lambda} u_{6\sigma}a_{4\lambda}a_{2\sigma}$ by degree reasons.  Since we have $\text{im}(\Tr_{G'}^G)=\ker (a_{\sigma})$, the class  $\done^8 u_{4\lambda} u_{6\sigma}a_{4\lambda}a_{2\sigma}$ must be a transfer of a class from $C_2$-level. Then the desired exotic transfer follows from degree reasons.

According to \cite{BBHS20}, $E_2$, as a $C_4$-spectrum, its Mackey functor valued homotopy groups $\underline{\pi}_*\mathbf{E}_2$ satisfy
\[
\Tr\circ \Res (1)=2. 
\]
Then the exotic transfer and restriction that we proved shows the existence of the hidden $2$ extension from $\done^6 u_{6\lambda}u_{4\sigma}a_{2\sigma}$ to $\done^8 u_{4\lambda} u_{6\sigma}a_{4\lambda}a_{2\sigma}$.
\end{proof}
\begin{rem}\rm\label{rem:exoticasigma}
For degree reasons, the class $\done^6 u_{6\lambda}u_{4\sigma}a_{2\sigma}$ cannot be in the image of the transfer from $C_2$. However, by the gold relation, the product of this class and $a_\sigma$ is zero on the $E_2$-page. Therefore, this class must have a hidden $a_{\sigma}$ extension.
\end{rem}
\begin{rem}\rm
The hidden $2$ extension in \cref{lem:hidden2C_4} will play a crucial role in deducing several higher differentials in $Q_8$-HFPSS$(\mathbf{E}_2)$ (see \cref{lem:hiiden2Q8}, \cref{prop:d13two}). A similar $2$ extension can also be seen in the homotopy groups of $tmf$ in stem $54$. The proof of this hidden $2$ extension in \cite[Proposition~8.5~(3)]{Bau08} uses shuffling arguments of $4$-fold Toda brackets. In our $Q_8$-HFPSS($\mathbf{E}_2$) computation, the corresponding hidden $2$ extension follows directly from the $C_4$-computation by restriction (see \cref{lem:hiiden2Q8}). 

\end{rem}

\subsection{\texorpdfstring{$RO(G)$-graded periodicity}{}}\label{subsec:ROGperiodicity}

When computing HFPSS, another advantage of expanding to $RO(G)$-gradings is having more periodicities. These periodicities have their own theoretical importance. They can also move integer-graded calculations to certain $RO(G)$-gradings where the calculations might be simpler. In either the slice spectral sequence for $\BPone$ \cite{HSWX2018} or the $C_4$-homotopy fixed point spectral sequence for $\E_2$ \cite{HHR17,BBHS20}, we have the following periodicities in the $RO(G)$-gradings.
\begin{lem}
The following permanent cycles in $C_4$-$\mathrm{HFPSS}(\E_2)$ \cite{HHR17,BBHS20} are periodic classes.
\begin{itemize}
\item The class $\done$ gives $ (1+\sigma+\lambda)$-periodicity.
    \item The class $u_{8\lambda} $ gives $( 16-8\lambda)$-periodicity.
    \item The class $u_{4\sigma}$ gives $(4-4\sigma)$-periodicity.
    \item The class $u_{4\lambda}u_{2\sigma}$ gives $(10-4\lambda-2\sigma)$-periodicity.
\end{itemize}
\end{lem}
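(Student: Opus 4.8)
The plan is to derive all four periodicities from a single formal principle: in a multiplicative $RO(G)$-graded spectral sequence, an \emph{invertible permanent cycle} generates a periodicity. First I would record this principle precisely. Suppose $x$ is a class on the $E_2$-page of the $C_4$-HFPSS for $\mathbf E_2$ in filtration $f$ and $RO(C_4)$-stem $V$ which is both a unit and a permanent cycle. By the Leibniz rule and induction on $r$, $d_r(x^{-1}) = -x^{-2}d_r(x) = 0$, so $x^{-1}$ is a permanent cycle too; hence multiplication by $x$ and by $x^{-1}$ are mutually inverse self-maps of the spectral sequence shifting bidegree by $(f,V)$ and commuting with every $d_r$. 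Passing to $E_\infty$ gives an $(f,V)$-periodicity of the whole spectral sequence. To upgrade this to a periodicity of $\pi_{\bigstar}\mathbf E_2^{hC_4}$ I would use that the $C_4$-HFPSS for $\mathbf E_2$ has a horizontal vanishing line (\cref{thm:vanishinglines}), so the HFPSS filtration on each homotopy group is finite; therefore the positive-filtration part of $\pi_0\mathbf E_2^{hC_4}$ is a nil ideal, any homotopy class $\widetilde x$ detecting $x$ and any class $\widetilde y$ detecting $x^{-1}$ satisfy $\widetilde x\,\widetilde y\in 1+(\text{nilpotents})$, so $\widetilde x$ is a unit, and multiplication by $\widetilde x$ is a periodicity isomorphism of period $V$ (here $f=0$ in all four cases).

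Second, I would check the two hypotheses for each of $\done$, $u_{8\lambda}$, $u_{4\sigma}$, and $u_{4\lambda}u_{2\sigma}$. Invertibility on the $E_2$-page is immediate from \cref{prop:E2C4Slicename}: the ring displayed there is a localization in which $\done$, $u_\lambda$, and $u_{2\sigma}$ are already inverted, so $u_{8\lambda}=u_\lambda^8$, $u_{4\sigma}=u_{2\sigma}^2$, and $u_{4\lambda}u_{2\sigma}=u_\lambda^4 u_{2\sigma}$ are units as well, and all four sit in filtration $0$. The stems are then read off from $|\done|=\rho_{C_4}=1+\sigma+\lambda$, $|u_\lambda|=2-\lambda$, and $|u_{2\sigma}|=2-2\sigma$, giving exactly the periodicities $1+\sigma+\lambda$, $16-8\lambda$, $4-4\sigma$, and $10-4\lambda-2\sigma$ claimed in the statement.

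Third — and this is the only substantive input — I would import from \cite{HHR17,BBHS20} (and, on the slice side, \cite{HSWX2018}) that these four classes are permanent cycles. For $\done$ this is essentially formal: it is a norm, $\done=N_{C_2}^{C_4}(\bar r_1)$, of a permanent cycle and lies in filtration $0$. For $u_{4\sigma}$ one uses that $u_{2\sigma}$ supports a $d_7$ whose target is $2$-torsion (cf.~the remark following \cref{prop:E2C4Slicename}), so $d_7(u_{4\sigma}) = 2u_{2\sigma}d_7(u_{2\sigma}) = 0$, after which one checks that no later differential can leave $u_{4\sigma}$; the classes $u_{8\lambda}$ and $u_{4\lambda}u_{2\sigma}$ are handled in the same spirit in the cited computations, with the candidate differentials on the separate factors cancelling against each other. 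Granting this, the principle of the first step finishes the proof, both page-by-page and on $\pi_{\bigstar}\mathbf E_2^{hC_4}$. I expect the main obstacle to be precisely this third step: it is not conceptual but relies on the detailed differential bookkeeping of the $C_4$-HFPSS computation to certify that $u_{8\lambda}$ and $u_{4\sigma}$ (and hence the product $u_{4\lambda}u_{2\sigma}$) survive every page — everything else is formal multiplicativity together with strong convergence.
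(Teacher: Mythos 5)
Your argument is correct and matches the paper's (implicit) justification: the paper states this lemma without proof, simply citing \cite{HHR17,BBHS20} (and \cite{HSWX2018}) for the fact that these filtration-zero units are permanent cycles, and the periodicity then follows exactly by the formal invertible-permanent-cycle principle and the degree count you give. Since the only substantive input in your write-up (survival of $\done$, $u_{8\lambda}$, $u_{4\sigma}$, $u_{4\lambda}u_{2\sigma}$) is likewise deferred to those same cited computations, your route is essentially the same as the paper's.
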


Since the norm functor is symmetric monoidal, we can apply it to the above three invertible permanent cycles, which gives some $RO(Q_8)$-periodicities in $Q_8$-HFPSS$(\E_2)$. The quaternion group $Q_8$ has three $C_4$ subgroups $C_4\langle i\rangle$, $C_4\langle j\rangle$ and $C_4\langle k\rangle$ generated by $i,j$ and $k$ respectively. For each $C_4$ copy we have the associated $C_4$-periodicities and their norms give $RO(Q_8)$-periodicities as follows.

\begin{cor}\label{cor:periodicity}
We have the following $RO(Q_8)$-periodicities in $Q_8$-$\mathrm{HFPSS}(\E_2)$.
\begin{itemize}
\item  $N_{C_4}^{Q_8}(\done):$
\[
1+\sigma_i+\sigma_j+\sigma_k+\mathbb{H}
\]
\item $N_{C_4}^{Q_8}(u_{4\sigma}):$ 
\begin{equation*}
\begin{aligned}
&4+4\sigma_i-4\sigma_j-4\sigma_k\\
&4+4\sigma_j-4\sigma_i-4\sigma_k\\
&4+4\sigma_k-4\sigma_i-4\sigma_j
\end{aligned}
\end{equation*}
\item $N_{C_4}^{Q_8}(u_{4\lambda}u_{2\sigma}):$
\begin{equation*}
\begin{aligned}
& 10+10\sigma_i-2\sigma_j-2\sigma_k-4\mathbb{H}\\
&  10+10\sigma_j-2\sigma_i-2\sigma_k-4\mathbb{H}\\
& 10+10\sigma_k-2\sigma_j-2\sigma_i-4\mathbb{H}
\end{aligned}
\end{equation*}
\item $N_{C_4}^{Q_8}(u_{8\lambda}):$

\begin{equation*}
\begin{aligned}
& 16+16\sigma_i-8\mathbb{H}\\
& 16+16\sigma_j-8\mathbb{H}\\
& 16+16\sigma_k-8\mathbb{H}
\end{aligned}
\end{equation*}
\end{itemize}
\end{cor}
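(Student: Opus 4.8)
The statement is a direct application of the preceding \cref{cor:periodicity} combined with \cref{prop:auclassnorm}, so the plan is essentially bookkeeping: each asserted $RO(Q_8)$-periodicity class is the norm $N_{C_4}^{Q_8}$ of a known invertible permanent cycle in $C_4$-$\mathrm{HFPSS}(\E_2)$, taken with respect to one of the three $C_4$-subgroups $C_4\langle i\rangle$, $C_4\langle j\rangle$, $C_4\langle k\rangle$ of $Q_8$. First I would record that the norm functor $N_{C_4}^{Q_8}$ is symmetric monoidal, so it sends invertible classes to invertible classes and, because it is compatible with the slice/HFPSS towers (see the norm structure discussion preceding \cref{thm:normdiff}, together with \cite{MSZ20}), it sends permanent cycles to permanent cycles. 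Hence each class on the right-hand side is an invertible permanent cycle and therefore defines a periodicity on the whole spectral sequence $E_r^{*,\bigstar}$ for all $2 \le r \le \infty$.

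The remaining content is to compute the $RO(Q_8)$-degree of each norm, which is where one uses \cref{prop:auclassnorm}: $N_{C_4}^{Q_8}(a_V) = a_{\Ind V}$ and $u_{\Ind |V|} N_{C_4}^{Q_8}(u_V) = u_{\Ind V}$, so on the level of $RO$-degrees the norm of a class in degree $W$ lands in degree $\Ind_{C_4}^{Q_8} W$ (after accounting for the trivial-representation summand in $u_{\Ind|V|}$, which contributes only to the integer part). Thus I would compute the induced representations $\Ind_{C_4\langle i\rangle}^{Q_8}(\sigma)$, $\Ind_{C_4\langle i\rangle}^{Q_8}(\lambda)$, $\Ind_{C_4\langle i\rangle}^{Q_8}(1)$ and their analogues for $j,k$. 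Concretely: $\Ind_{C_4\langle i\rangle}^{Q_8}(1) = \rho_{Q_8/C_4\langle i\rangle}$ is a $2$-dimensional representation which, after removing the trivial summand, is $\sigma_i$ (the representation on which $C_4\langle i\rangle$ acts trivially); and $\Ind_{C_4\langle i\rangle}^{Q_8}(\lambda)$, being $4$-dimensional, decomposes as $\sigma_j \oplus \sigma_k \oplus(\text{something})$ — one identifies it by restricting back to each $C_4$-subgroup and matching characters, or by recognizing $\Ind_{C_4\langle i\rangle}^{Q_8}(\lambda) \cong \mathbb{H}$ up to a correction by one-dimensionals. Carrying out these four induced-representation computations (one for each of the $C_4$-periodicity generators $\done$, $u_{4\sigma}$, $u_{4\lambda}u_{2\sigma}$, $u_{8\lambda}$) and then permuting the roles of $i,j,k$ yields exactly the lists in the corollary.

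The main obstacle — really the only non-formal step — is getting the induced-representation degrees exactly right, including keeping careful track of which one-dimensional representations $\sigma_i, \sigma_j, \sigma_k$ appear and with what signs, since the HFPSS is $RO(Q_8)$-graded and $RO(Q_8) = \Z\{1,\sigma_i,\sigma_j,\sigma_k,\mathbb{H}\}$ has relations among these induced representations (e.g.\ $\sigma_i \oplus \sigma_j \oplus \sigma_k \oplus \mathbb{H} = \Ind_{C_4\langle i\rangle}^{Q_8}(1) \oplus \mathbb{H}$ type identities) that one must use to put each answer in the normal form of the corollary. I would handle this by computing characters: for a virtual $C_4\langle i\rangle$-representation $W$, the character of $\Ind_{C_4\langle i\rangle}^{Q_8}(W)$ at $g \in Q_8$ vanishes unless $g$ is conjugate into $C_4\langle i\rangle$, and otherwise equals a sum over the double cosets; evaluating on the conjugacy classes $\{1\}, \{-1\}, \{\pm i\}, \{\pm j\}, \{\pm k\}$ and comparing with the characters of $1, \sigma_i, \sigma_j, \sigma_k, \mathbb{H}$ pins down each class uniquely. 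Once all four degree computations are in hand, the corollary follows by applying the three conjugate embeddings of $C_4$ into $Q_8$, and invoking symmetric monoidality of the norm one final time to conclude the periodicities hold on every page.
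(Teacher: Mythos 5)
Your proposal takes essentially the same route as the paper, whose justification for this corollary is exactly the paragraph preceding it: apply the symmetric monoidal norm $N_{C_4}^{Q_8}$ to the invertible $C_4$-periodicity classes, then do the degree bookkeeping via \cref{prop:auclassnorm} and the induced representations $\Ind_{C_4\langle i\rangle}^{Q_8}(1)=1+\sigma_i$, $\Ind_{C_4\langle i\rangle}^{Q_8}(\sigma)=\sigma_j+\sigma_k$, $\Ind_{C_4\langle i\rangle}^{Q_8}(\lambda)=\mathbb{H}$ (and the $j,k$ analogues). One small correction to your tentative guess: $\Ind_{C_4\langle i\rangle}^{Q_8}(\lambda)$ is exactly the irreducible $\mathbb{H}$, with no $\sigma_j\oplus\sigma_k$ summand and no one-dimensional correction, as the character computation you outline immediately shows.
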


\begin{cor}\label{cor:u4sigma}
There are periodicities of $4-4\sigma_i, 4-4\sigma_j$ and $4-4\sigma_k$ in $Q_8$-$\mathrm{HFPSS}(\E_2)$.
\end{cor}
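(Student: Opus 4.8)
The plan is to prove that the orientation class $u_{4\sigma_i}$, which sits in degree $4-4\sigma_i$ and filtration $0$, is an invertible permanent cycle in $Q_8$-$\mathrm{HFPSS}(\E_2)$; multiplication by it is then an automorphism of $(E_r,d_r)$ for every $2\leq r\leq\infty$, which is precisely a $(4-4\sigma_i)$-periodicity, and the $C_3$-symmetry visible in (\ref{eq:typeone}) that cyclically permutes $i,j,k$ then supplies the $(4-4\sigma_j)$- and $(4-4\sigma_k)$-periodicities. The class $u_{4\sigma_i}$ is defined on the $E_2$-page via \cref{lemma:isorange}, and it is invertible there since $u_{4\sigma_i}u_{-4\sigma_i}=u_0=1$; the class $u_{2\sigma_i}$ with $u_{2\sigma_i}^2=u_{4\sigma_i}$ is likewise invertible on $E_2$, but one does not expect $u_{2\sigma_i}$ to be a permanent cycle (just as $u_{2\sigma}$ supports a $d_7$ in $C_4$-$\mathrm{HFPSS}(\E_2)$), which is why the periodicity is $4-4\sigma_i$ rather than $2-2\sigma_i$. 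Thus the only content is that $u_{4\sigma_i}$ survives.

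First I would show that the square $u_{4\sigma_i}^2=u_{8\sigma_i}$ is an invertible permanent cycle. A character computation with Frobenius reciprocity gives $\Ind_{C_4\langle j\rangle}^{Q_8}(1)=1+\sigma_j$ and $\Ind_{C_4\langle j\rangle}^{Q_8}(\sigma)=\sigma_i+\sigma_k$ for $\sigma$ the sign representation of $C_4\langle j\rangle$, and cyclically in $i,j,k$. Plugging these into \cref{prop:auclassnorm} (and using $u_{4+4\sigma_j}=u_{4\sigma_j}$) produces, on the $E_2$-page, the identities $u_{4\sigma_j}\cdot N_{C_4\langle j\rangle}^{Q_8}(u_{4\sigma})=u_{4\sigma_i}u_{4\sigma_k}$ and $u_{4\sigma_k}\cdot N_{C_4\langle k\rangle}^{Q_8}(u_{4\sigma})=u_{4\sigma_i}u_{4\sigma_j}$. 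Multiplying these and cancelling the invertible factor $u_{4\sigma_j}u_{4\sigma_k}$ gives
\[
u_{8\sigma_i}=u_{4\sigma_i}^{2}=N_{C_4\langle j\rangle}^{Q_8}(u_{4\sigma})\cdot N_{C_4\langle k\rangle}^{Q_8}(u_{4\sigma}).
\]
By \cref{cor:periodicity} the right-hand side is a product of invertible permanent cycles, namely the norms of the invertible permanent cycle $u_{4\sigma}$ of $C_4$-$\mathrm{HFPSS}(\E_2)$, so $u_{8\sigma_i}$ is an invertible permanent cycle of degree $8-8\sigma_i$.

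The remaining step — upgrading this to $u_{4\sigma_i}$ itself — is where I expect the real difficulty to lie, since a square root of a permanent cycle need not be a permanent cycle. I would argue by contradiction: if $d_r(u_{4\sigma_i})=z$ is the first nonzero differential on $u_{4\sigma_i}$, then the Leibniz rule applied to $u_{8\sigma_i}=u_{4\sigma_i}^2$ forces $2u_{4\sigma_i}z=0$, hence $2z=0$ since $u_{4\sigma_i}$ is invertible on $E_r$; and since $u_{4\sigma_i}$ restricts to the permanent cycle $u_{4\sigma}$ in the $C_4\langle j\rangle$- and $C_4\langle k\rangle$-$\mathrm{HFPSS}(\E_2)$ and to $1$ in the $C_4\langle i\rangle$- and $C_2$-cases, the class $z$ is killed by restriction to every maximal subgroup of $Q_8$, i.e. $z$ lies in the essential part of the $E_2$-page in filtration $r$ and stem $3-4\sigma_i$. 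I would then rule out such a $z$ using the explicit description of $H^*(Q_8;\pi_*\E_2)$ and of the restriction maps developed in \cref{sec:groupcoho}, with the strong horizontal vanishing line of \cref{thm:vanishinglines} bounding $r$ so that only finitely many bidegrees need be inspected — essential classes for $Q_8$ are concentrated in low filtration, and I expect them to miss the relevant stem. Everything before this point is routine bookkeeping with norms; the genuine work, and the main obstacle, is pinning down the few possible targets of a differential on $u_{4\sigma_i}$ and verifying they vanish. (An alternative that avoids the group-cohomology check would be to construct directly an invertible class in $\pi_{4-4\sigma_i}^{Q_8}\E_2$ detected by $u_{4\sigma_i}$, but that seems to need input beyond the spectral-sequence methods the paper otherwise relies on.)
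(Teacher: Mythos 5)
Your first step is correct: the identities $u_{4\sigma_j}\cdot N_{C_4\langle j\rangle}^{Q_8}(u_{4\sigma})=u_{4\sigma_i}u_{4\sigma_k}$ and $u_{4\sigma_k}\cdot N_{C_4\langle k\rangle}^{Q_8}(u_{4\sigma})=u_{4\sigma_i}u_{4\sigma_j}$ do follow from \cref{prop:auclassnorm}, and they exhibit $u_{4\sigma_i}^{2}=N_{C_4\langle j\rangle}^{Q_8}(u_{4\sigma})\,N_{C_4\langle k\rangle}^{Q_8}(u_{4\sigma})$ as an invertible permanent cycle, i.e.\ an $(8-8\sigma_i)$-periodicity. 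But the corollary asserts a $(4-4\sigma_i)$-periodicity, and the step you yourself flag as the real work --- that $u_{4\sigma_i}$ supports no differential --- is never carried out. The constraints you extract (that $2z=0$ and that $z$ restricts to zero on the $E_r$-pages of the proper subgroups) do not close the argument: writing $d_r(u_{4\sigma_i})=u_{4\sigma_i}z'$ with $z'$ an integer-graded class in stem $-1$, the $E_2$-page still contains candidates of the form $k^{1+2l}h_2D^{l}$ in filtrations $5,13,21$, all below the vanishing line of \cref{thm:sharpvanishingline}; deciding whether these are $2$-torsion, whether they survive to the relevant page, and whether their nonzero $E_2$-restrictions (note $\Res^{Q_8}_{C_4\langle j\rangle}$ of $h_2$, $g$, $D$ are all nonzero) have died by the $E_r$-page of the three $C_4$-spectral sequences amounts to a computation in the $(\ast-4\sigma_i)$-graded part, which is nowhere done and cannot be dismissed with ``essential classes sit in low filtration.'' As written, your argument only proves the $(8-8\sigma_i)$-periodicity.

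The missing idea is that no square root is needed: the sublattice of $RO(Q_8)$ generated by the degrees of the invertible permanent cycles already listed in \cref{cor:periodicity} contains $4-4\sigma_i$ itself. The paper's proof is exactly this bookkeeping: the product
\[
N_{C_4\langle j\rangle}^{Q_8}(u_{4\lambda}u_{2\sigma})\,N_{C_4\langle k\rangle}^{Q_8}(u_{4\lambda}u_{2\sigma})\,N_{C_4\langle i\rangle}^{Q_8}(u_{8\lambda})^{-1}\,N_{C_4\langle i\rangle}^{Q_8}(u_{4\sigma})^{2}\,N_{C_4\langle j\rangle}^{Q_8}(u_{4\sigma})^{-1}\,N_{C_4\langle k\rangle}^{Q_8}(u_{4\sigma})^{-1}
\]
has degree exactly $4-4\sigma_i$, and each factor is a norm of an invertible permanent cycle of the $C_4$-HFPSS, hence the product is an invertible permanent cycle and the periodicity follows with no analysis of differentials at all. (By \cite[Lemma~3.13]{HHR16} this product equals $u_{4\sigma_i}$, see \cref{rem:4sigmapc}, so the survival of $u_{4\sigma_i}$ comes out as a consequence rather than being an input.) Your norm identities are a special case of this computation; the fix is to use the classes $N(u_{4\lambda}u_{2\sigma})$ and $N(u_{8\lambda})$ as well, rather than trying to halve $8-8\sigma_i$.
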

\begin{proof}
It suffices to show that $4-4\sigma_i$ is a periodicity. This periodicity is given by the following product: 
\[
N_{C_4\langle j\rangle}^{Q_8}(u_{4\lambda}u_{2\sigma})N_{C_4\langle k\rangle}^{Q_8}(u_{4\lambda}u_{2\sigma})N_{C_4\langle i\rangle}^{Q_8}(u_{8\lambda})^{-1}N_{C_4\langle i\rangle}^{Q_8}(u_{4\sigma})^2 N_{C_4\langle j\rangle}^{Q_8}(u_{4\sigma})^{-1}N_{C_4\langle k\rangle}^{Q_8}(u_{4\sigma})^{-1}.
\]
\end{proof}
\begin{rem}\rm \label{rem:4sigmapc}
    The above multiplication equals to $u_{4\sigma_i}$ by \cite[Lemma~3.13]{HHR16}, in other words, the classes $u_{4\sigma_i},u_{4\sigma_j}$ and $u_{\sigma_k}$ are permanent cycles in the $RO(Q_8)$-graded HFPSS for $\E_2$. 
\end{rem}

\section{\texorpdfstring{$E_2$-page of the $Q_8$-HFPSS$(\E_2)$}{}}
\label{sec:E2page}

In this section, we recollect the computation of the $E_2$-page of the integer-graded  $Q_8$-HFPSS for $\E_2$ by the $2$-Bockstein spectral sequence ($2$-BSS) from \cite{Bea17,Bau08}. Then we compute the $E_2$-page of the $(*-\sigma_i)$-graded part by the same method. By \Cref{lem:E2completion} we can compute $H^*(Q_8, \pi_* \E_2)$, the $E_2$-page of the $Q_8$-HFPSS for $\E_2$, by first computing $H^*(Q_8, \mathbb W[v_1,u^{-1}])[D^{-1}].$  
\subsection{$2$-BSS, integer-graded}

The integer-graded $2$-Bockstein spectral sequence for computing $H^*(Q_8, \mathbb W[v_1,u^{-1}])[D^{-1}]$ is
$$H^*(Q_8, \mathbb{F}_4[v_1,u^{-1}])[D^{-1}][h_0]\implies H^*(Q_8, \mathbb W[v_1,u^{-1}])[D^{-1}],$$
where $h_0$ detects $2$.
The computation of the $E_1$-page, $H^*(Q_8, \mathbb{F}_4[v_1,u^{-1}])[D^{-1}]$, is from \cite[Appendix A]{Bea17}. We follow the notation in \cite{Bea17}, except that we use $h_1$ for $\eta$ and $h_2$ for $\nu$. The differentials of this $2$-BSS are essentially from \cite[Section ~7]{Bau08} and we list them in \Cref{table:2BSS_integer}. 

More precisely, $H^*(G_{48}, \mathbb{F}_4[v_1,u^{-1}])$ is computed in explicit generators and relations in \cite[Theorem A.20]{Bea17} (the coefficient in \cite{Bea17} is denoted by $S_*(\rho)$). Further, we have $H^*(G_{24}, \mathbb{F}_4[v_1,u^{-1}]) \cong \mathbb{F}_4 \otimes_{\mathbb{F}_2}H^*(G_{48}, \mathbb{F}_4[v_1,u^{-1}])$. 

 We will show that $H^*(Q_8,\mathbb{F}_4[v_1,u^{-1}])[D^{-1}]$ is  $3$ copies of the $H^*(G_{24},\mathbb{F}_4[v_1,u^{-1}])[\Delta^{-1}]$ as follows, where $\Delta=D^3$.

\begin{lem}
 $H^*(Q_8,\mathbb{F}_4[v_1,u^{-1}])[D^{-1}]$ is a free $H^*(G_{24},\mathbb{F}_4[v_1,u^{-1}])[\Delta^{-1}]$-module of rank $3$ with generators $1, D$ and $D^2$.  
\end{lem}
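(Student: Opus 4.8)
The plan is to use the group extension $1 \to Q_8 \to G_{24} \to C_3 \to 1$ together with the fact that $|C_3| = 3$ is invertible in $\mathbb{F}_4$. Write $R = \mathbb{F}_4[v_1,u^{-1}][D^{-1}]$. Since $D$ is $Q_8$-invariant, exactly as in the proof of \cref{lem:E2completion} we have $H^*(Q_8,\mathbb{F}_4[v_1,u^{-1}])[D^{-1}] \cong H^*(Q_8, R)$, and $R$ inherits a $G_{24}$-action because $\omega_*$ carries $D$ to a unit multiple of itself. The first step is to pin down that unit: writing $D = \prod_{g} g_*(u^{-1})$ over a set of coset representatives for the central subgroup $C_2 \subset Q_8$, and using that conjugation by $\omega$ is an automorphism of $Q_8$ fixing $C_2$, one gets $\omega_*(D) = \prod_g (\omega g \omega^{-1})_*(\omega_* u^{-1}) = \prod_{g'} g'_*(\zeta^2 u^{-1}) = \zeta^{2\cdot 4} D = \zeta^2 D$, using the action formulas (\ref{eq:typeone}). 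In particular $D$ is \emph{not} $G_{24}$-invariant, while $\Delta = D^3$ is.

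Next, set $H := H^*(Q_8, R)$, which carries an action of $C_3 = G_{24}/Q_8$. Since each $H^q(Q_8,R)$ is an $\mathbb{F}_4$-module, hence uniquely $3$-divisible, the Lyndon--Hochschild--Serre spectral sequence for $Q_8 \triangleleft G_{24}$ is concentrated on its $0$-line, so that $H^*(G_{24}, R) = H^{C_3}$; more precisely $H$ decomposes into $\omega$-eigenspaces $H = H_0 \oplus H_1 \oplus H_2$, where $H_\ell = \{x : \omega_* x = \zeta^\ell x\}$ and $H_0 = H^*(G_{24}, R)$. By the eigenvalue computation, the element $D \in H^0(Q_8,R) = R^{Q_8}$ lies in $H_2$, so multiplication by $D$ cyclically permutes the eigenspaces: $H_0 \xrightarrow{\cdot D} H_2 \xrightarrow{\cdot D} H_1 \xrightarrow{\cdot D} H_0$. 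Because $\Delta = D^3$ is a $G_{24}$-invariant unit of $R$, it is a unit of the ring $H_0$, and multiplication by $\Delta^{-1} D^2$ is a two-sided inverse to each of these three maps; hence each is an isomorphism of $H_0$-modules. Therefore $H = H_0 \cdot 1 \oplus H_0 \cdot D \oplus H_0 \cdot D^2$ is a free $H_0$-module of rank $3$.

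Finally, I would identify $H_0$ with the asserted base ring. Inverting $D$ and inverting $D^3 = \Delta$ define the same localization of a commutative ring, so $R = \mathbb{F}_4[v_1,u^{-1}][\Delta^{-1}]$; and since $\Delta$ is $G_{24}$-invariant, localization at $\Delta$ commutes with $H^*(G_{24}, -)$ (group cohomology commutes with the filtered colimit defining $[\Delta^{-1}]$), giving $H_0 = H^*(G_{24}, R) = H^*(G_{24}, \mathbb{F}_4[v_1,u^{-1}])[\Delta^{-1}]$. Combined with the previous paragraph, this is exactly the claimed statement.

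The only step that is not purely formal is the eigenvalue computation $\omega_*(D) = \zeta^2 D$ --- equivalently, the assertion that $D$ generates a nontrivial $C_3$-eigenspace rather than being $G_{24}$-invariant --- since this is precisely what makes the rank equal $3$ and supplies the basis $\{1, D, D^2\}$; but it is a direct consequence of the explicit $Q_8$-action (\ref{eq:typeone}) and the fact that $\omega$ conjugation-permutes $i,j,k$ while centralizing $\pm 1$.
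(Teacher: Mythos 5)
Your proof is correct and follows essentially the same route as the paper: the extension $1\to Q_8\to G_{24}\to C_3\to 1$ with $3$ invertible in the coefficients, the eigenvalue computation $\omega_*(D)=\zeta^2 D$, and the resulting $C_3$-eigenspace decomposition of $H^*(Q_8,-)$ whose trivial eigenspace is $H^*(G_{24},-)$, with $1,D,D^2$ spanning the three eigenspaces after inverting $\Delta$. The only cosmetic difference is that you get the eigenspace decomposition from semisimplicity of $\mathbb{F}_4[C_3]$-modules applied directly to cohomology and conclude freeness via the multiplication-by-$D$ (and $\Delta^{-1}D^2$) isomorphisms, whereas the paper checks injectivity via the distinct eigenvalues and surjectivity by decomposing the cochain complex built from the bar resolution into eigenspaces; both arguments are valid.
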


\begin{proof}
We denote the coefficient $\mathbb{F}_4[v_1,u^{-1}]$ in the group cohomology by $A$ for simplicity. Because the coefficient $A$ is $2$-local, the Lyndon--Hochschild--Serre spectral sequence for the group extension
\[
1\rightarrow Q_8 \rightarrow G_{24}\rightarrow C_3\rightarrow 1
\]
collapses at the $E_2$-page. Therefore, we have 
\[
H^*(G_{24},A)=H^*(Q_8,A)^{C_3}.
\]
Note that $D$ is $Q_8$-invariant and hence in $H^0(Q_8,A)$. This gives an $H^*(G_{24},A)$-module map
$$H^*(G_{24},A)\oplus H^*(G_{24},A)\{D\}\oplus  H^*(G_{24},A)\{D^2\}\rightarrow H^*(Q_8,A).$$
We first prove the injectivity. Note that the $C_3$-action on $D$ is given by 
\[
\omega_*(D)=\zeta^2 D.
\]
Hence the three copies above have different eigenvalues so the images of these three copies must intersect trivially. We then prove the surjectivity after inverting $\Delta$. In $H^*(Q_8,A)$, we have $\Delta=D^3$, so $D$ is also inverted after inverting $\Delta$. We first show that $H^*(Q_8,A)$ is a direct sum of the above eigenspaces with respect to the $C_3$-action. Note that $\mathbb{F}_4[Q_8]$ as a $C_3$-module is a direct sum of eigenspaces with eigenvalues $1,\zeta, \zeta^2$, so are the entries of the bar resolution of $\mathbb{F}_4$ as $\mathbb{F}_4[Q_8]$-modules. Moreover, the coefficient $\mathbb{F}_4[v_1,u^{-1}]$ is also a direct sum of eigenspaces with eigenvalues $1, \zeta^2$ by (\ref{eq:typeone}). Therefore, every entry in the cochain complex for computing group cohomology $H^*(Q_8,A)$ is a direct sum of eigenspaces with eigenvalues $1,\zeta, \zeta^2$. So is $H^*(Q_8,A)$. After inverting $D$ and $\Delta$, $H^*(G_{24},A)$, $H^*(G_{24},A) \{D\}$, and $H^*(G_{24},A) \{D^2\}$ give the eigenspaces of eigenvalues $1, \zeta^2, \zeta$ respectively. Therefore, the map is also surjective.
\end{proof}

The lemma above and the computation of $H^*(G_{24}, \mathbb{F}_4[v_1,u^{-1}])$ in \cite[Theorem A.20]{Bea17} give the $Q_8$ case as follows.

\begin{prop}\label{prop:2BSSE1}
The bigradings of generators of $H^*(Q_8, \mathbb{F}_4[v_1,u^{-1}])[D^{-1}]$ are:
\[
\begin{tabular}{llll}
  $|v_1|=(2,0),$ &$|D|=(8,0)$, &$|k|=(-4,4)$, & $|h_1|=(1,1)$, \\
  $|h_2|=(3,1)$, &$|x|=(-1,1)$, & $|y|=(-1,1).$
\end{tabular}
\]
The relations $(\sim)$ are generated by
\begin{enumerate}
    \item in filtration $1$: 
    \[
    \begin{tabular}{lcr}
    $v_1h_2$, & $v_1^2 x,$& $v_1 y$;
    \end{tabular}
    \]
    \item in filtration $2$:
    \[
\begin{tabular}{lllll}
     $h_1 h_2,$ & $h_2 x-v_1h_1 x,$ &$h_1 y-v_1 x^2,$ & $xy,$ &$D y^2-h_2^2$;
\end{tabular}
    \]
    \item in filtration $3$: 
    \[
    \begin{tabular}{lcr}
      $h_1^2 D x-h_2^3,$&$D x^3-h_2^2 y;$
    \end{tabular}
    \]
    
    \item in filtration $4$:
    \[
    h_1^4-v_1^4 k.
    \]
\end{enumerate}
\end{prop}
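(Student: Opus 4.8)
The plan is to deduce the $Q_8$-structure directly from the known computation of $H^*(G_{24},\mathbb{F}_4[v_1,u^{-1}])$ in \cite[Theorem~A.20]{Bea17} together with the freeness statement from the preceding lemma, which identifies $H^*(Q_8,\mathbb{F}_4[v_1,u^{-1}])[D^{-1}]$ as the free module of rank $3$ over $H^*(G_{24},\mathbb{F}_4[v_1,u^{-1}])[\Delta^{-1}]$ on $1,D,D^2$. Concretely, I would first recall the generators and relations presentation of $H^*(G_{24},\mathbb{F}_4[v_1,u^{-1}])$ from \cite{Bea17}: the classes $v_1, \Delta=D^3, k, h_1, h_2$ together with the two remaining generators $x,y$ in filtration $1$, and their relations. (One should double-check the normalization of the generators against \cite{Bea17}, since we changed $\eta\mapsto h_1$, $\nu\mapsto h_2$.) The point is that $H^*(G_{24},A)[\Delta^{-1}]$ has exactly this presentation with $\Delta$ inverted, and then tensoring up along the free module structure simply replaces $\Delta$-generators by $D$-generators.

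Next I would verify the bidegrees. The gradings of $v_1, h_1, h_2, k$ transfer verbatim from the $G_{24}$-computation (they are $C_3$-invariant classes, living in the eigenspace of eigenvalue $1$), while $|D|=(8,0)$ follows from $D=\prod_{g\in Q_8/C_2}g_*(u^{-1})$ being a product of four classes each in degree $(2,0)$; in particular $|\Delta|=|D^3|=(24,0)$ is consistent. The classes $x,y$ in filtration $1$ need a small argument: in the $G_{24}$-presentation the analogous filtration-$1$ classes appear multiplied by a power of $\Delta$ to make them $C_3$-invariant, and upon passing to $Q_8$ and inverting $D$ one can divide out that power of $\Delta$, leaving classes $x,y$ of bidegree $(-1,1)$. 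So the stated bidegrees amount to rewriting the $G_{24}$-generators after clearing denominators.

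For the relations, the strategy is the same: take each relation in the $H^*(G_{24},\mathbb{F}_4[v_1,u^{-1}])[\Delta^{-1}]$-presentation and rewrite it in terms of $D$ rather than $\Delta$, tracking where factors of $D$ enter. The filtration-$1$ relations $v_1h_2, v_1^2x, v_1y$ come from the corresponding $G_{24}$-relations (these express that $v_1$ annihilates the image of certain transfers); the filtration-$2$ relations including $Dy^2-h_2^2$ and the filtration-$3$ relations $h_1^2Dx - h_2^3$, $Dx^3 - h_2^2y$ are the $G_{24}$-relations with the appropriate power of $\Delta$ replaced by $D$ after clearing; and $h_1^4 - v_1^4 k$ in filtration $4$ is inherited directly (it already lives in the invariant eigenspace). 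I would then check that no new relations among the rank-$3$ generators $1,D,D^2$ are introduced: this is exactly the content of freeness from the lemma, plus the relation $D^3=\Delta$ which is what trivializes the module structure after inverting $\Delta$.

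The main obstacle I expect is bookkeeping rather than conceptual difficulty: matching normalizations of generators between \cite{Bea17} and the present notation, and correctly keeping track of which relations acquire or lose factors of $D$ when one passes from the $\Delta$-inverted $G_{24}$-presentation to the $D$-inverted $Q_8$-presentation. In particular one must be careful that the relations listed are a \emph{generating} set for the ideal $\sim$ and not merely a list of valid relations; verifying this likely requires comparing Poincar\'e series (or additive bases in each bidegree) of the proposed presentation against the free-module description coming from \cite[Theorem~A.20]{Bea17}. Once that additive comparison is in hand, the proposition follows.
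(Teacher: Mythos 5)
Your proposal is correct and follows essentially the same route as the paper: the paper deduces \cref{prop:2BSSE1} directly from the preceding lemma (freeness of rank $3$ over $H^*(G_{24},\mathbb{F}_4[v_1,u^{-1}])[\Delta^{-1}]$ on $1,D,D^2$) combined with the explicit presentation of $H^*(G_{24},\mathbb{F}_4[v_1,u^{-1}])$ in \cite[Theorem~A.20]{Bea17}, i.e.\ by rewriting that presentation with $D$ in place of $\Delta=D^3$. The bookkeeping of normalizations and of which relations absorb powers of $D$, which you flag as the main obstacle, is exactly the content the paper leaves implicit in this translation.
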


The differentials in the integer-graded $2$-BSS for the cohomology $H^*(Q_8, \mathbb{W}[v_1,u^{-1}])[D^{-1}]$ are essentially from \cite[Section ~7]{Bau08} which are determined by the ones in \Cref{table:2BSS_integer} and the multiplicative structure.

The $2$-Bockstein computation gives the following result (see also \cite[Section~7]{Bau08}).
\begin{thm}
\label{thm:2BSSint}
\Cref{table:2BSS_integer_Einf} and \Cref{table:2BSS_integer_Einf_rel} present the $E_\infty$-page of the integer-graded $2$-Bockstein spectral sequence (also see \cref{fig:2BSSE1} and \cref{fig:integer2BSS}), which is the associated graded algebra of $H^*(Q_8, \mathbb{W}[v_1,u^{-1}])[D^{-1}]$.
\end{thm}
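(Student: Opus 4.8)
The plan is to run the integer-graded $2$-Bockstein spectral sequence from the $E_1$-page of \cref{prop:2BSSE1} to its $E_\infty$-page, and then to identify the outcome with \cref{table:2BSS_integer_Einf} and \cref{table:2BSS_integer_Einf_rel}. The key structural simplification is the rank-$3$ decomposition established above: $H^*(Q_8,\mathbb F_4[v_1,u^{-1}])[D^{-1}]$ is free on $1,D,D^2$ over $H^*(G_{24},\mathbb F_4[v_1,u^{-1}])[\Delta^{-1}]$. Since every Bockstein differential is $C_3$-equivariant and $\omega_*(D)=\zeta^2 D$, this splits the whole $2$-BSS for $Q_8$ into three copies — shifted by $1$, $D$, and $D^2$ — of the $2$-BSS for $G_{24}$ with $\Delta$ inverted, which is essentially the computation of \cite[Section~7]{Bau08}. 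So it suffices to run the latter and reassemble.

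First I would fix the Bockstein differentials on the generators $v_1,D,k,h_1,h_2,x,y$ from \cref{table:2BSS_integer}, together with the class $h_0$ detecting $2$, and note that the Leibniz rule forces all remaining differentials. Then I would compute homology one Bockstein length at a time: a nonzero $d_r$ on a class removes the $h_0$-divisible part below it and truncates the $h_0$-tower above its target, so after each stage one is left with $h_0$-torsion classes of the predicted heights, and iterating through the higher Bocksteins leaves exactly the monomials of \cref{table:2BSS_integer_Einf}. Convergence causes no trouble: $\mathbb W=\mathbb W(\mathbb F_4)$ is $2$-torsion free and $2$-complete, only finitely many Bockstein lengths occur, the spectral sequence collapses at a finite page, and by \cref{lem:E2completion} its $E_\infty$-page is the associated graded algebra of $H^*(Q_8,\mathbb W[v_1,u^{-1}])[D^{-1}]$, hence — after $J$-completion — of $H^*(Q_8,\pi_*\E_2)$. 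Matching the surviving classes' bidegrees and the induced relations against \cref{table:2BSS_integer_Einf_rel}, and cross-checking with \cref{fig:2BSSE1,fig:integer2BSS}, then completes the identification.

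The principal obstacle is bookkeeping, not a new idea: one must track the many interacting $h_0$-towers and the images and kernels of the Bockstein differentials, with particular care around the exotic classes $x$ and $y$, which are not restrictions of $G_{24}$-classes. One must also rule out differentials beyond those dictated by \cref{table:2BSS_integer} and the Leibniz rule; this follows from comparison with the settled $G_{24}$ (equivalently, $tmf$) computation through the rank-$3$ decomposition, since any extra differential would descend to one in the $G_{24}$-BSS. Finally, $E_\infty$ records only the associated graded algebra, so possible hidden $2$-extensions in $H^*(Q_8,\mathbb W[v_1,u^{-1}])[D^{-1}]$ are invisible at this stage and must be resolved separately.
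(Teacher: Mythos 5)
Your proposal matches the paper's own treatment: the paper likewise takes the $E_1$-page of \cref{prop:2BSSE1} (via the rank-$3$ decomposition over $H^*(G_{24},\mathbb F_4[v_1,u^{-1}])[\Delta^{-1}]$ on $1,D,D^2$), imports the Bockstein differentials of \cref{table:2BSS_integer} from Bauer's $tmf$ computation, extends them by the Leibniz rule, and reads off the $E_\infty$-page recorded in \cref{table:2BSS_integer_Einf} and \cref{table:2BSS_integer_Einf_rel}. Your added remarks on convergence and on hidden $2$-extensions being deferred (cf.\ \cref{rmk:h2ext}) are consistent with the paper, so no gap.
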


\begin{rem}\rm\label{rm:invertibleclass}
    According to the properties of Tate cohomology, we know the class $k$ is invertible in the associated Tate cohomology for $Q_8$ with the same coefficient. 
\end{rem}

\begin{rem}\rm \label{rmk:h2ext}
We note that in $H^*(Q_8, \mathbb{W}[v_1,u^{-1}])[D^{-1}]$, there is a hidden  $h_2$ extension 
$$h_2\cdot x^2 h_2=4 kD$$
by \cite[Equation~(7.13)]{Bau08} which is useful in later computations. See \cref{fig:2BSSE1} and \cref{fig:integer2BSS} for the information of $h_2$ extensions. 
\end{rem}
\begin{longtable}{llll}
\caption{$2$-BSS differentials, integer-graded
\label{table:2BSS_integer} 
} \\
\toprule
$(s,f)$ & $x$ & $r$ & $d_r(x)$ \\
\midrule \endhead
\bottomrule \endfoot
$(4k+2, 0)$ & $v_1^{2k+1}$ & 1 &  $2 v_1^{2k}h_1$\\

$(7,1)$ & $D x$ & 1 & $2 h_2^2$\\
$(-1,1)$ & $x$ & 1 & $2y^2$\\
$(-1,1)$ & $y$ & 1 & $2x^2$\\
\midrule
$(4,0)$ & $v_1^2$ & 2 & $4h_2$\\
\midrule
$(5,3)$ & $y h_2^2$ & 3 & $8kD$\\
\end{longtable}

\begin{longtable}{lll}
\caption{$E_\infty$-page, multiplicative generators, integer-graded
\label{table:2BSS_integer_Einf} 
} \\
\toprule
$(s,f)$ & $x$ &$2$-torsion \\
\midrule \endhead
\bottomrule \endfoot
$(-4,4)$ & $k$ & $\mathbb Z/8$ \\
$(-2,2)$ & $x^2$ & $\mathbb Z/2$ \\
$(-2,2)$ & $y^2$ & $\mathbb Z/2$ \\
$(0,2)$ & $xh_1$ & $\mathbb Z/2$\\
$(1,1)$ & $h_1$ & $\mathbb Z/2$\\
$(3,1)$ & $h_2$ & $\mathbb Z/4$ \\
$(5,1)$ & $v_1^2h_1$ & $\mathbb Z/2$ \\
$(8,0)$ & $D$ & $\mathbb Z$\\
$(8,0)$ & $v_1^4$ & $\mathbb Z$\\
\end{longtable}

\begin{longtable}{lc}
\caption{$E_\infty$-page, relations, integer-graded
\label{table:2BSS_integer_Einf_rel} 
} \\
\toprule
$f$ & relations \\
\midrule \endhead
\bottomrule \endfoot
$1$ & $v_1^4 h_2$  \\
$2$ & $h_1 h_2$, $v_1^2h_1\cdot h_2$, $D y^2-h_2^2$, $xh_1\cdot v_1^4$, $x^2\cdot v_1^4$, $y^2\cdot v_1^4$\\
$3$ & $xh_1\cdot h_2$, $xh_1\cdot v_1^2h_1$, $x^2\cdot  v_1^2h_1$, $y^2 h_1$, $y^2 \cdot v_1^2h_1$, $D \cdot xh_1\cdot h_1-h_2^3$\\
$4$ &$h_1^4-v_1^4 k$, $(xh_1)^2$, $(x^2)^2$,$(y^2)^2$, $h_2^4$,  $x^2 \cdot xh_1$, $y^2 \cdot xh_1$, $x^2 \cdot y^2$, \\
&$xh_1\cdot h_1^2$, $x_2\cdot h_1^2$, $y^2 \cdot h_1^2$,$h_2^2\cdot x^2-4kD$, $y^2 \cdot h_2^2$, $xh_1\cdot h_2^2$
\end{longtable}

We refer readers to \S \ref{section:charts} for charts of the $E_1$-page and the $E_\infty$-page.

\subsection{\texorpdfstring{2-BSS, $(*-\sigma_i)$-graded}{}}\hfill

We discuss the $RO(G)$-graded case and restrict it to the $(*-\sigma_i)$-graded case. 
A variation of \Cref{lem:E2completion} still holds in this case. Thus we can compute $H^*(Q_8, \pi_{*-\sigma_i}\E_2)$ by first computing the $(*-\sigma_i)$-graded 2-BSS and taking the completion. Note that after modulo $2$, the representation $\sigma_i$ is oriented and the orientation class $u_{\sigma_i}$ gives an isomorphism between $\pi_* \E_2/2$ and $\pi_{*+1-\sigma_i} \E_2/2$ as $Q_8$-modules. Therefore, the $E_1$-page of the $(*-\sigma_i)$-graded 2-BSS is abstractly isomorphic to that of the integer-graded part. We denote the $E_1$-page by 
$$H^*(Q_8, \mathbb{F}_4[v_1,u^{-1}])[D^{-1}]\{\usig\}[h_0]$$
where $\usig$ denote a generator of the class at $(1-\sigma_i,0)$.
\begin{prop}
\label{prop:2bss-diff-usig}
	In the $2$-BSS, there is a differential
	$$d_1(\usig)=2 x\usig+2y\usig.$$
\end{prop}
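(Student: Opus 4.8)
The plan is to identify the class $u_{\sigma_i}$ at $(1-\sigma_i,0)$ with a specific element of $\pi_{1-\sigma_i}\E_2$ and then compute its first nontrivial $2$-divisibility obstruction. First I would work over $\mathbb{W}[v_1,u^{-1}][D^{-1}]$ rather than after completion, since by the $(*-\sigma_i)$-graded analogue of \cref{lem:E2completion} it suffices to establish the differential there and then complete. The key point is that modulo $2$ the representation $\sigma_i$ becomes oriented, so $u_{\sigma_i}$ is a genuine generator of $\pi_{1-\sigma_i}(\E_2/2)$ (equivalently of $H^0(Q_8, \pi_{*-\sigma_i}\E_2/2)$ in the relevant degree), but it need not lift to an invariant class in $\pi_{1-\sigma_i}\E_2$ integrally; the $d_1$-differential in the $2$-BSS precisely measures the failure of such a lift. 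Concretely, I would pick an explicit element $\tilde u$ of $\pi_{1-\sigma_i}\E_2$ reducing to $u_{\sigma_i}$ mod $2$ — for instance coming from the Euler-class/orientation-class formalism applied to $\sigma_i$ together with a unit in $\pi_2\E_2$ — apply a generator $g\in Q_8$ not in the kernel of $\sigma_i$ (so $g$ acts by $-1$ on the $\sigma_i$-coordinate), and compute $(g_*-1)(\tilde u)/2 \in \pi_{1-\sigma_i}\E_2/2$ using the explicit action formulas (\ref{eq:typeone}). The image of this expression in $H^1(Q_8,\pi_{*-\sigma_i}\E_2/2)$ is $d_1(u_{\sigma_i})$.

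The main computation is then to match this cocycle against the cohomology classes $x u_{\sigma_i}$ and $y u_{\sigma_i}$ at $(-\sigma_i,1)$. Here I would use the abstract isomorphism of $E_1$-pages: $H^*(Q_8,\mathbb{F}_4[v_1,u^{-1}])[D^{-1}]\{u_{\sigma_i}\}$ is a free module over $H^*(Q_8,\mathbb{F}_4[v_1,u^{-1}])[D^{-1}]$ on one generator, and the classes $x,y$ at $(-1,1)$ are the explicit cocycles described in \cite{Bea17}. So the task reduces to: (a) writing down $x$ and $y$ as explicit $1$-cochains on the bar complex with $\mathbb{F}_4[v_1,u^{-1}]$-coefficients, and (b) checking that the cocycle representing $(g_*-1)(\tilde u)/2$ — which naturally lives in the $\sigma_i$-twisted coefficients — equals $(x+y)u_{\sigma_i}$ up to coboundaries. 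The factor-of-$2$ in the statement, $d_1(u_{\sigma_i}) = 2x u_{\sigma_i} + 2y u_{\sigma_i}$, reflects that we are recording this in the integral $2$-BSS where the target classes $x u_{\sigma_i}$, $y u_{\sigma_i}$ are $2$-torsion but the differential lands in the $h_0$-tower one step up; equivalently it encodes that $u_{\sigma_i}^2$ (which lives over the integer-graded part) has a lift but $u_{\sigma_i}$ itself supports this length-one Bockstein.

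A cleaner route, which I would pursue in parallel, is to import this differential from the known $C_2$-computation by restriction. Restricting along $C_2 = \langle -1\rangle \subset Q_8$, the representation $\sigma_i$ restricts to the sign representation $\sigma_2$, and the class $u_{\sigma_i}$ restricts to $u_{\sigma_2}$. In the $C_2$-level $2$-BSS for $\mathbf{E}_2$ (equivalently the $\sigma_2$-graded Bockstein for $\mathbb{W}[\![\mu_0]\!][\bar r_1^{\pm1},a_{\sigma_2},u_{2\sigma_2}^{\pm1}]$ from \cref{prop:E2C2Slicename}), the analogous differential on $u_{\sigma_2}$ is classical — it produces $2\bar r_1 a_{\sigma_2} u_{\sigma_2}$-type terms, i.e. $u_{\sigma_2}$ is not a $2$-BSS permanent cycle. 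One then checks that $\Res^{Q_8}_{C_2}(x u_{\sigma_i})$ and $\Res^{Q_8}_{C_2}(y u_{\sigma_i})$ together hit the relevant $C_2$-class, so that by injectivity of the restriction in the appropriate degree (which follows from the module structure over $H^*(G_{24},-)$ and the eigenspace decomposition already used in \cref{sec:E2page}) the differential $d_1(u_{\sigma_i}) = 2x u_{\sigma_i}+2y u_{\sigma_i}$ is forced.

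The hard part will be bookkeeping: pinning down the correct normalization of $x$ and $y$ (their definitions in \cite{Bea17} involve a choice, and the coefficient $\frac{1}{\zeta^2-\zeta}$ appearing throughout (\ref{eq:typeone}) must be tracked carefully mod $2$), and confirming that \emph{both} $x$ and $y$ appear with coefficient $1$ — not, say, only $x$, or $x+v_1(\cdots)$ — rather than some other combination in the $2$-dimensional $\mathbb{F}_4$-space spanned by $x u_{\sigma_i}, y u_{\sigma_i}$ at $(-\sigma_i,1)$. I expect the restriction-to-$C_2$ argument to resolve the existence and the precise combination simultaneously, with the explicit cocycle computation serving as a consistency check. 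Degree reasons (there being no other classes in the relevant stem and filtration) then rule out any further indeterminacy, and completing along $J = (2, v_1^4)$ as in \cref{lem:E2completion} finishes the proof.
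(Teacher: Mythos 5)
The decisive flaw is in your ``cleaner route'': the restriction to $C_2=\langle -1\rangle$ does not do what you claim. Every one-dimensional representation of $Q_8$ factors through $Q_8^{\mathrm{ab}}\cong C_2\times C_2$, so $-1=i^2$ lies in the kernel of $\sigma_i$ (indeed $\ker\sigma_i=C_4\langle i\rangle$); hence $\sigma_i$ restricts to the \emph{trivial} representation of the central $C_2$, not to $\sigma_2$. Consequently $\usig$ restricts to an integer-graded $C_2$-class, there is no ``$\sigma_2$-twisted'' $C_2$-Bockstein to import, and the argument that was supposed to ``resolve the existence and the precise combination simultaneously'' collapses. (If you want a restriction argument, the subgroups on which $\sigma_i$ restricts nontrivially are $C_4\langle j\rangle$ and $C_4\langle k\rangle$, where it becomes the sign representation $\sigma$ of $C_4$; this is the flavor of computation done in the appendix, but it is not what you wrote.) With that route gone, your remaining plan is the direct cochain computation: lift $\usig$ integrally, evaluate $g\mapsto (g_*\tilde u-\tilde u)/2$ on the generators $i,j$ of the resolution, and match the resulting $1$-cocycle against $x\usig$ and $y\usig$ modulo coboundaries. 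That is a legitimate strategy in principle, but it is exactly the step you defer as ``bookkeeping,'' and it is the entire content of the proposition (in particular showing that \emph{both} coefficients are $1$). As written, the proposal therefore has a genuine gap: the wrong step removes the mechanism for pinning down the combination, and the correct mechanism is never carried out.

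For comparison, the paper avoids any cocycle identification. Existence of a nonzero $d_1$ on $\usig$ is forced by the completed group cohomology input $H^1(Q_8,\pi_{1-\sigma_i}\E_2)\cong\W/2$ (computed in the appendix and listed in \cref{RestrictionList}): since this group is $2$-torsion, something must hit bidegree $(-\sigma_i,1)$, and by degree reasons only $\usig$ can be the source. Writing $d_1(\usig)=2ax\usig+2by\usig$ with $a,b\in\{0,1\}$, the coefficients are then determined by Leibniz-rule consistency: $d_1(v_1\usig)=2h_1\usig+2axv_1\usig$, and since $h_1\usig$ itself supports a nonzero $d_1$ (as $h_1$ is a permanent cycle and $d_1(\usig)\neq 0$), the target cannot be $2h_1\usig$, forcing $a=1$; and if $b=0$ one gets $d_1(y\usig)=2x^2\usig$, contradicting $d_1(x^2\usig)=2x^3\usig\neq 0$ (using that $x^2$ is a $1$-cycle), forcing $b=1$. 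You may want to redo your argument along these lines, or else actually carry out the chain-level Bockstein computation you sketch.
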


\begin{proof}
The group cohomology computation shows that $H^1(Q_8, \pi_{1-\sigma_i}\E_2)$ is $2$-torsion according to \cref{RestrictionList}. Hence in the $2$-BSS, there must be a $d_1$-differential hitting the bigrading $(-\sigma_i,1)$. Then $\usig$ in the $2$-BSS  must support a non-trivial $d_1$-differential by degree reasons. 
Assume that $d_1(\usig)=2a x\usig +2b y\usig$ where $a,b$ are either $0$ or $1$. 
By the Leibniz rule, we have $d_1(v_1\usig)=2h_1 \usig+2a x v_1 \usig$. Since $h_1$ is a permanent cycle, the Leibniz rule implies that $h_1\usig$ also supports a non-trivial $d_1$-differential. Therefore, the $d_1$-target of $v_1\usig$ cannot be $2h_1 \usig$. We deduce that $a=1.$

On the other hand, if $b=0$, then the Leibniz rule implies
\[
d_1(y\usi)=d_1(y)\usi+yd_1(\usi)=2x^2\usi,
\]
which means the class $x^2\usi$ is a $1$-cycle. However, $x^2$ is a $1$-cycle in the integral $2$-BSS. Then the Leibniz rule also implies that
\[
d_1(x^2\usi)=x^2d_1(\usi)=x^2(2x\usi)=2x^3\usi.
\]
This is a contradiction. Therefore $b$ must be $1$, and the claimed $d_1$-differential follows.

\end{proof}
\begin{rem}
The careful reader may observe that there are some Koszul sign rule-related concerns here; however, we opt to overlook them as they will have no bearing on our subsequent calculations.
\end{rem}
The remaining $(*-\sigma_i)$-graded $2$-BSS $d_1$-differentials can be determined by the Leibniz rule and the differential on $\usig$ in \cref{prop:2bss-diff-usig}.

\begin{prop}
\label{prop:2BSS_sigma_d2}
	There is a $2$-BSS differential
	$$d_2(xh_1^2 u_{\sigma_i})=4kv_1^2 u_{\sigma_i}.$$
\end{prop}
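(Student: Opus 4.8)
The plan is to run the $(*-\sigma_i)$-graded $2$-BSS one page further, using the $d_1$-differential on $\usig$ from \cref{prop:2bss-diff-usig} together with the multiplicative (Leibniz) structure and the known integer-graded differentials in \cref{table:2BSS_integer}. First I would identify the $E_2$-page class named $xh_1^2 u_{\sigma_i}$: since $d_1(\usig) = 2x\usig + 2y\usig$ and $h_1, x$ are $d_1$-cycles, the Leibniz rule gives $d_1(x h_1^2 \usig) = x h_1^2 d_1(\usig) = 2(x^2 h_1^2 + x y h_1^2)\usig$, and one must check using the filtration-$3$ and filtration-$4$ relations of \cref{prop:2BSSE1} (in particular $xy = 0$ and $h_1^4 = v_1^4 k$, hence $x^2 h_1^2 \cdot h_1^2 = x^2 v_1^4 k$, etc.) that this product actually vanishes modulo $2$, so that $x h_1^2 \usig$ genuinely survives to the $E_2$-page. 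The bidegree bookkeeping gives $|x h_1^2 \usig| = (-1 + 2 - \sigma_i, 1 + 2 + 0) = (1 - \sigma_i, 3)$, and the target $4 k v_1^2 \usig$ sits at $(-4 + 4 + 2 - \sigma_i, 2 + 4 + 1)$ after accounting for the two $h_0$'s, i.e.\ at $(2-\sigma_i, 7)$ in internal degree with the correct $d_2$-shift $(−1,2)$ in the $(s,f)$-convention of the BSS — I would double-check this degree match carefully since that is the one place a sign/indexing slip is easy.

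The heart of the argument is to produce the $d_2$ from a $d_2$ already present. The integer-graded $2$-BSS has $d_2(v_1^2) = 4 h_2$ (\cref{table:2BSS_integer}). Tensoring with $\usig$ and applying Leibniz with $d_2$ in the $(*-\sigma_i)$-graded BSS, one gets a relation involving $d_2(v_1^2 \usig)$; however $v_1^2\usig$ is not a $d_1$-cycle (it supports $d_1(v_1^2\usig) = 2 v_1 h_1 \usig + 2 x v_1^2 \usig$, using $d_1(v_1) = \dots$ — actually $v_1$ itself is a $d_1$-cycle, the $v_1^{2k+1}$ entry of the table is the relevant one, so more precisely $d_1(v_1^2\usig) = v_1^2 d_1(\usig) = 2(x+y)v_1^2\usig$), so this does not directly survive. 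Instead I would look for a class at the source bidegree $(1-\sigma_i,3)$ whose $d_2$ is forced. The cleaner route: multiply a known $d_2$-differential by an appropriate permanent cycle. Since $h_1^2 \usig$ and $v_1^2$ both live on the relevant pages, and $d_2(v_1^2) = 4h_2$ is known, Leibniz gives $d_2(v_1^2 \cdot h_1^2\usig) = 4 h_2 h_1^2 \usig$; but $h_1 h_2 = 0$ is a filtration-$2$ relation, so this target vanishes and this is \emph{not} the differential we want either. So the real input must come from a genuinely new $d_2$ forced either by the algebraic $H^*(Q_8,\pi_{*-\sigma_i}\E_2)$ computation (knowing the order of the relevant group-cohomology group, via \cref{RestrictionList}-type data, forces a $d_2$ into bidegree $(2-\sigma_i,7)$) or by comparison/multiplicativity with the class $x h_1^2 \usig$ being forced to \emph{die} because the target group is $2$-torsion of the wrong order.

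Concretely, the strategy I would carry out: (1) show $x h_1^2\usig$ is a nonzero $d_1$-cycle on $E_2$; (2) show, from the known integer-graded $E_\infty$ (\cref{thm:2BSSint}) and the $u_{\sigma_i}$-twisting, that the only possible nonzero differential out of the bidegree of $x h_1^2 \usig$ is a $d_2$, and the only possible nonzero differential \emph{into} the bidegree of $4kv_1^2\usig = k v_1^2 \usig$ (with three $h_0$-multiples, i.e.\ the class that $4\cdot(-)$ names) is from $x h_1^2\usig$; (3) deduce that the group-cohomology computation of $H^*(Q_8,\pi_{*-\sigma_i}\E_2)$ — which is the actual abutment and is computed independently (the paragraph before \cref{prop:2bss-diff-usig} refers to \cref{RestrictionList}) — forces this class to be hit, since otherwise the abutment would have a $\mathbb{Z}/4$ (or larger) summand in a spot where the algebra says it is smaller; hence the $d_2$ must occur. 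I would also sanity-check the coefficient "$4$" by noting $d_2$ raises $h_0$-divisibility by exactly two (a $d_2$ in a $2$-BSS connects $2^{a}$-multiples to $2^{a+2}$-multiples), consistent with $h_1^2\usig$ being $2^0$-divisible in its degree and $kv_1^2\usig$ appearing multiplied by $4 = 2^2$.

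\textbf{Expected main obstacle.} The delicate step is (2)–(3): ruling out competing differentials and correctly reading off the abutment. In particular one must be sure that $x h_1^2 \usig$ does not instead support a $d_1$ (it doesn't, by the relation check in step (1), but this uses the filtration-$3$/$4$ relations of \cref{prop:2BSSE1} in an essential way and the vanishing $xy=0$), and that nothing shorter kills $kv_1^2\usig$ before $E_2$. The bidegree/coefficient bookkeeping across the $u_{\sigma_i}$-twist, combined with keeping track of $h_0$-towers, is where an error is most likely to creep in, so that is the part I would verify most carefully; the rest is a routine application of the Leibniz rule and the comparison with the already-established integer-graded differentials.
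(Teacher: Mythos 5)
Your concluding strategy — forcing the $d_2$ by comparing with the independently computed abutment, i.e.\ the appendix calculation $H^4(Q_8,\pi_4\mathbf{E}_2\otimes\sigma_i)\cong\mathbb{W}/4$ recorded in \cref{RestrictionList}, together with degree reasons ruling out any other source or length — is exactly the paper's proof, which simply cites \cref{prop:grouphomology_sigma} to see that the class at $(1-\sigma_i,4)$ is $4$-torsion on the $E_\infty$-page and concludes the $d_2$ is forced. Only the bidegree bookkeeping needs correcting: the source $xh_1^2u_{\sigma_i}$ sits at $(2-\sigma_i,3)$ and the target $kv_1^2u_{\sigma_i}$ at $(1-\sigma_i,4)$ (with the factor $4$ recording the $h_0$-divisibility of a $d_2$), not the degrees you wrote down.
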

\begin{proof}
	By \Cref{prop:grouphomology_sigma}, the class at $(1-\sigma_i,4)$ is $4$-torsion in the $E_\infty$-page. This forces the desired $d_2$-differential.

\end{proof}
\begin{lem}\label{lem:hiddenh1}
    There is a hidden $h_1$ extension from $k^m x^2h_1 D^n u_{\sigma_i}$ to $2k^{m+1}v_1^2 D^n u_{\sigma_i}$, and a hidden $h_2$ extension from $k^m x^3 D^n u_{\sigma_i}$ to $2k^{m+1}v_1^2 D^n u_{\sigma_i}$ for any $m \in \mathbb{N}, n\in \mathbb{Z}$.     \end{lem}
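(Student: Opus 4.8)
The plan is to deduce both hidden extensions by transporting the corresponding extensions from the integer-graded $2$-BSS via multiplication by the orientation class $u_{\sigma_i}$, using the $d_1$-differential on $u_{\sigma_i}$ from \cref{prop:2bss-diff-usig} to control the defect. First I would recall the relevant integer-graded facts: by \cref{rmk:h2ext} there is a hidden $h_2$ extension $h_2 \cdot x^2 h_2 = 4kD$, and inside the integer-graded $E_\infty$-page (see \cref{table:2BSS_integer_Einf_rel}) one has the relations tying $h_1$, $h_2$, $x$, $y$ together, in particular $h_1 y = v_1 x^2$ and $x h_2 = v_1 h_1 x$ on the $E_1$-page. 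Combining $h_1 \cdot x^2 h_1 D^n$ and the hidden $h_2$ extension gives, in the integer-graded homotopy, the relation $h_1 \cdot x^2 h_1 D^n = 2 k v_1^2 D^n$ (up to the identifications $h_1^4 = v_1^4 k$ and $h_2^2 = D y^2$), which is the integer-graded shadow of the first claimed extension; similarly $h_2 \cdot x^3 D^n$ relates to $2 k v_1^2 D^n$ via the hidden $h_2$ extension after multiplying by $x$ and using $x h_2 = v_1 h_1 x$.

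The main step is then to lift these relations across the $u_{\sigma_i}$-multiplication. Since modulo $2$ the class $u_{\sigma_i}$ is an isomorphism $\pi_* \mathbf{E}_2/2 \cong \pi_{*+1-\sigma_i}\mathbf{E}_2/2$ of $Q_8$-modules, multiplication by $u_{\sigma_i}$ identifies the $E_1$-pages of the integer-graded and $(*-\sigma_i)$-graded $2$-BSS abstractly, but it does \emph{not} commute with the $2$-Bockstein differential: by \cref{prop:2bss-diff-usig}, $d_1(u_{\sigma_i}) = 2(x+y)u_{\sigma_i}$. So I would argue that the hidden $h_1$- and $h_2$-extensions in the $(*-\sigma_i)$-graded setting exist for \emph{degree reasons} once we know the target group $H^*(Q_8,\pi_{1-\sigma_i}\mathbf{E}_2)$ in the relevant bidegree is nonzero and $2$-torsion of the appropriate order — this is exactly what \cref{prop:grouphomology_sigma} and \cref{prop:2BSS_sigma_d2} supply, since $d_2(x h_1^2 u_{\sigma_i}) = 4 k v_1^2 u_{\sigma_i}$ shows the class $k v_1^2 D^n u_{\sigma_i}$ at filtration raised appropriately survives with the order forcing a nonzero multiplication from $k^m x^2 h_1 D^n u_{\sigma_i}$ and from $k^m x^3 D^n u_{\sigma_i}$. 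Concretely: $k^m x^2 h_1 D^n u_{\sigma_i} \cdot h_1$ is zero on the $E_\infty$-page by the integer-graded relation $y^2 h_1 = 0$ together with $x^2 h_1 \cdot h_1 = x h_1 \cdot h_1^2$ which lies in the ideal of vanishing relations; hence any nonzero value of $h_1 \cdot k^m x^2 h_1 D^n u_{\sigma_i}$ in homotopy is hidden, and the only class of the correct internal degree, filtration, and $2$-order available as a target is $2 k^{m+1} v_1^2 D^n u_{\sigma_i}$. The same degree-counting, now with $x$-multiplication converting $x^2 h_1$ to $x^3$ and $h_1$ to $h_2$ via $x h_1 \leftrightarrow$ the relevant relations, handles the $h_2$ extension.

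I would organize the write-up as: (1) state the integer-graded input — the hidden $h_2$ extension of \cref{rmk:h2ext} and the vanishing relations from \cref{table:2BSS_integer_Einf_rel}; (2) record the $(*-\sigma_i)$-graded $E_\infty$-page in the bidegrees $(- \sigma_i, \ast)$, $(1-\sigma_i,\ast)$ near the classes $k^m x^2 h_1 D^n u_{\sigma_i}$, $k^m x^3 D^n u_{\sigma_i}$, $2 k^{m+1} v_1^2 D^n u_{\sigma_i}$, using \cref{prop:grouphomology_sigma} and the $d_1$, $d_2$ differentials already established; (3) show $h_1$ (resp. $h_2$) annihilates the source on $E_\infty$ so the product is hidden; (4) show the target bidegree contains a unique nonzero class of the right order, forcing the extension. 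The hard part will be step (2)–(4) bookkeeping: pinning down that the only candidate target in the hidden-extension bidegree is $2k^{m+1}v_1^2 D^n u_{\sigma_i}$ and nothing else (e.g. no $y$-monomials or $h_1$-towers survive there), which requires careful use of the $2$-BSS differentials on $u_{\sigma_i}$-multiples — in particular checking that classes like $x^3 h_1 D^n u_{\sigma_i}$ and $x^2 y h_1 D^n u_{\sigma_i}$ have been killed by earlier differentials so they do not clutter the target. Everything else is a routine consequence of the Leibniz rule and the $C_3$-eigenspace decomposition already in play.
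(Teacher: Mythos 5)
Your approach has a genuine gap at its central step. After reducing to showing that the product $h_1\cdot x^2h_1 u_{\sigma_i}$ (resp. $h_2\cdot x^3 u_{\sigma_i}$) is \emph{hidden}, you assert that the extension to $2kv_1^2 u_{\sigma_i}$ is then ``forced for degree reasons'' because the target bidegree contains a unique class of the right order (\cref{prop:grouphomology_sigma}, \cref{prop:2BSS_sigma_d2}). That inference is not valid: knowing that the product vanishes on the associated graded and that $H^4(Q_8,\pi_4\E_2\otimes\sigma_i)\cong\W/4$ contains a candidate target only shows the extension is \emph{possible}; nothing in your argument rules out that the product is simply zero in the actual group cohomology. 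Hidden extensions require a positive input, and your attempted substitute --- transporting the integer-graded extension of \cref{rmk:h2ext} across multiplication by $u_{\sigma_i}$ --- does not supply one, precisely because (as you note yourself) $u_{\sigma_i}$ fails to commute with the Bockstein differential $d_1(u_{\sigma_i})=2(x+y)u_{\sigma_i}$, and moreover the ``integer-graded shadow'' relation you invoke, $h_1\cdot x^2h_1D^n=2kv_1^2D^n$, is not established anywhere (the paper's integer-graded chart records only hidden $h_2$ extensions, of the form $h_2\cdot x^2h_2=4kD$, which live in different degrees).

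The paper's proof supplies exactly the missing positive input. It uses the long exact sequence in $Q_8$-cohomology induced by the coefficient sequence $A\to A[1/2]\to A/2^\infty$ with $A=\mathbb{W}[v_1,u^{-1}][D^{-1}]\{u_{\sigma_i}\}$: the Bockstein differentials $d_1(xh_1u_{\sigma_i})=2x^2h_1u_{\sigma_i}$ and $d_2(xh_1^2u_{\sigma_i})=4kv_1^2u_{\sigma_i}$ (from \cref{prop:2bss-diff-usig}, \cref{prop:2BSS_sigma_d2} and Leibniz) are reinterpreted as saying that the boundary map sends the classes $2^{-1}xh_1u_{\sigma_i}$ and $2^{-1}xh_1^2u_{\sigma_i}$ of $H^*(Q_8,A/2^\infty)$ to $x^2h_1u_{\sigma_i}$ and $2kv_1^2u_{\sigma_i}$ in $H^*(Q_8,A)$ respectively; since the boundary map is linear over multiplication by $h_1$ and these two source classes differ by $h_1$, the nonzero product $h_1\cdot x^2h_1u_{\sigma_i}=2kv_1^2u_{\sigma_i}$ follows, and the $h_2$ case is handled the same way using $d_1(y^2u_{\sigma_i})=x^3u_{\sigma_i}$. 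If you want to salvage your write-up, replace your step (4) (``unique target forces the extension'') with this divisibility/boundary-map argument; the degree bookkeeping you outline is then unnecessary.
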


\begin{proof}
It suffices to prove the case for $m=n=0$. 
    We have the following differentials by \cref{prop:2bss-diff-usig}, \cref{prop:2BSS_sigma_d2} and the Leibniz rule
        	$$d_1(xh_1 u_{\sigma_i})=2x^2h_1 u_{\sigma_i},$$
    	$$d_2(xh_1^2 u_{\sigma_i})=4kv_1^2 u_{\sigma_i}.$$
    This implies that in the associated $2$-inverted $2$-BSS, there are 
        	$$d_1(2^{-1}xh_1 u_{\sigma_i})=2x^2h_1 u_{\sigma_i},$$
    	$$d_2(2^{-1}xh_1^2 u_{\sigma_i})=4kv_1^2 u_{\sigma_i}.$$
 Denote 
     $\mathbb{W}[v_1,u^{-1}][D^{-1}]\{u_{\sigma_i}\}$  by $A$. Consider the long exact sequence on $Q_8$ group cohomology induced by the short exact sequence on the coefficients
     $$A \rightarrow A[1/2] \rightarrow A/2^\infty.$$ 
In $H^*(Q_8, A/2^{\infty})$ we have $h_1$ multiplication from $2^{-1}xh_1u_{\sigma_i}$ to $2^{-1}xh_1^2 u_{\sigma_i}$.  Then the differentials
        	$$d_1(2^{-1}xh_1 u_{\sigma_i})=x^2h_1 u_{\sigma_i},$$
    	$$d_2(2^{-1}xh_1^2 u_{\sigma_i})=2kv_1^2 u_{\sigma_i}$$ 
     imply that the boundary map in the long exact sequence sends the two classes $2^{-1}xh_1 u_{\sigma_i}$ and $2^{-1}xh_1^2 u_{\sigma_i}$ in $H^*(Q_8, A/2^{\infty})$    to the two classes $x^2h_1 u_{\sigma_i}$ and $2kv_1^2 u_{\sigma_i}$ in $H^*(Q_8,A)$ respectively. And there is an $h_1$ extension from $x^2h_1 u_{\sigma_i}$ to $2kv_1^2 u_{\sigma_i}$ . 

As for the $h_2$ extension, we apply the similar argument to the differentials 
\[
d_1(y^2u_{\sigma_i})=x^3 u_{\sigma_i},
\]
\[
d_2(xh_1^2 u_{\sigma_i)}=4kv_1^2u_{\sigma_i}.
\]

    \end{proof}

We list non-trivial differentials on classes of the form $\{\text{multiplicative generators}\}\usig$  in the table below.

\begin{longtable}{llll}
\caption{$2$-BSS differentials, $(*-\sigma_i)$-graded
\label{table:2BSS_sigma} 
} \\
\toprule
$(s,f)$ & $x$ & r & $d_r(x)$ \\
\midrule \endhead
\bottomrule \endfoot
$(1-\sigma_i, 0)$ & $\usig$ & 1 &  $2 x\usig+2y\usig$\\
$(-\sigma_i,1)$ & $xu_{\sigma_i}$& 1 & $2x^2u_{\sigma_i}+2y^2u_{\sigma_i} $\\
$(3-\sigma_i,0)$ & $v_1\usig$ & 1 & $2h_1 \usig + 2xv_1\usig$\\
$(4-\sigma_i,1)$ & $h_2 \usig$ & 1 & $2xh_2 \usig+2yh_2\usig$\\
\midrule
$(2-\sigma_i,3)$ & $xh_1^2\usig$ & 2 & $4kv_1^2\usig$\\
\end{longtable}

\begin{thm}
\label{thm:2BSSsig}
\Cref{table:2BSS_sigma_Einf} and \Cref{table:2BSS_sigma_Einf_rel} present the $E_{\infty}$-page the $(*-\sigma_i)$-graded $2$-Bockstein spectral sequence (also see \cref{fig:sigma2BSS} and \cref{fig:sigmaE2} with hidden $h_1$ and $h_2$ extensions), which is the associated graded algebra of $H^*(Q_8, \mathbb{W}[v_1,u^{-1}]\otimes \sigma_i)[D^{-1}]$.
\end{thm}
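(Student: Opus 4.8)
The plan is to compute the $E_\infty$-page of the $(*-\sigma_i)$-graded $2$-Bockstein spectral sequence directly, running it page by page with the differentials recorded in \cref{table:2BSS_sigma} — established in \cref{prop:2bss-diff-usig} and \cref{prop:2BSS_sigma_d2} — and then checking that the resulting associated graded ring agrees with \cref{table:2BSS_sigma_Einf} and \cref{table:2BSS_sigma_Einf_rel}. The point to keep in mind throughout is that although the $E_1$-page is abstractly isomorphic to that of the integer-graded $2$-BSS, the generator $u_{\sigma_i}$ — which plays the role of the unit additively — supports a nontrivial $d_1$, unlike the unit $1$; so the answer is \emph{not} a reindexing of \cref{thm:2BSSint}, and the spectral sequence must genuinely be rerun.

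First I would set up the $E_1$-page: by the discussion preceding \cref{prop:2bss-diff-usig}, the mod-$2$ orientation class $u_{\sigma_i}$ identifies $E_1$ with the free rank-one module $H^*(Q_8,\mathbb{F}_4[v_1,u^{-1}])[D^{-1}][h_0]\{u_{\sigma_i}\}$ over the integer-graded $E_1$-page, with generators and relations as in \cref{prop:2BSSE1} and with $u_{\sigma_i}$ in bidegree $(1-\sigma_i,0)$. The differential $d_1$ is the unique $h_0$-linear derivation extending the integer-graded $d_1$'s of \cref{table:2BSS_integer} together with $d_1(u_{\sigma_i}) = 2xu_{\sigma_i}+2yu_{\sigma_i}$; applying the Leibniz rule gives the four generating differentials of \cref{table:2BSS_sigma}, and hence $d_1$ on all of $E_1$. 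Taking homology with respect to $d_1$ produces $E_2$.

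Next I would feed in the $d_2$-differential $d_2(xh_1^2u_{\sigma_i}) = 4kv_1^2u_{\sigma_i}$ of \cref{prop:2BSS_sigma_d2} (forced by \cref{prop:grouphomology_sigma}), spread it over $E_2$ by the Leibniz rule, and compute $E_3$. To see that $E_3 = E_\infty$ — and, along the way, that no further $d_2$- or higher differentials have been missed — I would compare bidegree by bidegree against the direct $Q_8$-cohomology computation of \cref{sec:groupcoho}: the $2$-BSS converges to $H^*(Q_8,\mathbb{W}[v_1,u^{-1}]\otimes\sigma_i)[D^{-1}]$, so once $E_3$ has the correct rank in every bidegree no further differentials are possible, and conversely any discrepancy would pinpoint a missing one. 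Reading off the multiplicative generators and relations of $E_3 = E_\infty$ then yields \cref{table:2BSS_sigma_Einf} and \cref{table:2BSS_sigma_Einf_rel}, and the hidden $h_1$- and $h_2$-extensions of \cref{lem:hiddenh1} are the extension data relating this associated graded to the filtered ring $H^*(Q_8,\mathbb{W}[v_1,u^{-1}]\otimes\sigma_i)[D^{-1}]$ itself.

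The main obstacle is the bookkeeping in the $d_1$-homology: the $E_1$-page carries the many relations of \cref{prop:2BSSE1}(1)--(4), and since $E_2$ is no longer free as a module over the integer-graded $E_2$-page, one cannot simply transport the integer-graded computation. Concretely, one must trace how $d_1(u_{\sigma_i}) = 2xu_{\sigma_i}+2yu_{\sigma_i}$ interacts with the relations $xy=0$, $v_1^2x=0$, $v_1y=0$, $Dy^2=h_2^2$, $h_1y=v_1x^2$, and $h_2x=v_1h_1x$ when deciding which products $z\,u_{\sigma_i}$ are $d_1$-cycles and which are $d_1$-boundaries — exactly the kind of case analysis already visible in the proofs of \cref{prop:2bss-diff-usig} and \cref{lem:hiddenh1}. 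A secondary nuisance, flagged in the remark after \cref{prop:2bss-diff-usig}, is the Koszul sign in the Leibniz rule for this $RO(Q_8)$-graded spectral sequence; it is harmless here because every class entering a differential is $2$-torsion, so the signs are absorbed into multiplication by $h_0$, but this should be confirmed in each instance.
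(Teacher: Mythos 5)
Your proposal is correct and takes essentially the same route as the paper: the paper's proof of this theorem is simply to run the $(*-\sigma_i)$-graded $2$-BSS using the generating differentials of \cref{prop:2bss-diff-usig} and \cref{prop:2BSS_sigma_d2} extended by the Leibniz rule, with the hidden $h_1$- and $h_2$-extensions of \cref{lem:hiddenh1} supplying the extension data. Your extra step of confirming $E_3=E_\infty$ by comparison with the direct group cohomology computation is a reasonable (if laborious) way to make explicit the "no further differentials" check that the paper leaves implicit.
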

\begin{proof}
The result follows from the 2-BSS computation.
\end{proof}

\begin{longtable}{lll}
\caption{$E_\infty$-page, module generators, $(*-\sigma_i)$-graded
\label{table:2BSS_sigma_Einf} 
} \\
\toprule
$(s,f)$ & $x$ &$2$-torsion \\
\midrule \endhead
\bottomrule \endfoot
$(-2,2)$ & $\{x^2+y^2\}\usig$ & $\mathbb Z/2$ \\
$(-1,1)$ & $\{x+y\}\usig$ & $\mathbb Z/2$\\
$(1,1)$ & $\{h_1+xv_1\}\usig$ & $\mathbb Z/2$\\
$(0,2)$ & $v_1^2\usig$ & $\mathbb Z$
\end{longtable}

\vspace{0.5cm}
\begin{longtable}{lc}
\caption{$E_\infty$-page, relations, $(*-\sigma_i)$-graded
\label{table:2BSS_sigma_Einf_rel} 
} \\
\toprule
$f$ & relation generators \\
\midrule \endhead
\bottomrule \endfoot
$1$ & $\{h_1+xv_1\}\usig \cdot v_1^4-v_1^2\usig\cdot v_1^2h_1$, $v_1^2\usig\cdot h_2$, $\{x+y\}\usig\cdot v_1^4$\\
$2$ & $\{h_1+xv_1\}\usig \cdot h_2$, $\{h_1+xv_1\}\usig \cdot v_1^2 h_1- v_1^2\usig\cdot h_1^2$, \\
&
$v_1^2\usig\cdot x^2$, $v_1^2\usig\cdot y^2$, $v_1^2\usig\cdot xh_1$, $v_1^2\usig\cdot h_2^2$, $\{x+y\}\usig \cdot v_1^2h_1$, $\{x^2+y^2\}\usig \cdot v_1^4$\\
$3$ & $\{h_1+xv_1\}\usig \cdot x^2 - \{x^2+y^2\}\usig\cdot h_1$, $\{h_1+xv_1\}\usig \cdot y^2$, $\{x^2+y^2\}\usig \cdot v_1^2h_1$\\
& $\{x+y\}\usig\cdot h_1^2 - \{h_1+xv_1\}u_{\sigma_i}\cdot xh_1$,
$\{x+y\}\usig\cdot x^2-\{x+y\}\usig\cdot y^2$\\
$4$ & $\{x^2+y^2\}\usig \cdot h_1^2$, $\{x^2+y^2\}\usig \cdot h_2^2$, $\{x^2+y^2\}\usig \cdot x^2$, $\{x^2+y^2\}\usig \cdot y^2$, $\{x^2+y^2\}\usig \cdot xh_1$ \\
& $\{h_1+xv_1\}\usig\cdot h_1^3-v_1^2\usig $, $4v_1^2\usig \cdot k$\\
\end{longtable}

We refer the readers to \S \ref{section:charts} for charts of the $E_1$-page and the $E_\infty$-page.

By \cref{lem:E2completion}, in both the  integer-graded and the $(*-\sigma_i)$-graded case, the $E_2$-page of $Q_8$-HFPSS($E_2$)  follows from \cref{thm:2BSSint} and \cref{thm:2BSSsig}.
\begin{rem}
\label{rem:tateE2}
The $E_2$-page of TateSS($\E_2$) follows by further inverting the class $k$ from that of $Q_8$-HFPSS$(\E_2)$, and replacing the $0$-line with the cokernel of the norm map.
\end{rem}

\section{\texorpdfstring{Computation of the integer-graded $Q_8$-HFPSS$(\E_2)$}{}}\label{section:differentials}

In this section, we derive all differentials in the integer-graded $Q_8$-HFPSS for $\E_2$ via the following two methods.
\begin{enumerate}
    \item Equivariant methods: apply the restrictions, transfers, and norms to deduce differentials in the $Q_8$-HFPSS for $\E_2$ from the $C_4$-HFPSS for $\E_2$;
    \item The vanishing line method: use the fact that the $Q_8$-HFPSS for $\E_2$ admits a strong vanishing line of filtration $23$ (\cref{thm:sharpvanishingline}, for general cases, see \cite[Theorem 6.1]{DLS2022}) to force differentials.
\end{enumerate}
We also solve all hidden $2$ extensions via equivariant methods and investigation of the Tate spectral sequence.

We will rename several classes on the $E_2$-page of the $Q_8$-HFPSS for $\E_2$ as follows. The advantage is that these names are compatible with the $tmf$ computation and the Hurewicz images in $\E_2^{hQ_8}$ (see \cite{Bau08}, also compare to \cite{Isa18}).
For example, we rename the class $kD^3$ by $g$, which is compatible with \cite{Bau08} and suggests that this class detects the Hurewicz image of $\bar{\kappa}$ (see \ref{lemma:gPC}).

\begin{longtable}{lcc}
\caption{Distinguished classes}\label{table:classnames}\\
\toprule
Classes & Bauer's notation& Bigrading\\
\midrule \endhead
\bottomrule \endfoot
$Dxh_1$ & $c$&$(8,2)$\\
$D^2 x^2$& $d$&$(14,2)$\\
$kD^3$& $g$ &$(20,4)$\\
\end{longtable}
When we talk about the restriction map from $Q_8$ to $C_4$, the subgroup $C_4$ usually indicates the subgroup $C_4\langle i\rangle $ generated by $i$ if there is no further specification. Some of the arguments in the proofs of this section are easier to see when accompanied by charts in \S \ref{section:charts}.

\subsection{General properties of the $Q_8$-HFPSS for $\E_{4k+2}$}\label{subsection:generalpropertyQ8HFPSS}

It is a result of Shi--Wang--Xu, using the Slice Differential Theorem and the norm functor of Hill--Hopkins--Ravenel \cite{HHR16}, that the homotopy fixed point spectrum $\E_{4k+2}^{hQ_8}$ is $2^{4k+6}$-periodic. 

The periodicity of $\E_2^{hQ_8}$ is known by computation to be $64$ classically. Here we give a proof that $\E_2^{hQ_8}$ is $64$-periodic before computing it using $Q_8$-HFPSS.

\begin{prop}\label{prop:periodicity}
The homotopy groups of the spectrum $\E_2^{hQ_8}$ is $64$-periodic and the periodicity class can be given by the class $D^8$.
\end{prop}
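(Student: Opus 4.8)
The plan is to show that the class $D^8$ is an invertible permanent cycle in the $Q_8$-HFPSS for $\mathbf{E}_2$, so that multiplication by it induces the claimed $64$-periodicity (since $|D^8| = 64$). First I would establish that $D^8$ survives the spectral sequence. The class $D$ lives in bidegree $(8,0)$, i.e.\ filtration zero, so a priori it could support a differential; the point is that $D^3 = \Delta$, and $\Delta^{\pm 1}$ is a permanent cycle because it restricts (up to the $C_3$-quotient) to the invertible modular-form-like class appearing in $G_{24}$-HFPSS, or more directly because $\done$-type norm classes control it. So a cleaner route is: the product $N_{C_4}^{Q_8}(\done)$ from \cref{cor:periodicity} gives a $(1+\sigma_i+\sigma_j+\sigma_k+\mathbb{H})$-periodicity class that is an invertible permanent cycle, and combined with the orientation-class permanent cycles $u_{4\sigma_i},u_{4\sigma_j},u_{4\sigma_k}$ (\cref{rem:4sigmapc}) and the $u_{8\lambda}$-norms, one can write $D^8$ (up to a unit) as a monomial in these invertible permanent cycles landing in integer grading $64$.

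The key steps, in order, would be: (1) identify $D^8$ on the $E_2$-page with an appropriate product of the $RO(Q_8)$-graded permanent cycles from \cref{cor:periodicity} and \cref{rem:4sigmapc} — this is a grading bookkeeping computation, matching the $\sigma_i,\sigma_j,\sigma_k,\mathbb{H}$ coefficients to zero and the trivial-summand coefficient to $64$; (2) conclude that $D^8$ is therefore a permanent cycle, being a product of permanent cycles, and that it is invertible on every page since each factor is invertible; (3) invoke the multiplicative structure of the spectral sequence to deduce that multiplication by $D^8$ is an isomorphism $E_r^{s,t} \to E_r^{s,t+64}$ for all $r$, hence an isomorphism on $E_\infty$, hence (after resolving the finite filtration) $\pi_* \mathbf{E}_2^{hQ_8} \cong \pi_{*+64}\mathbf{E}_2^{hQ_8}$.

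Alternatively — and this may be the intended short proof given the sentence ``Here we give a proof that $\mathbf{E}_2^{hQ_8}$ is $64$-periodic before computing it'' — one can argue purely on the $E_2$-page: by \cref{lem:E2completion} and \cref{thm:2BSSint}, the integer-graded $E_2$-page $H^*(Q_8,\pi_*\mathbf{E}_2)$ is visibly a module over $\mathbb{Z}_2[D^{\pm 1}, v_1^4]^\wedge$ (from \cref{table:2BSS_integer_Einf}), and one checks that $D$ itself, or at worst $D^{k}$ for a small $k$, acts invertibly; then one shows $D$ (or $D^k$) is a permanent cycle. Since Shi--Wang--Xu already give $2^{6}=64$-periodicity abstractly via the Slice Differential Theorem and the HHR norm, the content here is just to exhibit the periodicity operator explicitly as $D^8$ and verify it is a permanent cycle, which follows once we know $D^8$ is a monomial in the norm-periodicity classes of \cref{cor:periodicity}.

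The main obstacle I expect is step (1): showing that $D^8$ — a class defined a priori only as $\prod_{g \in Q_8/C_2} g_*(u^{-1})$ raised to the eighth power, living in the completed ring $\pi_*\mathbf{E}_2$ — actually coincides (up to a $2$-adic unit) with the relevant monomial in the $N_{C_4}^{Q_8}$ norm classes and orientation classes on the $E_2$-page. This requires carefully tracking how $D$ sits inside the $RO(Q_8)$-graded picture (it is an integer-graded, filtration-zero class, whereas the norm classes naturally live in $RO(Q_8)$-gradings), and using the relations of \cref{prop:auclassnorm} together with the identification of $u$-classes in the HFPSS via \cref{lemma:isorange}. Once that identification is in hand, everything else is formal from the multiplicativity of the spectral sequence and the fact that products of invertible permanent cycles are invertible permanent cycles.
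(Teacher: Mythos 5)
Your proposal is correct and follows essentially the same route as the paper: the paper takes the explicit product $N_{C_4}^{Q_8}(\done)^8 N_{C_4\langle i\rangle}^{Q_8}(u_{4\sigma})^2 N_{C_4\langle i\rangle}^{Q_8}(u_{8\lambda})N_{C_4\langle j\rangle}^{Q_8}(u_{4\sigma})^4 N_{C_4\langle k\rangle}^{Q_8}(u_{4\sigma})^4$ of the invertible norm periodicity classes from \cref{cor:periodicity}, checks it lies in bigrading $(64,0)$, and concludes it gives the $64$-periodicity. The identification with $D^8$ that worries you in your step (1) is handled there simply up to a unit: since $D^8$ is the $Q_8$-invariant invertible generator in degree $64$ and the norm product is an invertible filtration-zero class in the same degree, they agree up to a unit, which suffices.
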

\begin{proof}
The product
\[
N_{C_4}^{Q_8}(\done)^8 N_{C_4\langle i\rangle}^{Q_8}(u_{4\sigma})^2 N_{C_4\langle i\rangle}^{Q_8}(u_{8\lambda})N_{C_4\langle j\rangle}^{Q_8}(u_{4\sigma})^4 N_{C_4\langle k\rangle}^{Q_8}(u_{4\sigma})^4
\]
gives the $64$ periodicity of $\E_2^{hQ_8}$.  This product is in bigrading $(64,0)$ and is invertible. On the other hand, the generator $D^8$ of $\pi_{64}\E_2$ is $Q_8$-invariant and invertible.
Therefore, this periodicity class is $D^8$ up to a unit.
\end{proof}

From now on we can simply view $D^8$ as a periodicity class of $\E_2^{hQ_8}$.
In the following property, we show that the $Q_8$-HFPSS for $\E_2$ splits into three parts such that there are no differentials across different parts.

Note that the universal space $EG_{24}$ can be viewed as a model for $EQ_8$. The transfer and the restriction of the  
spectrum $F(EG_{24},\E_2)$ give a sequence $\E_2^{hG_{24}} \xrightarrow{\Res} \E_2^{hQ_8} \xrightarrow{\Tr} \E_2^{hG_{24}}$, which is compatible with the filtration of the HFPSS.

\begin{prop}\label{prop:G24Q8}
The composition
$$\E_2^{hG_{24}} \xrightarrow{\Res} \E_2^{hQ_8} \xrightarrow{\Tr} \E_2^{hG_{24}}$$ is an equivalence. In particular, the $G_{24}$-$\mathrm{HFPSS}$ for $\E_2$ splits as a summand of the $Q_8$-$\mathrm{HFPSS}$ for $\E_2$.
\end{prop}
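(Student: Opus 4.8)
The plan is to exhibit the composite $\Tr \circ \Res$ as multiplication by a unit, using the standard index formula for a transfer–restriction composite together with the fact that $|G_{24}/Q_8| = 3$ is invertible after $2$-completion. First I would recall that for a subgroup $H \subset G$ of finite index, the composite $\Tr_H^G \circ \Res_H^G$ on $X^{hG}$ is multiplication by the ``Euler characteristic'' $[G:H] \in \pi_0 S^0$, acting through the unit map $S^0 \to \E_2$; concretely, for the inclusion of finite groups $Q_8 \subset G_{24}$ this composite on $\E_2^{hG_{24}}$ is multiplication by $3$. Here I am implicitly using that $EG_{24}$ is a model for $EQ_8$ (stated in the excerpt just before the proposition), so that $F(EG_{24},\E_2)$ with its $G_{24}$-action restricts to $F(EQ_8,\E_2)$ with its $Q_8$-action, and the transfer and restriction for the subgroup $Q_8 \subset G_{24}$ are the maps appearing in the statement.

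Next I would observe that $3$ is invertible in $\pi_* \E_2 = \mathbb{W}[\![u_1]\!][u^{\pm 1}]$, since the maximal ideal is $I = (2,u_1)$ and $\E_2$ is $2$-complete; hence $3$ acts invertibly on $\E_2^{hG_{24}}$, and therefore $\tfrac{1}{3}\Tr$ provides a one-sided inverse to $\Res$. To promote this to the claim that $\Res$ and $\Tr$ are themselves (split) mono/epi and that the HFPSS splits, I would run the same argument at the level of the homotopy fixed point spectral sequences: the maps $\Res$ and $\Tr$ are maps of spectral sequences compatible with the HFPSS filtration (this compatibility is exactly the sentence preceding the proposition), and on $E_2$-pages the composite $\Tr \circ \Res$ is multiplication by $3$ on $H^*(G_{24}, \pi_* \E_2)$ — which one sees directly, for instance via the Lyndon–Hochschild–Serre identification $H^*(G_{24},A) = H^*(Q_8,A)^{C_3}$ used in Section~\ref{sec:E2page} — hence is an isomorphism there as well. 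Therefore $\Res$ realizes the $G_{24}$-HFPSS as a retract (direct summand) of the $Q_8$-HFPSS, page by page and compatibly with differentials, which is the ``in particular'' clause.

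I expect the main obstacle to be bookkeeping rather than conceptual: one must be careful that the transfer–restriction composite genuinely computes as multiplication by the index $[G_{24}:Q_8] = 3$ in this homotopy-fixed-point (cofree) setting — this is the standard fact that for $F(EG_+, X)$ the composite $\Tr_H^G \Res_H^G$ is $[G:H] \cdot (-)$, and I would cite it (e.g. from the double-coset / Mackey formalism, or directly from the transfer in the HFPSS) rather than reprove it. A secondary point to verify is that the splitting is multiplicative and compatible with the module structures we use later, but since everything is engineered by an idempotent $e = \tfrac{1}{3}\,\Res\,\Tr$ on the $Q_8$-side lying over $\operatorname{id}$ on the $G_{24}$-side, this follows formally; the summand complementary to the image of $\Res$ is then cut out by $1-e$, and one identifies it with the two ``eigenspace'' pieces (the $D^m$ with $3 \nmid m$) using the $C_3$-action on $D$ as in the proof of Lemma~\ref{prop:2BSSE1}. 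This last identification is what makes Proposition~\ref{prop:G24Q8} line up with Theorem~\ref{theorem:B}.
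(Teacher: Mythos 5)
Your proposal is correct and follows essentially the same route as the paper: the composite $\Tr \circ \Res$ is multiplication by the index $[G_{24}:Q_8]=3$, which is invertible in the $2$-local setting, so the composite is an equivalence and the $G_{24}$-HFPSS splits off as a summand. The extra material on the idempotent and the $C_3$-eigenspace decomposition is fine but not needed for this proposition (it reappears in the paper only in the surrounding discussion and \cref{cor:split}).
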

\begin{proof}
The composition $\Tr \circ \Res$ is multiplication by $|G_{24}|/|Q_8|=3$. All spectra are $2$-local and $3$ is coprime to $2$ so this composition is an equivalence.
\end{proof}

We identify the $E_2$-page of $Q_8$-HFPSS$(\E_2)$ as a free module over the $E_2$-page of $G_{24}$-HFPSS$(\E_2)$ generated by $\{1,D,D^2\}$.
\begin{cor}\label{cor:split}
Let $a,b$ be two classes on the $E_2$-page of $G_{24}$-$\mathrm{HFPSS}(\E_2)$.  View $a,b$ as classes in $Q_{8}$-$\mathrm{HFPSS}(\E_2)$ and consider classes $aD^{k_a}, bD^{k_b}$ where  $k_a, k_b\in \{0,1,2\}$. Then there is a differential $d_r (aD^{k_a}) = bD^{k_b}$ in the $Q_8$-$\mathrm{HFPSS}(\E_2)$ iff there is a differential $d_r (a) =b$ in the $G_{24}$-$\mathrm{HFPSS}(\E_2)$ and $k_a=k_b$.
\end{cor}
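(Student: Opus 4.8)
The plan is to identify the three summands $D^{k}\cdot H^*(G_{24},\pi_*\E_2)$, $k=0,1,2$, of the $E_2$-page with the eigenspaces of the residual $C_3=G_{24}/Q_8$-action, and to move differentials between these eigenspaces by multiplying with the unit $D$. First I would set up the $C_3$-action: since $3$ is invertible and $\zeta\in\mathbb W$, the $C_3$-action on $\E_2^{hQ_8}$ splits the $E_2$-page $H^*(Q_8,\pi_*\E_2)$ into three $\zeta^{\ell}$-eigenspaces, and because $\omega_*(D)=\zeta^{2}D$ while $\omega_*$ is the identity on $H^*(G_{24},\pi_*\E_2)=H^*(Q_8,\pi_*\E_2)^{C_3}$, the submodule $D^{k}\cdot H^*(G_{24},\pi_*\E_2)$ is exactly the $\zeta^{2k}$-eigenspace; for $k=0,1,2$ these are the three pairwise distinct eigenspaces, the $k=0$ one being the $E_2$-page of the $G_{24}$-HFPSS. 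The $C_3$-action is by automorphisms of $\E_2^{hQ_8}$ respecting the skeletal filtration of $EQ_8$, so it acts by automorphisms of the whole $Q_8$-HFPSS; hence each $d_r$ is $C_3$-equivariant and preserves the eigenspace decomposition, and a nonzero differential $d_r(aD^{k_a})=bD^{k_b}$ must have $\zeta^{2k_a}=\zeta^{2k_b}$, i.e. $k_a=k_b$.

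Next I would exploit that $D$ is a unit in $\pi_*\E_2$ (recall $\pi_*\E_2\cong(\mathbb W[v_1,u^{-1}][D^{-1}])^{\wedge}_{I}$) and is $Q_8$-invariant. Multiplication by $D$ is then a $Q_8$-equivariant self-equivalence $\E_2\xrightarrow{\ \sim\ }\Sigma^{|D|}\E_2$, and applying $(-)^{hQ_8}$ together with the HFPSS construction yields an automorphism of the $Q_8$-HFPSS that on $E_2$ is cup product with the class $D$. A morphism of spectral sequences commutes with all differentials, so $d_r(D^{k}x)=D^{k}d_r(x)$ for every $r$; in particular $D^{\pm1}$ are permanent cycles and multiplication by $D^{k}$ is an isomorphism on each $E_r$-page, with $D^{-k}D^{k}=1$ there. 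Finally, by \Cref{prop:G24Q8} the restriction $\E_2^{hG_{24}}\to\E_2^{hQ_8}$ exhibits the $G_{24}$-HFPSS as the $k=0$ eigenspace-summand of the $Q_8$-HFPSS, compatibly with differentials, so for $a,b$ in that summand, $d_r(a)=b$ holds in the $Q_8$-HFPSS if and only if it holds in the $G_{24}$-HFPSS.

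Putting these together proves the corollary: if $d_r(a)=b$ in the $G_{24}$-HFPSS, the same differential holds in the $Q_8$-HFPSS inside the $k=0$ summand, and multiplication by $D^{k}$ gives $d_r(aD^{k})=bD^{k}$; conversely, a nonzero $d_r(aD^{k_a})=bD^{k_b}$ forces $k_a=k_b=:k$ by the eigenspace argument, and then multiplication by $D^{-k}$ returns $d_r(a)=b$ in the $Q_8$-HFPSS with $a,b$ in the $k=0$ summand, hence $d_r(a)=b$ in the $G_{24}$-HFPSS. The one step I expect to require care is upgrading ``multiplication by $D$ is an isomorphism on $E_2$'' to ``it is an automorphism of the spectral sequence'' — that is, checking that $D$ is a unit of $\pi_*\E_2$, that it is $Q_8$-invariant (so the self-equivalence of $\E_2$ is $Q_8$-equivariant), and that such a self-equivalence induces a filtered self-equivalence of $\E_2^{hQ_8}$; the remaining bookkeeping with the $C_3$-eigenspaces is routine.
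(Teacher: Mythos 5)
Your second step contains a genuine error. You assert that, because $D$ is a $Q_8$-invariant unit of $\pi_*\E_2$, multiplication by $D$ is induced by a $Q_8$-equivariant self-equivalence of $\E_2$ and hence gives an automorphism of the whole $Q_8$-HFPSS, so that $d_r(D^k x)=D^k d_r(x)$ for all $r$ and ``$D^{\pm1}$ are permanent cycles.'' An element of $H^0(Q_8,\pi_8\E_2)=E_2^{0,8}$ being fixed by the action on underlying homotopy does \emph{not} mean it lifts to an equivariant (Borel) map $S^8\to\E_2$; the obstructions to such a lift are exactly the HFPSS differentials on that class, and here they are nonzero: by \cref{prop:d5} (equivalently \cref{prop:d5res}, since $\Res^{Q_8}_{C_4}(D)=\Delta_1$ supports a $d_5$) one has $d_5(D)=D^{-2}gh_2\neq 0$. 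So $D$ is not a permanent cycle, multiplication by $D$ does not commute with $d_5$ (your claim would give $d_5(D)=D\,d_5(1)=0$, a contradiction), and the asserted spectral-sequence automorphism does not exist. Since this is precisely the step that transports a differential from the $k=0$ summand to the $k=1,2$ summands and back, the proposal does not prove the corollary as written.

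The repair is to use the genuinely invertible permanent cycle $D^{8}$ from \cref{prop:periodicity} (a product of norm classes) instead of $D$ itself: multiplication by $D^{\pm 8}$ does commute with all differentials, and for $k_a\in\{1,2\}$ one multiplies $aD^{k_a}$ by $D^{8}$ (resp.\ $D^{16}$) to land on $aD^{9}$ (resp.\ $aD^{18}$), whose $D$-exponent is divisible by $3$, so it lies in the $G_{24}$-summand and the case $k=0$ (i.e.\ \cref{prop:G24Q8}) applies. This is the paper's argument. Your first step — splitting the $E_2$-page into $\zeta^{2k}$-eigenspaces for the residual $C_3=G_{24}/Q_8$-action and using $C_3$-equivariance of the differentials to force $k_a=k_b$ — is a reasonable and correct supplement (it needs the freeness of the $E_2$-page over the $G_{24}$-$E_2$-page on $\{1,D,D^2\}$, which the paper records, and the fact that $C_3$ acts on the whole spectral sequence since $EG_{24}$ models $EQ_8$), but it cannot substitute for the $D^{8}$-periodicity step above.
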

\begin{proof}
When $k_a=0$, this follows from \cref{prop:G24Q8}. For $k_a=1$, note that the $Q_8$-HFPSS for $\E_2$ is $D^8$-periodic by \cref{prop:periodicity}. The two differentials 
$$(1)~d_r (aD) = b D^{k_b} \text{~and~} (2)~d_r (aD^9)= b D^{k_b+8}$$ imply each other. We observe that the class $aD^9$ is a class in $G_{24}$-HFPSS$(\E_2)$. Then by the case $k_a=0$, the differential (2) happens in $G_{24}$-HFPSS$(\E_2)$. This implies the desired result. The case $k_a=2$ is similar.
\end{proof}
As a consequence, the computation of the $Q_8$-HFPSS for $\E_2$  splits into three copies with the same differential patterns and there are no differentials across different copies. In particular, the $G_{24}$-HFPSS for $\E_2$ is $192$-periodic.

\begin{rem}\rm
A similar statement holds for general height $4k+2$. A maximal finite subgroup in $\mathbb{S}_{4k+2}$ is $Q_8 \rtimes C_{3(2^{2k+1}-1)}\cong G_{24}\times C_{(2^{2k+1}-1)}$ \cite{Hew95}\cite[Section~4.3]{Buj12}. The computation of the $Q_8$-HFPSS for $E_{4k+2}$ also splits into copies of the computation of the $Q_8 \rtimes C_{3(2^{2k+1}-1)}$-HFPSS for $\E_{4k+2}$.
\end{rem}

\begin{rem}\rm
The $G_{24}$-HFPSS for $\E_2$ computation is essentially the same as the $2$-local $tmf$ computation \cite{Bau08}. However, our computation only relies on the $C_4$ computation of $\E_2$ and hence is an independent computation of the classical $tmf$ computations.
\end{rem}

In \cref{thm:sharpvanishingline}, we will improve the horizontal vanishing line result of the $Q_8$-HFPSS for $\E_{4k+2}$ in \cref{thm:vanishinglines}. In the case of the $Q_8$-HFPSS for $\E_2$, the improved vanishing line of filtration $23$ turns out to be sharp by computation. We start with the following fact.

\begin{prop}\label{prop:aclasstrivial}
Let $H\underline{\mathbb{Z}}$ be the Eilenberg-Mac Lane spectrum with trivial $Q_8$-action. Then on the $E_2$-page of $Q_8$-$\mathrm{HFPSS}(H\underline{\mathbb{Z}})$, the product $a_{\sigma_i}a_{\sigma_j}a_{\sigma_k}$ is trivial.
\end{prop}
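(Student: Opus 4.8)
The plan is to use the group cohomology of $Q_8$ with trivial $\mathbb{Z}$-coefficients together with the structure of the $a$-classes as Euler classes. First, recall that for the $Q_8$-HFPSS of $H\underline{\mathbb{Z}}$ (with trivial action), the $E_2$-page is $H^*(Q_8, \mathbb{Z})$, and the classes $a_{\sigma_i}, a_{\sigma_j}, a_{\sigma_k}$ live in cohomological degree $1$, being the images under $S^0 \to H\underline{\mathbb{Z}}$ of the Euler classes $a_{\sigma_i}\in\pi_{-\sigma_i}^{Q_8}S^0$, etc. Since each $\sigma_\ell$ is a one-dimensional (real sign-type) representation with isotropy a $C_4$ subgroup, the class $a_{\sigma_\ell}$ detects the Euler class in $H^1(Q_8;\widetilde{\mathbb{Z}}_\ell)$ for the appropriate orientation twist; after accounting for the fact that each $\sigma_\ell$ is orientation-reversing, the relevant permanent cycles $a_{\sigma_\ell}$ actually sit in $H^2(Q_8,\mathbb{Z})$ as the images of the $2$-torsion generators dual to the three index-$2$ (equivalently, the kernels are the three $C_4$) subgroups. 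So the statement to prove is really the algebraic fact that the product of the three degree-$2$ generators of $H^2(Q_8,\mathbb{Z}) \cong (\mathbb{Z}/2)^3$ vanishes in $H^6(Q_8,\mathbb{Z})$.

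The key computational input is the well-known ring structure of $H^*(Q_8;\mathbb{Z})$: it is $\mathbb{Z}[\alpha,\beta,\delta]/(2\alpha,2\beta,4\delta,\alpha^2+\alpha\beta+\beta^2,\alpha^2\beta,\alpha\beta^2)$ with $|\alpha|=|\beta|=2$, $|\delta|=4$, where $\alpha,\beta$ (and $\gamma:=\alpha+\beta$) are the three Euler classes associated to the three $C_4$-quotients $Q_8 \to C_2$. One then needs to match the topologically-defined classes $a_{\sigma_i}, a_{\sigma_j}, a_{\sigma_k}$ with $\alpha,\beta,\gamma$ (in some order). This identification can be made by restricting to the $C_4$-subgroups: $a_{\sigma_i}$ restricts nontrivially to exactly two of the three $C_4$ subgroups (namely $C_4\langle j\rangle$ and $C_4\langle k\rangle$) and trivially to $C_4\langle i\rangle$, which pins down which of $\alpha,\beta,\gamma$ it is, up to the relation. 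Alternatively — and this is probably cleaner — one can simply observe that $a_{\sigma_i}$ restricts to $0$ along $Q_8 \to C_4\langle i\rangle$ since $\sigma_i$ has nontrivial $C_4\langle i\rangle$-fixed points, hence $a_{\sigma_i}$ lies in the kernel of that restriction, and the three classes span the three "coordinate" lines in $H^2(Q_8,\mathbb{Z})\cong(\mathbb{Z}/2)^3$. Then $a_{\sigma_i}a_{\sigma_j}a_{\sigma_k} = \alpha\beta\gamma$ (up to reordering) $= \alpha\beta(\alpha+\beta) = \alpha^2\beta + \alpha\beta^2 = 0$ by the relations.

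I expect the main obstacle to be the bookkeeping of the identification between the equivariant Euler classes $a_{\sigma_\ell}$ and the standard algebraic generators of $H^*(Q_8,\mathbb{Z})$ — in particular, making sure the "orientation twist" is handled correctly so that these one-dimensional-representation Euler classes genuinely land in $H^2$ (via the Bockstein of the degree-$1$ classes in $\mathbb{Z}/2$-cohomology) rather than naively in $H^1$, and verifying that they correspond precisely to $\alpha,\beta,\alpha+\beta$ and not to some other triple. Once that dictionary is in place, the vanishing is immediate from the relation $\alpha^2\beta = \alpha\beta^2 = 0$ (equivalently from $\alpha^2 + \alpha\beta + \beta^2 = 0$, which gives $\alpha\beta\gamma = \alpha\beta(\alpha+\beta)$, and each monomial is a known relation). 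A cleaner route avoiding the full ring structure: restrict along the three $C_4$-subgroups simultaneously; since $H^*(Q_8,\mathbb{Z}) \to \prod_\ell H^*(C_4\langle \ell\rangle, \mathbb{Z})$ is injective in high enough degrees only after inverting nothing — actually it is not injective, so this shortcut fails and one does need the relations. Hence I would write the proof using the explicit ring presentation of $H^*(Q_8,\mathbb{Z})$ recalled in Section~\ref{sec:groupcoho} of the paper.
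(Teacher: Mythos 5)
Your approach misplaces the class whose vanishing is to be shown, and this is not just bookkeeping: it changes which cohomology group is relevant. In the $RO(Q_8)$-graded $E_2$-page, $a_{\sigma_\ell}$ is detected in filtration $1$, in $H^1\bigl(Q_8,\pi_{1-\sigma_\ell}(H\underline{\mathbb{Z}})\bigr)\cong H^1(Q_8,\mathbb{Z}_{\sigma_\ell})$, where the coefficients are $\mathbb{Z}$ twisted by the sign character of $\sigma_\ell$; it does not "actually sit in $H^2(Q_8,\mathbb{Z})$." The degree-$2$ Bockstein classes $\alpha,\beta,\alpha+\beta$ you invoke are the integral Euler classes of the doubled (or complexified) representations, i.e.\ they correspond to the squares/doubles $a_{\sigma_\ell}^2$ (equivalently $a_{2\sigma_\ell}$, up to orientation classes), not to $a_{\sigma_\ell}$ itself. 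Consequently the product $a_{\sigma_i}a_{\sigma_j}a_{\sigma_k}$ lives in filtration $3$, in $H^3\bigl(Q_8,\mathbb{Z}_{\sigma_i}\otimes\mathbb{Z}_{\sigma_j}\otimes\mathbb{Z}_{\sigma_k}\bigr)$, and since $\sigma_i\otimes\sigma_j\otimes\sigma_k$ is the trivial representation this is just $H^3(Q_8,\mathbb{Z})=0$ (recorded in \cref{RestrictionList}); that is exactly the paper's argument, phrased there via an invertible orientation class $u$ identifying the twisted coefficients with $\mathbb{Z}$. Your computation of $\alpha\beta(\alpha+\beta)$ in $H^6(Q_8,\mathbb{Z})$ is therefore a statement about a different product; its vanishing neither implies nor is needed for the proposition, and the ambient group for the actual class is already zero, so no ring-structure input is required.

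There are also factual slips in the algebra you lean on: $H^2(Q_8,\mathbb{Z})\cong(\mathbb{Z}/2)^2$ (not $(\mathbb{Z}/2)^3$), the degree-$4$ generator has order $8$ (not $4$), and the relation $\alpha^2+\alpha\beta+\beta^2=0$ is the mod-$2$ relation $x^2+xy+y^2=0$, not a relation among the integral degree-$2$ classes. If you wanted to salvage a proof along your lines you would have to first justify an identification of $a_{\sigma_i}a_{\sigma_j}a_{\sigma_k}$ with a class in integer-graded cohomology, which fails precisely because $u_{\sigma_\ell}$-type classes are not invertible for $H\underline{\mathbb{Z}}$. The efficient fix is to drop the $H^2$/$H^6$ detour entirely and argue in the twisted grading: triple twist trivial, hence coefficients $\mathbb{Z}$, hence the whole group $H^3(Q_8,\mathbb{Z})$ containing the product vanishes.
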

\begin{proof}

We prove a stronger statement that the whole group $H^3(Q_8,\pi_{3-\sigma_i-\sigma_j-\sigma_k}(H\underline{\mathbb{Z}}))$, where the class $a_{\sigma_i}a_{\sigma_j}a_{\sigma_k}$ lies in, is trivial. According to \cref{RestrictionList},  the group $H^3(Q_8,\mathbb{Z})$ is trivial. We observe that the homotopy group $\pi_{3-\sigma_i-\sigma_j-\sigma_k}(H\underline{\mathbb{Z}})$ as a $Q_8$-module is a copy of $\mathbb{Z}$ with trivial $Q_8$-action ($\sigma_i\otimes \sigma_j\otimes \sigma_k$ is a trivial $Q_8$-representation). Then we have $$H^0(Q_8,\pi_{3-\sigma_i-\sigma_j-\sigma_k}(H\underline{\mathbb{Z}}))=(\pi_{3-\sigma_i-\sigma_j-\sigma_k}(H\underline{\mathbb{Z}})))^{Q_8}\cong \mathbb{Z}.$$
Similarly we also have $$H^0(Q_8,\pi_{-3+\sigma_i+\sigma_i+\sigma_k}(H\underline{\mathbb{Z}}))= (\pi_{-3+\sigma_i+\sigma_i+\sigma_k}(H\underline{\mathbb{Z}}))^{Q_8}\cong \mathbb{Z}.$$
Let $u$ be a generator of $H^0(Q_8,\pi_{3-\sigma_i-\sigma_j-\sigma_k}(H\underline{\mathbb{Z}}))$. Then the class $u$ is invertible on the $E_2$-page of HFPSS for $H\underline{\mathbb{Z}}$ by the following paring
\[
\pi_{3-\sigma_i-\sigma_j-\sigma_k}(H\underline{\mathbb{Z}})\otimes \pi_{-3+\sigma_i+\sigma_i+\sigma_k}(H\underline{\mathbb{Z}}) \cong \mathbb{Z}.
\]
Therefore, the class $u$ induces an isomorphism $H^3(Q_8,\pi_{3-\sigma_i-\sigma_j-\sigma_k}(H\underline{\mathbb{Z}}))\simeq H^3(Q_8,\mathbb{Z})$, the latter of which is trivial.

\end{proof}

\begin{rem}
We thank Guillou for confirming and explaining  \cref{prop:aclasstrivial}.  This proposition also follows from Guillou and Slone's computation of quaternionic Eilenberg--Mac Lane spectra \cite{BC22}.
\end{rem}

\begin{thm}\label{thm:sharpvanishingline}Let $k$ denote a non-negative integer.
\begin{enumerate}
\item
The $RO(Q_8)$-graded $Q_8$-$\mathrm{TateSS}$ for $\E_{4k+2}$ vanishes after $E_{2^{4k+5}-9}$-page. 
\item The $RO(Q_8)$-graded $Q_8$-$\mathrm{HFPSS}$ for $\E_{4k+2}$ admits a strong vanishing line of filtration $2^{4k+5}-9$.
\end{enumerate}
\end{thm}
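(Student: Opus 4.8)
The plan is to prove (1) first and then deduce (2), exactly as \cite[Theorem~6.1]{DLS2022} is deduced from \cite[Theorem~5.1]{DLS2022}. For the deduction: a class of the $Q_8$-HFPSS for $\E_{4k+2}$ in filtration $s\ge 2^{4k+5}-9$ surviving to $E_\infty$ would, under the comparison map of \cref{lem:tateisorange}, correspond to a class of the $Q_8$-TateSS for $\E_{4k+2}$ in the same positive filtration; by (1) that Tate class must die on or before the $E_{2^{4k+5}-8}$-page, hence by a differential of length at most $2^{4k+5}-9$, so if it is a target then its source sits in filtration $\ge s-(2^{4k+5}-9)\ge 0$ and the differential is matched in the HFPSS, while if it is a source the target sits in positive filtration and is again matched --- either way contradicting that the class survives. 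So it suffices to prove (1), i.e.\ that the $Q_8$-TateSS for $\E_{4k+2}$ has $E_{2^{4k+5}-8}=0$.

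I would deduce this from the already available bound. \cref{thm:tatevanishing} (with $H=Q_8$, $N_{4k+2,Q_8}=2^{4k+5}-7$) gives $E_{2^{4k+5}-6}=0$, so the only possibly nonzero differentials of length $\ge 2^{4k+5}-8$ are $d_{2^{4k+5}-8}$ and $d_{2^{4k+5}-7}$. The first vanishes for parity reasons: since $\pi_t\E_{4k+2}=0$ for odd $t$, the $E_2$-group $\widehat H^s(Q_8,\pi^e_V\E_{4k+2})$ is zero whenever $|V|$ is odd, and a $d_r$ changes the underlying degree by $r-1$, so no differential of even length is nonzero. Hence $E_{2^{4k+5}-8}=E_{2^{4k+5}-7}$, and it remains to prove the \emph{Key Claim}: the $Q_8$-TateSS for $\E_{4k+2}$ has no nonzero $d_{2^{4k+5}-7}$. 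Granting this, $E_{2^{4k+5}-6}=0$ together with $d_{2^{4k+5}-7}\equiv 0$ forces $E_{2^{4k+5}-7}=0$, and then parity forces $E_{2^{4k+5}-8}=0$, completing (1).

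The Key Claim is where \cref{prop:aclasstrivial} is used. The value $2^{4k+5}-7$ in \cref{thm:tatevanishing} is produced through the norm structure of \cref{thm:normdiff}: the longest differential in the $C_2$-TateSS for $\E_{4k+2}$ has length $2^{4k+3}-1$ --- it is the slice differential of \cite[Theorem~9.9]{HHR16} attached to the real orientation $\MUR\to\E_{4k+2}$, and it is the longest because $v_{4k+2}$ is a unit --- and norming along the index-$4$ inclusion of the center $C_2\hookrightarrow Q_8$, for which $\Ind_{C_2}^{Q_8}(1)=1+\sigma_i+\sigma_j+\sigma_k$, yields a differential of length $4(2^{4k+3}-2)+1=2^{4k+5}-7$ whose source is $a_{\bar\rho}N_{C_2}^{Q_8}(-)$ with $a_{\bar\rho}=a_{\sigma_i}a_{\sigma_j}a_{\sigma_k}$. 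The same holds for the length-$(2^{4k+5}-7)$ differential one gets by norming the longest $C_4\langle i\rangle$-level differential along $C_4\langle i\rangle\hookrightarrow Q_8$: that $C_4$-differential is itself the norm of the $C_2$-slice differential, so its source is divisible by $a_{\sigma}$, and since $\Ind_{C_4\langle i\rangle}^{Q_8}(\sigma)=\sigma_j\oplus\sigma_k$ one has $N_{C_4\langle i\rangle}^{Q_8}(a_{\sigma})=a_{\sigma_j}a_{\sigma_k}$, so the resulting $Q_8$-source is again divisible by $a_{\sigma_i}a_{\sigma_j}a_{\sigma_k}$ (here one also uses $\Ind_{C_4\langle i\rangle}^{Q_8}(\lambda)=\mathbb H$ to match the ambient $u$- and $a$-classes with powers of $a_{\mathbb H}$). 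Now \cref{prop:aclasstrivial} --- equivalently its input $H^3(Q_8,\mathbb Z)=0$ together with the triviality of $\sigma_i\otimes\sigma_j\otimes\sigma_k$ --- shows that $a_{\sigma_i}a_{\sigma_j}a_{\sigma_k}=0$ already on the $E_2$-page of the $Q_8$-TateSS for $\E_{4k+2}$: this class is pulled back along the unit $S^0\to\E_{4k+2}$ from the corresponding class for $S^0$, whose $E_2$-representative lies in $H^3(Q_8,\pi_{3-\sigma_i-\sigma_j-\sigma_k}S^0)\cong H^3(Q_8,\mathbb Z)=0$. Consequently each of the normed $d_{2^{4k+5}-7}$'s above is vacuous, and to finish one argues, following the bookkeeping of \cite[Section~5]{DLS2022}, that together with the Leibniz rule, multiplicativity, and the $a_{\mathbb H}$- and $RO(Q_8)$-periodicities of \cref{cor:periodicity} these normed differentials generate all the differentials of the $Q_8$-TateSS --- so there is no other candidate for a $d_{2^{4k+5}-7}$, and all of them vanish.

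The main obstacle is precisely this last step: one must verify that the norm-generated description of \cite[Section~5]{DLS2022} really accounts for \emph{every} differential of length $2^{4k+5}-7$, so that the newly available relation $a_{\sigma_i}a_{\sigma_j}a_{\sigma_k}=0$ genuinely annihilates all of them; identifying the longest $C_2$-level differential of $\E_{4k+2}$ and computing its norm is the other, more routine, ingredient. (That the bound cannot be improved past $2^{4k+5}-9$ is a separate question, not part of the theorem; for $k=0$ it is witnessed by the $d_{23}$ of \cref{prop:d13one}.)
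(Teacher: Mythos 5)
Your deduction of (2) from (1) via \cref{lem:tateisorange} is fine and matches the paper, and your ingredients for (1) (the normed differential of \cref{thm:normdiff} whose source carries $a_{\bar\rho}=a_{\sigma_i}a_{\sigma_j}a_{\sigma_k}$, the vanishing $a_{\sigma_i}a_{\sigma_j}a_{\sigma_k}=0$ from \cref{prop:aclasstrivial}, and the parity observation that only odd-length differentials can be nonzero) are exactly the right ones. But the way you assemble them leaves a genuine gap, which you yourself flag as "the main obstacle": your Key Claim requires that \emph{every} $d_{2^{4k+5}-7}$ in the $RO(Q_8)$-graded Tate spectral sequence is generated, via Leibniz, multiplicativity and periodicity, by the normed differentials whose sources are divisible by $a_{\sigma_i}a_{\sigma_j}a_{\sigma_k}$. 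Neither \cite[Section~5]{DLS2022} nor anything in this paper classifies all differentials of the top length; Theorem~5.1 of \cite{DLS2022} only produces one specific predicted differential and never claims its norms generate everything. Without that classification your argument that $E_{2^{4k+5}-7}=E_{2^{4k+5}-6}=0$ does not close, and proving the classification is essentially as hard as (indeed circular with) the vanishing you are after.

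The missing idea is to exploit multiplicativity through the unit rather than through a census of differentials. The predicted differential of \cite[Theorem~5.1]{DLS2022} has target $1$: in the $Q_8$-TateSS one has $d_{2^{4k+5}-7}\bigl(N_{C_2}^{Q_8}(\bar v_{4k+2}^{-1}u_{2\sigma_2}^{2^{4k+2}-1}a_{\sigma_2}^{1-2^{4k+3}})\,a_{\bar\rho}\bigr)=1$. Since the Tate spectral sequence is a spectral sequence of rings, once $1$ is hit everything vanishes, so the whole question reduces to the page on which this single class dies. By naturality along $\mathbb{Z}\to\pi_*\E_{4k+2}$ and \cref{prop:aclasstrivial}, the source of the predicted differential is already zero on the $E_2$-page, so $1$ cannot survive to the $E_{2^{4k+5}-7}$-page and must be hit by a strictly shorter differential; your parity argument then forces its length to be at most $2^{4k+5}-9$, and the Tate spectral sequence vanishes after the $E_{2^{4k+5}-9}$-page. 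This is the paper's route: it never needs to rule out arbitrary top-length differentials, only the one hitting the unit, which is precisely the step your $a_{\bar\rho}=0$ observation handles.
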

\begin{proof}
\hfill

$(1)$~
Denote the height $4k+2$ by $h$. 
We briefly review the proof of the vanishing line of filtration $2^{h+3}-7$ in \cite[Theorem~6.1]{DLS2022} and explain the filtration improvement by $2$. By \cref{thm:normdiff}, in the $Q_8$-TateSS$(\E_h)$, there is a predicted differential
\begin{equation}
\label{eq:predicted}
	d_{2^{h+3}-7}(N_{C_2}^{Q_8}(\bar{v}_h^{-1}u_{2\sigma_2}^{2^h-1}a_{\sigma_2}^{1-2^{h+1}})a_{\bar{\rho}} ) = 1.
\end{equation}
By naturality, the unit $1$ has to be hit by a differential $d_r$ with $r\leq 2^{h+3}-7$. Note that since $1$ is hit, the spectral sequence vanishes at $E_r$-page.

The ring map $\mathbb{Z}\rightarrow \pi_*\E_{h}$ induces a map between $E_2$-pages of the $Q_8$-HFPSS for $H\underline{\mathbb{Z}}$ and $\E_{h}$. Then the naturality forces the source of (\ref{eq:predicted}) to be trivial since $a_{\bar{\rho}}=a_{\sigma_i}a_{\sigma_j}a_{\sigma_k}=0$  by \cref{prop:aclasstrivial}. For degree reasons, we conclude $r\leq 2^{h+3}-9$. So every class in the $Q_8$-TateSS$(E_h)$ will disappear on or before the $E_{2^{h+3}-9}$-page. 

$(2)$~ It follows from \cref{lem:tateisorange}.
\end{proof}

\begin{lem}\label{lemma:gPC}
In the $Q_8$-$\mathrm{HFPSS}$ for $\E_2$, the class $h_1$, $h_2$, $g$ are permanent cycles.
\end{lem}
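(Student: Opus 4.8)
The plan is to handle the three classes in increasing order of difficulty. For $h_1$ and $h_2$: these classes lie in filtration $1$ and internal degree $1$ and $3$ respectively, and they are the images under the unit map $S^0 \to \E_2$ of the classes in the $Q_8$-HFPSS for the sphere that detect $\eta$ and $\nu$. Since $\eta,\nu \in \pi_*S^0$ are genuine homotopy classes, their images in $\pi_*\E_2^{hQ_8}$ are detected by permanent cycles; as there is nothing in higher filtration in those stems that could serve as the target of a differential out of $h_1$ or $h_2$ anyway (one checks $E_2$-page degrees: a $d_r$ out of $(1,1)$ or $(3,1)$ lands in $(s-1,r)$ with $r\geq 2$, and by \cref{prop:2BSSE1} the relevant cohomology groups are easily seen to be either zero or occupied only by classes one can rule out), the cleanest argument is simply naturality along $S^0\to \E_2$. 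Alternatively, $h_1$ restricts to the permanent cycle $\eta$ in the $C_4$-HFPSS (computed in \cite{HHR17,BBHS20}) and $h_2$ is a transfer/restriction-compatible class; but the sphere argument is shortest and I would use that.

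For $g$ at $(20,4)$: the point is that $g = kD^3$ should detect (the image of) $\bar\kappa \in \pi_{20}S^0$, so again naturality along $S^0 \to \E_2$ would finish it, \emph{provided} we know $\bar\kappa$ maps to something detected in filtration exactly $4$ rather than being zero or detected in lower filtration. The honest way to see this without invoking external knowledge of $\pi_*S^0$ is to argue inside the spectral sequence: first show $g$ cannot support a differential, then (for the ``$E_\infty$'' naming in later sections) that it is not hit. That $g$ supports no differential is the main content. A $d_r(g)$ would land in bidegree $(19,4+r)$; running through \cref{table:2BSS_integer_Einf}, \cref{table:2BSS_integer_Einf_rel} and the multiplicative structure, the only candidate targets in stem $19$ are multiples of $v_1^2h_1$-type and $h_2$-type classes in high filtration, and each can be excluded — either by the horizontal vanishing line of filtration $23$ from \cref{thm:sharpvanishingline} (which bounds $r$), by $h_1$- or $h_2$-linearity (if $g$ supported a differential, so would $h_1 g$ or $h_2 g$, and the relations $h_1 h_2 = 0$, $h_2^4=0$, etc.\ combined with the module structure force contradictions), or by comparison with the $C_4$-HFPSS via the restriction map, under which $g$ restricts to a known permanent cycle in $C_4$-HFPSS$(\E_2)$ (the class detecting the restriction of $\bar\kappa$, namely a power-of-$\done$ multiple of $u_{2\lambda}a_\lambda^{?}$ in the notation of \cref{prop:E2C4Slicename}). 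The restriction argument is the most robust: since $\Res(g) \neq 0$ on the $E_2$-page and is a permanent cycle in the $C_4$-HFPSS, any $d_r(g)$ would have to restrict to $0$, i.e.\ $d_r(g) \in \ker(\Res)$, and by \cref{lem:exoticrestr} (applied to $Q_8 \supset C_4$ — though note $Q_8$ is not cyclic, so one uses instead that $\ker(\Res^{Q_8}_{C_4\langle i\rangle})$ is controlled by $a_{\sigma_j}$-ish classes) one checks the relevant kernel is empty in that bidegree.

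The hard part will be the exclusion of a differential \emph{on} $g$ purely internally: stem $19$ at high filtration is exactly where the three ``copies'' ($1, D, D^2$) of the $G_{24}$-computation overlap in a way that requires bookkeeping, and one must be careful that the vanishing line at filtration $23$ only bounds $r \leq 18$, leaving several pages to rule out. My expectation is that the decisive tool is the combination of (i) $g$ being $h_1$- and $h_2$-torsion-compatible (using the relation $h_2 \cdot x^2h_2 = 4kD$ from \cref{rmk:h2ext} and its $D^3$-multiple to pin down $h_2 g$), and (ii) the restriction to $C_4$ being a permanent cycle. If one is willing to cite that $g$ detects $\bar\kappa$ and $\bar\kappa$ is a permanent homotopy class — which is how \cite{Bau08} proceeds — then the whole lemma is immediate by naturality; I would present the internal argument as the primary proof and remark on the naturality shortcut.
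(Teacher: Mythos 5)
There is a genuine gap: the naturality you lean on for $h_1$ and $h_2$ (and as the ``shortcut'' for $g$) does not go through as stated. The unit $S^0\to \E_2^{hQ_8}$ only tells you that \emph{if} the image of $\eta$, $\nu$, or $\bar\kappa$ is nonzero in $\pi_*\E_2^{hQ_8}$ and detected in the expected filtration, then the detecting class is a permanent cycle; it does not by itself rule out that the image is zero, or detected only in much higher filtration after a filtration jump. Indeed there is no map of $E_2$-pages carrying a permanent cycle onto $h_1$ or $h_2$: in the $Q_8$-HFPSS of the sphere, $\eta$ and $\nu$ are detected in filtration $0$ by invariants of $\pi_*S^0$, and these map to zero in $H^0(Q_8,\pi_*\E_2)$ since $\pi_1\E_2=\pi_3\E_2=0$. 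Your backup claims also fail: it is not true that there are no possible differential targets (e.g.\ stem $0$ contains $k^2D$ in filtration $8$ and stem $2$ contains $k^2Dh_1^2$ in filtration $10$, so $h_1$ and $h_2$ have a priori targets), and ``$\Res^{Q_8}_{C_4}(g)$ is a permanent cycle'' does not imply $g$ is one, since a differential on $g$ could land in the kernel of restriction; the kernel-control statement you invoke, \cref{lem:exoticrestr}, is only proved for cyclic $2$-groups, as you yourself note, and no substitute is supplied. Finally, citing ``$g$ detects $\bar\kappa$'' is circular here, since the nontriviality and filtration of that Hurewicz image is exactly what must be established.

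The paper closes precisely this gap with one external input that your proposal never uses: by \cite[Theorem~1.8]{LSWX19}, $\bar\kappa$ (and likewise $\eta$, $\nu$) has \emph{nonzero} image in $\pi_*\E_2^{hC_2}$, detected in filtration $4$ (resp.\ $1$) in the $C_2$-HFPSS. Since the unit factors as $S^0\to \E_2^{hQ_8}\xrightarrow{\Res}\E_2^{hC_2}$ and restriction cannot decrease filtration, the image in $\pi_*\E_2^{hQ_8}$ is nonzero and detected in filtration at most $4$ (resp.\ $1$); for degree reasons the only candidate is $g$ (resp.\ $h_1$, $h_2$), which therefore survives. Note the paper deliberately uses only the $C_2$-level Hurewicz image, not the $C_4$-level data you propose to import. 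To repair your write-up, replace the bare naturality/degree-reasons claims with this factorization through $\E_2^{hC_2}$ together with the citation of \cite{LSWX19}; the long internal argument you sketch for $g$ (vanishing line plus case-by-case exclusion in stem $19$) is not carried out and is unnecessary once that input is in place.
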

\begin{proof}
Consider the following maps
$$S^0\xrightarrow{\mathrm{unit}} E_2^{hQ_8} \xrightarrow{\mathrm{res}} E_2^{hC_2}.$$
By \cite[Theorem 1.8]{LSWX19}, the class $\bar{\kappa} \in \pi_{20}S^0$ maps to a non-trivial class in $\E_2^{hC_2}$ in filtration $4$ in the $C_2$-HFPSS for $E_2$. Thus the image of $\bar{\kappa}$ in $\pi_*\E_2^{hQ_8}$ is non-trivial. For degree reasons, it is detected by the class $g$ in $Q_8$-HFPSS($E_2$). The proofs for $h_1$ and $h_2$ are similar.
\end{proof}

We only use the Hurewicz image of $\E_2^{hC_2}$ as the input. This has been systematically studied in \cite{LSWX19}. Our method does not assume the knowledge of the Hurewicz image of $\E_2^{hC_4}$.

\subsection{Differentials in the integer-graded pages}\label{subsection:differentials}

We suggest readers refer to the charts while reading the proofs in this section.

All statements about differentials in this subsection are differentials in integer-graded $Q_8$-HFPSS($\E_2$) if there is no specification.


\begin{prop}\label{prop:d3}
The class $v_1^6$ in $(12,0)$ supports a $d_3$-differential
\[
d_3(v_1^6)=v_1^4 h_1^3.
\]
\end{prop}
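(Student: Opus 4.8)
The plan is to import this differential from a subgroup, in the spirit of the equivariant method of this section. Since restriction commutes with differentials, it suffices to show that $\Res^{Q_8}_H(v_1^6)$ supports a $d_3$ restricting the asserted one, for some $H\subseteq Q_8$; then $d_3(v_1^6)\neq 0$ in the $Q_8$-HFPSS, and since $v_1^4h_1^3$ generates the only group at $(11,3)$ on the $E_3$-page that can receive a $d_3$ — this one reads off from \cref{table:2BSS_integer_Einf}, \cref{table:2BSS_integer_Einf_rel} and the charts of \S\ref{section:charts}, after checking that $v_1^4h_1^3$ is nonzero there and is not already killed by a $d_2$ — the differential must be $d_3(v_1^6)=v_1^4h_1^3$. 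I would take $H$ to be the (unique, central) $C_2\subset Q_8$, whose HFPSS for $\E_2$ is completely explicit by \cref{prop:E2C2Slicename}. On that $E_2$-page the fundamental Real-orientation differential
\[
d_3(u_{2\sigma_2})=\bar{r}_1\,a_{\sigma_2}^3
\]
holds, as in the $BP_{\mathbb R}$ picture (and consistently with the $C_4$-HFPSS input used throughout), and $\bar{r}_1$ is a permanent cycle there since it detects the polynomial generator of $\pi_{\star\rho_2}\E_2^{hC_2}$.

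Next I would match names. Reading the bidegrees in \cref{prop:E2C2Slicename}: the filtration-$0$ stem-$4$ class $\Res^{Q_8}_{C_2}(v_1^2)$ is $\bar{r}_1^2u_{2\sigma_2}$ and the filtration-$1$ stem-$1$ class $\Res^{Q_8}_{C_2}(h_1)$ is $\bar{r}_1 a_{\sigma_2}$, each up to units and higher $\mu_0$-terms (the $C_2$-action being given by (\ref{eq:typeone})). Hence $\Res(v_1^6)=\bar{r}_1^6u_{2\sigma_2}^3$ and $\Res(v_1^4h_1^3)=\bar{r}_1^7u_{2\sigma_2}^2a_{\sigma_2}^3$ up to units. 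Since $\bar{r}_1$ is a $d_3$-cycle, the Leibniz rule gives
\[
d_3\!\left(\bar{r}_1^6u_{2\sigma_2}^3\right)=3\,\bar{r}_1^6u_{2\sigma_2}^2\,d_3(u_{2\sigma_2})=3\,\bar{r}_1^7u_{2\sigma_2}^2a_{\sigma_2}^3,
\]
which is nonzero because $3$ is a unit in $\mathbb{W}$, and which equals $\Res(v_1^4h_1^3)$ up to a unit. Since $\Res^{Q_8}_{C_2}$ is injective on $H^0(Q_8,\pi_{12}\E_2)$, we get $\Res(v_1^6)\neq 0$ and $d_3(\Res(v_1^6))\neq 0$, so $d_3(v_1^6)\neq 0$; the degree count above then pins the target to $v_1^4h_1^3$.

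I expect the main difficulty to be not conceptual but bookkeeping: pinning down the fundamental $C_2$-differential $d_3(u_{2\sigma_2})=\bar{r}_1a_{\sigma_2}^3$ (and the fact that $\bar{r}_1$ is a $d_3$-cycle) from the $C_2$-computation and the Real orientation, and controlling the higher $\mu_0$-order ($I$-adic) corrections in the restriction formulas for $v_1$ and $h_1$ so that they do not perturb the leading-term computation above. One should also verify on the $E_2$-page that $v_1^4h_1^3\neq 0$ and is not a $d_2$-boundary, so that ``$d_3$'' is the correct length rather than ``$d_2$''. If the $C_2$-identification turns out awkward, the same scheme can be run through a $C_4\langle i\rangle$ subgroup using the analogous low $d_3$ in the $C_4$-HFPSS for $\E_2$ of \cite{HHR17,HSWX2018,BBHS20} together with the norm/restriction formalism (\cref{thm:normdiff}). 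Finally, I would note that the vanishing-line bound of filtration $23$ (\cref{thm:sharpvanishingline}) is far too weak to force a $d_3$ in this range, so the equivariant comparison is genuinely needed.
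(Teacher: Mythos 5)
Your strategy is the same as the paper's: import the differential by naturality of restriction to a subgroup. The paper simply restricts to $C_4\langle i\rangle$, where $\Res^{Q_8}_{C_4}(v_1^6)=T_2^3$ and $\Res^{Q_8}_{C_4}(h_1)=\eta$ on the nose, and quotes $d_3(T_2^3)=T_2^2\eta^3$ directly from \cite[Proposition~5.21]{BBHS20}; you instead restrict to the central $C_2$ and rebuild the input from $d_3(u_{2\sigma_2})=\bar r_1 a_{\sigma_2}^3$. That $C_2$-differential is indeed part of the BBHS computation underlying \cref{prop:E2C2Slicename}, so your route can be made to work, but as written it trades a one-line citation for exactly the name-matching and $\mu_0$-correction bookkeeping you flag as the difficulty; the $C_4$ route exists precisely to avoid that.

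There is, however, a concrete gap in your target identification. The bidegree $(11,3)$ of the $E_2$-page is not generated by $v_1^4h_1^3$ alone: $Dh_1^3$ also lives there (and is nonzero, since $h_1^4=v_1^4k\neq 0$). Worse for your argument, $Dh_1^3$ and $v_1^4h_1^3$ have the \emph{same} $C_2$-restriction up to a unit and $\mu_0$-corrections (both restrict to $\bar r_1^7u_{2\sigma_2}^2a_{\sigma_2}^3$ times a unit), so knowing $\Res^{Q_8}_{C_2}\bigl(d_3(v_1^6)\bigr)\neq 0$ does not by itself pin the target to $v_1^4h_1^3$. You can close this either by invoking \cref{cor:split} (the spectral sequence splits into the $D^0,D^1,D^2$ summands with no differentials between them, and both $v_1^6$ and $v_1^4h_1^3$ lie in the $D^0$ summand while $Dh_1^3$ does not), or by working at the $C_4$ level as the paper does, where the relation $T_2^2=\Delta_1\bigl((\mu-2)^2+4\bigr)$ gives $T_2^2\eta^3=\Delta_1\mu^2\eta^3\neq\Delta_1\eta^3$, so the restriction itself separates the two candidates. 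With one of these repairs, and with the $C_2$-input differential properly cited, your proof is complete.
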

\begin{proof}
By construction, we have $\Res^{Q_8}_{C_4}(v_1^6)=T_2^3,  \Res^{Q_8}_{C_4}(h_1)=\eta$. In $C_4$-HFPSS$(\E_2)$, \cite[Proposition~5.21]{BBHS20} implies that we have
\[
d_3(T_2^3)=T_2^2 \eta^3.
\]
The result now follows by naturality.

\end{proof}

\begin{cor}\label{cor:d3}
The class $v_1^2 h_1$ at $(5,1)$ supports a $d_3$-differential
\[
d_3(v_1^2h_1)= h_1^4.
\]
\end{cor}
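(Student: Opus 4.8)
The plan is to leverage the $d_3$-differential on $v_1^6$ from \cref{prop:d3} together with the $E_2$-page relation $h_1^4 = v_1^4 k$ of \cref{prop:2BSSE1} and the multiplicativity of the spectral sequence. Since $h_1$ is a permanent cycle by \cref{lemma:gPC}, the Leibniz rule gives
\[
d_3(v_1^6 h_1) = d_3(v_1^6)\cdot h_1 = v_1^4 h_1^3 \cdot h_1 = v_1^4 h_1^4 = v_1^8 k,
\]
the last equality being the relation $h_1^4 = v_1^4 k$. Now rewrite $v_1^6 h_1 = v_1^4 \cdot (v_1^2 h_1)$. The class $v_1^4$ is a permanent cycle: restricting to $C_4\langle i\rangle$ one has $\Res^{Q_8}_{C_4}(v_1^4) = T_2^2 = \Delta_1((\mu-2)^2+4)$, which is manifestly a permanent cycle in the $C_4$-$\mathrm{HFPSS}(\E_2)$, and the target bidegree of any putative differential on $v_1^4$ contributes nothing on $E_2$; alternatively, $v_1^4$ detects an honest element of $\pi_8 \E_2^{hQ_8}$. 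Hence a second application of the Leibniz rule yields
\[
v_1^4 \cdot d_3(v_1^2 h_1) = v_1^8 k = v_1^4 \cdot h_1^4 .
\]
It therefore remains to show that multiplication by $v_1^4$ is injective on the relevant part of $E_3$ in bidegree $(4,4)$.

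To control this, first use the splitting. By \cref{cor:split} the class $v_1^2 h_1$ lies in the $D^0$-summand (the $G_{24}$-part) of the $Q_8$-$\mathrm{HFPSS}$, so $d_3(v_1^2 h_1)$ is a class in that summand in bidegree $(4,4)$; in particular it carries no component along $kD$ or its multiples, which sit in the $D^1$-summand. From the $E_2$-page description (\cref{thm:2BSSint} with \cref{table:2BSS_integer_Einf} and \cref{table:2BSS_integer_Einf_rel}) the $D^0$-part in bidegree $(4,4)$ is generated by $h_1^4 = v_1^4 k$, so $d_3(v_1^2 h_1) \in \{0,\, h_1^4\}$. Next, multiplication by $v_1^4$ sends $h_1^4$ to $v_1^8 k$, while the relations $x^2 v_1^4 = y^2 v_1^4 = x h_1 \cdot v_1^4 = 0$ confine the $v_1^4$-torsion to classes not occurring in the $D^0$-part of $(4,4)$; hence $v_1^4\cdot(-)$ is injective there. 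Since the displayed identity shows $v_1^4 \cdot d_3(v_1^2 h_1) = v_1^4 h_1^4 \neq 0$, we conclude $d_3(v_1^2 h_1) = h_1^4$.

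I expect the main obstacle to be exactly this last bidegree bookkeeping — confirming that $v_1^4$-multiplication does not annihilate $h_1^4$ and that the target bidegree $(12,4)$ contains no cancelling classes — together with the preliminary point that $v_1^4$ is a permanent cycle. An alternative route that avoids the $v_1^4$-analysis is to work entirely by restriction to $C_4\langle i\rangle$: there $\Res^{Q_8}_{C_4}(v_1^2 h_1) = T_2\eta$, and since \cite[Proposition~5.21]{BBHS20} gives $d_3(T_2^3) = T_2^2\eta^3$ (forcing $d_3(T_2) = \eta^3$ as $T_2$ survives to $E_3$ in the $C_4$-$\mathrm{HFPSS}$), one gets $d_3(T_2\eta) = \eta^4 = \Res^{Q_8}_{C_4}(h_1^4)$; combined with injectivity of $\Res^{Q_8}_{C_4}$ in bidegree $(4,4)$, equivalently $\ker(\Res^{Q_8}_{C_4}) = \mathrm{im}(a_{\sigma_i})$ meeting this bidegree trivially (its source bidegree $(5,3)$ being empty on $E_2$), this again pins down $d_3(v_1^2 h_1) = h_1^4$.
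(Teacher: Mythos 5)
Your main argument is exactly the paper's proof: from \cref{prop:d3} and the Leibniz rule one gets $d_3(v_1^6h_1)=v_1^4h_1^4$, and since $v_1^4$ is a $3$-cycle, factoring $v_1^6h_1=v_1^4\cdot(v_1^2h_1)$ forces $d_3(v_1^2h_1)=h_1^4$; your extra bookkeeping (injectivity of $v_1^4$-multiplication in bidegree $(4,4)$, the splitting of \cref{cor:split}) just makes explicit what the paper leaves implicit. The proposal is correct and takes essentially the same route as the paper.
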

\begin{proof}
By \cref{prop:d3}, we have $d_3(v_1^6 h_1) = v_1^4 h_1^4.$ Note that $v_1^4$ is a $3$-cycle. This forces the desired $d_3$-differential.
\end{proof}

\cref{prop:d3} produces a family of $d_3$-differentials by the Leibniz rule:
$$d_3(D^m g^s v_1^{4l+2}h_1^n)=D^m g^sv_1^{4l}h_1^{n+3} \text{, and }
   d_3(D^m g^s v_1^2h_1^{n})= D^mg^sh_1^{n+3}$$
for any $(m,s,l,n)\in \mathbb{Z}\times \mathbb{Z}_{\geq 0}\times \mathbb{Z}_{\geq 1}\times \mathbb{Z}_{\geq 1}$.

For degree reasons (and the following proposition), these are all the non-trivial $d_3$-differentials.

\begin{prop}\label{prop:inftybo}
The following classes survive to the $E_{\infty}$-page. 
\[
2D^m v_1^{4l+2}, D^mv_1^{4l} , D^mv_1^{4l}h_1,D^mv_1^{4l}h_1^2 
~~, ~~(m,l)\in \mathbb{Z}\times \mathbb Z_{\geq 1}.\]
\end{prop}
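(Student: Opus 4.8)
The plan is to establish this via a dimension count combined with the vanishing line of \cref{thm:sharpvanishingline} and the already-determined $d_3$-differentials. First I would observe that each of the listed classes lies in low filtration ($\leq 2$), so they can only be killed by incoming differentials $d_r$ with $r \geq 2$ originating from classes of higher filtration in the appropriate stem; thus I must check, stem by stem, that no such source survives to its potential differential page, and that each listed class is itself a permanent cycle. For the permanent cycle claim: the classes $D^m v_1^{4l}$ (filtration $0$) are products of $v_1^4$ (a permanent cycle, being a power of $v_1^4$ which restricts to a permanent cycle in the $C_4$-HFPSS and survives — it detects a nonzero homotopy class) and $D^m$, and $D^m$ is invertible on the $E_2$-page but I should instead note $D^8$ is the periodicity class and $D^m$ for other $m$ is a permanent cycle because $E_2^{hQ_8}$ has homotopy in every degree divisible by $8$ in filtration $0$ by the edge homomorphism; alternatively, since $D^m v_1^{4l}$ has filtration $0$ and nothing in its stem on the $E_2$-page has negative filtration, it is automatically a permanent cycle. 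The classes $D^m v_1^{4l} h_1$ and $D^m v_1^{4l} h_1^2$ are permanent cycles by \cref{lemma:gPC}, which shows $h_1$ is a permanent cycle, combined with the permanent-cycle status of $D^m v_1^{4l}$ and the Leibniz rule. Finally $2 D^m v_1^{4l+2}$: the class $v_1^{4l+2}$ alone is \emph{not} a permanent cycle (it supports the $d_3$ of \cref{prop:d3} when multiplied by $h_1$, and more directly $v_1^{4l+2}$ itself supports a $d_3$ by the Leibniz rule extending \cref{prop:d3}), but $2 v_1^{4l+2}$ does survive: the $d_3$-target of $v_1^{4l+2}$ is $v_1^{4l}h_1^3$ (up to the precise form in the displayed family) which is $2$-torsion since $h_1$ is $2$-torsion on the $E_2$-page, so $d_3(2 v_1^{4l+2}) = 2 v_1^{4l}h_1^3 = 0$; and there are no longer differentials out of $2 D^m v_1^{4l+2}$ for degree reasons, nor can it be hit, since it is a $2$-divisible class in filtration $0$.

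The bulk of the work, and the main obstacle, is ruling out incoming differentials. For this I would use \cref{thm:sharpvanishingline}: any class that could kill one of these targets must have filtration in the range $[1, 22]$ and lie in the correct stem (one less, since $d_r$ raises filtration by $r$ and lowers stem by $1$). Using the $E_2$-page description from \cref{prop:2BSSE1}, \cref{thm:2BSSint}, and \cref{table:2BSS_integer_Einf}, together with the $D^8$-periodicity (\cref{prop:periodicity}) and the splitting into $\{1, D, D^2\}$-copies (\cref{cor:split}) — which reduces everything to a finite check within a single $G_{24}$-fundamental-domain stem range — I would enumerate the finitely many classes in each relevant stem and filtration and verify that each is either already known to support a shorter differential (from the $d_3$ family just described, or one of the differentials to be proven in later propositions whose sources are at higher filtration), or is itself forced to be hit. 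In practice the cleanest route is: the $E_2$-page in the relevant stems (stems $\equiv 0, 1, 2, 8, 9, 10 \bmod \ldots$) consists, after accounting for the $d_3$'s, of exactly the listed classes plus the classes in filtration $\geq 3$ that will be shown to cancel among themselves in subsequent propositions; so $d_r$-survival for $r \geq 3$ of the listed classes follows once those later propositions are in place, and the statement should perhaps be read as recording the $E_\infty$-survivors in these stems, with the "no incoming differential" verification being a bookkeeping consequence of the complete differential computation rather than an independent argument.

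Concretely, the steps in order: (1) note all listed classes have filtration $\leq 2$ and record their stems; (2) prove each is a permanent cycle — filtration-$0$ classes trivially, $h_1$- and $h_1^2$-multiples via \cref{lemma:gPC} and Leibniz, and $2 v_1^{4l+2}$-multiples because their $d_3$ would land in a $2$-torsion group; (3) invoke \cref{cor:split} and \cref{prop:periodicity} to reduce to a bounded stem range; (4) consult the $E_2$-page tables to list every potential source of an incoming differential into each listed class, and check each such source either supports the $d_3$ of \cref{prop:d3} (or its Leibniz consequences) or is otherwise accounted for by the differentials established in the surrounding propositions, leaving the listed classes untouched; (5) conclude survival to $E_\infty$. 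I expect step (4) to be the delicate part, and the honest framing may be that this proposition is partly a forward reference — its full justification is completed once all differentials in \cref{subsection:differentials} are in hand — but the permanent-cycle content of steps (1)--(3) is self-contained and is what is genuinely needed downstream.
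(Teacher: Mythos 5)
There is a genuine gap, and it sits exactly where the content of the proposition lies. You have the direction of the differentials reversed: in the HFPSS, $d_r$ raises filtration, so a class in filtration $\leq 2$ can essentially never be the \emph{target} of a differential --- this is the easy half, which the paper dispatches with ``cannot be hit by degree reasons,'' and your proposed ``bulk of the work'' (enumerating potential sources of filtration $1$--$22$ that could hit these classes) addresses a non-issue. The serious question is whether these low-filtration classes \emph{support} differentials, which a priori can be as long as $d_{23}$. Your argument for that half does not work: a filtration-$0$ class is not ``automatically a permanent cycle'' (in this very spectral sequence $v_1^6$, $D$, $4D$, $2D^2$, $D^4$ all support differentials, cf.\ \cref{prop:d3,prop:d5,prop:d7one,prop:d7two}); $D^m$ is not a permanent cycle for general $m$, so the edge-homomorphism/product argument for $D^mv_1^{4l}$ fails; and the fact that $\Res(v_1^4)$ is a permanent cycle in the $C_4$-HFPSS would not imply $v_1^4$ is one, since a nonzero differential target may restrict to zero. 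For $2D^mv_1^{4l+2}$ you only show the $d_3$ vanishes (its target is $2$-torsion), and you dismiss longer differentials ``for degree reasons'' or defer to later propositions; that is unjustified (compare $d_7(4D)$ and $d_7(2D^2)$), and the deferral is circular in spirit, since this proposition pins down the $bo$-pattern at the $E_3$-page that the subsequent differential arguments take as given.

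The paper's proof is self-contained and uses two tools you do not invoke. For $D^mv_1^{4l}h_1^n$ ($n=0,1,2$) it passes to the $Q_8$-Tate spectral sequence, where $g$ is invertible, and exhibits these classes as targets of $d_3$-differentials with $g^{-1}$-shifted sources; a class hit in the TateSS is a permanent cycle there, and \cref{lem:tateisorange} together with \cref{met:Tatemethod} transports permanent-cycle status back to the HFPSS, while ``cannot be hit'' is immediate from the low filtration. For $2D^mv_1^{4l+2}$ it computes the additive norm $N(x)=\sum_{g\in Q_8}g(x)$ on $v_1^{2l+1}(u^{-1})^{2l+1}$ using the action formulas (\ref{eq:typeone}) and shows the leading term is $2v_1^{4l+2}$; since the norm is the $d_1$ of the Tate tower, these classes vanish in Tate cohomology and hence support no differential in the HFPSS. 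Any repair of your outline needs an argument of this kind ruling out \emph{outgoing} differentials of all lengths, not an enumeration of incoming ones.
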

\begin{proof}
The classes $D^mv_1^{4l} , D^mv_1^{4l}h_1,D^mv_1^{4l}h_1^2 $ cannot be hit by degree reasons. They are permanent cycles by \cref{lem:tateisorange} and the $Q_8$-TateSS($\E_2$) $d_3$-differentials 
\[
d_3(D^{m+3}g^{-1}v_1^{4l-2}h_1^{n+4})=D^m v_1^{4l} h_1^{n+3}, ~~ m,l\in \mathbb{Z}, ~l\neq0, n\geq -3.
\]
As for the classes $2D^m v_1^{4l+2}$, we consider the additive norm map
\[
H_0(Q_8,(\E_2)_*)\xrightarrow{N} H^0(Q_8, (\E_2)_*)
\]
where $N(x)=\sum\limits_{g\in Q_8} g(x)$.
By the $Q_8$-action formulas (\ref{eq:typeone}), we have 
\begin{align*}
&N(v_1^{2l+1}(u^{-1})^{2l+1})=\sum\limits_{g\in Q_8} g(v_1^{2l+1}(u^{-1})^{2l+1})\\
&=2v_1^{2l+1}(u^{-1})^{2l+1}+2\left(\frac{v_1+2u^{-1}}{\zeta^2-\zeta}\right)^{2l+1}\left(\frac{v_1-u^{-1}}{\zeta^2-\zeta}\right)^{2l+1}\\
&+2\left(\frac{v_1+2\zeta^2u^{-1}}{\zeta^2-\zeta}\right)^{2l+1}\left(\frac{\zeta v_1-u^{-1}}{\zeta^2-\zeta}\right)^{2l+1}+2\left(\frac{v_1+2\zeta u^{-1}}{\zeta^2-\zeta}\right)^{2l+1}\left(\frac{\zeta^2 v_1-u^{-1}}{\zeta^2-\zeta}\right)^{2l+1}.
\end{align*}
The leading term of the above formula on the $E_\infty$-page of the $2$-BSS for $H^*(Q_8, \mathbb{W}[v_1,u^{-1}])$ is $2v_1^{4l+2}$ for $l\geq 1$. Then we have $$N(D^mv_1^{2l+1}(u^{-1})^{2l+1})=D^m\sum\limits_{g\in Q_8} g(v_1^{2l+1}(u^{-1})^{2l+1})=2D^mv_1^{4l+2}$$
since $D$ is $Q_8$-invariant. As the additive norm map is the $d_1$-differential on $E_1$-page of the $Q_8$-TateSS for $\E_2$, we have the classes $2D^m v_1^{4l+2}$ are permanent cycles who survive to the $E_{\infty}$-page by \cref{lem:tateisorange}.
\end{proof}

\begin{rem}\rm
All the classes supporting or receiving non-trivial $d_3$-differentials and all classes in \cref{prop:inftybo} are sometimes referred to as the $bo$-pattern. They match the pattern of (many copies of) $\pi_*KO$, the homotopy groups of the real $K$-theory. See \cite[Definition ~2.1]{BG18} for more details. 
\end{rem}


The following result is the first example of the strong vanishing line method (\cref{thm:sharpvanishingline}). The method gives differentials of three lengths (including the longest $d_{23}$-differential) all at once (see \cref{fig:Vanishing line method}). 

\begin{prop}\label{prop:d13one}
There are differentials
\begin{enumerate}
    \item $d_5(D^{-13} g^5 d h_2 )=4D^{-16}g^7;$
   \item $d_{13}(D^{-7}  g^3 ch_1 )=2D^{-16}g^7;$
\item $d_{23}(D^{-1} gh_1 )=D^{-16}g^7.$
\end{enumerate}
\end{prop}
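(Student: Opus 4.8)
The plan is to extract all three differentials at once from a single feature of the $E_2$-page in bidegree $(12,28)$, using the strong vanishing line of filtration $23$. First I would record that $D^{-16}g^7$ is a permanent cycle: $D^{-16}=(D^{-8})^2$ is a permanent cycle because $D^8$ is the invertible periodicity class of \cref{prop:periodicity}, and $g$ is a permanent cycle by \cref{lemma:gPC}. On the $E_2$-page one has $D^{-16}g^7=k^7D^5$, and this generates a $\mathbb{Z}/8$: the class $k$ is invertible in the associated Tate cohomology (\cref{rm:invertibleclass}), so $k^7$ has the same additive order $8$ as $k$, and multiplying by the unit $D^5$ preserves this. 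Once the $d_3$-differentials of \cref{prop:d3} and \cref{cor:d3} have removed the remaining $bo$-pattern classes sitting in this bidegree, the group at $(12,28)$ is exactly $\mathbb{Z}/8\{D^{-16}g^7\}$. Since $28>23$, the strong horizontal vanishing line (\cref{thm:sharpvanishingline}) forces this entire $\mathbb{Z}/8$ to be annihilated by differentials no later than the $E_{23}$-page.

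Next I would enumerate the possible sources. A differential landing in $(12,28)$ must start in stem $13$, and a $d_r$ with $r\le 23$ must start in filtration $\ge 5$. Scanning the $E_2$-chart (\S\ref{section:charts}) in the window $5\le f\le 23$ of stem $13$, the relevant classes are
\[
D^{-1}gh_1\ \text{at}\ (13,5),\qquad D^{-7}g^3ch_1\ \text{at}\ (13,15),\qquad D^{-13}g^5dh_2\ \text{at}\ (13,23),
\]
each of which is $2$-torsion because $h_1$, $xh_1$, and $x^2$ are $2$-torsion, hence so are $gh_1$, $ch_1=Dxh_1^2$, and $dh_2=D^2x^2h_2$. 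A differential out of a $2$-torsion class can destroy only a $\mathbb{Z}/2$, so at least three differentials are required to kill the $\mathbb{Z}/8$ at $(12,28)$. Working up by length, the $d_5$ from stem $13$ into $(12,28)$ must hit the unique order-$2$ element $4D^{-16}g^7$ (its source is $2$-torsion, so its image is $2$-torsion in $\mathbb{Z}/8$), dropping the group to $\mathbb{Z}/4$; the subsequent $d_{13}$ then hits $2D^{-16}g^7$, dropping it to $\mathbb{Z}/2$; and the final $d_{23}$ hits $D^{-16}g^7$, necessarily on the $E_{23}$-page, which is the last page the vanishing line allows. To pin the correct source to each step I would use multiplicativity: the other $2$-torsion classes occupying $(13,5)$, $(13,15)$, $(13,23)$ (and the higher filtrations $(13,21)$, $(13,29)$, $\dots$) are products of the shorter-differential sources with cycles such as $k^2D$, so by the Leibniz rule they can only support differentials into the higher rungs of the $k$-tower in stem $12$; this leaves exactly $D^{-13}g^5dh_2$, $D^{-7}g^3ch_1$, and $D^{-1}gh_1$ to carry the $d_5$, $d_{13}$, and $d_{23}$ claimed.

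The hardest part is this last identification and the chart bookkeeping behind it: I would need to check from the $E_2$-page that after the $d_3$-differentials the bidegree $(12,28)$ contains nothing but $\mathbb{Z}/8\{D^{-16}g^7\}$, and then disentangle the several $2$-torsion classes present in each of $(13,5)$, $(13,15)$, $(13,23)$ so as to see that the asserted source is the one not already forced, via Leibniz from a shorter differential, to support a longer differential into a higher $k$-tower element. The count of differentials itself is rigid — three $\mathbb{Z}/2$'s worth must disappear by the $E_{23}$-page — so the vanishing line supplies the conceptual content, and what remains is the multiplicative accounting that attaches the correct name to each source.
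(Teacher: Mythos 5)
Your overall strategy is the paper's: the target tower $\mathbb{Z}/8\{D^{-16}g^7\}$ at $(12,28)$ consists of permanent cycles above the strong vanishing line of \cref{thm:sharpvanishingline}, so $D^{-16}g^7$, $2D^{-16}g^7$ and $4D^{-16}g^7$ must all be hit, and since every potential source in stem $13$ is $2$-torsion, three differentials are forced, with lengths dictated by the source filtrations. That part is fine.

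The genuine gap is in the step where you pin the three differentials to the three named sources. The paper does this by invoking \cref{cor:split}: the $E_2$-page splits $D$-power-by-$D$-power (equivalently by $C_3$-eigenvalue) into three copies of the $G_{24}$-spectral sequence, and no differential crosses copies; within the copy containing $D^{-16}g^7=k^7D^5$ the only stem-$13$ classes in filtrations $5$ through $23$ are exactly $D^{-1}gh_1$, $D^{-7}g^3ch_1$ and $D^{-13}g^5dh_2$. You never use this splitting, and your substitute argument does not work. First, your enumeration of stem $13$ is incomplete: besides the classes you list, there are classes at $(13,7)$ and $(13,13)$ (e.g.\ $D^{-2}gch_1$ and $D^{-6}g^3h_1$), at $(13,21)$ (e.g.\ $D^{-10}g^5h_1$), and a second class in each of $(13,15)$ and $(13,23)$, namely $D^{-8}g^3dh_2$ and $D^{-12}g^5ch_1$; without the splitting, a $d_{21}$, $d_{15}$ or $d_7$ from these, or a $d_5$/$d_{13}$ from the "wrong" class in bidegrees $(13,23)$/$(13,15)$, is not excluded, so the specific sources in the statement are not determined. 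Second, the Leibniz mechanism you propose to rule them out is invalid: the factors you multiply by, such as $k^2D$, are not cycles ($D$ supports the $d_5$ of \cref{prop:d5}, and $k=gD^{-3}$ likewise fails to be a $d_5$-cycle), and in the logical order of the paper \cref{prop:d13one} is proved \emph{before} any $d_5$ or higher differential is known, so no differential on those competing classes is available to feed into a Leibniz argument. Replacing that paragraph with an appeal to \cref{cor:split} (the eigenvalue decomposition under the $C_3\subset G_{24}$ action) closes the gap and recovers the paper's proof.
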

\begin{proof}
We suggest readers compare the arguments with \cref{fig:Vanishing line method}.
The class $D^{-16}g^7$ is a permanent cycle in filtration $28$, which is above the vanishing line (\cref{thm:sharpvanishingline}). Therefore, the classes $D^{-16}g^7$, $2D^{-16}g^7$ and $4D^{-16}g^7$ must receive differentials. According to \cref{cor:split}, $Q_8$-HFPSS$(\E_2)$ splits into three parts. On the $E_2$-page, these three parts are modules over the $E_2$-page of $G_{24}$-HFPSS$(\E_2)$, and all differentials do not cross different copies. In \cref{fig:Vanishing line method}, we highlight the relevant copy. By inspection, we obtain the desired $d_5$, $d_{13}$ and $d_{23}$-differentials.\end{proof}

\begin{figure}[!htb]
\begin{center}
{\includegraphics[trim={0cm, 1cm, 0cm, 3cm},clip,page=1,scale=1]{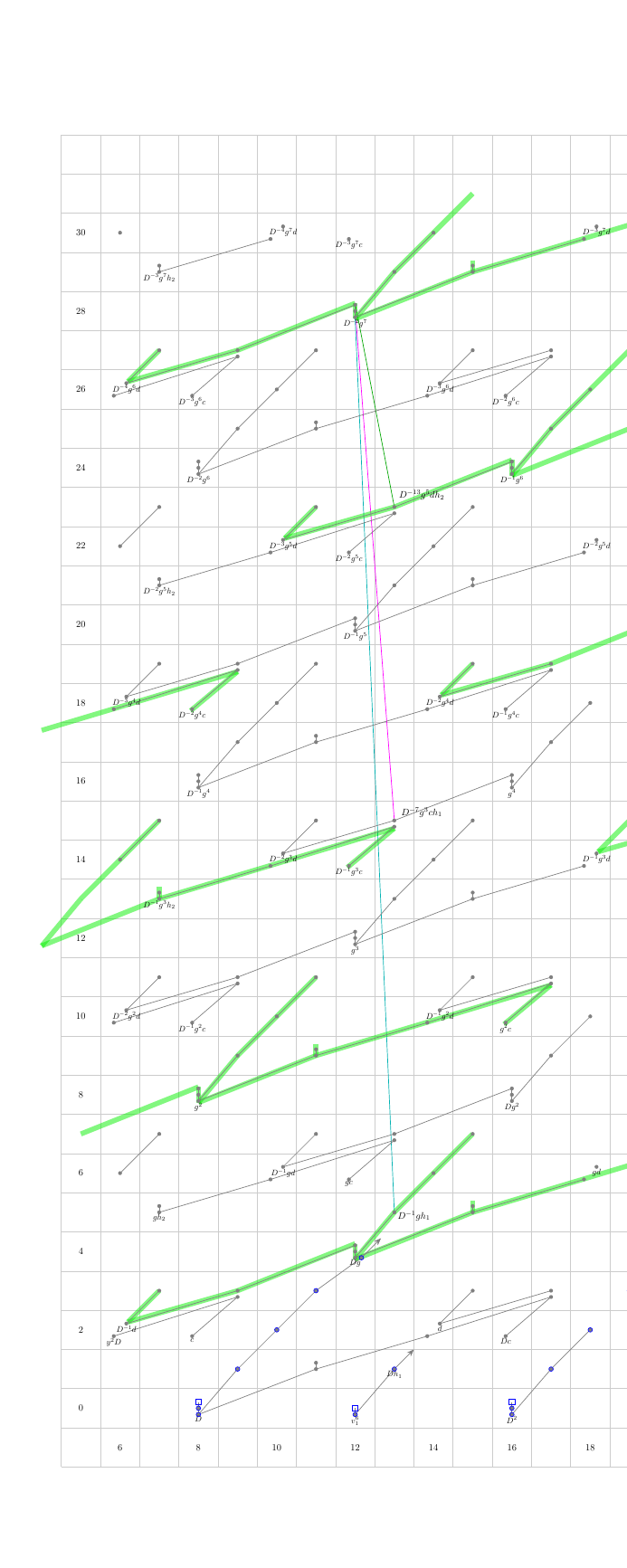}}
\caption{$d_5$, $d_{13}$, $d_{23}$-differentials}
\label{fig:Vanishing line method}
\hfill
\end{center}
\end{figure}

\begin{cor}\label{prop:d5}
The class $D$ at $(8,0)$ supports a $d_5$-differential
$$d_5(D)=D^{-2}gh_2.$$
\end{cor}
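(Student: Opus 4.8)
The plan is to deduce this from \cref{prop:d13one}(1) using only the multiplicative structure of the $E_2$-page. First I would note that $D$ survives to $E_5$: it is not a $v_1$-power, so by the classification of $d_3$-differentials above it supports no $d_3$, and there is nothing in bidegrees $(7,2),(7,4)$ for a $d_2$ or $d_4$. Inspecting the $E_2$-page one checks that the only nonzero class in bidegree $(7,5)$ is $D^{-2}gh_2=kDh_2$, so $d_5(D)=\epsilon\,D^{-2}gh_2$ for some coefficient $\epsilon$, and the content is to show $\epsilon$ is a unit (modulo the order of $kDh_2$).

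Now I would feed in \cref{prop:d13one}(1). Using $g=kD^3$ and $d=D^2x^2$, its source factors as $D^{-13}g^5dh_2=D\cdot(k^5D^3x^2h_2)$ and its target is $4D^{-16}g^7=4k^7D^5$. Since $g,h_2$ are permanent cycles (\cref{lemma:gPC}) and $k=gD^{-3}$, the identity $d_5(g)=0$ forces $d_5(k)=-3\epsilon\,k^2h_2$. Expanding $d_5(k^5D^3x^2h_2)$ by the Leibniz rule and substituting the hidden relation $x^2h_2^2=4kD$ of \cref{rmk:h2ext}, every term becomes a multiple of $8k^7D^4$ except for the correction term $k^5D^3h_2\,d_5(x^2)$; since $8k^7=0$ and the group $E_5^{-3,7}$ is annihilated by $h_2$ (it is generated by $kD^{-1}h_2^3$, killed since $h_2^4=0$), this gives $d_5(k^5D^3x^2h_2)=0$. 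Hence $d_5(D^{-13}g^5dh_2)=d_5(D)\cdot k^5D^3x^2h_2=\epsilon\,kDh_2\cdot k^5D^3x^2h_2=4\epsilon\,k^7D^5$ after one more use of $x^2h_2^2=4kD$. Comparing with \cref{prop:d13one}(1) yields $4(\epsilon-1)k^7D^5=0$; as $k^7D^5$ has order $8$, $\epsilon$ is odd, so $d_5(D)=D^{-2}gh_2$ (the sign being immaterial).

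The step I expect to be the main obstacle is the bookkeeping that pins down the two relevant groups on $E_5$: that bidegree $(7,5)$ contains only $D^{-2}gh_2$ (so the target is unambiguous) and that $E_5^{-3,7}$ is $h_2$-torsion (so the Leibniz correction term drops out). Both are routine from the $E_2$-page description but are most transparent on the charts. Indeed, since $Q_8$-HFPSS$(\E_2)$ splits into three isomorphic copies by \cref{cor:split} and both $D$ and $D^{-2}gh_2$ lie in the same copy, one could alternatively read this differential off the charts directly, exactly as in the proof of \cref{prop:d13one}.
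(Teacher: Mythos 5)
Your overall strategy (write $d_5(D)=\epsilon\,D^{-2}gh_2$, feed the known differential of \cref{prop:d13one}(1) through the Leibniz rule and the relation $x^2h_2^2=4kD$, and solve for $\epsilon$) is reasonable and close in spirit to the paper's argument, but it has a genuine gap at the step where you discard the correction term $k^5D^3h_2\,d_5(x^2)$. Your justification is that $E_5^{-3,7}$ is generated by $kD^{-1}h_2^3$ and is annihilated by $h_2$; this is false. The bidegree $(-3,7)$ also contains the class $kx^2h_2=D^{-5}g\,d\,h_2$, which is nonzero and survives to $E_5$ (it is untouched by the $d_3$-family), and which is \emph{not} $h_2$-torsion: by the very hidden relation you invoke, $h_2\cdot kx^2h_2=k\,x^2h_2^2=4k^2D\neq 0$. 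Since $d_5(x^2)=D^{-2}d_5(d)$ and you have no control over $d_5(d)$ (its target bidegree $(13,7)$ contains $k\,dh_2$, with the same non-$h_2$-torsion phenomenon), the correction term can a priori contribute exactly a multiple of $4k^7D^4$, i.e.\ a quantity of the same size as the one you are comparing. Your final identity then only gives $4(\epsilon-1+\alpha)k^7D^5=0$ for an unknown coefficient $\alpha$, which does not determine the parity of $\epsilon$. (A smaller inaccuracy: the first two Leibniz terms are each $4\epsilon k^7D^4$ modulo $8$, not multiples of $8k^7D^4$; they happen to cancel in the sum, so that part is fine in outcome.)

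The missing input is precisely what the paper supplies. The paper's proof first upgrades \cref{prop:d13one}(1) to $d_5(D^3dh_2)=4g^2$, uses $dh_2^2=4g$ to get $d_5(D^3d)=gdh_2$, and then reduces to showing that $D^2d$ is a $5$-cycle; this last point is settled by observing that the only possible target of $d_5(D^2d)$, namely $D^{-1}gdh_2$, itself supports a nonzero $d_5$ (obtained by multiplying $d_5(D^3d)=gdh_2$ by the $5$-cycle $D^{-4}gh_2$), hence cannot be hit. That step is exactly the control on $d_5(d)$ (equivalently on your $d_5(x^2)$) that your argument lacks; if you add such an argument your computation goes through. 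Finally, your closing suggestion to ``read the differential off the charts as in \cref{prop:d13one}'' is not a substitute for a proof: the target $D^{-2}gh_2$ sits in filtration $5$, far below the vanishing line, so the vanishing-line mechanism that drives \cref{prop:d13one} does not apply to this differential directly.
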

\begin{proof}
Note that $D^{8}$ is an invertible permanent cycle (\cref{prop:periodicity}), and $g^5$ is a permanent cycle (\cref{lemma:gPC}). By \cref{prop:d13one}(1) and the Leibniz rule, there is a $d_5$-differential 
\begin{equation}
\label{eq:d51}
d_5(D^3 d h_2 )=4g^2.
\end{equation}
The relation $dh_2^2=4g$ (see \cref{rmk:h2ext} under $2$BSS names) forces the following $d_5$-differential 
\begin{equation}
\label{eq:d5}
	d_5(D^3 d)=gdh_2.
\end{equation}

With (\ref{eq:d5}), it suffices to show that $D^2 d$ is a $5$-cycle. In fact, the only possible $d_5$ target of $D^2 d$ supports a differential
\[
d_5(D^{-1}gdh_2)=4D^{-4}g^3.
\]
by multiplying $D^{-4}gh_2$ with (\ref{eq:d5}).
Note that $D^{-4}$ is a $5$-cycle since $D$ is a $3$-cycle.

\end{proof}

All the remaining $d_5$-differentials follow from the Leibniz rule. There are no more $d_5$-differentials by degree reasons and \cref{cor:split}.
\\

We also get a $d_9$-differential from the $d_{13}$-differential in \cref{prop:d13one}(2).
\begin{cor}\label{prop:d9one}
The class $Dc$ at $(16,2)$ supports a $d_9$-differential
\[
d_9(D c)=D^{-5}g^2dh_1.
\]
\end{cor}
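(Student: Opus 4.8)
The plan is to derive this $d_9$-differential directly from the $d_{13}$-differential $d_{13}(D^{-7}g^3 ch_1)=2D^{-16}g^7$ of \cref{prop:d13one}(2), using multiplicativity together with the known permanent cycles. First I would factor the source of that differential as $D^{-7}g^3 ch_1=(h_1 D^{-8}g^3)\cdot (Dc)$. Since $h_1$ and $g$ are permanent cycles (\cref{lemma:gPC}) and $D^{\pm 8}$ is an invertible permanent cycle (\cref{prop:periodicity}), the factor $h_1 D^{-8}g^3$ is a permanent cycle; as $D^{-7}g^3 ch_1$ is \emph{not} a permanent cycle, it follows that $Dc$ is not a permanent cycle either. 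The class $Dc$ sits in filtration $2$, so it cannot be hit; hence it must support a nontrivial differential, and the task is to determine its length and target.

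Next I would rule out the short differentials off $Dc$. It is not part of the $bo$-pattern (\cref{prop:inftybo}), so $d_3(Dc)=0$; and since $d_5(D)=D^{-2}gh_2$ (\cref{prop:d5}) and $c=Dxh_1$, the Leibniz rule gives $d_5(Dc)=d_5(D)\cdot c=D^{-2}g\,(h_2 c)=0$, using the relation $h_1 h_2=0$ from \cref{prop:2BSSE1}; one excludes any $d_6,d_7,d_8$ off $Dc$ similarly, by checking that the relevant targets in stem $15$ vanish. It then remains to inspect stem $15$ in filtration $\ge 9$: by the splitting of \cref{cor:split} we may restrict to the summand containing $Dc$, and by the $E_2$-page description the only nonzero class there is $D^{-5}g^2 dh_1$ at $(15,11)$. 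This forces $d_9(Dc)=D^{-5}g^2 dh_1$. As a consistency check, $(h_1 D^{-8}g^3)\cdot(D^{-5}g^2 dh_1)=D^{-13}g^5 d h_1^2=0$ since $dh_1^2=D^2 x^2 h_1^2=0$ on the $E_2$-page, so $D^{-7}g^3 ch_1$ indeed survives to $E_{13}$ as required.

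The main obstacle is the bookkeeping: confirming that $Dc$ supports no $d_6$, $d_7$, or $d_8$ and that $D^{-5}g^2 dh_1$ is still a nonzero (and not yet hit) class on the $E_9$-page. This is where careful inspection of the charts in \S\ref{section:charts}, the splitting \cref{cor:split}, and the already-established $d_3$- and $d_5$-differentials do the work; the $bo$-pattern together with the $E_2$-relations $h_1h_2=0$ and $x^2h_1^2=0$ should make stem $15$ in low filtration transparent enough to finish.
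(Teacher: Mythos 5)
Your overall strategy is the same as the paper's: both proofs extract from \cref{prop:d13one}(2), via multiplication by the permanent cycles $h_1$, $g$ and $D^{\pm 8}$, that $Dc$ cannot survive unscathed, and then identify the differential by degree bookkeeping (the paper phrases it contrapositively: if $Dc$ did not support the $d_9$ it would be a $13$-cycle, hence so would $Dch_1$, contradicting \cref{prop:d13one}(2)). The genuine gap in your write-up is the exclusion of a $d_5$ on $Dc$. The Leibniz rule gives $d_5(Dc)=d_5(D)\,c\pm D\,d_5(c)$; you dropped the second term, and nothing you have established shows $d_5(c)=0$. Your fallback claim that ``the relevant targets in stem $15$ vanish'' is false in filtration $7$: the class $D^{-1}gh_1^3$ at $(15,7)$ is nonzero on the $E_2$-page ($h_1^3\neq 0$ since $h_1^4=v_1^4k$, and $k$ becomes invertible in Tate cohomology), it is not a $d_3$-target (the $d_3$-family only reaches its $v_1^4D^{-1}$-multiples), and it in fact survives until it is killed by the later differential $d_7(2D^2)=D^{-1}gh_1^3$ of \cref{prop:d7one}; similarly $d_5(c)$ could a priori hit the nonzero class $D^{-2}gh_1^3$ at $(7,7)$. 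Note also that these potential targets lie in the same summand of \cref{cor:split} as $Dc$, so the splitting does not dispose of them. What is needed here is the paper's prior determination that the $d_5$-differentials generated by $d_5(D)$ (following \cref{prop:d5}) are the only ones, or some independent argument that $c$ (equivalently $xh_1$) is a $d_5$-cycle; Leibniz plus ``targets vanish'' does not do it.

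A secondary inaccuracy: at stem $15$ the $E_2$-page in filtration $\geq 9$ is not concentrated at $(15,11)$; for instance $g^3D^{-6}h_2$ at $(15,13)$ and $g^3D^{-6}h_1^3$ at $(15,15)$ are nonzero there. These classes do die on earlier pages (the first is already wiped out on the $E_5$-page by a $d_5$ in the $g$-multiplied family, the second by a $d_7$), so your conclusion stands, but those checks are exactly the bookkeeping hidden in the phrase ``for degree reasons'' and must be carried out before you can conclude that the differential on $Dc$ has length exactly $9$ with the stated target. With the $d_5$-point repaired by citing the established $d_5$-pattern and with these higher-filtration targets verified to be dead, your argument becomes a correct variant of the paper's proof; as written, the $d_5$-step would not survive scrutiny.
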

\begin{proof}
We observe that in $Q_{8}$-HFPSS($\E_2$) there is an $h_1$ extension from $Dc$ to $Dch_1$. We prove this by contradiction.
Suppose that $Dc$ does not support the claimed $d_9$-differential.
Then for degree reasons, $Dc$ becomes a $13$-cycle. However, this contradicts \cref{prop:d13one} since $Dch_1$ supports a non-trivial $d_{13}$-differential. 
\end{proof}

\begin{prop}\label{prop:d7one}
The classes $4D$ and $2D^2$ at $(16,0)$ support the following $d_7$-differentials
\begin{enumerate}
\item $d_7(4D)=D^{-2}gh_1^3;$

\item $d_7(2D^2)=D^{-1}gh_1^3.$
\end{enumerate}
\end{prop}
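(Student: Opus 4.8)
The plan is to first verify that $4D$ and $2D^2$ are nonzero permanent cycles on the $E_7$-page, and then to force the two $d_7$-differentials by the vanishing line method, in the same spirit as \cref{prop:d13one}.

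\emph{Survival to $E_7$.} By \cref{prop:d5} and the Leibniz rule, $d_5(4D)=4D^{-2}gh_2$ and $d_5(2D^2)=2\cdot 2D\cdot d_5(D)=4D^{-1}gh_2$, both of which vanish because $h_2$ is $4$-torsion on the $E_2$-page by \cref{table:2BSS_integer_Einf}. Neither class is hit or supports a $d_3$ (the only $d_3$'s are those of the $bo$-pattern recorded after \cref{cor:d3}, and an inspection of the charts shows the relevant bidegrees contain no competitor), so $4D$ and $2D^2$ survive to $E_6=E_7$. Moreover $2D$ and $D^2$ support nontrivial $d_5$'s (again by \cref{prop:d5}, since $2h_2\neq 0$), so after the passage $E_5\to E_6$ the part of the relevant $D$-power copy in the sense of \cref{cor:split} in those bidegrees is cyclic, generated by $4D$ and $2D^2$ respectively.

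\emph{The $d_7$-differentials.} Write the targets as $D^{-2}gh_1^3=D\cdot(kh_1^3)$ and $D^{-1}gh_1^3=D^2\cdot(kh_1^3)$; since $g$ and $h_1$ are permanent cycles (\cref{lemma:gPC}) and $h_1h_2=0$ on the $E_2$-page, these are $d_3$- and $d_5$-cycles. Multiplying by suitable powers of $D$ and of the permanent cycle $g$ translates both targets into filtration greater than $23$, above the strong vanishing line of \cref{thm:sharpvanishingline}; hence the translated classes, being permanent cycles, must be hit. Invoking the splitting of $Q_8$-$\mathrm{HFPSS}(\E_2)$ into three $D$-power copies across which no differential runs (\cref{cor:split}), a degree count inside the relevant copy forces these classes to be hit by a $d_7$ whose source is the matching $D^{\bullet}g^{\bullet}$-multiple of $4D$ (resp. $2D^2$). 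Transporting the differential back to the original bidegree, by passing to the $Q_8$-Tate spectral sequence where the auxiliary multipliers become invertible and then applying \cref{met:Tatemethod}, yields $d_7(4D)=D^{-2}gh_1^3$ and $d_7(2D^2)=D^{-1}gh_1^3$.

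\emph{Main obstacle.} The delicate part is the second step: one must check that the translated target is not already killed by a shorter differential, that no other permanent cycle in its bidegree competes as the source of the forced differential, and that the relations among $h_1,h_2,x,y$ together with the hidden $h_2$-extension of \cref{rmk:h2ext} pin the $d_7$-target to $D^{-2}gh_1^3$ and $D^{-1}gh_1^3$ rather than to some other class of the same bidegree. An alternative route is to deduce both differentials from the $C_4$-$\mathrm{HFPSS}(\E_2)$, where a $d_7$ on a $u_{2\sigma}$-type class is available from \cite{BBHS20}; in that approach the crux is to identify $\Res^{Q_8}_{C_4}(D)$ together with the restrictions of the target classes and to check that the latter are nontrivial.
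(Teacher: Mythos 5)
The central step of your argument does not go through as written. The vanishing-line method (\cref{thm:sharpvanishingline}) only forces a class in filtration $\geq 23$ to die if that class is known to be a \emph{permanent cycle}, but you only verify that $D^{-2}gh_1^3$ and $D^{-1}gh_1^3$ are $d_3$- and $d_5$-cycles before translating them upward (note also that $D$ is not a permanent cycle, so the translation has to be by $g$ and the invertible class $D^{\pm 8}$ only). Nothing you have established rules out that these classes support a longer differential themselves, and at this stage of the computation this is not a formality: for example, $D^{-2}gh_1^3$ at $(7,7)$ has the nonzero class $g^4D^{-11}d=k^4Dd$ at $(6,18)$, which lies in the same $D$-power copy in the sense of \cref{cor:split}, as an a priori possible $d_{11}$-target, and excluding such possibilities requires real input. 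Indeed, in the paper the analogous permanence statement needed to run the vanishing-line argument for the $d_7$ on $D^4$ --- that $Dh_1^3$ is a permanent cycle (\cref{lem:pcford7}) --- is only proved after the $d_{13}$- and $d_{23}$-differentials of \cref{prop:d13two} and \cref{cor:d23two}, which in turn rest on the hidden $2$-extension \cref{lem:hiiden2Q8} imported by restriction from $C_4$; your proposed proof would sit logically before all of that. The concluding ``degree count inside the relevant copy'' identifying the source as the correct multiple of $4D$ (resp.\ $2D^2$) is likewise asserted rather than carried out, though that part is routine chart checking once permanence is in hand.

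For comparison, the paper's actual proof uses a different and much shorter mechanism: since $d_5(D)=D^{-2}gh_2$ and $d_5(D^2)=2D^{-1}gh_2$ (\cref{prop:d5}), Toda's relation $4\nu=\eta^3$, i.e.\ the hidden $2$-extension from $2h_2$ to $h_1^3$, forces $D^{-2}gh_1^3$ and $D^{-1}gh_1^3$ to be hit, and then degree reasons together with \cref{cor:split} leave $4D$ and $2D^2$ as the only possible sources. If you wish to salvage your route, you must first prove permanence of the targets (for instance by a \cref{met:Tatemethod}-style argument or by eliminating the potential longer differentials bidegree by bidegree), at which point you would essentially be reproducing the harder argument reserved in the paper for \cref{prop:d7two}, rather than the short argument that suffices here.
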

\begin{proof}
By \cref{prop:d5} and the hidden $2$ extension from $2h_2$ to $h_1^3$ (see \cite{Tod62}), $D^{-2}gh_1^3$ has to be hit by a differential. For degree reasons and \cref{cor:split}, the only possible source is $4D$. The second $d_7$-differential follows similarly from $d_5(D^2)=2D^{-1}gh_2$. 
\end{proof}

The $d_7$-differential on $D^4$ (which we prove in \cref{prop:d7two}) turns out to be a hard one, as it does not follow from primary relations like the Leibniz rule or (hidden) extensions.  We will first prove several $d_9,d_{13}$-differentials, and the $d_7$-differential follows from the vanishing line method.

\begin{prop}\label{prop:diff9}
The class $D^5 ch_1$ at $(49,3)$ and the class $D^5 c$ at $(48,2)$ support the following differentials. 
\begin{enumerate}
\item $d_{13}(D^5 ch_1)=2D^{-4}g^4;$
\item $d_9(D^5 c)=D^{-1}g^2 dh_1.$
\end{enumerate}
\end{prop}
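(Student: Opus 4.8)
The plan is to bootstrap both differentials from the ones already in hand (\cref{prop:d9one}, \cref{prop:d13one}) by multiplication with a suitable class, using the $G_{24}$-splitting of \cref{cor:split} to control which classes can interact, and — for part (1) — relocating the differential through the $Q_8$-TateSS by means of \cref{met:Tatemethod} and \cref{lem:tateisorange}.

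For part (2): on the $E_2$-page we have $D^5 c = D^4\cdot(Dc)$, and $d_9(Dc)=D^{-5}g^2dh_1$ is \cref{prop:d9one}. Since $D^5 c$ sits in filtration $2$, it can never be the target of a differential, so it is either a permanent cycle or supports one; it is not in the $bo$-pattern, and the Leibniz rule applied to $d_5(D)=D^{-2}gh_2$ (\cref{prop:d5}) together with the $4$-torsion of $h_2$ shows $d_5(D^4)=4D^{-1}gh_2=0$, so $D^4$, hence $D^5c$, survives to $E_7$. Checking via the $d_7$-differentials of \cref{prop:d7one} and the $G_{24}$-splitting that $D^4$ supports no $d_7$, one gets that $D^4$ survives to $E_9$; multiplying $d_9(Dc)=D^{-5}g^2dh_1$ by $D^4$ then yields $d_9(D^5 c)=D^{-1}g^2dh_1$, provided (again a \cref{cor:split} inspection) the target $D^{-1}g^2dh_1$ is not hit by a shorter differential.

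For part (1): the $E_2$-page relation $Dxh_1^2=h_2^3$ from \cref{table:2BSS_integer_Einf_rel} gives $ch_1=h_2^3$, hence $D^5ch_1=D^5h_2^3$; since $h_2$ is a permanent cycle (\cref{lemma:gPC}) with $h_2^4=0$, and since by part (2) $d_9(D^5c)=D^{-1}g^2dh_1$ with $D^{-1}g^2dh_1\cdot h_1=D^{-1}g^2dh_1^2=0$ (using the relation $x^2h_1^2=0$), the class $D^5ch_1$ is both a $d_5$- and a $d_9$-cycle, and there are no $d_{10}$-, $d_{11}$-, $d_{12}$-differentials in this range, so it survives to $E_{13}$. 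To produce the $d_{13}$ I would transport the differential $d_{13}(D^{-7}g^3ch_1)=2D^{-16}g^7$ of \cref{prop:d13one}(2) into the $Q_8$-TateSS, where $k$ is invertible (\cref{rm:invertibleclass}), and multiply by $k^{-3}D^3$. From $d_5(g)=0$ and $d_5(D)=D^{-2}gh_2$ one extracts $d_5(k)=-3k^2h_2$, whence $d_5(k^{-3}D^3)=4k^{-2}D^3h_2=0$; analogous checks (or, again, the $G_{24}$-splitting) show $k^{-3}D^3$ is a $d_r$-cycle for all $r\le 13$. Then $d_{13}(D^5ch_1)=k^{-3}D^3\cdot 2D^{-16}g^7=2D^{-4}g^4$ holds in the Tate SS, a differential lying entirely in positive filtration, and pulls back to the HFPSS by \cref{lem:tateisorange}.

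The main obstacle is the premature-differential bookkeeping that underlies both parts: confirming that $D^4$ carries no nontrivial $d_7$, that $k^{-3}D^3$ remains a cycle through the $E_{13}$-page of the Tate SS, and that neither $D^{-1}g^2dh_1$ nor $2D^{-4}g^4$ is killed by a shorter differential. Each of these reduces to a finite inspection of the $E_2$-page in the relevant $G_{24}$-summand, but assembling these inspections — and making sure the multiplications above do not collide with the $bo$-pattern $d_3$'s or with the $d_5$-, $d_7$-families — is the technical heart of the argument.
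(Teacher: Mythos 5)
Your argument has a decisive gap: both parts rest on the claim that $D^4$ carries no nontrivial $d_7$ (in part (2) you assert this outright, and in part (1) you need the Tate-class $k^{-3}D^3=g^{-3}D^8\cdot D^4$ to be a $d_r$-cycle for all $r\le 13$, which is equivalent, since $g^{-3}D^8$ is an invertible permanent cycle in the Tate spectral sequence). But this claim is false: by \cref{prop:d7two} one has $d_7(D^4)=Dgh_1^3\neq 0$, and since the source sits in filtration $0$ this differential also occurs in the Tate spectral sequence by \cref{lem:tateisorange}, so $d_7(k^{-3}D^3)=g^{-2}D^9h_1^3=k^{-2}D^3h_1^3\neq 0$. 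Consequently neither $D^4$ nor $k^{-3}D^3$ is present on the $E_9$- (resp.\ $E_{13}$-) page, and the Leibniz-rule multiplications you use to transport $d_9(Dc)$ and $d_{13}(D^{-7}g^3ch_1)$ are not available. Worse, the logical order compounds the problem: in the paper \cref{prop:d7two} is proved \emph{after} and \emph{using} \cref{prop:diff9}, so at this stage the $d_7$ on $D^4$ is exactly the hard open question (the paper calls it a hard differential), and the ``finite inspection'' you defer to would in fact reveal a legitimate potential target $Dgh_1^3$ in the same $G_{24}$-summand — it cannot be ruled out, and indeed it is realized.

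For comparison, the paper's route avoids this entirely: it proves (1) first by observing that $2D^{-4}g^4=\Tr\circ\Res(D^{-4}g^4)$ is a transfer from the $C_4$-HFPSS, where the restricted class at $(48,16)$ is known (\cite{BBHS20}) to receive a $d_{13}$; naturality then forces $2D^{-4}g^4$ to die on or before the $E_{13}$-page in the $Q_8$-HFPSS, and by \cref{cor:split} and degree reasons the only possible source is $D^5ch_1$. Part (2) then follows as in \cref{prop:d9one}: if $D^5c$ did not support the $d_9$, it would be a $13$-cycle for degree reasons, and the Leibniz rule with the permanent cycle $h_1$ would make $D^5ch_1$ a $13$-cycle, contradicting (1). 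If you want a multiplicative shortcut of the kind you attempted, you must multiply by an invertible \emph{permanent} cycle (such as $g^{\pm 1}$, $k^{\pm1}$ in the Tate SS only when combined into permanent cycles, or $D^{\pm 8}$), and no such class realizes the shift $D^{12}g^{-3}$ you need; the transfer/naturality argument is what replaces it.
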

\begin{proof}
By a similar argument as in \cref{prop:d9one}, it is enough to show (1). 
We first observe that the class $2D^{-4}g^4$ is in the image of the transfer map from $C_4$-HFPSS$(\E_2)$ since 
\[
\Tr\circ\Res(D^{-4}g^4)=[Q_8:C_4]D^{-4}g^4=2D^{-4}g^4.
\]
Since the class $D^{-4}g^4$ is order $8$, its restriction to $C_4$-HFPSS for $\E_2$ is non-trivial and of order 4. Then according to the computations in \cite{BBHS20} (See \cite[Figure~5.3.]{BBHS20}), on bigrading $(48,16)$, the class $\Res(D^{-4}g^4)$ receives a $d_{13}$-differential in $C_4$-HFPSS$(\E_2)$. The naturality forces that $2D^{-4}g^4$ dies on or before the the $E_{13}$-page in $Q_8$-HFPSS$(\E_2)$. The only possibility is the desired $d_{13}$-differential by \cref{cor:split} and  degree reasons.\end{proof}

\begin{rem}\rm
Since $C_4$-HFPSS$(\E_2)$ is $32$-periodic with the periodicity class $\Delta_1^4=\done^8 u_{8\lambda} u_{8\sigma}$ \cite{HHR17}\cite{BBHS20}, the same argument in the proof of \cref{prop:diff9} gives an alternative proof of \cref{prop:d13one}(2) and \cref{prop:d9one}.
\end{rem}

\begin{lem}\label{lem:pcford11}
The class $D^3 h_1$ is a permanent cycle.
\end{lem}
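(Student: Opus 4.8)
The plan is to show that $D^3 h_1$ at $(25,1)$ cannot support any nontrivial differential, hence is a permanent cycle. First I would list the potential targets: a $d_r$-differential on $D^3 h_1$ would land in bidegree $(24, r)$, and by \cref{cor:split} the target must lie in the same one of the three $\{1,D,D^2\}$-summands as $D^3 h_1$ — namely the $G_{24}$-summand, since $3\mid 3$. So it suffices to rule out all classes in stem $24$, filtration $\geq 2$, that lie in the $G_{24}$-part of the $E_r$-page. By the $E_2$-page description (\cref{thm:2BSSint}, \cref{table:2BSS_integer_Einf}, \cref{table:2BSS_integer_Einf_rel}), I would enumerate the monomials in $k, x^2, y^2, xh_1, h_1, h_2, v_1^2 h_1, D, v_1^4$ landing in stem $24$ with filtration $\geq 2$ and divisible appropriately by $D^3$ (equivalently $\Delta=D^3$ after the $G_{24}$-identification), then check each candidate.

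The main work is then to observe that every such candidate has already been killed on an earlier page. The relevant eliminations come from the $bo$-pattern $d_3$-differentials (the family $d_3(D^m g^s v_1^{4l+2}h_1^n) = D^m g^s v_1^{4l} h_1^{n+3}$ and $d_3(D^m g^s v_1^2 h_1^n) = D^m g^s h_1^{n+3}$, together with \cref{cor:d3}), the $d_5$-differentials produced by \cref{prop:d5} and its Leibniz consequences, and the $d_7$-differentials of \cref{prop:d7one}. In particular classes like $D^3 v_1^4 h_1 \cdot(\text{stuff})$ and $D^3 h_1^4$-type classes are hit by $d_3$, and the remaining high-filtration classes in stem $24$ of the $G_{24}$-summand are either hit by or support the $d_5$/$d_7$-differentials already established, or they are the permanent cycles of \cref{prop:inftybo} (which do not lie in stem $24$ at the relevant filtration). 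I expect that after the $d_3$ and $d_5$ passes there is essentially nothing left in stem $24$ at filtration $\geq 2$ in the correct summand, so $D^3 h_1$ has no room to support a differential.

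Alternatively — and this is probably the cleanest writeup — I would use the $2$-BSS / group cohomology input directly: $h_1$ itself is a permanent cycle by \cref{lemma:gPC}, and $D^3$ represents (up to units) a class whose behavior is controlled by the $G_{24}$-computation; but since $D^3$ is \emph{not} a priori a permanent cycle (indeed $D$ supports $d_5(D)=D^{-2}gh_2$), one cannot simply multiply. So the honest argument is the target-elimination one above. The one subtlety — and the step I expect to be the main obstacle — is making sure no \emph{long} differential ($d_{13}$ or $d_{23}$, as appear elsewhere in this range) can hit into $(24, \text{large})$ from $D^3 h_1$; this is handled by noting the vanishing line of filtration $23$ (\cref{thm:sharpvanishingline}) bounds how high a nonzero class can sit, and by checking that the few surviving classes in high filtration in stem $24$ are accounted for by the $E_\infty$-description, leaving no valid target. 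Once the bookkeeping is done, the conclusion that $D^3 h_1$ is a permanent cycle is immediate.
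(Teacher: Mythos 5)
Your reduction to the $G_{24}$-summand via \cref{cor:split} matches the paper, but the core of your argument --- that after the $d_3$-, $d_5$- and $d_7$-passes ``essentially nothing is left'' in stem $24$ of that summand in filtration $\geq 4$ --- is false, and this is exactly where the real content of the lemma lies. Two candidate targets survive those passes: $D^{-3}g^2c$ in bidegree $(24,10)$ (a potential $d_9$-target of $D^3h_1$) and $2D^{-12}g^6$ in bidegree $(24,24)$ (a potential $d_{23}$-target). Neither is touched by the $bo$-pattern $d_3$'s, the $d_5$'s of \cref{prop:d5}, or the $d_7$'s of \cref{prop:d7one}, and neither is covered by \cref{prop:inftybo}. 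The paper eliminates them using \cref{prop:diff9} together with the permanence of $g$ and $D^{\pm 8}$: multiplying $d_9(D^5c)=D^{-1}g^2dh_1$ by permanent cycles shows that the class at $(24,10)$ itself supports a nonzero $d_9$, so it cannot receive a $d_9$ from $D^3h_1$; and multiplying $d_{13}(D^5ch_1)=2D^{-4}g^4$ gives $d_{13}(D^{-3}g^2ch_1)=2D^{-12}g^6$, so the class at $(24,24)$ is already dead on the $E_{14}$-page, before any $d_{23}$ out of $D^3h_1$ could reach it. These $C_4$-derived differentials are the essential input your proposal omits.

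Your appeal to the vanishing line to dispose of the long-differential possibility is also misdirected. The strong vanishing line of filtration $23$ does not forbid a $d_{23}$ from $(25,1)$ into $(24,24)$ --- the paper has exactly such differentials, e.g.\ $d_{23}(D^{-1}h_1)=D^{-16}g^6$ from \cref{prop:d13one} --- and in fact the vanishing line is what forces the classes at $(24,24)$ to die, so a priori $D^3h_1$ could be precisely the class that kills them. What rules this out is not the vanishing line but the earlier differentials into $(24,24)$, in particular the $d_{13}$ above. So the proposal has a genuine gap: without \cref{prop:diff9} (or equivalent control of the classes at $(24,10)$ and $(24,24)$), the target-elimination cannot be completed.
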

\begin{proof}
By \cref{cor:split}, it suffices to show that $D^3 h_1$ is a permanent cycle in $G_{24}$-HFPSS$(\E_2)$.
For degree reasons, $D^3 h_1$ can only possibly hit $D^{-3}gc$ or $2D^{-12}g^6$ in $G_{24}$-HFPSS$(\E_2)$. Because $D^{-8}$, $g$ are permanent cycles, \cref{prop:diff9} implies
\[
d_{13}(D^{-3}g^2 ch_1)= 2D^{-12}g^6
~\text{ and }~
d_9(D^{-3}gc)=D^{-9}g^3 dh_1.
\]
Therefore, the class $D^3h_1$ has to be a permanent cycle. \end{proof}
\begin{rem}\rm
It turns out that $D^3 h_1$ is hit by a $d_{23}$-differential in the Tate spectral sequence by \cref{cor:d23two}. 
\end{rem}
\begin{cor}\label{cor:d23two}
There are non-trivial $d_{23}$-differentials
\begin{enumerate}
    \item $d_{23}(D^2  h_1^2)=D^{-13}g^6 h_1;$
    \item $d_{23}(D^5 h_1^3)=D^{-10}g^6 h_1^2.$
\end{enumerate}
\end{cor}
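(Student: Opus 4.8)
The plan is to deduce both differentials from the single $d_{23}$-differential $d_{23}(D^{-1}gh_1)=D^{-16}g^7$ of \cref{prop:d13one}(3), by multiplying it with suitable permanent cycles inside the $Q_8$-$\mathrm{TateSS}$ for $\E_2$ — where $k$ is invertible by \cref{rm:invertibleclass} — and then transporting the resulting differentials back to the $Q_8$-$\mathrm{HFPSS}$ via \cref{lem:tateisorange}.

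First I would assemble the permanent cycles that I need. By \cref{lemma:gPC} the classes $g$ and $h_1$ are permanent cycles, and in the $\mathrm{TateSS}$ the class $g=kD^3$ is invertible, so $g^{-1}$ is a permanent cycle as well; by \cref{lem:pcford11} the class $D^3h_1$ is a permanent cycle. Hence $h_1k^{-1}=g^{-1}\cdot D^3h_1$ and $D^3h_1^2k^{-1}=(D^3h_1)\cdot(h_1k^{-1})$ are permanent cycles in the $\mathrm{TateSS}$, and so is $D^{-16}g^6=k^6D^2$ (using that $D^{\pm 8}$ is a permanent cycle by \cref{prop:periodicity} and that $g$ is, by \cref{lemma:gPC}).

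Next I would carry out the multiplications. Transporting $d_{23}(D^{-1}gh_1)=D^{-16}g^7$ into the $\mathrm{TateSS}$ by \cref{lem:tateisorange} and multiplying by the permanent cycle $h_1k^{-1}$ yields
\[
d_{23}(D^2h_1^2)=D^{-13}g^6h_1,
\]
and multiplying instead by $D^3h_1^2k^{-1}$ yields
\[
d_{23}(D^5h_1^3)=D^{-10}g^6h_1^2,
\]
where I have rewritten the targets using $D^{-13}g^6=k^6D^5$ and $D^{-10}g^6=k^6D^8$. Both sources lie in non-negative filtration ($D^2h_1^2$ in filtration $2$, $D^5h_1^3$ in filtration $3$) and both targets lie in positive filtration, so by \cref{lem:tateisorange} these correspond to differentials in the $Q_8$-$\mathrm{HFPSS}$.

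Finally I would verify that these $d_{23}$'s are genuinely nontrivial, which amounts to showing the target classes survive to $E_{23}$. The targets $D^{-13}g^6h_1=(D^{-16}g^6)\cdot D^3h_1$ and $D^{-10}g^6h_1^2=(D^{-16}g^6)\cdot(D^3h_1)^2$ are permanent cycles of filtration $25$ and $26$, hence above the strong horizontal vanishing line of filtration $23$ from \cref{thm:sharpvanishingline}, so they must be hit; meanwhile neither source supports a shorter differential, since the Leibniz rule gives $d_r(D^2h_1^2)=(h_1k^{-1})\,d_r(D^{-1}gh_1)=0$ and $d_r(D^5h_1^3)=(D^3h_1^2k^{-1})\,d_r(D^{-1}gh_1)=0$ for $r<23$ (the differential $d_{23}$ being the shortest nontrivial differential on $D^{-1}gh_1$). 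The main obstacle is to exclude the possibility that a target is hit by some differential of length $<23$ originating from a different class; I would resolve this by inspecting the charts of \S\ref{section:charts} in stems $17,18$ and $42,43$ inside the relevant $D^2$-summand of \cref{cor:split} and checking that no such source exists, so that the $d_{23}$'s constructed above are forced. As a byproduct, the same source-differential, multiplied by the permanent cycle $(D^{-16}g^6)^{-1}$, exhibits $d_{23}(h_1^2k^{-6})=D^3h_1$ in the $\mathrm{TateSS}$, accounting for the way $D^3h_1$ dies there despite being a permanent cycle.
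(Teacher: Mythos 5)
Your argument is correct and is essentially the paper's own proof: the paper deduces both differentials from \cref{prop:d13one}(3) together with \cref{lem:pcford11}, exactly via the Tate-multiplication mechanism of \cref{met:Tatemethod} that you spell out (multiplying by the permanent cycles $k^{-1}h_1=g^{-1}D^3h_1$ and $k^{-1}D^3h_1^2$ and transporting back through \cref{lem:tateisorange}), and your byproduct observation that $D^3h_1$ is then hit by a $d_{23}$ in the Tate spectral sequence matches the paper's remark following \cref{lem:pcford11}. Your additional nontriviality check (targets are permanent cycles above the filtration-$23$ vanishing line of \cref{thm:sharpvanishingline}, plus a chart inspection to rule out shorter differentials into them) is just a more explicit version of what the paper leaves to its final "by inspection" verification.
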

\begin{proof}
The claimed $d_{23}$-differentials follow from \cref{prop:d13one}(3)
and \cref{lem:pcford11}
\end{proof}

\begin{lem}\label{lem:hiiden2Q8}
There is a hidden $2$ extension from $D^6 h_2^2$ to $g^2 d$.
\end{lem}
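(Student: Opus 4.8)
The plan is to deduce this hidden $2$ extension directly from the corresponding one in the $C_4$-HFPSS via the restriction map $\Res^{Q_8}_{C_4}$, as advertised in the remark following \cref{lem:hidden2C_4}. First I would transport \cref{lem:hidden2C_4} from stem $22$ to stem $54$: since the $C_4$-HFPSS for $\E_2$ is $32$-periodic with periodicity class $\Delta_1^4=\done^8u_{8\lambda}u_{8\sigma}$ at $(32,0)$ \cite{HHR17,BBHS20}, multiplying the extension of \cref{lem:hidden2C_4} by $\Delta_1^4$ produces a hidden $2$ extension in stem $54$, from $\Delta_1^4\done^6u_{6\lambda}u_{4\sigma}a_{2\sigma}$ at $(54,2)$ to $\Delta_1^4\done^8u_{4\lambda}u_{6\sigma}a_{4\lambda}a_{2\sigma}$ at $(54,10)$.

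Next I would identify $\Res^{Q_8}_{C_4}(D^6h_2^2)$ and $\Res^{Q_8}_{C_4}(g^2d)$ with the source and target of this $C_4$ extension, respectively. This uses the description of the restriction map in \cref{sec:groupcoho} together with the $C_4$-computation of \cite{BBHS20}: one checks that $\Res^{Q_8}_{C_4}(D)$ is $\Delta_1$ up to a unit and a class of higher filtration, that $\Res^{Q_8}_{C_4}(h_2)$ is the nonzero class at $(3,1)$, that $\Res^{Q_8}_{C_4}(g)$ is the class detecting $\bar\kappa$ (compare the proof of \cref{lemma:gPC}), with an analogous description for $d$; multiplying these out shows $\Res^{Q_8}_{C_4}(D^6h_2^2)$ is detected by $\Delta_1^4\done^6u_{6\lambda}u_{4\sigma}a_{2\sigma}$ and $\Res^{Q_8}_{C_4}(g^2d)$ by $\Delta_1^4\done^8u_{4\lambda}u_{6\sigma}a_{4\lambda}a_{2\sigma}$.

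Granting these identifications, the conclusion is formal. Restriction is a ring map that does not decrease HFPSS filtration, so $\Res^{Q_8}_{C_4}(2D^6h_2^2)=2\Res^{Q_8}_{C_4}(D^6h_2^2)$, which by the $C_4$ extension is nonzero and detected at filtration $10$; hence $2D^6h_2^2\neq 0$ in $\pi_{54}\E_2^{hQ_8}$. On the other hand $h_2^2$ detects $\nu^2$, which is $2$-torsion, so $2D^6h_2^2$ vanishes in the associated graded of the $Q_8$-HFPSS and is therefore detected by a class of filtration at least $3$; since restriction does not decrease filtration and the $C_4$ target sits in filtration $10$, that class has filtration at most $10$. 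Inspecting the $E_\infty$-page of the $Q_8$-HFPSS for $\E_2$ at stem $54$ — by this point in the argument all differentials have been determined — the only class in filtrations $3$ through $10$ whose restriction to the $C_4$-level is $\Delta_1^4\done^8u_{4\lambda}u_{6\sigma}a_{4\lambda}a_{2\sigma}$ is $g^2d$, so $2D^6h_2^2$ is detected by $g^2d$, which is the asserted hidden $2$ extension. The step I expect to be the main obstacle is the second one — pinning down the restrictions of $D^6h_2^2$ and of $g^2d$, equivalently verifying from the $E_\infty$-chart that $g^2d$ is the only class in the relevant filtration range at stem $54$ with the right restriction — since the behavior of $\Res^{Q_8}_{C_4}$ on these high-filtration classes is exactly the subtle input provided by \cref{sec:groupcoho}.
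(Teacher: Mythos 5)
Your first two steps coincide with the paper's argument: transport the $C_4$ hidden $2$ extension of \cref{lem:hidden2C_4} from stem $22$ to stem $54$ by $\Delta_1^4$-periodicity, and identify $\Res^{Q_8}_{C_4}(D^6h_2^2)$ and $\Res^{Q_8}_{C_4}(g^2d)$ with its source and target (using $\Res^{Q_8}_{C_4}(D)=\Delta_1$ up to a unit and the nontriviality of the restrictions of $h_2$, $d$, $g$ from \cref{sec:groupcoho}). The gap is in your final detection step. First, the claim that ``by this point in the argument all differentials have been determined'' is false in the logical order of the computation: this lemma precedes, and is an essential input to, the $d_{13}$-differentials of \cref{prop:d13two} (and the $d_7$ on $D^4$, the $d_9$'s of \cref{prop:d9two,prop:d9three,prop:d9five}, the $d_{11}$'s, etc., all come later). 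So appealing to the full $Q_8$ $E_\infty$-page at stem $54$ risks circularity: whether $E_2$-classes such as $gD^4h_1^2$ at $(54,6)$ survive is settled only by differentials established after this lemma. Second, even granting the $E_\infty$-page, your criterion is not quite the right one: the class detecting $2\alpha$ (where $\alpha$ is detected by $D^6h_2^2$) need not restrict to the $C_4$ target --- a surviving class in filtration $3$ through $9$ with \emph{vanishing} restriction could detect $2\alpha$, with $\Res(2\alpha)$ then detected in the higher filtration $10$; so ``the only class whose restriction is the target'' does not by itself pin down $g^2d$. What you must show is that there are \emph{no} surviving classes at all in stem $54$, filtrations $3$ through $9$, in the summand containing $D^6h_2^2$.

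The paper closes both gaps at once by invoking \cref{cor:split}: since $D^6h_2^2$ and $g^2d$ lie in the $G_{24}$-summand and the splitting is compatible with the filtration, $2\alpha$ must be detected by a class of that summand, and in the $G_{24}$-HFPSS at stem $54$ there is nothing strictly between filtration $2$ and filtration $10$ on the relevant page (the only intermediate $E_2$-classes, such as $D^6h_1^6$ and the $v_1$-tower classes, are wiped out by the already-established $d_3$-differentials). Naturality then forces the detecting class of $2\alpha$ to have filtration at most $10$, hence exactly $10$, i.e.\ to be $g^2d$, with no appeal to later differentials or to the behavior of restriction on intermediate classes. Your argument can be repaired along exactly these lines, but as written the last step does not go through.
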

\begin{proof}
According to \cref{lem:hidden2C_4}, there is a hidden $2$ extension in stem $54$ from $\Delta_1^4\done^6 u_{6\lambda}u_{4\sigma}a_{2\sigma}$ to $\Delta_1^4 \done^8 u_{4\lambda}u_{6\sigma}a_{4\lambda}a_{2\sigma}$ in the $C_4$-HFPSS$(\E_2)$ since it is $\Delta_1^4$-periodic. Note that the restriction of $D$ to the $E_2$-page of the $C_4$-HFPSS$(\E_2)$ is invertible then it  equals  $\Delta_1$ up to a unit in $\W(\F_4)$, in other words up to a unit we have $\Res^{Q_8}_{C_4}(D)= \Delta_1$.  In \cref{sec:groupcoho} we show that the restriction of the classes $h_2$, $d$ and $g$ are non-trivial. Then in stem $54$ of $Q_{8}$-HFPSS$(\E_2)$, we have the following two restrictions up to units
\begin{align*}
\Res^{Q_8}_{C_4}(D^6 h_2^2)&= \Delta_1^4\done^6 u_{6\lambda}u_{4\sigma}a_{2\sigma} \\ \Res^{Q_8}_{C_4}(g^2d)&= \Delta_1^4\done^8 u_{4\lambda}u_{6\sigma}a_{4\lambda}a_{2\sigma}.
\end{align*}
Note that in $G_{24}$-HFPSS$(\E_2)$, there are no other classes between these two filtrations. Then the naturality forces a hidden $2$ extension from $D^6 h_2^2$ to  $g^2 d$ in  $G_{24}$-HFPSS$(\E_2)$. This hidden $2$ extension also happens in $Q_8$-HFPSS$(\E_2)$ by \cref{cor:split}.
\end{proof}
As the $C_4$-HFPSS for $\E_2$ is $32$-periodic, a similar proof gives the following hidden $2$ extension in stem $22$ in the $Q_8$-HFPSS for $\E_2$. In the rest of this paper, when we refer to restricting a class, we always mean the restriction up to a unit in $\W(\F_4)$.

\begin{cor}\label{cor:hidden2Q8}
There is a hidden $2$ extension from $D^2 h_2^2$ to $D^{-4}g^2 d$.
\end{cor}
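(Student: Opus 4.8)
The plan is to mirror the proof of \cref{lem:hiiden2Q8}, now working in stem $22$ and invoking the hidden $2$ extension of \cref{lem:hidden2C_4} exactly as stated (no $\Delta_1^4$-translate is needed this time). The first step is to identify the restrictions to $C_4\langle i\rangle$:
\[
\Res^{Q_8}_{C_4}(D^2 h_2^2) = \done^6 u_{6\lambda}u_{4\sigma}a_{2\sigma}, \qquad \Res^{Q_8}_{C_4}(D^{-4}g^2 d) = \done^8 u_{4\lambda} u_{6\sigma}a_{4\lambda}a_{2\sigma},
\]
both up to a unit in $\W(\F_4)$. These follow from the restriction identities already established inside the proof of \cref{lem:hiiden2Q8}, namely $\Res^{Q_8}_{C_4}(D^6 h_2^2) = \Delta_1^4 \done^6 u_{6\lambda}u_{4\sigma}a_{2\sigma}$ and $\Res^{Q_8}_{C_4}(g^2 d) = \Delta_1^4 \done^8 u_{4\lambda} u_{6\sigma}a_{4\lambda}a_{2\sigma}$, together with the fact that $\Res^{Q_8}_{C_4}(D) = \Delta_1$ up to a unit and $\Delta_1$ is invertible on the $E_2$-page (equivalently, $C_4$-HFPSS$(\E_2)$ is $32$-periodic with periodicity class $\Delta_1^4 = \Res^{Q_8}_{C_4}(D^4)$); the non-triviality of $\Res(h_2)$, $\Res(d)$, $\Res(g)$ used here is established in \cref{sec:groupcoho}. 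By \cref{lem:hidden2C_4} the two classes on the right are the source and target of a hidden $2$ extension in $C_4$-HFPSS$(\E_2)$.

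The second step is the naturality argument. Since $\done^6 u_{6\lambda}u_{4\sigma}a_{2\sigma}$ supports a hidden $2$ extension in $C_4$-HFPSS$(\E_2)$, we get $2\cdot \Res^{Q_8}_{C_4}(D^2 h_2^2)\neq 0$, hence $2\cdot D^2 h_2^2 \neq 0$ in $\pi_{22}\E_2^{hQ_8}$; let $z$ be a positive-filtration class detecting it. Then $\Res^{Q_8}_{C_4}(z) = 2\cdot\Res^{Q_8}_{C_4}(D^2 h_2^2) = \done^8 u_{4\lambda} u_{6\sigma}a_{4\lambda}a_{2\sigma}$, which has filtration $10$. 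Inspecting the $E_\infty$-chart in stem $22$ — which by \cref{cor:split} is three isomorphic copies, with $D^2 h_2^2$ and $D^{-4}g^2 d$ in the same copy — one checks that in that copy there are no surviving classes in filtrations $3$ through $9$, and that $D^{-4}g^2 d$ is the unique surviving class of positive filtration whose restriction to $C_4\langle i\rangle$ is a unit multiple of $\done^8 u_{4\lambda} u_{6\sigma}a_{4\lambda}a_{2\sigma}$; therefore $z = D^{-4}g^2 d$, which is the asserted hidden $2$ extension. Equivalently, one may run the whole argument inside the $G_{24}$-HFPSS summand after dividing by $D^2$ (and the corresponding invertible power of $\Delta_1$) and then apply \cref{cor:split}.

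This argument is lighter than that of \cref{lem:hiiden2Q8}: beyond \cref{lem:hidden2C_4} and the restriction identities already in hand, the only new input is chart bookkeeping in stem $22$. The one point that deserves care is therefore confirming from the $E_\infty$-charts of \S\ref{section:charts} that $D^2 h_2^2$ and $D^{-4}g^2 d$ both survive to $E_\infty$, that no competing classes intervene between them, and that the restriction computations of \cref{sec:groupcoho} yield exactly the stated unit-multiple identifications.
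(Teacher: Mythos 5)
Your proposal is correct and follows essentially the same route as the paper, which proves this corollary simply as ``a similar proof'' to \cref{lem:hiiden2Q8}: restrict $D^2h_2^2$ and $D^{-4}g^2d$ to $C_4\langle i\rangle$, invoke the stem-$22$ hidden $2$ extension of \cref{lem:hidden2C_4}, and use naturality plus the absence of intervening $E_\infty$-classes in the relevant copy (via \cref{cor:split}) to pin down the target. The details you supply — the restriction identities up to units in $\W(\F_4)$ and the chart check in filtrations $3$ through $10$ — are exactly what the paper's abbreviated argument relies on.
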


\begin{prop}\label{prop:d13two}
The classes $2Dh_2$ at $(11,1)$ and $2D^5 h_2$ at $(43,1)$ support  $d_{13}$-differentials
\begin{enumerate}
\item $d_{13}(2D h_2)=D^{-8}g^3d;$
\item $d_{13}(2D^5 h_2)=D^{-4}g^3 d.$
\end{enumerate}
\end{prop}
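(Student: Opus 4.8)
Both differentials follow from the same argument, so I will describe (1) in detail and indicate the parallel modifications for (2). The plan is to push the target above the strong vanishing line of \cref{thm:sharpvanishingline}, where it is forced to be hit, and then to transport the resulting differential back down through the $Q_8$-Tate spectral sequence, in which $g$ becomes invertible.

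First I collect the inputs. By \cref{lem:hiiden2Q8} the hidden $2$-extension $2\,(D^6h_2^2)=g^2d$ in $\pi_{54}$ shows in particular that $g^2d$ is a permanent cycle in the $Q_8$-HFPSS for $\E_2$; combined with \cref{lemma:gPC} (so $g$ is a permanent cycle) and \cref{prop:periodicity} (so $D^{8}$ is an invertible permanent cycle), the class $D^{-8}g^6d=D^{-8}\cdot g^4\cdot(g^2d)$ is a permanent cycle, sitting in bidegree $(70,26)$, i.e.\ in filtration $26\ge 23$, above the strong vanishing line of \cref{thm:sharpvanishingline}; hence $D^{-8}g^6d$ must be hit. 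Next, $2Dh_2$ survives to $E_{13}$: from \cref{prop:d5} and the Leibniz rule ($h_2$ a permanent cycle) one gets $d_5(2Dh_2)=2\,d_5(D)\,h_2=2D^{-2}gh_2^2$, which vanishes because $h_2^2=Dy^2$ with $y^2$ of order $2$, so that $2h_2^2=0$; and there is no room for $d_3,d_4,d_7,d_9,d_{11}$ on $2Dh_2$ inside the relevant summand of \cref{cor:split}. Consequently $2Dg^3h_2=g^3\cdot 2Dh_2$ also survives to $E_{13}$, in bidegree $(71,13)$.

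Now I would argue that $d_{13}(2Dg^3h_2)=D^{-8}g^6d$. Since $D^{-8}g^6d$ is a permanent cycle above the vanishing line it is killed by some $d_r$; inspecting the summand of \cref{cor:split} containing $D^{-8}g^6d$ in stem $71$, and using the $bo$-pattern (\cref{prop:d3}, \cref{prop:inftybo}) together with the $g$- and $D^{8}$-periodicities to see that the other bidegrees $(71,\ast)$ from which a differential could reach $(70,26)$ are either empty or already exhausted, the only remaining possibility is the stated $d_{13}$ from $2Dg^3h_2$. Having established $d_{13}(2Dg^3h_2)=D^{-8}g^6d$ in the HFPSS, it holds a fortiori in the $Q_8$-TateSS; multiplying by $g^{-3}$, which is invertible there by \cref{rm:invertibleclass}, gives $d_{13}(2Dh_2)=D^{-8}g^3d$ in the TateSS, and since both $2Dh_2$ (filtration $1$) and $D^{-8}g^3d$ (filtration $14$) lie in positive filtration, \cref{lem:tateisorange} transports this back to the desired $d_{13}$ in the HFPSS. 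For (2) one replaces \cref{lem:hiiden2Q8} by \cref{cor:hidden2Q8} (so that $D^{-4}g^2d$, hence $D^{-4}g^6d=g^4\cdot(D^{-4}g^2d)$, is a permanent cycle), works in stem $103$ with $D^5$ in place of $D$, and repeats the argument verbatim.

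I expect the main obstacle to be the middle step: verifying that the differential killing $D^{-8}g^6d$ (resp.\ $D^{-4}g^6d$) is precisely the $d_{13}$ from $2Dg^3h_2$ (resp.\ $2D^5g^3h_2$), and not a differential of some other length from another class. This is a finite bookkeeping problem on the $E_r$-pages in a single stem, but it presupposes the full lower-page differential pattern and the $bo$-pattern; this is why the proposition is placed after those results and after the hidden $2$-extensions have been deduced from the $C_4$-computation, the latter being exactly the ingredient that lets us bypass the $4$-fold Toda bracket shuffles of \cite[Proposition~8.5(3)]{Bau08}.
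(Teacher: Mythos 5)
Your overall skeleton (multiply by $g^3$ to push the target above the strong vanishing line of \cref{thm:sharpvanishingline}, then divide by $g^3$ in the Tate spectral sequence via \cref{met:Tatemethod} and \cref{lem:tateisorange}) is sound, and the first and last steps are fine; but the middle step, ``the only remaining possibility is the stated $d_{13}$ from $2Dg^3h_2$,'' is exactly where the content of the proposition lies, and as written it is asserted rather than proved --- and it is not a matter of empty bidegrees. Two cautions first: the summand of \cref{cor:split} must be read off after rewriting classes in terms of the $C_3$-invariant generators $c=Dxh_1$, $d=D^2x^2$, $g=kD^3$, $D^{\pm 3}$ (equivalently via $C_3$-eigenvalues), not from the naive $D$-exponent of the $2$-BSS monomials, since $x^2$, $xh_1$, $k$ are not individually invariant. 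With that done, the bidegree $(71,15)$ in the same summand contains $g^3Dh_1^3$, which a priori supports a $d_{11}$ onto $D^{-8}g^6d$, and $(71,3)$ contains $D^7dh_1$, which a priori supports a $d_{23}$ onto it. The latter can be handled with tools already available (it is hit in the Tate spectral sequence by a $d_9$ coming from \cref{prop:diff9}(2) after multiplying by $g^{-2}D^{8}$, hence is a permanent cycle), but the former is the genuine obstruction: the natural way to kill it, $d_7(g^2D^4)=g^3Dh_1^3$, invokes \cref{prop:d7two}, which in this paper is proved \emph{after} and \emph{using} \cref{prop:d13two} (through \cref{lem:pcford7}), so that route is circular at this point of the argument. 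For part (2) the analogous competitor $g^3D^5h_1^3$ at $(103,15)$ is not killed early at all; there one can appeal to \cref{cor:d23two}(2) (a class supporting a nontrivial $d_{23}$ cannot support a $d_{11}$), so ``repeat the argument verbatim'' also conceals a case-specific check.

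This is precisely the fork the paper's proof confronts, and it resolves it by a different mechanism: it multiplies the hidden $2$-extension of \cref{lem:hiiden2Q8} by $g$ to get a hidden $2$-extension from $D^{-2}gh_2^2$ to $D^{-8}g^3d$ in stem $10$, uses $d_5(Dh_2)=D^{-2}gh_2^2$ to force $D^{-8}g^3d$ to die, and then excludes the only shorter alternative, a $d_{11}$ from $Dh_1^3$, by the extension relating $Dh_1^3$ to $4Dh_2$ (Toda's $4\nu=\eta^3$): since $2Dh_2$ survives to $E_{13}$, $Dh_1^3$ cannot support an earlier differential. Your argument needs this same ingredient (applied to $g^3Dh_1^3$ versus $2g^3Dh_2$), or some substitute that does not presuppose \cref{prop:d13two}; without it, the source identification --- which you yourself flag as ``the main obstacle'' --- is a genuine gap rather than routine bookkeeping, so the proposal is a plausible alternative strategy but not yet a proof.
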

\begin{proof}
(1)~ By \cref{lem:hiiden2Q8} and the $E_{\infty}$-page class $g$, there is a hidden $2$ extension from $D^{-2}gh_2^2$ to $ D^{-8}g^3d$ in stem $10$ of the $Q_8$-HFPSS for $\E_2$. By \cref{prop:d5}, we have
\[
d_5(D h_2)=D^{-2}gh_2^2.
\] 
Then the hidden $2$ extension forces $D^{-8}g^3d$ to be hit by a differential of length at most $13$. Note that there is a $2$ extension $2 (2D h_2) = D h_1^3$. Then $D h_1^3$ cannot support a shorter differential than $2D h_2$. In particular, $D h_1^3$ cannot support a $d_{11}$-differential to $D^{-8}g^3d$. This rules out the only possibility that the target $D^{-8}g^3d$ is hit by a shorter differential. Therefore, we proved the desired non-trivial differential.\\
(2)~ 
It follows similarly from the hidden $2$ extension from $D^2g^2h_2^2$ to $D^{-4}g^3d$ by \cref{cor:hidden2Q8}.
\end{proof}

\begin{rem}\rm
In Bauer's computation for $tmf$ \cite{Bau08}, the hidden $2$ extension in \cref{lem:hiiden2Q8} is proved using four-fold Toda brackets. In our approach, the hidden $2$ extension follows from the restriction and the $C_4$-HFPSS hidden $2$ extension, which again is forced by the exotic restrictions and transfers in \cref{lem:hidden2C_4}.
\end{rem}

\begin{lem}\label{lem:pcford7}
The class $Dh_1^3$ is a permanent cycle.
\end{lem}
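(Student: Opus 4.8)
The plan is to argue exactly as in the proof of \cref{lem:pcford11}: show that the class $Dh_1^3$, which sits at bidegree $(11,3)$, cannot support any nontrivial differential. First I would invoke \cref{cor:split} to reduce to the single $D$-summand containing $Dh_1^3$ (no differential crosses summands, and all three summands carry the same differential pattern); using the $D^8$-periodicity of \cref{prop:periodicity} this is the same as showing $D^9h_1^3$ is a permanent cycle in the $G_{24}$-HFPSS for $\E_2$. Next I would enumerate the possible targets: a hypothetical $d_r(Dh_1^3)$ lands in stem $10$ at filtration $3+r$, and inspecting the $E_2$-page charts in \cref{section:charts} together with the differentials already proved in \cref{subsection:differentials} and the vanishing line of filtration $23$ (\cref{thm:sharpvanishingline}), the only classes of stem $10$ strictly above filtration $3$ that can still be nonzero on the relevant pages are $D^{-8}g^3d$ at $(10,14)$ and a $k$-tower of classes $k^mD^3h_1^2$ at filtrations $\geq 18$.

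The key tool for eliminating these is the hidden $2$-extension $2\cdot(2Dh_2)=Dh_1^3$ recorded in the proof of \cref{prop:d13two} (itself coming from \cref{lem:hiiden2Q8}, \cref{prop:d5}, and the $h_0$-structure of the $2$-BSS), combined with the fact that $2Dh_2$ supports the $d_{13}$-differential of \cref{prop:d13two}. From $Dh_1^3=2\cdot(2Dh_2)$ and the $2$-linearity of the differentials one gets that $Dh_1^3$ survives at least as long as $2Dh_2$ does, hence cannot support any $d_r$ with $r<13$; in particular it cannot hit $D^{-8}g^3d$ by a $d_{11}$. Since $D^{-8}g^3d$ is moreover killed by that same $d_{13}$ from $2Dh_2$, it is also unavailable as a target from any later page. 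For the remaining $k$-tower targets $k^mD^3h_1^2$ (which would require a $d_r$ with $r\geq 13$), I would again use $2$-divisibility — these classes are $2$-torsion, so $d_r(Dh_1^3)=2\,d_r(2Dh_2)=0$ on any page where $2Dh_2$ still survives — and, for the pages past $E_{13}$, check via the $d_{23}$-differentials of \cref{cor:d23two} and $D^8$-periodicity that each such class is itself a source of (or has already been hit by) a differential, so it cannot receive one.

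The hard part will not be any single cancellation but the bookkeeping in the enumeration step: confirming that, after the $bo$-pattern $d_3$'s and the differentials proved so far, the genuinely available targets in stem $10$ above filtration $3$ are exactly $D^{-8}g^3d$ and the $k^mD^3h_1^2$-tower. Once that is in place, the hidden $2$-extension $2(2Dh_2)=Dh_1^3$ together with the $d_{13}$ on $2Dh_2$ (and the $2$-torsion nature of the would-be targets) finishes the proof with essentially no further computation.
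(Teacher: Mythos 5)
Your proposal is correct and follows essentially the same route as the paper: after \cref{cor:split} and degree reasons the only candidate targets are $D^{-8}g^3d$ at $(10,14)$, which is unavailable because it receives the $d_{13}$ of \cref{prop:d13two} (whose proof already contains your $2$-extension argument $Dh_1^3=2(2Dh_2)$ ruling out a shorter differential), and $D^{-14}g^6h_1^2$ at $(10,26)$, which supports a $d_{23}$ by \cref{cor:d23two} together with \cref{met:Tatemethod}. The only slip is in your enumeration: the high-filtration target is $D^{-14}g^6h_1^2=k^6D^4h_1^2$ rather than a tower $k^mD^3h_1^2$ (the class $k^4D^3h_1^2$ at $(10,18)$ lies in a different summand of \cref{cor:split} and is excluded for free), but the method you specify ($d_{23}$-differentials plus $D^8$-periodicity) disposes of the correct class exactly as the paper does.
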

\begin{proof}
The class $Dh_1^3$ is a $5$-cycle. By \cref{cor:split} and degree reasons, $Dh_1^3$ can only possibly hit $D^{-8}g^3 d$ and $D^{-14}g^6h_1^2$. According to \cref{prop:d13two} the former class is hit by a non-trivial $d_{13}$-differential.  Moreover, according to \cref{cor:d23two} and \cref{met:Tatemethod}, the class $D^{-14}g^6h_1^2$ supports a non-trivial $d_{23}$-differential ($D^{-14}g^6 h_1^2=D^{-16}g^6 D^2h_1^2$). Therefore, these two potential targets cannot receive differentials from $Dh_1^3$. The result thereby follows.
\end{proof}

\begin{prop}\label{prop:d7two}
The class $D^4$ at $(32,0)$ supports a $d_7$-differential
\[
d_7(D^{4})=Dg h_1^{3}.
\]
\end{prop}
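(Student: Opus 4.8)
The plan is to force this differential by the vanishing line method of \cref{thm:sharpvanishingline}, exactly in the spirit of \cref{prop:d13one}, using as input the differentials of \cref{prop:d3,prop:d5,prop:d7one,prop:diff9,prop:d13two,cor:d23two} together with the permanent cycles of \cref{lemma:gPC,lem:pcford11,lem:pcford7}. First note that the target class $Dgh_1^3$ lies at $(31,7)$, is nonzero on the $E_2$-page --- it equals $kD^4h_1^3$, and $k^nh_1^3\neq 0$ for all $n$ by \cref{prop:2BSSE1} and \cref{lem:E2completion} --- and is a permanent cycle, since it factors as $g\cdot(Dh_1^3)$, a product of the permanent cycles of \cref{lemma:gPC} and \cref{lem:pcford7}.

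Next I would pin down the possible differentials supported by $D^4$. The class $D^4$ is a $d_3$-cycle, since it is not among the $d_3$-sources of the $bo$-pattern listed after \cref{prop:d3}, and it is a $d_5$-cycle: by the Leibniz rule and \cref{prop:d5}, $d_5(D^4)=4D^3\,d_5(D)=4kD^4h_2=0$ because $h_2$ is $4$-torsion on the $E_2$-page. Hence any nontrivial differential on $D^4$ has length $\geq 7$, and inspecting the charts in \S \ref{section:charts} the only class in stem $31$, filtration $\geq 7$, surviving to the relevant page is $Dgh_1^3$ at $(31,7)$; so it remains only to exclude the possibility that $D^4$ is a permanent cycle.

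Suppose $D^4$ were a permanent cycle. Then $D^4g^6$ --- a product of permanent cycles by \cref{lemma:gPC}, nonzero on $E_2$ since it equals $k^6D^{22}$, and of filtration $24>23$ --- would survive to $E_\infty$, contradicting \cref{thm:sharpvanishingline}, provided nothing hits it. Using \cref{cor:split} to restrict to the $G_{24}$-summand containing $D^4g^6$ and inspecting the $E_2$-page together with the differentials already established, every class of filtration $<24$ in the stem-$153$ column of that summand has already been identified as a permanent cycle: the $bo$-pattern survivors of \cref{prop:inftybo}, the $D^8$-periodic translates of $D^3h_1$ (\cref{lem:pcford11}) and $Dh_1^3$ (\cref{lem:pcford7}), and products of $g, h_1, h_2$. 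So $D^4g^6$ cannot be hit, a contradiction; therefore $D^4$ supports a differential, which by the previous paragraph is $d_7(D^4)=Dgh_1^3$.

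The delicate point is the chart bookkeeping in the last paragraph: concluding that a high-filtration permanent cycle cannot be hit requires knowing every differential and every permanent cycle in a whole stem-column, which is precisely why the $d_9$-, $d_{13}$- and $d_{23}$-differentials and the permanent-cycle lemmas are proved beforehand. A more economical variant avoids a full column: the permanent cycle $Dg^5h_1^3$ at $(111,23)$ must be hit by \cref{thm:sharpvanishingline}; by \cref{cor:split} and chart inspection the only possible source is $D^4g^4$ at $(112,16)$, which is a $d_5$-cycle and hence supports a $d_7$; then the Leibniz rule applied with the permanent cycle $g^4$ gives $d_7(D^4)\cdot g^4=Dg^5h_1^3=(Dgh_1^3)\,g^4\neq 0$, which forces $d_7(D^4)=Dgh_1^3$.
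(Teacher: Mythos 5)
Your proposal is essentially the paper's own argument: the paper also forces this differential by the vanishing line method, taking the permanent cycle $D^{-15}h_1^3g^6$ at $(3,27)$ (your $Dg^5h_1^3$ shifted by the permanent cycle $gD^{-16}$), using \cref{cor:split} to see that the only possible sources are $D^{-3}gc$ and $D^{-12}g^5$, ruling out the former because it supports a $d_9$ by \cref{prop:diff9}, and then dividing by the permanent cycle $D^{-8}g^5$. Your ``economical variant'' is this proof almost verbatim, and your preliminary observations ($Dgh_1^3$ is a nonzero permanent cycle, $D^4$ is a $d_3$- and $d_5$-cycle) are consistent with the paper.

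Two bookkeeping corrections, though. In the economical variant, the assertion that ``the only possible source is $D^4g^4$'' skips a case the paper treats explicitly: $D^{13}c$ at $(112,2)$ lies in the same summand and could, degree-wise, support a $d_{21}$ onto $Dg^5h_1^3$; it must be excluded by noting $d_9(D^{13}c)=D^7g^2dh_1$, the $D^8$-translate of \cref{prop:diff9}(2) --- exactly the paper's step for its translate $D^{-3}gc$. In the main variant, the claim that ``every class of filtration $<24$ in the stem-$153$ column of that summand has already been identified as a permanent cycle'' is false: for example $g^4D^7dh_2$ at $(153,19)$ supports a $d_5$ hitting $4D^4g^6$, and $g^2D^{13}ch_1$ at $(153,11)$ supports a $d_{13}$ hitting $2D^4g^6$ (both are $gD^{-16}$-translates of the differentials in \cref{prop:d13one}). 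Since these only hit proper multiples of $D^4g^6$, your contradiction can be salvaged, but the check must be done at the level of the cyclic group $\mathbb{Z}/8$ generated by $D^4g^6$ (as in \cref{prop:d13one}), not by declaring the whole column permanent; as written that step is a genuine, if repairable, inaccuracy, and the economical variant (with the $D^{13}c$ case added) is the clean way to finish.
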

\begin{proof}

Note that $g$ and $D^{-8}$ are permanent cycles. Then by \cref{lem:pcford7} the class $D^{-15} h_1^{3} g^6$ at $(3,27)$ is also a permanent cycle. This class has to be hit by a differential via the vanishing line method (\Cref{thm:sharpvanishingline}). By \Cref{cor:split}, the potential source is either $D^{-3} g c$ or $D^{-12}g^5$. The former supports a $d_9$ by \Cref{prop:diff9}.
Therefore, the only possibility is the $d_7$-differential
$$d_7(D^{-12}g^5) = D^{-15} h_1^{3} g^6.$$
Since $D^{-8} g^5$ is a permanent cycle, the result follows.

\end{proof}
All  $d_7$-differentials follow from \cref{prop:d7one}, \cref{prop:d7two} and the Leibniz rule.

Before proving the next two $d_9$-differentials in \cref{prop:d9two}, we need to first prove a permanent cycle in \cref{cor:pcford9} and two $d_{11}$-differentials in \cref{prop:d11}.

\begin{lem}\label{cor:pcford9}
The class $D^3 dh_1$ is a permanent cycle.
\end{lem}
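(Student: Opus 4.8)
The plan is to follow the same strategy used for the earlier permanent-cycle lemmas \cref{lem:pcford11}, \cref{lem:pcford7}, and \cref{cor:pcford9}-adjacent results: show that $D^3dh_1$ cannot support any non-trivial differential by ruling out all potential targets in its copy of the splitting from \cref{cor:split}. The class $D^3dh_1$ sits in stem $43$ and filtration $3$, and by \cref{prop:inftybo}, \cref{prop:d3} and the Leibniz rule it is a $d_3$- and $d_5$-cycle (all $d_3$ and $d_5$ differentials near it have been accounted for). So I first identify, using the $E_2$-page description from \cref{thm:2BSSint} together with the already-established differentials, exactly which classes in the same $G_{24}$-summand lie in stem $42$ and filtration $\geq 4$ (these are the candidate targets of a $d_r(D^3 dh_1)$ for $r\geq 4$).

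First I would enumerate those candidate targets. By $D^8$-periodicity (\cref{prop:periodicity}) and \cref{cor:split} it suffices to check the relevant $G_{24}$-copy; the candidates should be of the form $D^{-m}g^s(\cdots)$ in stem $42$, filtration $4,6,8,\dots$ Second, for each candidate I would exhibit that it is already killed by, or supports, a previously-established differential: most should be hit by or support $d_5$, $d_9$, $d_{13}$, or $d_{23}$ differentials coming from \cref{prop:d13one}, \cref{prop:diff9}, \cref{prop:d13two}, \cref{cor:d23two}, possibly after multiplying by the permanent cycles $g$, $D^{\pm 8}$, and combined with \cref{met:Tatemethod} to propagate Tate-side differentials of classes that appear as negative-filtration translates. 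The Tate spectral sequence trick (\cref{met:Tatemethod}) will be essential for the high-filtration candidates that are $D^{-16}g^j$-multiples of classes supporting long differentials. Third, having shown every class in stem $42$ and filtration $\geq 4$ in the relevant summand either supports or receives a non-trivial differential strictly before $D^3dh_1$ could map to it, I conclude $D^3dh_1$ is a permanent cycle.

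The main obstacle I anticipate is bookkeeping: stem $42$ has fairly high filtration depth (up to the vanishing line at filtration $23$ from \cref{thm:sharpvanishingline}), so there are several candidate targets across filtrations $4,6,\dots,22$, and for each one I must either quote an existing differential or produce a new short argument (typically via the Tate spectral sequence and the hidden $2$-extensions of \cref{lem:hiiden2Q8}, \cref{cor:hidden2Q8}). A secondary subtlety is making sure no $d_r(D^3dh_1)$ could hit a class that I have only shown to be a \emph{permanent cycle} elsewhere rather than a class that is hit or supports a differential — but since $D^3dh_1$ is a permanent cycle candidate and I am proving \emph{it} is permanent, the only danger is a differential \emph{out} of it, so I just need all stem-$42$ higher-filtration classes in its copy to be non-permanent-cycles or already-hit; the vanishing line guarantees there are only finitely many to check. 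If a direct target-by-target check proves unwieldy, the fallback is to locate a permanent cycle $y$ in low filtration with $D^3 dh_1 = c\cdot y$ for a permanent cycle $c$ (e.g.\ writing $D^3 dh_1 = D^3 h_1 \cdot d$ and using \cref{lem:pcford11} that $D^3h_1$ is a permanent cycle, together with the claim that $d$ is a permanent cycle — which itself would need the Hurewicz-image input analogous to \cref{lemma:gPC}), giving $D^3dh_1$ as a product of permanent cycles.
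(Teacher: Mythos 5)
Your plan has a genuine gap, and it starts with the bidegree: $D^3dh_1$ lies at $(39,3)$ (stem $39=3\cdot 8+14+1$), not stem $43$, so the candidate targets live in stem $38$. More seriously, the main strategy --- enumerate all higher-filtration classes in stem $38$ of the relevant summand and show each is already hit or supports an earlier differential --- cannot succeed at the decisive candidate, namely the would-be $d_{11}$-target $D^{-3}g^{3}h_1^{2}$ at $(38,14)$. This is exactly the analogue of the classes $D^{j-6}g^{3}h_1^{2}$ which, for $j=1,2,5,6$, are killed \emph{precisely by} $d_{11}$-differentials on $D^{j}dh_1$ (\cref{prop:d11}, \cref{cor:d11}, which are only established after this lemma); none of the differentials available at this stage hits it, and by inspection there is no other class in stem $39$ that could (the only candidates are $D^3dh_1$ itself and $gD^2h_2$, neither of which carries an established differential into it). So whether $D^3dh_1$ supports that $d_{11}$ is the actual content of the lemma, and it cannot be settled by the ``target is already dead'' bookkeeping you propose. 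The paper settles it by a mechanism you never apply to the class itself: transport the known $d_9$ on $Dc$ (\cref{prop:d9one}) into the Tate spectral sequence and multiply by the invertible class $g^{-2}D^{8}$ to obtain $d_9(D^9g^{-2}c)=D^3dh_1$, whose source has negative filtration; being hit in the Tate spectral sequence makes $D^3dh_1$ a permanent cycle there, hence in the HFPSS by \cref{lem:tateisorange}. This is exactly \cref{met:Tatemethod}, which you cite only as a tool for disposing of candidate targets.

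Your fallback, $D^3dh_1=D^3h_1\cdot d$ with \cref{lem:pcford11}, is closer to a working argument, but as stated it hinges on proving that $d$ is a permanent cycle via the Hurewicz image of $\kappa$. That is extra input the paper deliberately avoids (it only uses the $C_2$-level Hurewicz images of $\eta$, $\nu$, $\bar\kappa$), and it would additionally require ruling out a filtration jump, i.e.\ showing the image of $\kappa$ is detected by $d$ at $(14,2)$ rather than by a higher-filtration class in stem $14$ such as $kD^2h_1^2$ at $(14,6)$. The paper does prove that $d$ is a permanent cycle (\cref{lem:dpc}), but by the same Tate-shift trick applied to the $d_{13}$ of \cref{prop:d13two}, which is already available at this point; if you replaced the Hurewicz step with that argument, your fallback would go through and would essentially reproduce the paper's method.
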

\begin{proof}
By \cref{prop:d9one} in the $Q_8$-TateSS for $\E_2$, we have a $d_9$-differential
\[
d_9(D^9 g^{-2} c)= D^3 d h_1.
\]
Then $D^3 dh_1$ is a permanent cycle in the $Q_8$-TateSS. By \cref{lem:tateisorange} it is also a permanent cycle in $Q_8$-HFPSS$(E_2)$.
\end{proof}

\begin{prop}\label{prop:d11}
The classes $D^{2}d$ at $(30,2)$ and $D^6 d$ at $(62,2)$ support $d_{11}$-differentials
\begin{enumerate}
\item $d_{11}(D^{2}d)=D^{-4}g^3 h_1;$
\item $d_{11}(D^6 d)=g^3h_1.$
\end{enumerate}
\end{prop}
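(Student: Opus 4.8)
The two targets are $D^{-4}g^3h_1$ at $(29,13)$ and $g^3h_1$ at $(61,13)$. Both are cycles through the $E_{11}$-page: $g$ and $h_1$ are permanent cycles by \cref{lemma:gPC}, and the only shorter differentials a priori supported by these classes are of the shape $d_\bullet(D^{\mp4})\cdot g^3h_1$, whose value is a multiple of $h_1^4$, which already vanishes on $E_4$ by \cref{cor:d3}. So each target is eligible to receive a $d_{11}$. The plan is to prove (1) by the vanishing line method and then deduce (2) from it through the Tate spectral sequence.

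For (1), I would multiply the putative differential by the permanent cycle $g^3$ to obtain $d_{11}(D^2g^3d)=D^{-4}g^6h_1$, whose target lies at $(89,25)$, strictly above the filtration-$23$ vanishing line of \cref{thm:sharpvanishingline}; hence $D^{-4}g^6h_1$ is a permanent cycle that must be hit. By \cref{cor:split} no differential crosses the three summands, so I would work only inside the summand containing $D^{-4}g^6h_1$ (the one that also contains $D^2d$ and $D^2g^3d$), and then go through the classes in stem $90$ that survive to some $E_r$ with $r\le 11$: the $d_3$-, $d_5$-, $d_7$- and $d_9$-differentials already established in \cref{prop:d3}, \cref{prop:d5}, \cref{prop:d7one}, \cref{prop:d7two}, \cref{prop:d9one} and \cref{prop:diff9} should eliminate every candidate source except $D^2g^3d$, forcing $d_{11}(D^2g^3d)=D^{-4}g^6h_1$. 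Finally, since $g^3$ is a permanent cycle and $D^2d$ survives to $E_{11}$ (a routine consequence of the already-established lower differentials, using that $d$ is $2$-torsion so that $d_5(D^2d)=2D^{-1}g\,d\,h_2=0$), the Leibniz rule gives $\big(d_{11}(D^2d)-D^{-4}g^3h_1\big)\cdot g^3=0$ on $E_{11}$, and the injectivity of multiplication by $g^3$ in this bidegree (a $g$-periodicity statement on the $E_2$-page) yields $d_{11}(D^2d)=D^{-4}g^3h_1$.

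For (2), recall that $D$ is invertible in the $Q_8$-TateSS and $D^2d$ lies in filtration $2\ge 0$, so by \cref{lem:tateisorange} part (1) holds in the TateSS; multiplying by $D^4$ (after checking $D^6d$ survives to the $E_{11}$-page of the TateSS, again from the known lower differentials) produces $d_{11}(D^6d)=g^3h_1$ in the TateSS, and since both $D^6d$ at $(62,2)$ and $g^3h_1$ at $(61,13)$ lie in positive filtration, \cref{lem:tateisorange} transports this back to the $Q_8$-HFPSS. Alternatively one may run the $C_4$-restriction argument used in \cref{prop:diff9}: the $C_4$-HFPSS for $\E_2$ is $32$-periodic, and if the nonzero restrictions $\Res^{Q_8}_{C_4}(D^2d)$ and $\Res^{Q_8}_{C_4}(D^6d)$ are seen to support $d_{11}$-differentials in the $C_4$-computation of \cite{BBHS20}, naturality forces the $Q_8$-differentials directly.

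\textbf{Main obstacle.} The delicate point is the bookkeeping in the vanishing-line step: one must verify that, inside the relevant summand, \emph{every} class in the target stem capable of supporting a differential of length $\le 11$ hitting $D^{-4}g^6h_1$ has already been shown to support a shorter or otherwise different differential, so that $D^2g^3d\xrightarrow{d_{11}}D^{-4}g^6h_1$ is the only remaining possibility. Closely related are the subsidiary checks that $D^2d$ and $D^6d$ genuinely survive to $E_{11}$ — this is where the $2$-torsion of $d$ and the order-$4$ behavior of $h_2$ must be used carefully — and the injectivity of multiplication by $g^3$; none of these is conceptually hard, but each requires reading the charts in \S\ref{section:charts} attentively.
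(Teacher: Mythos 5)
Your route is genuinely different from the paper's: the paper proves \cref{prop:d11} by restriction, observing that $\Res^{Q_8}_{C_4}(D^2d)$ is non-trivial of order $2$ and, by \cite{BBHS20}, supports a $d_{13}$-differential in the $C_4$-HFPSS; naturality then forces $D^2d$ to die on or before $E_{13}$, and degree reasons pin down the $d_{11}$; part (2) is handled the same way via the $32$-periodicity of the $C_4$-HFPSS. (The paper's vanishing-line proof, \cref{prop:d11vanishing}, covers only $D^6d$, is run in the Tate spectral sequence where $g$ is invertible, and uses the target $g^8h_1^2$, which is visibly a product of permanent cycles.) Your proposal has genuine gaps. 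First, the key step ``the target lies above the filtration-$23$ line, hence $D^{-4}g^6h_1$ is a permanent cycle that must be hit'' is a non sequitur: \cref{thm:sharpvanishingline} only says a class in filtration $\geq 23$ supports a differential \emph{or} is hit, so to conclude ``must be hit'' you must first prove $D^{-4}g^6h_1$ is a permanent cycle. This is not automatic, because $D^{\pm 4}$ is not a permanent cycle ($d_7(D^4)=Dgh_1^3$ by \cref{prop:d7two}), so the class is not a product of known permanent cycles; the paper is careful to choose such products ($D^{-16}g^7$ in \cref{prop:d13one}, $g^8h_1^2$ in \cref{prop:d11vanishing}). The stem-$90$ elimination of all other candidate sources, which you defer, is the actual content of a vanishing-line argument at this stage of the computation.

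Second, your deduction of (2) from (1) by ``multiplying by $D^4$'' in the Tate spectral sequence is not valid: $D^4$ supports a $d_7$, so it is not a class on the $E_{11}$-page and cannot be fed into the Leibniz rule there ($D$ being invertible on the $E_2$-page does not help), and no product of the invertible permanent cycles $g$ and $D^{\pm 8}$ sits in bidegree $(32,0)$ to substitute for it; this is precisely why the paper proves (2) separately. Your fallback for (2) is also misstated: in \cite{BBHS20} the restrictions of $D^2d$ and $D^6d$ support $d_{13}$-differentials, not $d_{11}$'s, so naturality alone only shows the $Q_8$-classes die by $E_{13}$ and one still needs the degree-reasons step — which is exactly the paper's argument, not a consequence of your conditional as phrased. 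Two smaller points: for $r=9,11$ you cannot compute $d_r(D^{-4}g^3h_1)$ as $d_r(D^{-4})\cdot g^3h_1$, since $D^{-4}$ does not survive past $E_7$ (degree reasons are needed instead); and ``injectivity of multiplication by $g^3$'' must be checked on the $E_{11}$-page in the relevant bidegree, not just on $E_2$ — it is simpler to note that $D^{-4}g^3h_1$ is the only surviving class in $(29,13)$ there.
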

\begin{proof}

According to \cref{RestrictionList}, the restriction of the class $d$ from $Q_8$-HFPSS$(\E_2)$ to $C_4$-HFPSS$(\E_2)$ is non-trivial, Since the class $d$ is  order $2$, the class $\Res(D^2 d)$ must be order $2$. Then according to the computations in \cite{BBHS20} (See \cite[Figure 5.3]{BBHS20}), on bigrading $(30,2)$, the class $\Res(D^2 d)$ supports a non-trivial $d_{13}$-differential. This implies the class $D^2 d$ supports a non-trivial differential with a length at most $13$. The desired differential in (1) follows by degree reasons. The proof for (2) is similar since $C_4$-HFPSS$(\E_2)$ is $32$-periodic. 

\end{proof}
\begin{cor}
The classes $D^{2}dh_1$ at $(31,3)$ and $D^6 dh_1$ at $(63,3)$ support  $d_{11}$-differentials
\begin{enumerate}
\item $d_{11}(D^{2}dh_1)=D^{-4}g^3 h_1^2;$
\item $d_{11}(D^6 dh_1)=g^3 h_1^2$.
\end{enumerate}
\end{cor}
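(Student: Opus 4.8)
The plan is to deduce both differentials directly from \cref{prop:d11} by applying the Leibniz rule, using that $h_1$ is a permanent cycle. First I would recall from \cref{lemma:gPC} that $h_1$ survives to $E_\infty$, so in particular $d_{11}(h_1)=0$. Then multiplying the differential $d_{11}(D^2 d)=D^{-4}g^3 h_1$ of \cref{prop:d11}(1) by $h_1$ and invoking the Leibniz rule gives
\[
d_{11}(D^2 d h_1)=d_{11}(D^2 d)\cdot h_1 = D^{-4}g^3 h_1^2,
\]
and similarly multiplying $d_{11}(D^6 d)=g^3 h_1$ from \cref{prop:d11}(2) by $h_1$ yields $d_{11}(D^6 d h_1)=g^3 h_1^2$.

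To be careful, I would then check that these are genuine (non-trivial) $d_{11}$-differentials, i.e.\ that both the source and target are still alive on the $E_{11}$-page. For the sources $D^2 d h_1$ at $(31,3)$ and $D^6 d h_1$ at $(63,3)$: these are classes on the $E_2$-page, and by degree reasons (and \cref{cor:split}) they cannot be hit by any $d_r$ with $r\le 10$, nor can they support any differential shorter than $d_{11}$; this can be read off the charts in \S\ref{section:charts}. For the targets $D^{-4}g^3 h_1^2$ at $(20,14)$ and $g^3 h_1^2$ at $(52,14)$: one checks on the charts that no differential of length $<11$ hits them, so they indeed survive to $E_{11}$ and are nonzero there, making the Leibniz-rule conclusion meaningful.

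The argument has essentially no hard content beyond \cref{prop:d11}; the only point requiring (light) care is the bookkeeping that $D^{-4}g^3 h_1^2$ and $g^3 h_1^2$ are nonzero on $E_{11}$—equivalently that $h_1$-multiplication on $D^{-4}g^3 h_1$ and $g^3 h_1$ is nonzero on that page. This follows from the explicit description of the $E_2$-page relations in \cref{prop:2BSSE1} (where the only relations killing $h_1$-powers involve $v_1$, not $D$ or $g$) together with the list of differentials already established, so no new input is needed.
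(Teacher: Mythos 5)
Your proposal is correct and matches the paper's (implicit) argument: this corollary is stated without proof precisely because it follows from \cref{prop:d11} by multiplying by the permanent cycle $h_1$ (\cref{lemma:gPC}) and applying the Leibniz rule, together with the routine check that sources and targets are still alive on the $E_{11}$-page. One bookkeeping slip to fix: a $d_{11}$ lowers the stem by $1$, so the targets $D^{-4}g^3h_1^2$ and $g^3h_1^2$ sit at $(30,14)$ and $(62,14)$, not at $(20,14)$ and $(52,14)$.
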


\begin{cor}\label{prop:d9two}
The classes $Dh_1$ at $(9,1)$ and  $D^5 h_1$ at $(41,1)$ support $d_9$-differentials
\begin{enumerate}
\item  $d_9(D h_1)=D^{-5}g^2 c;$
\item $d_9(D^5 h_1)=D^{-1}g^2 c.$
\end{enumerate}
\end{cor}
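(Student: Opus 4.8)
\emph{Proof proposal.} The plan is to bootstrap from the $d_9$-differentials already established, $d_9(Dc)=D^{-5}g^2dh_1$ (\cref{prop:d9one}) and $d_9(D^5c)=D^{-1}g^2dh_1$ (\cref{prop:diff9}(2)), together with the $E_2$-page identity $c^2=dh_1^2$ --- immediate from $c=Dxh_1$ and $d=D^2x^2$ in \cref{prop:2BSSE1} --- and the splitting of \cref{cor:split}. Since $d_9(D^5c)=D^{-1}g^2dh_1$ is precisely the $D^5$-analogue of $d_9(Dc)=D^{-5}g^2dh_1$, the argument for $D^5h_1$ is the argument for $Dh_1$ mutatis mutandis with $D$ replaced by $D^5$ (note that $D^4$ is not a permanent cycle by \cref{prop:d7two}, so one cannot simply multiply the $Dh_1$-statement by $D^4$); I therefore only discuss $Dh_1$.

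First I would show that $Dh_1$ is not a permanent cycle. If it were, then since $g$, $D^8$ and $D^{-8}$ are permanent cycles (\cref{lemma:gPC}, \cref{prop:periodicity}), every class $g^NDh_1$ would be a permanent cycle, and for $N$ large it sits strictly above the filtration-$23$ vanishing line of \cref{thm:sharpvanishingline}, hence must be hit. Differentials are $g$-linear because $g$ is a permanent cycle, and by \cref{cor:split} the incoming differential stays inside one of the three copies; chasing it down the $g$-tower (using $D^{\pm8}$-periodicity to keep the bidegrees in range) together with a degree count at $(9,1)$ --- where the only other classes are $v_1$-power ($bo$-pattern) classes whose differentials are already pinned down by \cref{prop:d3} and \cref{prop:inftybo} --- forces a differential out of $Dh_1$ itself. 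A differential of length $\leq 8$ is impossible: $d_3(D)=0$ for degree reasons, $d_5(Dh_1)=d_5(D)\,h_1=D^{-2}gh_2h_1=0$ by \cref{prop:d5} and the relation $h_1h_2=0$, and the potential $d_7$-target in stem $8$ vanishes by \cref{cor:split} and inspection of the chart. Thus $Dh_1$ supports a $d_r$ for some $9\leq r\leq 23$.

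Finally I would identify $r$ and the target. By \cref{cor:split} and a degree count, the only candidate target for a differential out of $Dh_1$ is the class at $(8,10)$ in the appropriate copy, which is (up to a unit in $\mathbb{W}(\mathbb{F}_4)$) the class $D^{-5}g^2c$; this pins down $r=9$ and $d_9(Dh_1)=D^{-5}g^2c$ simultaneously. As a check on the coefficient, multiplying $d_9(Dc)=D^{-5}g^2dh_1$ by $h_1$ and rewriting with $c^2=dh_1^2$ gives $d_9(Dch_1)=D^{-5}g^2c^2$, which equals $d_9\bigl(c\cdot Dh_1\bigr)=c\cdot d_9(Dh_1)$ since $c$ is a $d_9$-cycle, consistent with the claimed formula. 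Running the same argument with $D^5$ and $d_9(D^5c)=D^{-1}g^2dh_1$ yields $d_9(D^5h_1)=D^{-1}g^2c$. I expect the main obstacle to be the first step: making the vanishing-line/$g$-linearity argument watertight --- in particular ruling out that the differential hitting $g^NDh_1$ fails to be $g$-divisible --- and carrying out the degree bookkeeping at $(9,1)$ and $(41,1)$, where the $bo$-pattern classes make \cref{cor:split} and the known $d_3$-differentials essential for isolating the summand that carries the $d_9$.
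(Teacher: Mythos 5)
Your overall shape (show $Dh_1$ is non-permanent, then pin down the target by degree count inside one copy of \cref{cor:split}) is not the paper's argument, and its first step has a genuine gap. Knowing that $g^N Dh_1$ lies above the filtration-$23$ line of \cref{thm:sharpvanishingline} only tells you that those classes must die; it does not force a differential \emph{out of} $Dh_1$. The tower can be killed by differentials whose sources are not $g$-divisible in the HFPSS, and dividing by $g^N$ is only legitimate in the Tate spectral sequence, where the resulting statement is that $Dh_1$ is hit by a source in negative filtration --- which by \cref{lem:tateisorange} and \cref{met:Tatemethod} proves $Dh_1$ is a \emph{permanent cycle}, not that it supports a differential. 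This is not a hypothetical failure mode: the completely analogous class $D^3h_1$ is a permanent cycle (\cref{lem:pcford11}), even though its $g$-tower is wiped out above the vanishing line by the $d_{23}$-differentials of \cref{cor:d23two} and it is hit in the Tate spectral sequence. So the vanishing-line/$g$-linearity argument you flag as the ``main obstacle'' cannot be made watertight without new input; some extra structure is needed to distinguish $Dh_1$ from $D^3h_1$. The paper supplies exactly this: by degree reasons $Dh_1$ either supports the $d_9$ or is an $11$-cycle, and if it were an $11$-cycle the Leibniz rule applied to $d_{11}(D^2d)=D^{-4}g^3h_1$ (\cref{prop:d11}, proved by restriction to $C_4$) would give $d_{11}(D^3dh_1)=D^{-3}g^3h_1^2$, contradicting that $D^3dh_1$ is a permanent cycle (\cref{cor:pcford9}, proved by the Tate method). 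An alternative genuine route, used in \cref{prop:d9fivenorm} for $D^2h_1$, goes through the $(\ast-\sigma_i)$-graded norm differentials; your proposal uses neither.

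Two smaller points. Even granting that $Dh_1$ supports some $d_r$ with $9\le r\le 23$, your degree count at the target is incomplete: inside the same copy, stem $8$ also contains $D^{-14}g^6$ in filtration $24$, so a $d_{23}$ is a priori possible and must be excluded (for instance because $D^{-14}g^6$ already supports a $d_5$ by the Leibniz rule applied to $d_5(D^2)$ from \cref{prop:d5}), before ``the only candidate target is $(8,10)$'' is legitimate. Finally, the consistency check via $c^2=dh_1^2$ is vacuous as a coefficient check: both sides are literally the same monomial $D^2x^2h_1^2$, and on the associated graded of the $E_2$-page this product vanishes by the filtration-$4$ relations, so it confirms nothing about the unit; it is harmless but carries no weight.
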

\begin{proof}

By \cref{cor:split} and degree reasons, the class $Dh_1$ either supports a non-trivial $d_9$-differential or is an $11$-cycle. We show that it is the first case.

If $Dh_1$ were a $11$-cycle then by \cref{prop:d11} and the Leibniz rule, there would be a $d_{11}$-differential
\[
d_{11}(D^3 dh_1)=D^{-3}g^3 h_1^2.
\]
This contradicts \cref{cor:pcford9}. Therefore, we have the desired $d_9$-differential in (1). The proof for $(2)$ is similar. 

\end{proof}

\begin{prop}\label{prop:9cycle}
The class $D^{-1} h_1$ is a $13$-cycle. 
\end{prop}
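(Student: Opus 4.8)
The plan is to use the three-fold splitting of \cref{cor:split} to push the question into the $G_{24}$-HFPSS, where the behaviour of the $\Delta$-power multiples of $h_1$ is already pinned down by the differentials proved above.

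Since $D^{-1}h_1$ lies in bidegree $(-7,1)$, it is never the target of a differential, so it is a $13$-cycle as soon as $d_r(D^{-1}h_1)=0$ for $r\in\{3,5,7,9,11,13\}$. The cases $r=3$ and $r=5$ are formal: $D^{-1}h_1$ carries no power of $v_1$, so it is not one of the $d_3$-sources of the $bo$-pattern produced by \cref{prop:d3}, giving $d_3(D^{-1}h_1)=0$; and applying the Leibniz rule to the differential $d_5(D)=D^{-2}gh_2$ of \cref{prop:d5} yields $d_5(D^{-1})=D^{-4}gh_2$ up to a unit, whence $d_5(D^{-1}h_1)=D^{-4}g\cdot h_1h_2=0$ by the relation $h_1h_2=0$ recorded in \cref{prop:2BSSE1}.

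For $r\in\{7,9,11,13\}$ I would multiply $D^{-1}h_1$ by the invertible permanent cycle $D^{16}=(D^{8})^{2}$ from \cref{prop:periodicity}; the resulting class $D^{15}h_1=\Delta^{5}h_1$ lies in the $G_{24}$-summand, so by \cref{cor:split} it suffices to show $\Delta^{5}h_1$ supports no $d_r$ with $r\le13$ in the $G_{24}$-HFPSS. One then inspects the sources of the relevant established differentials, propagated by the $\Delta^{8}$-periodicity of the $G_{24}$-HFPSS and the Leibniz rule: the $d_7$-sources are (suitably $2$-divisible) pure $\Delta$-powers, none equal to $\Delta^{5}$, from \cref{prop:d7one,prop:d7two}; the $d_9$-sources among the classes $\Delta^{m}h_1$ and $\Delta^{m}c$ have $D$-exponent congruent to $1$ mod $4$, from \cref{prop:d9one,prop:d9two,prop:diff9}; the $d_{11}$-sources are $\Delta$-power multiples of $d$, from \cref{prop:d11}; and the $d_{13}$-sources are $\Delta$-power multiples of $h_2$ and of $ch_1$, from \cref{prop:d13two,prop:d13one}. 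The class $\Delta^{5}h_1=D^{15}h_1$ has $D$-exponent $15\equiv3\pmod4$ and is an $h_1$-multiple that is neither a pure $\Delta$-power nor a $d$-, $h_2$-, or $ch_1$-multiple, so it is not among these sources; it then remains to verify, against the $E_2$-page computed in \cref{thm:2BSSint}, that the bidegrees $(-8,8)$, $(-8,10)$, $(-8,12)$, $(-8,14)$ in which a differential from $D^{-1}h_1$ would land contain no class not already consumed within the same copy of \cref{cor:split}. This gives $d_r(D^{-1}h_1)=0$ for $r\le13$.

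The hard part will be this last piece of bookkeeping: several classes adjacent to $D^{-1}h_1$ — in particular $D^{\pm3}h_1$ and $D^{\pm5}h_1$ — do support $d_9$, and multiplying $D^{-1}h_1$ by the permanent cycle $g$ produces the class $D^{-1}gh_1$, which even supports a $d_{23}$ by \cref{prop:d13one}(3); so the argument must track the $D$- and $\Delta$-exponents modulo $4$ and $8$ carefully and genuinely leans on the explicit $E_2$-page of \cref{thm:2BSSint} to clear the potential $d_7$- through $d_{13}$-targets. Everything else — the reduction through \cref{cor:split} and the use of $D^{8}$-periodicity — is routine.
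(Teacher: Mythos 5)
Your reduction via $D^{8}$-periodicity (or $D^{16}$ and \cref{cor:split}) and the disposal of $d_3$, $d_5$ are fine, but the decisive step is missing. Observing that $\Delta^5 h_1=D^{15}h_1$ is "not among the sources of the established differentials" proves nothing: the proposition is exactly the claim that this class supports no new differential, so the only admissible argument is an analysis of the potential targets, which you defer as "bookkeeping" and never carry out. Moreover, your proposed criterion for that bookkeeping — that the target bidegrees "contain no class not already consumed" — fails precisely at the crucial case. In the same summand as $D^{-1}h_1$ there is a nonzero class in the $d_9$-target bidegree $(-8,10)$, namely $k^2D^{-1}c=g^2D^{-7}c$ (after translating by $D^8$, this is $g^2Dc$ at $(56,10)$), and this class is \emph{not} dead before the $E_9$-page: it survives to $E_9$. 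So it cannot be cleared by showing it was already consumed. The point you need, and which the paper uses, is that this class \emph{supports} its own nontrivial $d_9$: by \cref{prop:d9one} the class $Dc$ supports a $d_9$, and since $g=kD^3$ is invertible in the $Q_8$-Tate spectral sequence, \cref{met:Tatemethod} and \cref{lem:tateisorange} propagate this to a nontrivial $d_9$ on $g^2Dc$; a class supporting a nonzero $d_9$ cannot simultaneously be a $d_9$-boundary, so $d_9(D^7h_1)=0$. Without this (or an equivalent) argument your proof does not rule out a $d_9$ on $D^{-1}h_1$, which is the whole content of the statement; the remaining $d_{11}$ and $d_{13}$ cases are then genuinely just degree checks, as in the paper.

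For comparison, the paper's proof is short: translate to $D^7h_1$ by \cref{prop:periodicity}, note it is a $7$-cycle from the already-computed $E_9$-page, kill the single possible $d_9$-target $g^2Dc$ by the Tate-invertibility argument above, and finish by degree reasons. If you want to keep your $G_{24}$-summand formulation, you must still confront the class $k^2D^{-1}c$ (it lies in the same copy, since $-1\equiv 2 \bmod 3$ matches $D^{-1}h_1$), and the only way to clear it is the "already a $d_9$-source" argument, not an "already hit" argument.
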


\begin{proof}
Since $D^8$ is the periodic class, it suffices to prove that $D^7 h_1$ is a $13$-cycle. The $D^{7} h_1$ is a $7$-cycle from our computation of $E_9$-page. According to \cref{prop:d9one}, the class $Dc$ supports a $d_9$-differential. Then the class $g^2 Dc$ supports a non-trivial $d_9$-differential by \cref{met:Tatemethod} since the class $g=kD^3$ is invertible.

Therefore, the class $D^{7}h_1$  does not support a $d_9$-differential since the possible target $g^2 Dc$ already supports a $d_9$-differential. Then for degree reasons, $D^7 h_1$ is a $13$-cycle. So is the class $D^{-1}h_1$. 
\end{proof}
\begin{cor}\label{prop:d9three}
The classes $D^2c$ at $(24,2)$ and $D^6c$ at $(56,2)$ support $d_9$-differentials
 \begin{enumerate}
\item $d_9(D^2c)=D^{-4}g^2 dh_1;$
\item $d_9(D^6 c)=g^2 dh_1.$
\end{enumerate}
\end{cor}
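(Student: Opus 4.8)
The plan is to obtain both differentials from the $d_9$-differentials already established on $Dc$ in \cref{prop:d9one} and on $D^5c$ in \cref{prop:diff9}, by exploiting the splitting of the $Q_8$-HFPSS$(\E_2)$ into three copies of the $G_{24}$-HFPSS$(\E_2)$ (\cref{cor:split}). First I would record the relevant $C_3$-bookkeeping: since $c=Dxh_1$ is $C_3$-invariant, $c$ lies in the $G_{24}$-summand (the $D^0$-copy), and $D^mc$ lies in the $D^{m\bmod 3}$-copy, so that $D^2c$ sits in the $D^2$-copy while $D^6c$ returns to the $G_{24}$-summand. Writing $Dc=c\cdot D$ and noting that its target $D^{-5}g^2dh_1=(D^{-6}g^2dh_1)\cdot D$ also lies in the $D$-copy, \cref{cor:split} turns \cref{prop:d9one} into the $G_{24}$-HFPSS differential $d_9(c)=D^{-6}g^2dh_1$. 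Feeding this back through \cref{cor:split} with $D^2c=c\cdot D^2$ (both source and target in the $D^2$-copy) yields part (1), $d_9(D^2c)=D^{-4}g^2dh_1$. (Part (2) of \cref{prop:diff9} likewise gives $d_9(D^3c)=D^{-3}g^2dh_1$ in the $G_{24}$-HFPSS, but since $D^6$ is not a permanent cycle this does not reach $D^6c$ directly.)

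For part (2), after reducing via \cref{cor:split} to the claim $d_9(D^6c)=g^2dh_1$ inside the $G_{24}$-HFPSS$(\E_2)$, I would argue exactly as in \cref{prop:diff9}. The class $2D^{-3}g^4=\Tr_{C_4}^{Q_8}(D^{-3}g^4)$ has nontrivial restriction $\Res^{Q_8}_{C_4}(D^{-3}g^4)$ to the $C_4$-HFPSS, and since $\Res^{Q_8}_{C_4}(D)=\Delta_1$ is an invertible permanent cycle there, $\Res(D^{-3}g^4)$ differs from the class $\Res(D^{-4}g^4)$ of \cref{prop:diff9} by a unit; hence it too receives a $d_{13}$-differential in the $C_4$-HFPSS by the computation of \cite{BBHS20}. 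By naturality $2D^{-3}g^4$ must die on or before $E_{13}$ in the $Q_8$-HFPSS, and \cref{cor:split} together with degree reasons forces $d_{13}(D^6ch_1)=2D^{-3}g^4$. The $d_9$ then follows as in \cref{prop:d9one}: if $D^6c$ did not support the claimed $d_9$, it would be a $13$-cycle for degree reasons, which is incompatible with $D^6ch_1$ supporting a nontrivial $d_{13}$ (note $dh_1^2=0$ on the $E_2$-page by the relation $x^2h_1^2=0$, so the $d_9$ on $D^6c$ does not obstruct this $d_{13}$).

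The routine parts are the Leibniz-rule and periodicity manipulations together with the degree-reason eliminations of competing targets. The main obstacle is the second half: making the $C_4$-restriction input precise, i.e.\ confirming from \cite{BBHS20} (using $32$-periodicity of the $C_4$-HFPSS and $\Res^{Q_8}_{C_4}(D)=\Delta_1$) that $\Res(D^{-3}g^4)$ is the target of a $d_{13}$, and checking that no class other than $D^6ch_1$ can account for the disappearance of $2D^{-3}g^4$ in the $Q_8$-HFPSS. One must also keep the $C_3$-eigenvalue bookkeeping entirely consistent when invoking \cref{cor:split}, since an off-by-one in the copy index would produce a wrong target.
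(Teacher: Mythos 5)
Your route fails in both halves, for concrete reasons. For (1), you read \cref{cor:split} as allowing you to strip a power of $D$ off $d_9(Dc)=D^{-5}g^2dh_1$ to get ``$d_9(c)=D^{-6}g^2dh_1$ in the $G_{24}$-HFPSS'' and then re-multiply by $D^2$. That intermediate statement is false: $c$ at $(8,2)$ is a permanent cycle (it detects the image of $\epsilon$, and it is not among the $d_9$-sources in \cref{table:HPFSS_integer_diff}); the same applies to your parenthetical ``$d_9(D^3c)=D^{-3}g^2dh_1$''. The usable content of the splitting is only that differentials do not cross the three $C_3$-eigenspace copies, together with the identification of the $D$- and $D^2$-copies with the $G_{24}$-pattern \emph{after} multiplying by the invertible permanent cycle $D^{8}$ (that is how the proof of \cref{cor:split} actually runs); taken literally at the same stem offset the ``iff'' is inconsistent with, e.g., $d_5(D)=D^{-2}gh_2$ versus $d_5(1)=0$. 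So \cref{prop:d9one} translates to $d_9(\Delta^3c)=\Delta g^2dh_1$ in the $G_{24}$-HFPSS, and there is no primary way to move this to the $D^2$-copy, since $D$ and $\Delta^3$ are not even $d_5$-cycles. The paper instead argues by contradiction: if $D^2c$ were a $13$-cycle, then multiplying by the $13$-cycle $D^{-1}h_1$ (\cref{prop:9cycle}) would make $Dch_1$ a $13$-cycle, contradicting \cref{prop:d13one}.

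For (2), the forcing step $d_{13}(D^6ch_1)=2D^{-3}g^4$ does not hold. First, your justification uses that $\Res^{Q_8}_{C_4}(D)=\Delta_1$ is an invertible \emph{permanent} cycle; it is not --- $\Delta_1$ supports a $d_5$ in the $C_4$-HFPSS (\cite[Proposition~5.24]{BBHS20}, cf.\ \cref{prop:d5res}), and only $\Delta_1^4$ gives the $32$-periodicity, so $(56,16)\equiv(24,16)$ is a genuinely different bidegree from the $(48,16)\equiv(16,16)$ used in \cref{prop:diff9}, and ``differs by a unit, hence also receives a $d_{13}$'' is unjustified. Second, and decisively, the intended target is not there to be hit: by the Leibniz rule with \cref{prop:d5}, $d_5(2D^{-3}g^4)=2g^5D^{-6}h_2\neq 0$ (as $h_2$ has order $4$ and multiplication by $g^5D^{-6}$ is injective in positive filtration), so $2D^{-3}g^4$ already dies by supporting a $d_5$; your naturality argument is satisfied without forcing any $d_{13}$, and the claimed $d_{13}$ on $D^6ch_1$ is impossible. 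The correct argument for (2) is again the Leibniz contradiction: if $D^6c$ were a $13$-cycle, then $D^6c\cdot D^{-1}h_1=D^5ch_1$ would be a $13$-cycle, contradicting \cref{prop:diff9}(1).
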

\begin{proof}
Suppose $D^2 c$ does not support a non-trivial $d_9$-differential. Then for degree reasons, it is a $13$-cycle. However, since $D^{-1}h_1$ is also a $13$-cycle, the Leibniz rule show that $Dh_1c$ is also a $13$-cycle. This contradicts \cref{prop:d13one} and proves the $d_9$-differential in $(1)$. The $d_9$-differential in $(2)$ follows similarly by \cref{prop:diff9}.
\end{proof}

\begin{cor}\label{cor:d11}
The classes $Ddh_1$ at $(23,3)$ and $D^5dh_1$ at $(55,3)$ support $d_{11}$-differentials
\begin{enumerate}
    \item $d_{11}(Ddh_1)=D^{-5}g^3 h_1^2;$
    \item $d_{11}(D^5 dh_1)=D^{-1}g^3 h_1^2.$
\end{enumerate}
\end{cor}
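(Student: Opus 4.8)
The plan is to obtain both differentials from \cref{prop:d11} by multiplying with the class $D^{-1}h_1$. On the $E_2$-page one has the factorizations (up to units in $\W$)
\[
Ddh_1 = (D^2d)\cdot(D^{-1}h_1), \qquad D^5dh_1 = (D^6d)\cdot(D^{-1}h_1).
\]
By \cref{prop:9cycle} the class $D^{-1}h_1$ is a $13$-cycle, so in particular a $d_{11}$-cycle; and by \cref{prop:d11} the classes $D^2d$ and $D^6d$ survive to the $E_{11}$-page (they support $d_{11}$-differentials). First I would check (from the charts in \S\ref{section:charts}, or equivalently from the fact that all the shorter differentials out of this region have already been accounted for) that $Ddh_1$ and $D^5dh_1$ likewise survive to $E_{11}$. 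The Leibniz rule then gives
\begin{align*}
d_{11}(Ddh_1) &= d_{11}(D^2d)\cdot(D^{-1}h_1) = D^{-4}g^3h_1\cdot D^{-1}h_1 = D^{-5}g^3h_1^2,\\
d_{11}(D^5dh_1) &= d_{11}(D^6d)\cdot(D^{-1}h_1) = g^3h_1\cdot D^{-1}h_1 = D^{-1}g^3h_1^2,
\end{align*}
since the cross-terms $(D^2d)\cdot d_{11}(D^{-1}h_1)$ and $(D^6d)\cdot d_{11}(D^{-1}h_1)$ vanish.

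To conclude that these differentials are nontrivial, I would verify that the targets $D^{-5}g^3h_1^2$ and $D^{-1}g^3h_1^2$ are nonzero on the $E_{11}$-page. They are nonzero already on $E_2$, being classes of the form $k^3D^{\bullet}h_1^2$ in filtration $14$ on the $E_\infty$-page of the integer-graded $2$-BSS (\cref{thm:2BSSint}); using \cref{cor:split} one reduces to the relevant $G_{24}$-summand and checks, against the $d_3$, $d_5$, $d_7$ and $d_9$ differentials already established in this subsection (and their Leibniz consequences), that nothing maps onto these classes before $E_{11}$. Alternatively, one can run the same argument inside the $Q_8$-TateSS: multiplying the Tate lifts of \cref{prop:d11} by the permanent cycle $D^{-1}h_1$ produces the analogous $d_{11}$-differentials there, and these transport back to the HFPSS by \cref{met:Tatemethod} since both source and target lie in positive filtration.

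The main obstacle here is bookkeeping rather than anything conceptual: it is to confirm that the sources $Ddh_1$, $D^5dh_1$ and the targets $D^{-5}g^3h_1^2$, $D^{-1}g^3h_1^2$ all survive to $E_{11}$, i.e. that no earlier differential in this portion of the spectral sequence interferes. Given the complete description of the $E_{11}$-page that is available at this point in the computation, this is a routine chart inspection; everything else is an immediate consequence of the multiplicativity of the $Q_8$-HFPSS together with \cref{prop:d11} and \cref{prop:9cycle}.
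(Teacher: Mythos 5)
Your proposal is correct and follows essentially the same route as the paper: the paper also deduces both differentials by combining \cref{prop:9cycle} (so $D^{-1}h_1$ is in particular a $d_{11}$-cycle) with \cref{prop:d11} and the Leibniz rule. The additional bookkeeping you flag (survival of sources and nonvanishing of targets on $E_{11}$) is the routine chart check the paper leaves implicit, so nothing further is needed.
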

\begin{proof}
According to \cref{prop:9cycle}, the class $D^{-1}h_1$ is a $13$-cycle. Then these two $d_{11}$-differentials follow by \cref{prop:d11} and the Leibniz rule.
\end{proof}

\begin{lem}\label{lem:dpc}
The class $d$ is a permanent cycle.
\end{lem}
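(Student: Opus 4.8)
The class $d=D^2x^2$ lies at $(14,2)$, so a nonzero $d_r(d)$ must lie at $(13,2+r)$; since $\pi_*\E_2$ is concentrated in even internal degrees only odd $r$ can occur, and we have already seen that $d$, not being a source of the $bo$-pattern, supports no $d_3$. The plan is to rule out a $d_5,d_7,d_9,\dots$ on $d$ page by page. By \cref{cor:split} the $Q_8$-HFPSS splits into three copies over the $G_{24}$-HFPSS and $d$ sits in the $D^2$-copy, so every potential target lies in that same copy; this already discards every stem-$13$ class that is a $D^{3k}$- or $D^{3k+1}$-multiple, and it localizes the entire question inside one $G_{24}$-copy. (The reader should consult the charts in \S\ref{section:charts}.)

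Two ring-theoretic constraints cut the list of possible targets down to a short one. First, the hidden $h_2$-extension $x^2h_2\cdot h_2=4kD$ of \cref{rmk:h2ext} gives, after multiplying by $D^2$, the relation $d\,h_2^2=4g$; since $g$ and $h_2$ are permanent cycles (\cref{lemma:gPC}), $d\,h_2^2$ is a permanent cycle, and the Leibniz rule forces $d_r(d)\cdot h_2^2=d_r(d\,h_2^2)=0$ on every page, so any target of $d$ is annihilated by $h_2^2$. Second, by \cref{RestrictionList} the restriction $\Res^{Q_8}_{C_4}(d)$ is non-trivial, so naturality of restriction between the homotopy fixed point spectral sequences gives $\Res^{Q_8}_{C_4}(d_r(d))=d_r(\Res^{Q_8}_{C_4}d)$; comparing with the known behavior of $\Res^{Q_8}_{C_4}(d)$ in the $C_4$-HFPSS for $\E_2$ of \cite{BBHS20} restricts which candidate targets can be $\Res$-compatible with a differential on $d$. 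It is also convenient to pass through the Tate spectral sequence (\cref{lem:tateisorange},\cref{met:Tatemethod}): several stem-$13$ classes in the $D^2$-copy are visibly $k$-multiples of classes that support or receive differentials already proved in \S\ref{subsection:differentials}, and this information transports back to the HFPSS.

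Finally I would go through the surviving classes in the $D^2$-copy of stem $13$ page by page: each one is either a $g$- or $D^8$-multiple of one of the permanent cycles produced in \S\ref{subsection:differentials} (hence not available as a differential target), or is the source or target of one of the $d_5,d_7,d_9,d_{11},d_{13},d_{23}$-differentials computed above, or is killed on an earlier page by a $bo$-pattern $d_3$-differential, or violates the $h_2^2$-annihilation constraint. In each case it cannot equal $d_r(d)$, so $d$ supports no differential and is a permanent cycle. The main obstacle is not any single ingredient but the bookkeeping: verifying that the stem-$13$ column of the chart has been completely accounted for on every page. The $h_2^2$-annihilation and the restriction constraints are precisely what make that list short enough to settle by hand.
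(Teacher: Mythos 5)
There is a genuine gap here: your argument is a plan, not a proof. The decisive step --- enumerating every class in stem $13$ of the $D^2$-copy in filtrations $5$ through $25$ and checking, page by page, that none of them can be $d_r(d)$ --- is exactly the content of the lemma, and you explicitly defer it as ``bookkeeping'' without carrying it out. Moreover, one of the criteria you propose to use in that enumeration is logically flawed: being ``a $g$- or $D^8$-multiple of one of the permanent cycles produced in \cref{subsection:differentials}'' does \emph{not} make a class ``not available as a differential target.'' A permanent cycle is by definition a class that supports no differential; it is precisely the kind of class that can be \emph{hit}, so this test rules out nothing. Your other two constraints are also weaker than stated: the relation $d\,h_2^2=4g$ only forces $d_r(d)\cdot h_2^2=0$ on the $E_r$-page (a candidate target $y$ with $yh_2^2\neq 0$ on $E_2$ is not excluded if $yh_2^2$ has already been killed), and naturality of restriction only forces the target to restrict to zero in the $C_4$-HFPSS, which essentially all high-filtration stem-$13$ classes do.

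You were in fact one step away from the paper's argument when you mentioned transporting information through the Tate spectral sequence. The paper's proof is a single application of \cref{met:Tatemethod}: the already-established differential $d_{13}(2Dh_2)=D^{-8}g^3d$ of \cref{prop:d13two} corresponds to a Tate differential, and multiplying by the invertible permanent cycle $g^{-3}D^{8}$ shows that $d$ itself is hit by $d_{13}(2D^9g^{-3}h_2)$ in the $Q_8$-TateSS. A class that is hit supports no differentials, so $d$ is a permanent cycle in the TateSS, and since $d$ sits in positive filtration, \cref{lem:tateisorange} transfers this to the HFPSS. No case-by-case analysis of stem $13$ is needed. If you want to salvage your approach, you must actually list the stem-$13$ classes of the $D^2$-copy and dispose of each one by a correct criterion (hit on an earlier page, supports a differential of length $\geq r$, or survives to $E_\infty$ for an independently established reason); as written, the proof is not complete.
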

\begin{proof}
\cref{prop:d13two} shows that $d$ is hit by a $d_{13}$-differential from $2D^9 g^{-3}h_2$ in $Q_8$-TateSS$(\E_2)$.
By \cref{lem:tateisorange} $d$ is a permanent cycle.
\end{proof}
\begin{rem}\rm
The class $d$ is in the image of the Hurewicz map $S^0\rightarrow \E_2^{hQ_8}$. This follows from the Hurewicz image of $\E_2^{hC_4}$ \cite[Figure 12]{HSWX2018} (see \cref{prop:hurewiczC4}).
\end{rem}
\begin{prop}\label{prop:d9five}
The classes $D^2h_1$ at $(17,1)$ and $D^6h_1$ at $(49,1)$ support  $d_9$-differentials
\begin{enumerate}
\item $d_9(D^2h_1)=D^{-4}g^2c;$
\item $d_9(D^6 h_1)=g^2 c.$
\end{enumerate}
\end{prop}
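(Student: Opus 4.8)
The plan is to reuse the contradiction scheme behind the other $d_9$-differentials in this subsection (compare \cref{prop:d9two} and \cref{prop:d9three}): realise $D^2h_1$ (resp.\ $D^6h_1$) as a factor of a class that is already known to support a $d_{11}$-differential, and play this against \cref{lem:dpc}. First I would reduce to the single $G_{24}$-summand containing these classes via \cref{cor:split}; since $D^8$ is a periodicity class (\cref{prop:periodicity}) it suffices to prove the two named differentials. From the already established lower differentials one checks that $D^2h_1$ and $D^6h_1$ are $7$-cycles (the potential $d_5$-value vanishes because $h_1h_2=0$), hence survive to $E_9$; inspecting the charts (\S\ref{section:charts}) one sees that the only position available to a nonzero differential of length at most $11$ out of $(17,1)$ (resp.\ $(49,1)$) is the $d_9$-target $D^{-4}g^2c$ at $(16,10)$ (resp.\ $g^2c$ at $(48,10)$). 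In particular no $d_{10}$ or $d_{11}$ is possible, and such a class cannot be hit since its filtration is $1$.

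Granting this, the argument is immediate. By \cref{lem:dpc} the class $d$ is a permanent cycle, and by \cref{prop:d11}(1) together with the Leibniz rule ($h_1$ being a permanent cycle by \cref{lemma:gPC}) the class $D^2dh_1=D^2d\cdot h_1$ supports the differential $d_{11}(D^2dh_1)=D^{-4}g^3h_1^2$. On the other hand $D^2dh_1$ is equally the product $D^2h_1\cdot d$. Suppose, for contradiction, that $D^2h_1$ does not support a $d_9$-differential. By the previous paragraph $D^2h_1$ is then a $d_{11}$-cycle, so that $d_r(D^2h_1)=0$ for $r\le 11$, and the Leibniz rule gives
\[
d_{11}(D^2dh_1)=d_{11}(D^2h_1\cdot d)=d_{11}(D^2h_1)\cdot d\pm D^2h_1\cdot d_{11}(d)=0,
\]
contradicting $d_{11}(D^2dh_1)=D^{-4}g^3h_1^2\neq 0$. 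Hence $D^2h_1$ supports a nonzero differential, which by the first paragraph is necessarily $d_9(D^2h_1)=D^{-4}g^2c$. The case of $D^6h_1$ is identical, using $D^6dh_1=D^6h_1\cdot d$ and $d_{11}(D^6dh_1)=g^3h_1^2$ (from \cref{prop:d11}(2) and the Leibniz rule).

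The step I expect to be the main obstacle is the bookkeeping in the first paragraph: one must verify from the already computed pages that $D^2h_1$ (resp.\ $D^6h_1$) has no available $d_{10}$- or $d_{11}$-target, so that ``$D^2h_1$ does not support $d_9$'' really forces it to be a $d_{11}$-cycle; without this a hypothetical nonzero $d_{11}(D^2h_1)$ could account for the $d_{11}$ on $D^2dh_1$ and dissolve the contradiction. Everything else is a routine application of the Leibniz rule together with \cref{cor:split}. A logically independent proof, obtained by applying the norm structure to the $(*-\sigma_i)$-graded computation, will be given in \cref{prop:d9fivenorm}.
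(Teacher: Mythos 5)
Your argument is correct and is essentially the paper's own proof: assuming $D^2h_1$ (resp.\ $D^6h_1$) supports no $d_9$, degree reasons make it an $11$-cycle, and the Leibniz rule applied to $D^2dh_1=D^2h_1\cdot d$ (resp.\ $D^6dh_1=D^6h_1\cdot d$), whose nontrivial $d_{11}$ comes from \cref{prop:d11}, contradicts the fact that $d$ is a permanent cycle (\cref{lem:dpc}). The bookkeeping you flag (no available $d_{10}$- or $d_{11}$-target out of $(17,1)$ or $(49,1)$) is exactly what the paper subsumes under ``degree reasons,'' so there is no gap.
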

\begin{proof}
We prove this by contradiction.
Assume $D^2h_1$ does not support the desired differential. Then it is a $11$-cycle by degree reasons. The Leibniz rule forces the class $Dh_1$ to support a non-trivial $d_{11}$-differential but this contradicts \cref{lem:dpc}. The proof of (2) is similar.
\end{proof}

\cref{table:HPFSS_integer_diff} lists the differentials we have computed so far. They generate differentials via the Leibniz rule.
By inspection, these are all non-trivial differentials since the remaining classes are permanent cycles by \cref{met:Tatemethod}. 


\subsection{Extension problem}\label{sub:2ext}
Now we solve all the $2$-extensions on the $E_\infty$-page.
\begin{thm}\label{thm:hidden24}
All the hidden $2$ extensions in the integer-graded  $G_{24}$-$\mathrm{HFPSS}(\E_2)$ are displayed in \cref{fig:G24integerEinf1} by grey vertical lines. 
\end{thm}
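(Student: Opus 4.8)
By \cref{prop:G24Q8} the $G_{24}$-$\mathrm{HFPSS}(\E_2)$ is a retract of the $Q_8$-$\mathrm{HFPSS}(\E_2)$, which is $64$-periodic (\cref{prop:periodicity}); together with \cref{cor:split} this makes the $G_{24}$-computation $192$-periodic, so it suffices to resolve $2$-extensions on finitely many stems. Fix $\alpha\in\pi_*\E_2^{hG_{24}}$ detected by a class $x$ in filtration $s$ of the $E_\infty$-page. If $h_0 x\neq 0$ on $E_\infty$ then $2\alpha$ is detected by $h_0 x$ and the $2$-extension is visible; otherwise $2\alpha$ is either $0$ or detected by a class of filtration $>s$ that is not an $h_0$-multiple, i.e.\ a hidden $2$-extension. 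Thus the theorem splits into an \emph{existence} half (every grey line in \cref{fig:G24integerEinf1} is a genuine hidden $2$-extension) and a \emph{non-existence} half (for every remaining $\alpha$ with $h_0 x=0$ on $E_\infty$ we have $2\alpha=0$).

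For existence, I would isolate a short list of generating hidden extensions and propagate them multiplicatively. The extension $h_0 h_2\mapsto h_1^3$ is inherited from $\pi_3 S^0$, where $4\nu=\eta^3$ $2$-locally (\cite{Tod62}), along the unit map $S^0\to\E_2^{hG_{24}}$ together with \cref{lemma:gPC}; multiplying by the permanent cycles $g=kD^3$ and $v_1^4$ and the periodicity class $D^{\pm 8}$, and using the relation $dh_2^2=4g$ of \cref{rmk:h2ext}, accounts for the grey lines of the corresponding family. The remaining grey lines sit in the stems $\equiv 22$ and $\equiv 54\pmod{192}$ and are exactly the extensions $D^2 h_2^2\mapsto D^{-4}g^2 d$ and $D^6 h_2^2\mapsto g^2 d$ of \cref{cor:hidden2Q8} and \cref{lem:hiiden2Q8}, which were produced from the $C_4$-computation via restriction; translating these by $g$ and $D^{\pm8}$ and using \cref{met:Tatemethod} to slide the corresponding extensions freely around the $Q_8$-Tate spectral sequence yields the whole list. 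At each step one must verify, from the $E_\infty$-page recorded in \S\ref{section:charts}, that the target is not an $h_0$-multiple, so that the extension is genuinely hidden.

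For non-existence, I would argue individually over the finitely many $\alpha$ with $h_0 x=0$ on $E_\infty$. The tools are: sparsity of the $E_\infty$-page (for most such $\alpha$ there is no candidate target of higher filtration, so $2\alpha=0$); multiplicativity together with inherited relations such as $2h_1=0$ (from $2\eta=0$ in $\pi_*S^0$); restriction to the $C_4$-$\mathrm{HFPSS}(\E_2)$, where, if $2\,\Res\alpha=0$ in $\pi_*\E_2^{hC_4}$ while the unique candidate target $\beta$ has $\Res\beta\neq 0$ (the needed restrictions being computed in \cref{sec:groupcoho}), one concludes $2\alpha=0$; and \cref{met:Tatemethod}, which transports a hypothetical hidden extension into negative filtration in the Tate spectral sequence, where it can be checked against the known differential pattern. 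I expect the non-existence half to be the main obstacle: the delicate stems are those where the $d$-family, the $g$-power towers and the $\eta$-towers overlap so that several candidate targets coexist and restriction to $C_4$ is not by itself decisive, and these must be handled one at a time using the Hurewicz images of $\E_2^{hC_2}$ and $\E_2^{hC_4}$ (\cite{LSWX19, HSWX2018}) and the multiplicative structure to pin down $2\alpha$ exactly. The existence half, by contrast, is essentially bookkeeping once \cref{lem:hiiden2Q8}, \cref{cor:hidden2Q8} and \cref{met:Tatemethod} are available. Along the way one recovers the hidden $2$-extensions in $2$-local $tmf$ \cite{Bau08}, but only the $C_4$-input is used.
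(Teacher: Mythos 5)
There is a genuine gap in your existence half. You claim that, beyond the $4\nu=\eta^3$ family, the remaining grey lines are ``exactly'' the extensions of \cref{lem:hiiden2Q8} and \cref{cor:hidden2Q8} together with their $g$- and $D^{\pm8}$-translates. This is wrong on two counts. First, the stem-$22$ extension $D^2h_2^2\mapsto D^{-4}g^2d$ of \cref{cor:hidden2Q8} involves $D$-powers not divisible by $3$, so by \cref{cor:split} it lives in the $D^2$-copy of the $Q_8$-spectral sequence and does not appear in the $G_{24}$-HFPSS at all. Second, and more seriously, you miss an entire third family: the hidden $2$-extensions in stems $110$, $130$, $150$ from $D^{12}d$ to $D^6g^3h_1^2$ and its $g$, $g^2$ multiples (filtration jumps from $2$ to $14$, and from $10$ to $22$). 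These are not translates of the stem-$54$ extension of \cref{lem:hiiden2Q8} under $g$ and $D^{\pm8}$ — in stem $150$ the chart has two stacked extensions, $D^{18}h_2^2\mapsto D^{12}g^2d$ (the $32$-periodic translate of \cref{lem:hiiden2Q8}) and then $D^{12}g^2d\mapsto D^6g^5h_1^2$, and your method only produces the first. Moreover, your proposed mechanism of ``using \cref{met:Tatemethod} to slide the corresponding extensions freely around the $Q_8$-Tate spectral sequence'' is not valid: that method transports differentials and detects permanent cycles via the invertibility of $g$ in the Tate spectral sequence, but hidden multiplicative extensions live in $\pi_*\E_2^{hG_{24}}$, where $g$ (detecting $\bar\kappa$) is not invertible, so extensions cannot be divided by $g$ or moved by Tate periodicity. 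The paper handles this third family by a separate argument: it deduces hidden $h_1$-extensions from $D^{18}h_2$ to $D^{15}gc$ and from $D^{15}gch_1$ to $D^6g^5h_1^2$ by pushing the $h_1$-multiplications in negative Tate filtration (supported by the $d_5$, $d_9$ and $d_{23}$ Tate differentials of \cref{prop:d5}, \cref{prop:d9two}, \cref{cor:d23two}) through the norm cofiber sequence ${\E_2}_{hG_{24}}\to\E_2^{hG_{24}}\to\E_2^{tG_{24}}$, and then uses $\eta^3=4\nu$ to force $D^6g^5h_1^2$ to be divisible by $4$, which pins down the stem-$150$ extension; the others follow by degree reasons. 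Nothing in your proposal supplies this step.

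On the non-existence half, your plan (case-by-case analysis via sparsity, $2\eta=0$, $C_4$-restriction, Tate comparison) is plausible but is only a list of tools; you anticipate it as ``the main obstacle'' and leave the delicate stems unresolved. The paper in fact disposes of non-existence with a short uniform argument: every remaining candidate hidden $2$-extension either has an $h_1$-divisible source (so the source homotopy class is a multiple of $\eta$ and is killed by $2$) or a target supporting a nonzero $h_1$-multiplication on $E_\infty$ (incompatible with $2\eta=0$). You would do well to replace your case-by-case plan with this observation, and to repair the existence half as above.
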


\begin{proof}
Since the $G_{24}$-HFPSS for $\E_2$ is $192$-periodic, it suffices to consider the stem range from $0$ to $192$.
 We divide these $2$ extensions into three types by their proofs. The first type follows from the fact that in homotopy groups of spheres, $4\nu = \eta^3$ and $h_1$ detects $\eta$, $h_2$ detects $\nu$ (\cref{lemma:gPC}). This type of hidden $2$ extensions happens in stem $3,27,51,99,123$ and $147$ in the period from $0$ to $192$. 

The second type consists of the $2$ extensions in stem $54$ and $150$. The proof of the first is in \cref{lem:hiiden2Q8}, and the proof of the second is similar using the $32$-periodicity of $C_4$-HFPSS$(\E_2)$ and \cref{lem:hidden2C_4}.

The third type consists of three hidden $2$ extensions in the first period. The first one is in stem $110$ from $D^{12}d$ to $D^6g^3 h_1^2$. The other two in stem $130$ and $150$ (from filtration $10$ to $22$) follow from the first one by multiplying $g$ and $g^2$ respectively. So it suffices to show that there is a $2$ extension from $D^{12}g^2d$ to $D^6 g^5 h_1^2$.  To derive this $2$ extension, we claim there are two hidden $h_1$ extensions from $D^{18}h_2$ to $D^{15}gc$ and from $D^{15}gch_1$ to $D^6 g^5 h_1^2$.  As for the first hidden $h_1$ extension, In $G_{24}$-TateSS$(\E_2)$, we have the following two differentials by \cref{prop:d5} and \cref{prop:d9two}:
\begin{align*}
    &d_{5}(g^{-1}D^{21})=D^{18}h_2,\\
    &d_{9}(g^{-1}D^{21}h_1)=D^{15} g c.
    \end{align*}
 Now consider the cofibration 
\[
 {\E_2}_{hG_{24}}\rightarrow \E_2^{hG_{24}}\rightarrow \E_2^{tG_{24}}.
 \]
In the negative filtrations in $G_{24}$-TateSS$(\E_2)$, there is an $h_1$ extension from $g^{-1}D^{21}$ to $g^{-1}D^{21}h_1$, then this  $h_1$ extension under the additive norm map gives an $h_1$ extension relation in $\pi_*\E_2^{hG_{24}}$ from an element detected by $D^{18}h_2$ to some element detected by $D^{15} g c$. This forces a hidden $h_1$ extension from $D^{18}h_2$ to $D^{15} g c$ in $G_{24}$-HFPSS$(\E_2)$. The similar hidden $h_1$ extension from $D^{15}gch_1$ to $D^6 g^5 h_1^2$ follow from the following two differentials in $G_{24}$-TateSS$(\E_2)$ by \cref{prop:d9two} and \cref{cor:d23two}.
\begin{align*}
    &d_{9}(g^{-1}D^{21}h_1^2)=D^{15} g ch_1,\\
    & d_{23}(g^{-1}D^{21}h_1^3)=D^6 g^5 h_1^2
    \end{align*}
Therefore, in $\pi_* \E_2^{hG_{24}}$ there is an $h_1^3$-extension from $D^{18}h_2$ to $D^6 g^5 h_1^2$. On the other hand we know $h_1^3=4h_2$, which implies $D^6 g^5 h_1^2$ must be $4a$ for some class $a\in \pi_{150}\E_2^{hG_{24}}$. Then the degree reasons forces the $2$ extension $D^{12}g^2d$ to $D^6 g^5 h_1^2$.

We claim there are no further $2$ extensions in $G_{24}$-HFPSS$(\E_2)$. By degree reasons, the other possible hidden $2$ extensions either have sources that are $h_1$ divisible or have targets that support $h_1$ extensions. Therefore, the hidden $2$ extensions cannot happen in these cases. 
\end{proof}

\begin{cor}\label{cor:hiddenQ8}
All the hidden $2$ extensions in the integer-graded  $Q_8$-$\mathrm{HFPSS}(\E_2)$ are displayed in \cref{fig:integerEinf} by gray vertical lines. 
\end{cor}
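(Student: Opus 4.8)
The plan is to reduce \cref{cor:hiddenQ8} to the $G_{24}$-case \cref{thm:hidden24} by exploiting the three-fold splitting of the $Q_8$-$\mathrm{HFPSS}(\E_2)$. First I would recall from \cref{cor:split} (together with \cref{prop:G24Q8}) that $Q_8$-$\mathrm{HFPSS}(\E_2)$ is, as a spectral sequence, the direct sum of three copies of $G_{24}$-$\mathrm{HFPSS}(\E_2)$, namely the classes $a$, $aD$, $aD^2$ with $a$ a class of $G_{24}$-$\mathrm{HFPSS}(\E_2)$, and that no differential joins different copies. The key point is that this splitting is exactly the eigenspace decomposition of $\pi_*\E_2^{hQ_8}$ under the residual $C_3 = G_{24}/Q_8$-action, as in the proof of \cref{cor:split} and the surrounding discussion in \cref{sec:E2page}; in particular it is preserved by the $\mathrm{HFPSS}$ filtration and, since multiplication by $2$ is $\W[C_3]$-linear, by multiplication by $2$. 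Consequently every hidden $2$ extension of $Q_8$-$\mathrm{HFPSS}(\E_2)$ must lie within one of the three copies.

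Second, within the copy spanned by the $G_{24}$-classes a hidden $2$ extension is, by definition and by the retraction of \cref{prop:G24Q8}, precisely a hidden $2$ extension of $G_{24}$-$\mathrm{HFPSS}(\E_2)$, so these are exactly the ones recorded in \cref{thm:hidden24} (displayed in \cref{fig:G24integerEinf1}). To transfer the statement to the other two copies I would use that $D^8$ is an invertible permanent cycle (\cref{prop:periodicity}) sitting in a nontrivial $C_3$-eigenspace: multiplication by $D^8$ and by $D^{16}=(D^8)^2$ then gives filtration-preserving, $2$-equivariant bijections from $\pi_*\E_2^{hG_{24}}$ onto the remaining two copies (because $D^8$ is a unit), compatibly with the $E_\infty$-pages since $D^8$ survives. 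Hence the hidden $2$ extensions in those copies are the images, under multiplication by $D$ and $D^2$, of the ones in \cref{thm:hidden24}, and assembling the three copies yields exactly the grey vertical lines drawn in \cref{fig:integerEinf}.

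The one point that needs genuine care --- and the step I expect to be the main obstacle, modest though it is --- is justifying that no hidden $2$ extension jumps between the three copies, i.e.\ that the splitting of \cref{cor:split} persists at the level of homotopy groups and extensions and not merely on the $E_2$-page. This is exactly what the $C_3$-equivariance argument above gives, but it relies on the eigenspace decomposition being defined over the $2$-local ground ring; this is harmless because $3$ is a unit in $\mathbb{Z}_{(2)}$ and the primitive cube root of unity $\zeta$ already lies in $\W$, so $\W[C_3]$ is a product of three copies of $\W$ and the idempotent splitting is available directly. Once this is in place, the rest is bookkeeping against the chart in \cref{section:charts}.
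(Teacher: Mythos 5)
Your proposal is correct and follows essentially the same route as the paper, which deduces \cref{cor:hiddenQ8} directly from \cref{thm:hidden24} together with the splitting of \cref{prop:G24Q8} (and implicitly \cref{cor:split} and the invertible permanent cycle $D^8$ of \cref{prop:periodicity}). You simply spell out the details the paper leaves implicit — the $C_3$-eigenspace decomposition over $\W$ showing no hidden $2$ extension crosses the three copies, and the $D^8$-, $D^{16}$-multiplication transporting the $G_{24}$ extensions to the $D^2$- and $D$-copies — and these details are accurate.
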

\begin{proof}
This follows from \cref{thm:hidden24} and \cref{prop:G24Q8}.
\end{proof}

Our result of $2$ extensions via the equivariant and the Tate methods matches the $tmf$ computation in \cite{Bau08}. In \cite{Bau08}, because the arguments for proving differentials rely on (hidden) $\eta$ and $\nu$ extensions, almost all these hidden extension are also computed (there is another $\nu$ extension from $D^{15}h_1^2$ at $(122,2)$ and its $\bar\kappa$ multiples \cite[Lemma~5.3]{Isa09}). Here our new methods only use hidden $2$ extensions and the $h_1$, $h_2$ multiplications on the $E_2$-page. Therefore, we do not need to work out hidden $\eta$ and $\nu$ extensions and in our figures, we only draw $h_1$, $h_2$ multiplications.

\subsection{Differentials: alternative methods}\label{subsection:differentialsalternative}

In this subsection, we revisit several differentials in the integer-graded part via different approaches.
\begin{prop}\label{prop:d5res}
The class $D$ at $(8,0)$ supports a $d_5$-differential
\[
d_5(D)=D^{-2}gh_2.
\]
\end{prop}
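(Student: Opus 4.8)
This is \cref{prop:d5} again, so the task is to supply an argument that avoids the vanishing-line input \cref{prop:d13one}. The plan is to use restriction to $C_4\langle i\rangle$. First I would record why the obvious attempt fails: $\Res^{Q_8}_{C_4}(D)$ is the invertible permanent cycle $\Delta_1$ of the $C_4$-HFPSS$(\E_2)$ (cf. \cref{lem:hiiden2Q8}), so $D$ restricts to a permanent cycle and naturality is vacuous on $D$ itself. The remedy is to route the argument through a $2$-torsion class that restricts non-trivially. I would use $D^3 d$ (equivalently $D^3 dh_2$): since $\Res^{Q_8}_{C_4}(d)\neq 0$ (\cref{sec:groupcoho}) and $\Delta_1$ is invertible, $\Res^{Q_8}_{C_4}(D^3 d)=\Delta_1^3\,\Res^{Q_8}_{C_4}(d)\neq 0$, and by the $32$-periodicity of the $C_4$-HFPSS$(\E_2)$ together with the differential pattern in \cite[Figure~5.3]{BBHS20} this restriction supports a non-trivial differential.

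Granting that, the steps I would carry out are: (1) deduce from naturality that $D^3 d$ supports a non-trivial differential of finite length in the $Q_8$-HFPSS; (2) use \cref{prop:d3} and degree reasons (nothing is available at the intermediate target bidegrees) to see that $D^3 d$ is a $d_3$- and $d_4$-cycle, so its shortest possible differential is $d_5$, and since the only nonzero class at $(37,7)$ is $gdh_2$ one gets $d_5(D^3 d)=gdh_2$, hence via the hidden $h_2$-extension $dh_2^2=4g$ of \cref{rmk:h2ext} also $d_5(D^3 dh_2)=4g^2$; (3) show $D^2 d$ is a $d_5$-cycle, exactly as in the proof of \cref{prop:d5}: its only candidate $d_5$-target $D^{-1}gdh_2$ already supports a $d_5$, obtained by multiplying the differential from (2) by $D^{-4}gh_2$ in the Tate spectral sequence and transporting back via \cref{met:Tatemethod}, $D^{-4}$ being a $d_5$-cycle; (4) apply the Leibniz rule $d_5(D)\cdot D^2 d + D\cdot d_5(D^2 d)=d_5(D^3 d)=gdh_2$, so $d_5(D)\cdot D^2 d=gdh_2\neq 0$ and in particular $d_5(D)\neq 0$. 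As $D^{-2}gh_2$ is the only class at $(7,5)$ and $D^{-2}gh_2\cdot D^2 d=gdh_2$, conclude $d_5(D)=D^{-2}gh_2$.

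The hard part will be step (2): the $C_4$-figures of \cite{BBHS20} show a $d_{13}$, not a $d_5$, on the class associated to $\Res^{Q_8}_{C_4}(d)$, so naturality by itself only bounds the differential on $D^3 d$ by $d_{13}$, and to force it to be a $d_5$ one must separately rule out $d_7,d_9,d_{11},d_{13}$ on $D^3 d$ — either by degree reasons inside the relevant $G_{24}$-summand (\cref{cor:split}) or by arguing that the $d_5$-target $gdh_2$ (equivalently $4g^2$) must be hit for consistency with the differentials already established in \cref{subsection:differentials} (\cref{lemma:gPC}, \cref{prop:diff9}). A genuinely different route, in the spirit of \cref{section:sigma} and the norm reproof of a $d_9$ advertised in the abstract, would be to extract $d_5(D)$ from a $d_3$ in the $C_4$-HFPSS$(\E_2)$ via \cref{thm:normdiff} (note $2(3-1)+1=5$) after passing to the appropriate $RO(Q_8)$-grading; there the obstacle is to untwist the factor of $a_{\sigma_i}$ that the norm introduces and recover the integer-graded class $D$.
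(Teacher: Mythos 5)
Your starting premise is wrong, and it is exactly the point of the statement: $\Res^{Q_8}_{C_4}(D)=\Delta_1$ is \emph{not} a permanent cycle of the $C_4$-HFPSS. The assertion in \cref{lem:hiiden2Q8} (and in \cref{prop:E2C4Slicename}) is only that $\Delta_1$ is invertible on the $E_2$-page; in the $C_4$-HFPSS it supports a non-trivial $d_5$-differential (\cite[Proposition~5.24]{BBHS20}, essentially because $d_5(u_{2\lambda})=\done u_\lambda a_{2\lambda}a_\sigma$ and $\Delta_1=\done^2u_{2\lambda}u_{2\sigma}$). So naturality is not vacuous on $D$: it forces $D$ to support a differential of length $\leq 5$, and then \cref{cor:split} together with degree reasons identifies it as $d_5(D)=D^{-2}gh_2$. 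This two-line restriction argument is the paper's proof of \cref{prop:d5res}; the detour you build to avoid it is unnecessary.

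The detour itself also has a genuine gap. Its engine is the claim that $\Res^{Q_8}_{C_4}(D^3d)$ supports a non-trivial differential, but this is unverified and almost certainly false: using $\Res(d)=\done^4u_{4\lambda}u_{2\sigma}a_{2\sigma}$ one gets $\Res(D^3d)=\done^{10}u_{10\lambda}u_{8\sigma}a_{2\sigma}=\Delta_1^4\cdot\done^2u_{2\lambda}a_{2\sigma}=2\Delta_1^4\done^2u_\lambda u_{2\sigma}a_\lambda$ (by the gold relation), a $\nu^2$-type class at stem $38\equiv 6$, and its candidate $d_5$-image vanishes already on the $E_2$-page since $u_{9\lambda}a_{3\sigma}=2u_{8\lambda}u_{2\sigma}a_\lambda a_\sigma=0$ — consistent with the fact that the $Q_8$-target $gdh_2$ restricts to zero, so naturality yields nothing here (unlike stems $30,62$ used in \cref{prop:d11}, stem $38$ is not the residue where \cite[Figure~5.3]{BBHS20} shows a $d_{13}$ on such a class). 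Moreover, even granting some differential on $D^3d$, you concede in your last paragraph that you cannot force it to be a $d_5$ by elementary means; and the paper's own derivation of $d_5(D^3d)=gdh_2$ goes through $d_5(D^3dh_2)=4g^2$, which comes from the vanishing-line differential of \cref{prop:d13one} — precisely the input you set out to avoid — so steps (2)–(4) are either gapped or circular as written.
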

\begin{proof}
The restriction of $D$ to the $C_4$-HFPSS for $\E_2$ is $\Delta_1$, which supports a non-trivial $d_5$-differential according to \cite[Proposition~5.24]{BBHS20}. By naturality, $D$ must support a non-trivial differential with length $\leq 5$. Then by \cref{cor:split} and degree reasons, it has to be
  $
d_5(D)=D^{-2}gh_2.
$
\end{proof}



Moreover, given all $d_5,d_7$-differentials, then the vanishing line forces the $d_{11}$-differential in \cref{prop:d11}.
\begin{prop}\label{prop:d11vanishing}
The class $D^6 d$ at $(62,2)$ supports a $d_{11}$-differential

$$d_{11}(D^6 d)= g^3h_1.$$
\end{prop}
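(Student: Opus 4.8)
The plan is to re-derive $d_{11}(D^6 d)=g^3 h_1$ using only the strong vanishing line of \cref{thm:sharpvanishingline} together with the completed lists of $d_3$-, $d_5$-, and $d_7$-differentials, in place of the restriction-to-$C_4$ argument used in \cref{prop:d11}. The strategy is the one already employed in \cref{prop:d7two}: manufacture a permanent cycle sitting above the vanishing line, conclude that it must be hit, pin down its unique possible source, and then divide off a power of $g$ inside the Tate spectral sequence.

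First I would note that $g^3 h_1$, and hence $g^6 h_1$, is a permanent cycle, since both $g$ and $h_1$ are permanent cycles by \cref{lemma:gPC}. The class $g^6 h_1$ lies in bidegree $(121,25)$, so its filtration $25$ exceeds the vanishing line of filtration $23$ furnished by \cref{thm:sharpvanishingline}. Therefore $g^6 h_1$ cannot survive to $E_\infty$ and must be the target of a differential $d_r$ with $r\leq 23$. This is the multiple of the desired differential obtained by multiplying $d_{11}(D^6 d)=g^3 h_1$ through by the permanent cycle $g^3$, so the source I am hunting for is $g^3 D^6 d$ at $(122,14)$.

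Next I would identify that source. By \cref{cor:split} the class $g^6 h_1$ lies in the $G_{24}$-summand and can only be hit by a class in the same summand, so the whole analysis takes place inside the fully understood $G_{24}$-HFPSS. Reading off the $E_9$-page from the complete $d_3$-, $d_5$-, and $d_7$-differentials, I would enumerate the classes in stem $122$ that survive to $E_9$ and could hit $(121,25)$; the surviving candidate at $(122,14)$ is $g^3 D^6 d$, and a degree count rules out the competing filtrations, so the filtration gap forces exactly $d_{11}(g^3 D^6 d)=g^6 h_1$. Finally, since $g=kD^3$ is an invertible permanent cycle in the Tate spectral sequence ($k$ is invertible by \cref{rm:invertibleclass} and $D$ is a unit on the $E_2$-page), I would invoke \cref{met:Tatemethod} to divide this differential by $g^3$, obtaining $d_{11}(D^6 d)=g^3 h_1$ in the Tate spectral sequence with source $(62,2)$ and target $(61,13)$ both in positive filtration; by \cref{lem:tateisorange} this corresponds to the same differential in the $Q_8$-HFPSS for $\E_2$.

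The main obstacle is the enumeration step: to make the vanishing-line argument airtight I must verify that, on the $E_9$-page obtained from the $d_3$-, $d_5$-, and $d_7$-differentials, no other class in stem $122$ can support a differential landing on $(121,25)$, and in particular that nothing at $(122,16)$ supports a $d_9$ onto it and nothing at lower filtration supports a longer differential onto it. This requires careful bookkeeping of exactly which classes in that stem survive the $bo$-pattern $d_3$-differentials and the $d_5$-, $d_7$-differentials, which is where the real work lies; everything else is formal manipulation of permanent cycles and the Tate comparison.
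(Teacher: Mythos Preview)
Your proposal is correct and follows essentially the same vanishing-line strategy as the paper's own proof. The only difference is the choice of multiplier: the paper multiplies the putative differential by $g^{5}h_{1}$, pushing the target to $g^{8}h_{1}^{2}$ in filtration $34$ and the source $D^{6}g^{5}d\,h_{1}$ to filtration $23$, whereas you multiply by $g^{3}$, landing at $g^{6}h_{1}$ in filtration $25$ with source $g^{3}D^{6}d$ in filtration $14$. In both cases the target sits above the vanishing line of \cref{thm:sharpvanishingline}, so it must be hit; an enumeration of the surviving classes in the relevant stem of the $G_{24}$-summand (via \cref{cor:split}) singles out the $d_{11}$-source; and invertibility of $g$ in the Tate spectral sequence allows one to divide back to $d_{11}(D^{6}d)=g^{3}h_{1}$ using \cref{lem:tateisorange}. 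You correctly flag that the enumeration is where the real work lies---the paper's proof likewise leaves this step to inspection of the charts.
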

\begin{proof}
It is enough to prove the $d_{11}$-differential
$$d_{11}(D^6 g^5 d h_1)= g^8 h_1^2$$ since $g$ is invertible in the $Q_8$-TateSS for $\E_2$.  The target $g^8h_1^2$ is a permanent cycle in filtration $34\geqslant 23$. By  \cref{thm:sharpvanishingline} and \cref{thm:tatevanishing} it has to be hit by a differential. Since $D^6 g^5 d h_1$ is a $7$-cycle, the only possibility is the desired $d_{11}$-differential. 
\end{proof}

We here present another proof of the $d_9$-differential in \cref{prop:d9five} which combines the partial calculations in $(*-\sigma_i)$-gradings by the norm method (see \cref{prop:d9norm}).
\begin{prop}\label{prop:d9fivenorm}
The class $D^2h_1$ at $(17,1)$ supports a $d_9$-differential
\[
d_9(D^2h_1)=D^{-4}g^2c.
\]
\end{prop}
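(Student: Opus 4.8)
The plan is to deduce this differential from the $(*-\sigma_i)$-graded $Q_8$-HFPSS, where the analogous $d_9$ is produced directly by a Hill--Hopkins--Ravenel norm differential. First I would exploit the subgroup $C_4\langle i\rangle\subset Q_8$: since $\Ind_{C_4\langle i\rangle}^{Q_8}\mathbb{1}=\mathbb{1}\oplus\sigma_i$, the reduced representation of $\rho=\Ind_{C_4\langle i\rangle}^{Q_8}\mathbb{1}$ is $\bar{\rho}=\sigma_i$, so by \cref{thm:normdiff} a $d_5$-differential in the $C_4\langle i\rangle$-HFPSS for $\E_2$ (taken from \cite{BBHS20}) gives a $d_{2(5-1)+1}=d_9$-differential in the $Q_8$-HFPSS for $\E_2$ on the class $a_{\sigma_i}N_{C_4\langle i\rangle}^{Q_8}(-)$. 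This differential lies in the $(*-\sigma_i)$-graded part; it is exactly the differential of \cref{prop:d9norm}, and under the orientation class $u_{\sigma_i}$ at $(1-\sigma_i,0)$ it corresponds to the asserted differential $d_9(D^2h_1)=D^{-4}g^2c$.

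The second step is to descend from the $(*-\sigma_i)$-grading to the integer grading. Because $\sigma_i$ is orientable modulo $2$, the class $u_{\sigma_i}$ becomes invertible after reducing mod $2$ and identifies the integer-graded and $(*-\sigma_i)$-graded parts of the mod-$2$ $Q_8$-HFPSS, matching $D^2h_1 u_{\sigma_i}$ with $D^2h_1$ and $D^{-4}g^2c\, u_{\sigma_i}$ with $D^{-4}g^2c$. Both $D^2h_1$ (which is $h_1$-divisible) and $D^{-4}g^2c$ (which is $xh_1$-divisible) are $2$-torsion, so the resulting $d_9$ in the mod-$2$ integer-graded $Q_8$-HFPSS lifts to the integral one, with the only possible ambiguity --- an error term of strictly higher $2$-adic filtration in the target --- excluded by the lower differentials of \cref{subsection:differentials} and \cref{cor:split}, which leave $D^{-4}g^2c$ as the unique class in the relevant stem and filtration.

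The hard part is to make the passage between the two gradings legitimate: one must control the behaviour of $u_{\sigma_i}$ itself, which (like $u_{2\sigma}$ in the $C_4$ computation) is \emph{not} a permanent cycle, and check that it survives to $E_9$ and that multiplication by it is injective on the classes in question there, so that the nontriviality of the $d_9$ on $D^2h_1 u_{\sigma_i}$ cannot be an artifact created by $u_{\sigma_i}$ but genuinely forces a nontrivial $d_9$ on $D^2h_1$. This is where the partial $(*-\sigma_i)$-graded computation of \cref{section:sigma} --- the $E_r$-pages and their differentials for $r\le 9$ --- is used; granting it, the claimed integer-graded $d_9$-differential follows.
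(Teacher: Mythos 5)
Your first step (the Hill--Hopkins--Ravenel norm of the $C_4$ $d_5$ with $\bar\rho=\sigma_i$, giving a $d_9$ in the $(*-\sigma_i)$-graded part) is sound and is exactly \cref{prop:d9norm}, which is also where the paper's alternative proof starts. The gap is in the claimed correspondence and the descent. The norm differential of \cref{prop:d9norm} has source $\{x+y\}D^6u_{\sigma_i}=a_{\sigma_i}D^6$ at position $(47,1)$ and target $k^2\{x^2+y^2\}D^7u_{\sigma_i}$ at $(46,10)$, whereas "$D^2h_1\cdot u_{\sigma_i}$" would sit at $(17,1)$; these are different classes in different stems (the offset $(30,0)$ is not realized by $D^{8}$, $g$, or any of the periodicities of \cref{cor:periodicity} at filtration $0$), so \cref{prop:d9norm} is \emph{not} the asserted integer-graded differential multiplied by $u_{\sigma_i}$. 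The $(*-\sigma_i)$-graded differential that literally is $a_{\sigma_i}$ times the asserted one is \cref{prop:sigmad9three}(2), but in the paper that differential is \emph{deduced from} the integer-graded $d_9$'s via the Leibniz rule, so invoking "the $E_r$-pages and differentials for $r\le 9$ of \cref{section:sigma}" wholesale as input makes your argument circular.

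The mod-$2$ descent also fails as stated: $\sigma_i$ is not an orientable $Q_8$-representation, so there is no integral class $u_{\sigma_i}$ at all --- in the $2$-Bockstein spectral sequence $u_{\sigma_i}$ supports a $d_1$ (\cref{prop:2bss-diff-usig}), hence $H^0(Q_8,\pi_{1-\sigma_i}\E_2)=0$, and the class at chart position $(17,1)$ in the $(*-\sigma_i)$-graded $E_2$-page, namely $\{h_1+xv_1\}D^2u_{\sigma_i}$, supports a $d_3$ (\cref{prop:sigmad3two}); so "multiplication by $u_{\sigma_i}$" is neither defined integrally nor injective on $E_9$ in the relevant degree, and making it meaningful would require passing to the HFPSS of $\E_2/2$, a different spectral sequence whose differentials are not computed here and do not formally determine the integral ones. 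The paper sidesteps all of this: \cref{lem:sigma11cycle} shows (by norming $u_{2\sigma}$ and multiplying by permanent periodicity classes, independently of any integer-graded $d_9$) that $\{x+y\}D^4u_{\sigma_i}$ is a $9$-cycle; if $D^2h_1$ were also a $9$-cycle, the Leibniz rule would force the product $\{x+y\}D^6h_1u_{\sigma_i}$ to be a $9$-cycle, contradicting the nontrivial $d_9$ coming from \cref{prop:d9norm} (its $h_1$-multiple). Replacing your "invert $u_{\sigma_i}$ mod $2$ and lift" step by this multiply-and-contradict step turns your outline into the paper's proof.
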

\begin{proof}
Suppose the claimed $d_9$-differential does not happen, then $D^2 h_1$ is a $9$-cycle. According to \cref{lem:sigma11cycle}, the class $\{x+y\}D^4 u_{\sigma_i}$ is a $9$-cycle. Then the Leibniz rule implies that $\{x+y\}D^6 h_1 u_{\sigma_i}$ is also a $9$-cycle. This contradicts the fact that $\{x+y\}D^6 h_1 u_{\sigma_i}$ supports a non-trivial $d_9$-differential in \cref{prop:d9norm}. 
\end{proof}

\subsection{Summary of differentials}
We summarize differentials in \cref{table:HPFSS_integer_diff}. All differentials follow from this list by the Leibniz rule. 
\newline
\begin{longtable}{llllll}
\caption{HPFSS differentials, integer page
\label{table:HPFSS_integer_diff} 
} \\
\toprule
$(s,f)$ & $x$ & $r$ & $d_r(x)$ &Proof &\\
\midrule \endhead
\bottomrule \endfoot
$(12,0)$&$v_1^6$&3&$v_1^4h_1^3$& \cref{prop:d3} (restriction)\\
\midrule
 $(8,0)$ &  $D$&  5&  $D^{-2} g h_2$   &\cref{prop:d5} (vanishing line)\\
 &&&& or \cref{prop:d5res} (restriction)\\
 \midrule
         $(8,0)$&   $4D$ &  7 &  $D^{-2} g h_1^3$&\cref{prop:d7one} ($8\nu =\eta^3$)\\ 
         $(16,0)$&  $2D^2$ &  7 &
         $D^{-1} g h_1^3$&\cref{prop:d7one}\\
         $(32,0)$&   $D^4$ &  7 
         &$D g h_1^3$& \cref{prop:d7two} (vanishing line)\\
         \midrule
         $(9,1)$ & $Dh_1$ & 9 & $ D^{-5} g^2 c$& \cref{prop:d9two}    \\
           $(41,1)$ & $D^5h_1$ & 9 & $ D^{-1} g^2 c$&\cref{prop:d9two}    \\
           
         $(16,2)$ & $Dc$ & 9 &$D^{-5}g^2dh_1$& \cref{prop:d9one}\\
                  $(48,2)$ & $D^5 c$ & 9 &$D^{-1}g^2dh_1$& \cref{prop:diff9}\\
         
         $(17,1)$ & $D^2 h_1$ & 9 &$ D^{-4}g^2c$& \cref{prop:d9five}\\
          $(49,1)$ & $D^6 h_1$ & 9 &$ g^2c$&\cref{prop:d9five} \\
         $(24,2)$ & $D^2c$ & 9 &$D^{-4} g^2 dh_1$& \cref{prop:d9three}  \\
         
            $(56,2)$ & $D^6c$ & 9 &$ g^2 dh_1$& \cref{prop:d9three} \\

         \midrule
         $(30,2)$ & $D^{2}d$ & 11 &$D^{-4}g^3 h_1$&\cref{prop:d11} (restriction) \\ 
        $(62,2)$ & $D^{6}d$ & 11 &$g^3 h_1$&\cref{prop:d11} (restriction) \\ 
        &&&& or \cref{prop:d11vanishing} (vanishing line)\\
        $(23,3)$ & $Ddh_1$ &11& $D^{-5}g^3h_1^2$ &\cref{cor:d11} \\
       $(55,3)$ & $D^5 dh_1$ &11& $D^{-1}g^3h_1^2$ & \cref{cor:d11}\\
          \midrule
         $(17,3)$ & $Dch_1$ & 13 &$2D^{-8}g^4$&\cref{prop:d13one} (vanishing line) \\
         $(49,3)$ & $D^5 ch_1$ &13 & $2D^{-4}g^4$& \cref{prop:diff9} (transfer) \\
         $(11,1)$ & $2D h_2$ & 13 &$ D^{-8}g^3 d$&\cref{prop:d13two} (hidden $2$ extension)\\
        $(43,1)$ &$2D^5h_2$ &13& $D^{-4}g^3 d$ &\cref{prop:d13two}\\
         \midrule
          $(-7,1)$ & $D^{-1}  h_1$ & 23 &$ D^{-16}g^6$&
     \cref{prop:d13one} (vanishing line) \\ 
$(18,2)$ &$D^2h_1^2$&23&$D^{-13}g^6h_1$&\cref{cor:d23two}\\
$(43,3)$&$D^5h_1^3$&23&$D^{-10}g^6h_1^2$&\cref{cor:d23two}\\
     
     \midrule
\end{longtable}


\section{\texorpdfstring{The $(*-\sigma_i)$-graded computation} {}}\label{section:sigma}
In this section, we compute the $(*-\sigma_i)$-graded $Q_8$-HFPSS for $\E_2$. We use the following convention: a class at $(n-\sigma_i,m)$ will be denoted as in degree $(n-1,m)$. Since the $Q_8$-representation $\sigma_i$ cannot be lifted to $G_{24}$, in this section, we only consider the groups $Q_8$ and $SD_{16}$. We name classes by their names in the 2-BSS in \cref{table:2BSS_sigma_Einf}, and also use 2-BSS names for the integer-graded classes as it makes the multiplication relation clearer.

\begin{prop}\label{prop:sigmad3one}
The class $v_1^2u_{\sigma_i}$ at $(4,0)$ supports a $d_3$-differential
\[
d_3(v_1^2u_{\sigma_i})=h_1^3 u_{\sigma_i}.
\]
\end{prop}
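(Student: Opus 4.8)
The plan is to deduce this differential by restriction along $\Res^{Q_8}_{C_4\langle i\rangle}$, in exact parallel with \cref{prop:d3}. The key point is that $C_4\langle i\rangle$ is by definition the kernel of $\sigma_i$, so $\Res^{Q_8}_{C_4\langle i\rangle}\sigma_i$ is the trivial one-dimensional representation; hence $\Res^{Q_8}_{C_4\langle i\rangle}(u_{\sigma_i}) = 1$, the orientation class of the trivial representation. Together with the identities $\Res^{Q_8}_{C_4\langle i\rangle}(v_1^2) = T_2$ and $\Res^{Q_8}_{C_4\langle i\rangle}(h_1) = \eta$ (up to units in $\W(\F_4)$, as in \cref{prop:d3}), this shows that under restriction $v_1^2 u_{\sigma_i}\mapsto T_2$ and $h_1^3 u_{\sigma_i}\mapsto \eta^3$. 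So the statement reduces to producing the differential $d_3(T_2) = \eta^3$ in the integer-graded $C_4$-HFPSS$(\E_2)$ and then pushing it through naturality of the homotopy fixed point spectral sequence under restriction.

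To obtain $d_3(T_2) = \eta^3$ in the $C_4$-HFPSS$(\E_2)$: by \cite[Proposition~5.21]{BBHS20} there is a nontrivial differential $d_3(T_2^3) = T_2^2\eta^3$. Since $\eta$ is a permanent cycle, if $T_2$ were a $d_3$-cycle then $T_2^3$ would be one as well, contradicting this differential; hence $d_3(T_2)\neq 0$. As $d_3(T_2)$ lies in the bidegree of $\eta^3$, which in the integer-graded $C_4$-HFPSS$(\E_2)$ is spanned by $\eta^3$ (see \cite{HHR17,BBHS20}), we conclude $d_3(T_2) = \eta^3$. (Alternatively this differential can simply be read off the $C_4$-computation.)

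Finally I would pull this back. The class $v_1^2 u_{\sigma_i}$ lies on the $0$-line and survives to $E_3$, since the first nontrivial differential of the $(*-\sigma_i)$-graded $Q_8$-HFPSS$(\E_2)$ is a $d_3$, exactly as in the integer-graded case. Naturality then gives
\[\Res^{Q_8}_{C_4\langle i\rangle}\bigl(d_3(v_1^2 u_{\sigma_i})\bigr) = d_3\bigl(\Res^{Q_8}_{C_4\langle i\rangle}(v_1^2 u_{\sigma_i})\bigr) = d_3(T_2) = \eta^3\neq 0,\]
so $d_3(v_1^2 u_{\sigma_i})$ is nonzero. It remains to identify the target among the $E_3$-classes in the bidegree of $h_1^3 u_{\sigma_i}$. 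Using the description of the $E_2$-page from \cref{thm:2BSSsig} together with the relations of \cref{prop:2BSSE1}—in particular $h_1 h_2 = 0$, $v_1 h_1 x = h_2 x$, $v_1^2 x = 0$, $v_1 y = 0$ and $v_1 h_2 = 0$, which between them force $x v_1 h_1^2 = 0$ and kill the remaining potential contributions—the target bidegree on $E_3$ is generated by $h_1^3 u_{\sigma_i}$, and hence $d_3(v_1^2 u_{\sigma_i}) = h_1^3 u_{\sigma_i}$. The only place requiring genuine care is this last enumeration of the classes in the target bidegree; the input $d_3(T_2) = \eta^3$ and the naturality step are routine.
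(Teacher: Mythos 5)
Your argument is essentially the paper's proof: restrict along $\Res^{Q_8}_{C_4\langle i\rangle}$, identify $\Res(v_1^2u_{\sigma_i})$ with $T_2$ (the paper phrases this as an equality modulo $2$, since $u_{\sigma_i}$ is only defined after reduction), invoke the nontrivial $d_3$ on $T_2$ in the $C_4$-HFPSS from \cite[Proposition~5.21]{BBHS20}, and conclude by naturality and degree reasons. The only difference is cosmetic — you rederive $d_3(T_2)\neq 0$ from $d_3(T_2^3)=T_2^2\eta^3$ via the Leibniz rule and spell out the target-bidegree bookkeeping, whereas the paper cites the $C_4$ differential directly — so the proposal is correct and follows the same route.
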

\begin{proof}
We consider the restriction map from $(*-\sigma_i)$-graded  $Q_8$-HFPSS$(\E_2)$ to the integer-graded $C_4$-HFPSS$(\E_2)$. Note that the $C_4$-invariant element $T_2 \in H^0(C_4, \pi_{4}\E_2)$ equals $v_1^2$ modulo $2$. This implies 
$\Res ^{Q_8}_{C_4}(v_1^2  u_{\sigma_i}) = T_2$ modulo $2$. Recall that in the $C_4$-HFPSS for $\E_2$, the class $T_2$ supports a non-trivial $d_3$-differential (\cite[Proposition~5.21]{BBHS20}). Then the class $v_1^2 u_{\sigma_i}$ must support a non-trivial differential of length $\leq 3$. For degree reasons, we have 
\[
d_3(v_1^2  u_{\sigma_i})=h_1^3 u_{\sigma_i}.
\]
\end{proof}
Since the $(*-\sigma_i)$-graded part is a module over the integer-graded part, this $d_3$-differential implies a family of $d_3$-differentials as follows:
\[
d_3(k^sD^m v_1^{4l+2}h_1^n u_{\sigma_i})=k^sD^m v_1^{4l}h_1^{n+3}u_{\sigma_i}
\]
where $k,m,l,n\in \mathbb{Z}$ and $l,n \geq 0$. By taking out these $d_3$-differentials, an argument similar to the proof in \cref{prop:inftybo} shows that the following classes are permanent cycles
\[
2D^m v_1^{4l-2}, D^m v_1^{4l}, D^m v_1^{4l}h_1, D^m v_1^{4l}h_1^2
\]
where $l\geq 1$ and $m\geq 0$. 
All the classes above either support non-trivial $d_3$-differentials or are permanent cycles. Similar to the $bo$-pattern in the integer graded part, we do not need to consider this part in later computations of higher differentials. 

However, this is not the only kind of $d_3$-differentials in $(*-\sigma_i)$-graded part. To derive the second kind of $d_3$-differentials, we first need to show the $d_5$-differential pattern and several other facts.

\begin{lem}\label{lem: asigmaiP.C.}
The class $\{x+y\}u_{\sigma_i}$ is a permanent cycle.
\end{lem}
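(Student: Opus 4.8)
}

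The plan is to identify the class $\{x+y\}u_{\sigma_i}$ with an Euler class up to a unit. Recall from \cref{prop:aVHFPSS} that for any $Q_8$-representation $V$, the Euler class $a_V$ on the $E_2$-page of the $Q_8$-HFPSS$(\E_2)$ is automatically a permanent cycle. So it suffices to show that $\{x+y\}u_{\sigma_i}$, which lives at bigrading $(-1,1)$ (i.e.\ at $(-\sigma_i,1)$ before our shift convention), equals the Euler class $a_{\sigma_i}\in H^1(Q_8,\pi_{-\sigma_i}\E_2)$ up to a unit. The Euler class $a_{\sigma_i}$ is the image of the inclusion $S^0\to S^{\sigma_i}$ under the unit map $S^0\to\E_2$, hence the image of the corresponding class in $H^1(Q_8,\pi_{-\sigma_i}H\underline{\Z})$ under the map of $E_2$-pages induced by $\Z\to\pi_*\E_2$.

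First I would check that the relevant group $H^1(Q_8,\pi_{-\sigma_i}\E_2)$ is one-dimensional (over $\mathbb F_2$, or cyclic), so that any nonzero class in it is $a_{\sigma_i}$ up to a unit. By \cref{lem:E2completion} this follows from the $2$-BSS computation, where at bigrading $(-\sigma_i,1)$ the only surviving class on the $E_\infty$-page of the $(*-\sigma_i)$-graded $2$-BSS is $\{x+y\}u_{\sigma_i}$, which is $\mathbb Z/2$ (see \cref{table:2BSS_sigma_Einf}). So $H^1(Q_8,\pi_{-\sigma_i}\E_2)\cong\mathbb Z/2$, generated by $\{x+y\}u_{\sigma_i}$. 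Next I would verify that $a_{\sigma_i}$ is nonzero in this group: the representation $\sigma_i$ has trivial fixed points only for the subgroup $C_4\langle i\rangle$ and its subgroups, but $\sigma_i^{Q_8}=0$, so $a_{\sigma_i}$ is not forced to vanish by the fixed-point criterion; more precisely, restricting to $C_2\subset Q_8$ (the center), $\sigma_i$ restricts to the trivial $C_2$-representation\,---\,wait, I need to be careful here. The center $C_2=\langle -1\rangle$ acts trivially on $\sigma_i$ only if $-1\in C_4\langle i\rangle$, which it is; so $\Res^{Q_8}_{C_2}\sigma_i$ is trivial and $\Res^{Q_8}_{C_2}a_{\sigma_i}=0$. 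Instead I should restrict to a $C_2$ not contained in $C_4\langle i\rangle$\,---\,but $Q_8$ has a unique subgroup of order $2$, so that approach fails. So the cleaner route is: restrict to $C_4\langle j\rangle$ (or $C_4\langle k\rangle$), on which $\sigma_i$ restricts to the sign representation $\sigma$ of $C_4$; then $\Res^{Q_8}_{C_4\langle j\rangle}a_{\sigma_i}=a_\sigma$, which is nonzero in $H^1(C_4,\pi_{-\sigma}\E_2)$ by the $C_4$-computation of \cite{BBHS20}. Hence $a_{\sigma_i}\ne 0$ in $H^1(Q_8,\pi_{-\sigma_i}\E_2)\cong\mathbb Z/2$, so $a_{\sigma_i}=\{x+y\}u_{\sigma_i}$ up to a unit, and therefore $\{x+y\}u_{\sigma_i}$ is a permanent cycle by \cref{prop:aVHFPSS}.

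The main obstacle I anticipate is correctly pinning down the restriction behavior of $\sigma_i$ to the various cyclic subgroups and making sure the nonvanishing claim for $a_{\sigma_i}$ really does survive\,---\,in particular confirming that $\Res^{Q_8}_{C_4\langle j\rangle}\sigma_i\cong\sigma$ (the nontrivial one-dimensional $C_4$-rep whose kernel is the order-$2$ subgroup) and that $a_\sigma\ne 0$ in the relevant $\E_2$-cohomology, which one reads off from the $C_4$-HFPSS input in \cref{subsec:Mackeycomputation}. An alternative, if one prefers to avoid Euler-class identifications, is a purely computational argument: at bigrading $(-\sigma_i,1)$ the target of any differential $d_r$ would land at $(-\sigma_i-1,1+r)$, and one checks via \cref{table:2BSS_sigma_Einf,table:2BSS_sigma_Einf_rel} together with the already-established $d_3$-differentials that all such target groups are either zero or already hit, forcing $\{x+y\}u_{\sigma_i}$ to be a permanent cycle. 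Either way the computation is short once the $E_2$-page bookkeeping is in place.
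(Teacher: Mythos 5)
Your proposal is correct and follows essentially the same route as the paper: the paper identifies $\{x+y\}u_{\sigma_i}$ with the Euler class $a_{\sigma_i}$ "for degree reasons" and then invokes \cref{prop:aVHFPSS}. You merely flesh out that identification (one-dimensionality of the group at $(-\sigma_i,1)$ from the $2$-BSS, and nonvanishing of $a_{\sigma_i}$ via $\Res^{Q_8}_{C_4\langle j\rangle}\sigma_i\cong\sigma$ and $a_\sigma\neq 0$ in the $C_4$-computation), which is a valid and slightly more careful version of the same argument.
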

\begin{proof}
For degree reasons, this class is $a_{\sigma_i}$ on the $E_2$-page defined in \cref{defn:aclass}. By \cref{prop:aVHFPSS}, this class is a permanent cycle.
\end{proof}

\begin{cor}\label{cor: sigmad5one}
The class $\{x+y\}Du_{\sigma_i}$ at $(7,1)$ supports a $d_5$-differential
\[
d_5(\{x+y\}Du_{\sigma_i})=k\{yh_2+xh_1 v_1\}Du_{\sigma_i}.
\]
\end{cor}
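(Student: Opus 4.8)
The plan is to obtain this $d_5$-differential from the integer-graded $d_5$-differential on $D$ (\cref{prop:d5}, or equivalently \cref{prop:d5res}) via the Leibniz rule, using that the $(*-\sigma_i)$-graded $Q_8$-HFPSS for $\E_2$ is a module over the integer-graded one and that $\{x+y\}u_{\sigma_i}$ detects the Euler class $a_{\sigma_i}$.

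First, by \cref{lem: asigmaiP.C.} the class $\{x+y\}u_{\sigma_i}=a_{\sigma_i}$ is a permanent cycle. From \cref{prop:d5} we have $d_5(D)=D^{-2}gh_2$ in the integer-graded $Q_8$-HFPSS for $\E_2$; in particular $D$ survives to the $E_5$-page, hence so does $\{x+y\}Du_{\sigma_i}=D\cdot a_{\sigma_i}$. Applying the Leibniz rule to this product and using $d_5(a_{\sigma_i})=0$ gives
\[
d_5(\{x+y\}Du_{\sigma_i})=d_5(D)\cdot\{x+y\}u_{\sigma_i}=D^{-2}gh_2\cdot\{x+y\}u_{\sigma_i}.
\]

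It remains to rewrite the right-hand side into the stated form. Since $g=kD^3$ (\cref{table:classnames}), we have $D^{-2}gh_2=kDh_2$, and the $E_2$-page relation $h_2x=v_1h_1x$ from \cref{prop:2BSSE1} yields
\[
kDh_2\{x+y\}u_{\sigma_i}=kD(h_2x+h_2y)u_{\sigma_i}=k\{yh_2+xh_1v_1\}Du_{\sigma_i},
\]
which is exactly the asserted target. The differential is nontrivial because this class is nonzero on the $E_5$-page: it is not in the image of any of the $(*-\sigma_i)$-graded $d_3$-differentials established above, as one reads off from the $E_2$-page description in \cref{thm:2BSSsig}. The only point requiring any care is the bookkeeping of the $E_2$-page relations in the last display, so the argument carries essentially no real obstacle; everything else is formal.
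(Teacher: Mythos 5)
Your proof is correct and is essentially the paper's own argument: the $(*-\sigma_i)$-graded spectral sequence is a module over the integer-graded one, $\{x+y\}u_{\sigma_i}=a_{\sigma_i}$ is a permanent cycle (\cref{lem: asigmaiP.C.}), and the differential follows from $d_5(D)=D^{-2}gh_2$ (\cref{prop:d5}) via the Leibniz rule. The only addition is your explicit identification of the target using $g=kD^3$ and the relation $h_2x=v_1h_1x$, which the paper leaves implicit.
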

\begin{proof}
Since the $(*-\sigma_i)$-graded part is a module over the integer-graded part, the claimed differential follows from \cref{lem: asigmaiP.C.},  \cref{prop:d5} and the Leibniz rule.
\end{proof}

\cref{cor: sigmad5one} generates
the first kind of $d_5$-differentials via the Leibniz rule.

\begin{lem}\label{lem:x2y2P.C.}
The class $\{x^2+y^2\}Du_{\sigma_i}$ is a permanent cycle.
\end{lem}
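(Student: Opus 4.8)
The plan is to show that the class $\{x^2+y^2\}Du_{\sigma_i}$, which sits at bidegree $(6,2)$ in the $(*-\sigma_i)$-graded chart (\cref{fig:sigmaE2}), supports no nonzero differential; since no class can map into $(6,2)$ for filtration reasons, this is exactly what ``permanent cycle'' means here. First I would record the relevant algebraic input: by \cref{lem: asigmaiP.C.} one has $\{x+y\}u_{\sigma_i}=a_{\sigma_i}$, so $a_{\sigma_i}^2$ is detected by $\{x^2+y^2\}u_{2\sigma_i}$ (using $xy=0$), and multiplying by $D$ relates $\{x^2+y^2\}Du_{\sigma_i}$ to the permanent-cycle-times-$D$ class $a_{\sigma_i}^2 D$. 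Next I would enumerate the candidate targets of a differential on $\{x^2+y^2\}Du_{\sigma_i}$: they lie at $(5,2+r)$, and $r\ge 23$ is excluded outright by the strong horizontal vanishing line of filtration $23$ (\cref{thm:sharpvanishingline}), leaving finitely many lengths $r\in\{3,5,7,9,11,13\}$ to rule out.

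The short differentials I would dispatch by naturality under restriction. Restricting along $Q_8\supset C_4\langle i\rangle$ sends $\sigma_i$ to the trivial representation, so $u_{\sigma_i}\mapsto 1$ and $\{x^2+y^2\}Du_{\sigma_i}$ maps to $\Delta_1$ times the restriction of $x^2+y^2$, which is a permanent cycle in the integer-graded $C_4$-HFPSS by the computation of \cite{BBHS20} together with the restriction computation of \cref{sec:groupcoho}; restricting along $Q_8\supset C_4\langle j\rangle$, under which $\sigma_i$ becomes the sign representation $\sigma$, gives a second such check. This eliminates any $d_r$ whose target has nonzero restriction to some index-$2$ copy of $C_4$.

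The main obstacle is then the differentials with $C_4$-trivial target, and the most delicate is the potential $d_5$, since \cref{cor: sigmad5one} has just produced a rich $d_5$-pattern in precisely this grading. For these I would pass to the $Q_8$-TateSS and apply \cref{met:Tatemethod}: using the invertible permanent cycles $a_{\mathbb H}^{\pm 1}$, $k^{\pm 1}$, and $u_{4\sigma_i}^{\pm 1}$ (permanent and invertible by \cref{cor:u4sigma} and \cref{rem:4sigmapc}), each candidate target at $(5,2+r)$ can be translated to a location already known---from the $d_3$- and $d_5$-patterns (\cref{prop:sigmad3one}, \cref{cor: sigmad5one}) and the module structure over the integer-graded $E_\infty$-page---to carry a genuine permanent cycle that is \emph{not} in the image of a differential out of $\{x^2+y^2\}Du_{\sigma_i}$; the corresponding differential is therefore forced to vanish. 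Finally, no contradiction can arise from the $v_1^4$-translate, because $\{x^2+y^2\}v_1^4u_{\sigma_i}=0$ already on the $E_2$-page by the relations $x^2v_1^4=y^2v_1^4=0$ of \cref{thm:2BSSint}. Assembling these cases shows $\{x^2+y^2\}Du_{\sigma_i}$ supports no differential, as claimed.
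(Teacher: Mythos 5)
The heart of this lemma is precisely the case your restriction argument cannot see, and for that case your proposal does not actually give a proof. The dangerous differential is a potential $d_5(\{x^2+y^2\}Du_{\sigma_i})=k\{x+y\}h_1^2Du_{\sigma_i}$ — the same pattern that genuinely occurs on $\{x^2+y^2\}u_{\sigma_i}$ and on $\{x^2+y^2\}D^2u_{\sigma_i}$ by \cref{cor:sigmad5two} — together with the longer candidates whose targets are built from $\{x+y\}u_{\sigma_i}=a_{\sigma_i}$. All of these targets restrict to zero in every $C_4$: over $C_4\langle i\rangle$ because $a_{\sigma_i}$ restricts trivially, and over $C_4\langle j\rangle$, $C_4\langle k\rangle$ because of the relation $\eta a_{\sigma}=0$ in \cref{prop:E2C4Slicename}. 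For exactly these cases you only say that each target ``can be translated to a location already known to carry a genuine permanent cycle that is not in the image of a differential out of'' the class in question; but that is the statement to be proved, not an argument. Being a permanent cycle does not prevent a class from being the target of a differential, so exhibiting permanent cycles at translated spots proves nothing; to rule out a candidate $d_r$ you must show its target is zero on $E_r$ (hit earlier, or supporting an earlier differential) or that it survives to $E_\infty$, and you never identify the known differential or surviving class that would do this. Since the $d_5$ target is alive on the $E_5$-page, this is a genuine gap, not a routine verification left to the reader.

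There are also smaller inaccuracies: the strong vanishing line of filtration $23$ (\cref{thm:sharpvanishingline}) forces collapse only after the $E_{23}$-page, so it excludes $r>23$ but not a $d_{23}$ (this spectral sequence does have $d_{23}$-differentials, cf.\ \cref{prop:sigmad23}); the identification of $\{x^2+y^2\}Du_{\sigma_i}$ with ``$a_{\sigma_i}^2D$'' conflates the $(*-\sigma_i)$- and $(*-2\sigma_i)$-graded groups (note also that $x+y$ does not survive the integer-graded $2$-BSS, so there is no integer-graded class it could factor through); and the claim that the $C_4\langle i\rangle$-restriction of the source is a permanent cycle is asserted rather than checked. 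For comparison, the paper's proof uses one concrete input in place of all of this: $d_9(D^6h_1)=k^2xh_1D^7$ (\cref{prop:d9five}) multiplied by the permanent cycle $\{x+y\}u_{\sigma_i}$ shows that $k^2x^2h_1D^7u_{\sigma_i}$ is hit by the $E_9$-page; the $h_1$-extension then forces $k^2\{x^2+y^2\}D^7u_{\sigma_i}=g^2\cdot\{x^2+y^2\}Du_{\sigma_i}$ to be hit, and since $g^2$ survives to $E_\infty$ this precludes $\{x^2+y^2\}Du_{\sigma_i}$ from supporting any differential. To salvage your approach you would need to supply explicit input of this kind for each restriction-invisible target, at which point you would essentially be reproducing the paper's argument.
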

\begin{proof}
According to \cref{prop:d9five}, there is a $d_9$-differential 
$
d_9(D^6h_1)=k^2D^7 xh_1.
$
Then the Leibniz rule implies that the class $k^2x^2h_1D^7 u_{\sigma_i}=k^2D^7xh_1 \cdot \{x+y\}u_{\sigma_i}$ is hit by a differential of length $\leq 9$. For degree reasons, it is hit by either a $d_9$-differential or a $d_7$-differential.  In either case, the $h_1$ extensions force $k^2\{x^2+y^2\}D^7 u_{\sigma_i}$ to be hit on or before the $E_9$-page. Then the class  $\{x^2+y^2\}Du_{\sigma_i}$  must be a permanent cycle; otherwise 
the class $k^2\{x^2+y^2\}D^7 u_{\sigma_i}$ would support a non-trivial differential since 
\[
k^2\{x^2+y^2\}D^7 u_{\sigma_i}=\{x^2+y^2\}Du_{\sigma_i} \cdot k^2D^6
\]
where  $k^2D^6=g^2$ is a permanent cycle that survives to $E_{\infty}$-page in the integer-graded part. 
\end{proof}
\begin{cor}\label{cor:sigmad5two}
The class $\{x^2+y^2\}u_{\sigma_i}$ at $(-2,2)$ supports a $d_5$-differential
\[
d_5(\{x^2+y^2\}u_{\sigma_i})=k\{x+y\}h_1^2 u_{\sigma_i}.
\]
\end{cor}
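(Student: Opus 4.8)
The plan is to mirror the derivation of \cref{cor: sigmad5one} from \cref{lem: asigmaiP.C.}, but using the permanent cycle $\{x^2+y^2\}Du_{\sigma_i}$ produced in \cref{lem:x2y2P.C.} instead of a permanent cycle that is literally a factor of the source. The key point is that $D$ is a unit: since $\pi_*\E_2=(\mathbb{W}[v_1,u^{-1}][D^{-1}])^\wedge_I$ (\cref{subsec:LubinTate}), the class $D$ is invertible on the $E_2$-page of the $(*-\sigma_i)$-graded $Q_8$-HFPSS for $\E_2$. By \cref{prop:d5} the class $D$ supports a nonzero $d_5$, so it is a $d_r$-cycle for $r<5$ and stays a unit on the pages $E_2,\dots,E_5$; the Leibniz rule applied to $D\cdot D^{-1}=1$ then gives $d_r(D^{-1})=0$ for $r<5$ and $d_5(D^{-1})=D^{-4}gh_2$ (all computations are modulo $2$, so signs may be ignored). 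Hence $\{x^2+y^2\}u_{\sigma_i}=D^{-1}\cdot\{x^2+y^2\}Du_{\sigma_i}$ is a product of a $d_r$-cycle with a permanent cycle, so it is itself a $d_r$-cycle for $r<5$; and it cannot be hit on the pages $E_2,\dots,E_4$, because a differential $d_r(s)=\{x^2+y^2\}u_{\sigma_i}$ with $r<5$ would force $d_r(Ds)=\{x^2+y^2\}Du_{\sigma_i}$, contradicting \cref{lem:x2y2P.C.}. Thus $\{x^2+y^2\}u_{\sigma_i}$ survives to $E_5$, and the asserted differential is meaningful.

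With this in hand I would compute by the Leibniz rule, using that $d_5$ annihilates the permanent cycle $\{x^2+y^2\}Du_{\sigma_i}$ and that $g=kD^3$ (\cref{table:classnames}):
\[
d_5\!\left(\{x^2+y^2\}u_{\sigma_i}\right)=d_5\!\left(D^{-1}\cdot\{x^2+y^2\}Du_{\sigma_i}\right)=\{x^2+y^2\}Du_{\sigma_i}\cdot D^{-4}gh_2=k\{x^2+y^2\}h_2u_{\sigma_i}.
\]
It then remains to recognize $k\{x^2+y^2\}h_2u_{\sigma_i}$ as the class $k\{x+y\}h_1^2u_{\sigma_i}$ appearing in the statement. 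For this I would use the relations of \cref{prop:2BSSE1}: the relation $h_2x=v_1h_1x$ gives $x^2h_2=v_1h_1x^2$, and the relations $h_2^3=h_1^2Dx$ and $h_2^2=Dy^2$ (together with $D$ invertible) give $y^2h_2=h_1^2x$; hence $\{x^2+y^2\}h_2u_{\sigma_i}=(v_1h_1x^2+h_1^2x)u_{\sigma_i}=\{h_1+xv_1\}\,xh_1\,u_{\sigma_i}=\{x+y\}h_1^2u_{\sigma_i}$, the last equality being the filtration-$3$ relation $\{x+y\}u_{\sigma_i}\cdot h_1^2=\{h_1+xv_1\}u_{\sigma_i}\cdot xh_1$ of \cref{table:2BSS_sigma_Einf_rel}.

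The main thing to be careful about is the bookkeeping in the first paragraph — that $\{x^2+y^2\}u_{\sigma_i}$ genuinely survives to $E_5$ and that its $d_5$ is rigidly pinned down by the behaviour of the unit $D^{-1}$ and of the permanent cycle $\{x^2+y^2\}Du_{\sigma_i}$; I expect no genuine difficulty here beyond invoking the invertibility of $D$. As a consistency check one can observe that both $k\{x^2+y^2\}h_2u_{\sigma_i}$ and $k\{x+y\}h_1^2u_{\sigma_i}$ lie in the bidegree $(-3,7)$ into which a $d_5$ out of $(-2,2)$ must land, so the identification of the two names is in any case forced. Everything past the first paragraph is the Leibniz rule plus the explicit $2$-BSS relations, which is routine.
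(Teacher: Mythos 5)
Your proposal is correct and is precisely the argument the paper intends (the corollary is stated with no separate proof because it is meant to follow from \cref{lem:x2y2P.C.}, the differential $d_5(D)=D^{-2}gh_2$ of \cref{prop:d5}, and the Leibniz rule, with the target $k\{x^2+y^2\}h_2u_{\sigma_i}=k\{x+y\}h_1^2u_{\sigma_i}$ pinned down by exactly the $2$-BSS relations you quote). One small slip: ``permanent cycle'' in \cref{lem:x2y2P.C.} only means the class supports no differential, not that it cannot be hit, so your ``cannot be hit'' step is not justified by that lemma --- but it is also unnecessary, since a class in filtration $2$ could only be hit by a $d_r$ ($r\geq 3$) whose source lies in negative filtration.
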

 By inspection, all $d_5$-differentials in $(*-\sigma_i)$-graded part follows from \cref{cor: sigmad5one} and \cref{cor:sigmad5two} by the Leibniz rule. 
\begin{lem}\label{lem:sigma11cycle}
The class $\{x+y\}D^4 u_{\sigma_i}$ is a $11$-cycle.
\end{lem}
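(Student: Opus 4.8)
The plan is to exploit that $\{x+y\}D^4\usig$ is a permanent cycle times an integer-graded class, so that its differentials are governed by the integer-graded computation through the Leibniz rule. By \cref{lem: asigmaiP.C.} the class $\{x+y\}\usig$ is the Euler class $a_{\sigma_i}$ and hence a permanent cycle, so $\{x+y\}D^4\usig = a_{\sigma_i}\cdot D^4$ and $d_r(\{x+y\}D^4\usig) = a_{\sigma_i}\cdot d_r(D^4)$ for all $r$. From the integer-graded computation $D^4$ is a $7$-cycle, with $d_7(D^4) = Dg h_1^3$ by \cref{prop:d7two}; hence $\{x+y\}D^4\usig$ supports no differential of length $<7$, and $d_7(\{x+y\}D^4\usig) = a_{\sigma_i}\cdot Dg h_1^3 = \{x+y\}Dg h_1^3\usig$.

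The next — and main — step is to check that this $d_7$-target already vanishes on the $E_2$-page, i.e. that $a_{\sigma_i}h_1^3 = \{x+y\}h_1^3\usig = 0$. Starting from the relation $\{x+y\}h_1^2\usig = \{h_1+xv_1\}\usig\cdot xh_1$ of \cref{table:2BSS_sigma_Einf_rel}, multiplying by $h_1$ and rewriting $xh_1^2 = D^{-1}h_2^3$ (the integer-graded relation $Dxh_1^2 = h_2^3$ of \cref{table:2BSS_integer_Einf_rel}), one gets $\{x+y\}h_1^3\usig = \{h_1+xv_1\}\usig\cdot xh_1^2 = D^{-1}h_2^2\bigl(\{h_1+xv_1\}\usig\cdot h_2\bigr)$, which is $0$ because $\{h_1+xv_1\}\usig\cdot h_2 = 0$ by \cref{table:2BSS_sigma_Einf_rel}. (One should also confirm that these $E_\infty$-relations of the $2$-BSS hold on the nose in $H^*(Q_8,\pi_{\bigstar}\E_2)$; the only exotic corrections recorded in this range are the hidden $h_1$- and $h_2$-extensions of \cref{lem:hiddenh1}, which do not affect $\{h_1+xv_1\}h_2\usig$.) Hence $d_7(\{x+y\}D^4\usig) = Dg\cdot a_{\sigma_i}h_1^3 = 0$, so $\{x+y\}D^4\usig$ is a $9$-cycle — already enough for the application in \cref{prop:d9fivenorm}.

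Finally I would rule out a $d_9$ (and, since all differentials here have odd length, a $d_{10}$). On the $E_8 = E_9$ page the factorization through $D^4$ is no longer available, so this last step is by inspection: once all $d_3$-, $d_5$- and $d_7$-differentials have been recorded, one checks that the relevant target bidegree (stem $30$, filtration $10$ in this section's convention) carries no surviving class, so no $d_9$ is possible; alternatively, one can multiply by the invertible permanent cycle $g^2$, use that $\{x+y\}g^2\usig = g^2 a_{\sigma_i}$ is a permanent cycle, and apply \cref{met:Tatemethod} to force the $d_9$ to vanish. Either way $\{x+y\}D^4\usig$ is an $11$-cycle. I expect the second step — obtaining $a_{\sigma_i}h_1^3 = 0$ exactly from the $E_2$-page relations — to be the main obstacle, with the chart check in the last step a secondary one.
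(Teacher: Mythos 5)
Your proposal takes a genuinely different route from the paper. The paper's proof never uses the integer-graded $d_7$ on $D^4$: it quotes that $u_{2\sigma}$ is a $5$-cycle in the $C_4$-HFPSS \cite{BBHS20}, applies the norm structure (\cref{thm:normdiff}, \cref{prop:auclassnorm}) to conclude that $N_{C_4}^{Q_8}(u_{2\sigma})a_{\sigma_i}$ is a $9$-cycle, identifies that class with $\{x+y\}D^4u_{\sigma_i}$ up to the invertible permanent-cycle periodicity classes of \cref{cor:periodicity} and \cref{rem:4sigmapc}, and then handles $d_{10}$ and $d_{11}$ by degree reasons. Your factorization $\{x+y\}D^4u_{\sigma_i}=a_{\sigma_i}\cdot D^4$ via \cref{lem: asigmaiP.C.} and \cref{prop:d7two} is a legitimate alternative start, and it is consistent with the paper's charts (there are no $d_7$-differentials anywhere in the $(*-\sigma_i)$-graded computation), so the route can in principle be completed; what the paper's norm argument buys is that the $9$-cycle statement comes for free from the $C_4$ input, with no product computations on the $E_2$-page and no chart inspection below filtration $9$.

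As written, though, there are genuine gaps. First, you conclude ``$11$-cycle'' after discussing only $d_9$ and $d_{10}$; the statement requires $d_{11}$ to vanish as well (this is exactly what \cref{cor:sigmad11one} uses, so it cannot be dropped), i.e.\ you still owe an inspection of bidegree $(30,12)$ alongside $(30,10)$ and $(30,11)$. Second, the key vanishing $a_{\sigma_i}h_1^3=0$ is derived purely from the relations in \cref{table:2BSS_sigma_Einf_rel} and \cref{table:2BSS_integer_Einf_rel}, which are relations in the associated graded of the $2$-Bockstein spectral sequence; to use it on the $E_2$-page of the HFPSS you must rule out a nonzero $2$-divisible value, and ``the only recorded hidden extensions are those of \cref{lem:hiddenh1}'' is not an argument, since the paper makes no completeness claim there. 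The honest fix is to check that the entire group in the target position (stem $30$, filtration $8$ of the $(*-\sigma_i)$-graded $E_2$-page) vanishes, which is additional bookkeeping you do not carry out. Third, the proposed shortcut for the $d_9$ --- multiply by $g^2$ and invoke \cref{met:Tatemethod} --- does not work as stated: that method produces permanent cycles from \emph{known} differentials by moving them into negative filtration via invertible classes; it cannot by itself force a hypothetical $d_9$ on your class to vanish. So your $d_9$ step reduces to the chart inspection at $(30,10)$, which is feasible but is precisely the work the paper avoids. (Minor: a class supporting a nonzero $d_7$ is a $6$-cycle, not a $7$-cycle.)
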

\begin{proof}
According to \cite[Remark~5.23]{BBHS20}, the class  $u_{2\sigma}$ is a $5$-cycle in the $C_4$-HFPSS for $\E_2$. Therefore, \cref{thm:normdiff} implies that  $N_{C_4}^{Q_8}(u_{2\sigma})a_{\sigma_i}$ is a $9$-cycle. According to \cref{prop:auclassnorm}
\[
N_{C_4}^{Q_8}(u_{2\sigma})a_{\sigma_i}=\frac{u_{2\sigma_j}{u_{2\sigma_k}}}{u_{2\sigma_i}}a_{\sigma_i}.
\]
If we restrict this class to the subgroup $C_4\langle j\rangle $, then we get the class $a_{\sigma}$ which is non-trivial. Hence $N_{C_4}^{Q_8}(u_{2\sigma})a_{\sigma_i}$ is non-trivial on the $E_2$-page. By multiplying $N_{C_4}^{Q_8}(u_{2\sigma}) a_{\sigma_i}$ with the following periodicity classes 
$$ N_{C_4 \langle i\rangle}^{Q_8}(u_{4\sigma}) N_{C_4 \langle i\rangle}^{Q_8}(\done)^4 N_{C_4 \langle i\rangle}^{Q_8}(u_{4\lambda}u_{2\sigma})u_{4\sigma_i}^5u_{4\sigma_j}^{-1}u_{4\sigma_k}^{-1}$$
where $u_{4\sigma_i},u_{4\sigma_j},u_{4\sigma_k}$ are permanent cycles by \cref{rem:4sigmapc}, we get a non-trivial class at $(31,1)$. For degree reasons, this class must be $\{x+y\}D^4 u_{\sigma_i}$ (up to a unit). This implies that $\{x+y\}D^4 u_{\sigma_i}$ is also a $9$-cycle. For degree reasons, $\{x+y\}D^4 u_{\sigma_i}$ is a $11$-cycle .
\end{proof}
\begin{rem}\rm
We will show in \cref{prop:sigmad13} that the above class supports a non-trivial $d_{13}$-differential.
\end{rem}


\begin{prop}\label{prop:sigmad3two}
The class $\{h_1+xv_1\}u_{\sigma_i}$ at $(1,1)$ supports a $d_3$-differential
\[
d_3(\{h_1+xv_1\}u_{\sigma_i})=2kv_1^2 u_{\sigma_i}.
\]
\end{prop}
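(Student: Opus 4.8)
The plan is to establish the differential in two stages: first a degree count on the $E_2$-page reducing the statement to ``\,$\{h_1+xv_1\}u_{\sigma_i}$ supports a nonzero differential,'' and then an argument that it does. Throughout, the $E_2$-page is the one produced by \cref{thm:2BSSsig} and \cref{lem:E2completion}.

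For the first stage I would carry out the following bookkeeping. The class $\{h_1+xv_1\}u_{\sigma_i}$ sits in degree $(1,1)$ and generates a $\mathbb{Z}/2$. A degree count shows there is no class in degree $(0,3)$, so no $d_2$ is possible on it; the only class in degree $(0,4)$ is $kv_1^2u_{\sigma_i}$, which by \cref{prop:2BSS_sigma_d2} is $\mathbb{Z}/4$, and one checks that $2kv_1^2u_{\sigma_i}$ survives to $E_3$ (there is nothing in degree $(1,2)$ to hit it). Hence the $d_3$-image of a $\mathbb{Z}/2$-generator is either $0$ or $2kv_1^2u_{\sigma_i}$. Using the $g$- and $D$-module structure over the integer-graded $E_2$-page one rules out a target in degree $(0,f)$ for $f\ge 5$, so any nonzero differential on $\{h_1+xv_1\}u_{\sigma_i}$ must be the asserted $d_3$; and, as a consistency check, note that by \cref{prop:sigmad3one} the Leibniz rule already shows $kv_1^2u_{\sigma_i}$ itself supports a $d_3$ hitting $k h_1^3 u_{\sigma_i}$ while $2kv_1^2u_{\sigma_i}$ is a $d_3$-cycle, so it is a legitimate target.

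For the second stage, i.e.\ showing $\{h_1+xv_1\}u_{\sigma_i}$ is not a permanent cycle, I would argue by contradiction. Assuming it is a permanent cycle, one multiplies it by suitable permanent cycles—using \cref{lem: asigmaiP.C.} ($\{x+y\}u_{\sigma_i}=a_{\sigma_i}$), the $d_5$-pattern of \cref{cor: sigmad5one} and \cref{cor:sigmad5two}, and \cref{lem:x2y2P.C.}, \cref{lem:sigma11cycle}—to produce a permanent cycle which either violates the Leibniz rule against one of the established $d_5$-differentials, or lies above the strong vanishing line of filtration $23$ (\cref{thm:sharpvanishingline}) in a way incompatible with $\{h_1+xv_1\}u_{\sigma_i}$ remaining undifferentiated. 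A second route, parallel to the proof of \cref{prop:sigmad3one}, is to restrict to $C_4\langle i\rangle$, on which $\sigma_i$ restricts to the trivial representation so that $u_{\sigma_i}\mapsto 1$ modulo $2$, and play the image off against the known $C_4$-HFPSS of \cite{BBHS20}; yet another is to use the $a_{\sigma_i}$-cofiber sequence $S(\sigma_i)_+\to S^0\xrightarrow{a_{\sigma_i}}S^{\sigma_i}$, which identifies the cofiber of $a_{\sigma_i}$-multiplication with $\E_2^{hC_4\langle i\rangle}$ and thereby controls the fate of $\{h_1+xv_1\}u_{\sigma_i}$.

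The hard part will be exactly this second stage: unlike \cref{prop:sigmad3one}, there is no single short restriction identity that exhibits the differential immediately, so one must either carefully marshal the $d_5$-pattern together with the permanent-cycle lemmas into a contradiction, or set up the $a_{\sigma_i}$-cofiber sequence and track the relevant classes through the associated long exact sequence. Once nonpermanence is in hand, the degree count of the first stage forces $d_3(\{h_1+xv_1\}u_{\sigma_i})=2kv_1^2u_{\sigma_i}$.
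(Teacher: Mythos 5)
Your Stage 1 (identifying $2kv_1^2u_{\sigma_i}$ as the only possible nonzero target) is fine, but it is not where the content of the proposition lies. The entire substance is your Stage 2 — showing that $\{h_1+xv_1\}u_{\sigma_i}$ actually supports a differential — and there you do not give a proof: you list three candidate strategies (``multiply by suitable permanent cycles and contradict a $d_5$ or the vanishing line,'' restriction to $C_4\langle i\rangle$, the $a_{\sigma_i}$-cofiber sequence) without executing any of them, and you explicitly flag this as ``the hard part.'' That is a genuine gap. Moreover, the concrete ingredient that makes the argument work in the paper is absent from your proposal: the hidden $h_2$-extension of \cref{lem:hiddenh1}, which identifies $2k\,v_1^2D^n u_{\sigma_i}$ as $h_2\cdot x^3D^n u_{\sigma_i}$ (up to the $k$-indexing) on the $E_2$-page. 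The paper assumes the claimed $d_3$ fails, so that $2kv_1^2u_{\sigma_i}$ (hence $2k^2v_1^2D^4u_{\sigma_i}$) survives to $E_5$; the $h_2$-extension together with the Leibniz rule and the known integer-graded $d_5$-pattern then forces $d_5(x^3D^4u_{\sigma_i})=2k^2v_1^2D^4u_{\sigma_i}$; but $x^3D^4u_{\sigma_i}=x^2\cdot\{x+y\}D^4u_{\sigma_i}$ is a product of $d_5$-cycles ($\{x+y\}u_{\sigma_i}=a_{\sigma_i}$ is a permanent cycle by \cref{lem: asigmaiP.C.}, $D^4$ survives to $E_7$ by \cref{prop:d7two}, and $x^2$ supports a $d_{11}$ by \cref{prop:d11}), a contradiction. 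Without \cref{lem:hiddenh1} (or an equivalent multiplicative link between $2kv_1^2u_{\sigma_i}$ and lower-filtration classes of known fate), your vague plan has no identified class on which to hang the contradiction.

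Your two fallback routes also do not obviously close the gap as stated. Restriction to $C_4\langle i\rangle$ sends $\{h_1+xv_1\}u_{\sigma_i}$ to a filtration-one class in stem $1$ of the $C_4$-HFPSS, i.e.\ (up to units) the permanent cycle detecting $\eta$; naturality only transports differentials \emph{from} the restriction, so, unlike \cref{prop:sigmad3one} where $v_1^2u_{\sigma_i}$ restricts to $T_2$ which visibly supports a $d_3$, restriction gives no obstruction here. And the vanishing-line route requires you to multiply $2kv_1^2u_{\sigma_i}$ by powers of $g$ into filtration $\ge 23$ and then rule out every alternative source of the differential that must kill it — an analysis you do not attempt, and which in practice again leans on the $h_2$-extension and the $5$-cycle/$11$-cycle facts above. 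So the proposal, as written, does not prove the statement.
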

\begin{proof}
 We argue by contradiction. Suppose this differential does not happen.  Then the class $2kv_1^2 u_{\sigma_i}$ will survive to the $E_5$-page and the Leibniz rule implies that there is a $d_5$-differential 
 \[
d_5(x^3D^4\usi)=2k^2v_1^2D^4\usi,
 \]
since there is an $h_2$ extension from $kx^3D^4\usi$ to $2k^2v_1^2D^4\usi$ by \cref{lem:hiddenh1}. 

On the other hand, according to \cref{lem: asigmaiP.C.}, \cref{prop:d7two} and the Leibniz rule we know the class  $\{x+y\}D^4$ is a $5$-cycle. Moreover, the class $x^2$ is a also $5$-cycle by \cref{prop:d11}. so the product  $x^3D^4=\{x+y\}D^4\usi\cdot x^2$ is a $5$-cycle. This is a contradiction. And the claimed $d_3$ follows immediately.

\end{proof}
\begin{rem}\rm \cref{prop:sigmad3two} shows that $2kv_1^2 u_{\sigma_i}$ is hit by a $d_3$-differential. Recall that  the class $kv_1^2 u_{\sigma_i}$ itself supports a non-trivial $d_3$-differential by \cref{prop:sigmad3one}.
\end{rem}
By the above discussion and by inspection, all $d_3$-differentials in the  $(*-\sigma_i)$-graded part  follows from \cref{prop:sigmad3one}, \cref{prop:sigmad3two} and the Leibniz rule.

\begin{cor}\label{cor:sigmad11one}
The classes $x^3 u_{\sigma_i}$ at $(-3,3)$ and $x^3D^4 u_{\sigma_i}$ at $(29,3)$ support $d_{11}$-differentials
\begin{enumerate}
\item $d_{11}(x^3 u_{\sigma_i})=k^3\{x+y\}Dh_1 u_{\sigma_i};$

\item $d_{11}(x^3D^4 u_{\sigma_i})=k^3\{x+y\}D^5 h_1 u_{\sigma_i}.$
\end{enumerate}
\end{cor}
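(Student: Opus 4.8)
The plan is to mimic the strategy already used several times in this section: first locate the target of the differential as a class that must be hit (either because it sits above the vanishing line, or because it is forced by a hidden extension from a shorter differential), and then show that the proposed source is the only class that can hit it. Concretely, it suffices to prove (1), since the $(*-\sigma_i)$-graded part is a module over the integer-graded part and multiplying (1) by the integer-graded periodicity class $D^4$ (which, after removing $bo$-pattern, is a permanent cycle in the relevant range, cf.\ \cref{prop:periodicity}) gives (2). So I will focus on $d_{11}(x^3 u_{\sigma_i}) = k^3\{x+y\}Dh_1 u_{\sigma_i}$.

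\medskip

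First I would establish that $x^3 u_{\sigma_i}$ is a $d_{11}$-cycle candidate, i.e.\ that it survives to the $E_{11}$-page. From \cref{prop:sigmad3two} we already know the behavior of the classes in the relevant filtration degrees: $x^3 u_{\sigma_i}$ cannot be hit (degree reasons, using the $E_2$-page description \cref{thm:2BSSsig} and the list of shorter differentials computed so far), and it is not a source of a $d_3,d_5,d_7,d_9$-differential — the $d_3$-pattern was exhausted by \cref{prop:sigmad3one} and \cref{prop:sigmad3two}, the $d_5$-pattern by \cref{cor: sigmad5one} and \cref{cor:sigmad5two}, and for $d_9$ one uses the module structure together with the fact (analogue of \cref{lem:sigma11cycle}, proved via \cref{thm:normdiff} applied to the $5$-cycle $u_{2\sigma}$ in the $C_4$-HFPSS) that $\{x+y\}D^4 u_{\sigma_i}$ is a $9$-cycle, hence so is $x^3 u_{\sigma_i} = \{x+y\}u_{\sigma_i}\cdot x^2 \cdot (\text{unit})$ after translating by $D^{-4}$ and using \cref{lem:x2y2P.C.} and \cref{prop:d11} for $x^2$ being a $5$-cycle. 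Thus $x^3 u_{\sigma_i}$ is at least a $d_{11}$-cycle.

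\medskip

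Next I would pin down the target. The class $k^3\{x+y\}Dh_1 u_{\sigma_i}$ lies in high filtration; by \cref{thm:sharpvanishingline} (strong vanishing line of filtration $23$ for the $Q_8$-HFPSS of $\E_2$) it is in a filtration $\geq 23$ only after multiplying by further powers of $g = kD^3$, so the cleanest route is: multiply the putative differential by the invertible (in the Tate SS) permanent cycle $g^m$ for suitable $m$ so that the target $g^m k^3\{x+y\}Dh_1 u_{\sigma_i}$ is a permanent cycle (it is a product of permanent cycles: $\{x+y\}u_{\sigma_i}$ by \cref{lem: asigmaiP.C.}, $h_1$ by \cref{lemma:gPC}, $g$, and a power of $k$ which is a permanent cycle in this module-over-integer-graded sense) of filtration $\geq 23$, hence must be hit (\cref{thm:sharpvanishingline}, \cref{thm:tatevanishing}, \cref{met:Tatemethod}). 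One then checks via the $E_2$-page that in the relevant stem the only class capable of hitting it — after accounting for the shorter differentials already computed — is $g^m x^3 u_{\sigma_i}$, and the length is forced to be $11$ by the filtration jump. Transporting back via $g^{-m}$ using \cref{met:Tatemethod} gives the stated $d_{11}$ on $x^3 u_{\sigma_i}$ itself.

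\medskip

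The main obstacle I anticipate is the bookkeeping in the last step: verifying that, in the stem of the target, there is genuinely a \emph{unique} possible source and that no \emph{shorter} differential (a $d_7$ or $d_9$ originating elsewhere, or a hidden $h_1$/$h_2$ extension of the type catalogued in \cref{lem:hiddenh1}) has already killed the target class on an earlier page. This requires a careful sweep of the $(*-\sigma_i)$-graded $E_2$-page near that stem, using \cref{thm:2BSSsig}, \cref{table:2BSS_sigma_Einf}, \cref{table:2BSS_sigma_Einf_rel} for the module generators and relations, and cross-referencing the already-established differentials of lengths $3,5,7,9$; I expect it to be entirely mechanical but somewhat lengthy, and the charts in \S\ref{section:charts} (\cref{fig:sigmaE7,fig:sigmaE9}) will be the most efficient way to carry it out.
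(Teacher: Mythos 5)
There is a genuine gap, and it sits in your very first step. Your reduction of (2) to (1) rests on the claim that $D^4$ is a permanent cycle ``cf.\ \cref{prop:periodicity}'' — but the periodicity class there is $D^8$, not $D^4$, and in fact $D^4$ supports the differential $d_7(D^4)=Dgh_1^3$ (\cref{prop:d7two}), so it is not even a $7$-cycle; nor can you repair this by writing $D^4=g\,k^{-1}D$ in the Tate spectral sequence, since $D$ supports a $d_5$. This is why the paper does \emph{not} deduce (2) from (1): it proves them in parallel, using that $\{x+y\}u_{\sigma_i}=a_{\sigma_i}$ is a permanent cycle (\cref{lem: asigmaiP.C.}) for (1) and the separate, norm-theoretic fact that $\{x+y\}D^4u_{\sigma_i}$ is an $11$-cycle (\cref{lem:sigma11cycle}) for (2). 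As written, your proposal gives no proof of (2) at all.

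For (1), your vanishing-line route is a long detour whose decisive step is exactly the part you defer: after multiplying by $g^3$ you must show that the class at $(56,26)$ is not hit by any shorter differential and that the \emph{only} admissible source at stem $57$ is $g^3x^3u_{\sigma_i}$ in filtration $15$; without that enumeration the argument is not a proof. The irony is that your preliminary paragraph already contains the paper's entire argument: since $x^2y=0$ one has $x^3u_{\sigma_i}=\{x+y\}u_{\sigma_i}\cdot x^2$, the factor $\{x+y\}u_{\sigma_i}$ is a permanent cycle, and the integer-graded class $x^2=D^{-8}\cdot D^6d$ supports $d_{11}(x^2)=k^3Dh_1$ by \cref{prop:d11} and $D^{\pm 8}$-periodicity; the Leibniz rule then gives $d_{11}(x^3u_{\sigma_i})=\{x+y\}u_{\sigma_i}\cdot d_{11}(x^2)=k^3\{x+y\}Dh_1u_{\sigma_i}$ immediately, and the same computation with $\{x+y\}D^4u_{\sigma_i}$ (an $11$-cycle by \cref{lem:sigma11cycle}) in place of $\{x+y\}u_{\sigma_i}$ yields (2). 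I recommend replacing both the $D^4$-reduction and the vanishing-line bookkeeping by this module-structure/Leibniz argument.
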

\begin{proof}
According to \cref{prop:d11}, there is a $d_{11}$-differential in the integer-gradings
\[
d_{11}(x^2)=k^3D h_1.
\]
Note that $\{x+y\}u_{\sigma_i}$ and $\{x+y\}D^4 u_{\sigma_i}$ are both $11$-cycles. By the Leibniz rule, we have
\[
d_{11}(x^3 u_{\sigma_i})=\{x+y\}u_{\sigma_i}d_{11}(x^2)=k^3\{x+y\}Dh_1 u_{\sigma_i}. 
\]
The proof of the second $d_{11}$-differential is similar. 
\end{proof}

\begin{prop}\label{prop:sigmad9two}
The classes $\{h_1^2+xh_1v_1\}Du_{\sigma_i}$ at $(10,2)$ and  $\{h_1^2+xh_1v_1\}D^5 u_{\sigma_i}$ at $(42,2)$ support  $d_9$-differentials
\begin{enumerate}
\item $d_9(\{h_1^2+xh_1v_1\}Du_{\sigma_i})=k^2 \{x+y\}h_1^2D^2u_{\sigma_i};$
\item $d_9(\{h_1^2+xh_1v_1\}D^5 u_{\sigma_i})=k^2 \{x+y\} h_1^2 D^6u_{\sigma_i}.$
\end{enumerate}
\end{prop}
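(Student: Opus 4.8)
The plan is to mimic the strategy used for the other $d_9$-differentials in the $(*-\sigma_i)$-graded part, namely to argue by contradiction using the module structure over the integer-graded computation together with the multiplicativity of the spectral sequence. By the $D^8$-periodicity (\cref{prop:periodicity}) and invertibility of $g = kD^3$ in the Tate spectral sequence, it suffices to prove statement (1), and (2) follows by multiplying by a suitable power of $D$ (or $g$). So first I would reduce to showing $d_9(\{h_1^2+xh_1v_1\}Du_{\sigma_i})=k^2\{x+y\}h_1^2 D^2 u_{\sigma_i}$.

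Next I would locate the source class as a product. Observe that $\{h_1^2+xh_1v_1\}Du_{\sigma_i} = \{h_1+xv_1\}u_{\sigma_i}\cdot D h_1$ on the $E_2$-page (up to the relations in \cref{table:2BSS_sigma_Einf_rel}). By \cref{prop:d9two} we have the integer-graded $d_9$-differential $d_9(Dh_1)=D^{-5}g^2 c$, which in $2$-BSS names reads $d_9(Dh_1)=k^2 D^{7} xh_1 \cdot (\text{unit})$ — more precisely $D^{-5}g^2 c = k^2 D^{-5}D^6 D x h_1 = k^2 D^2 xh_1$ after collecting $D$-powers, matching the target degree $(10,2)$ once multiplied by $\{x+y\}u_{\sigma_i}$. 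Since $\{h_1+xv_1\}u_{\sigma_i}$ is not necessarily a permanent cycle (indeed it supports a $d_3$ by \cref{prop:sigmad3two}), I cannot directly apply the Leibniz rule; instead I would argue as follows. Suppose the claimed $d_9$ does not happen. Then for degree reasons $\{h_1^2+xh_1v_1\}Du_{\sigma_i}$ would have to be a permanent cycle (or at least an $11$-cycle; I would check the chart to see what the only other possible differential lengths/targets are, and rule them out using the vanishing line \cref{thm:sharpvanishingline} and known longer differentials on candidate targets). Given that, the Leibniz rule applied to the product $\{h_1^2+xh_1v_1\}Du_{\sigma_i}\cdot x^2$ — where $x^2$ is a $5$-cycle but supports the $d_{11}$ of \cref{prop:d11} — combined with the hidden $h_1$-extension of \cref{lem:hiddenh1} and the hidden $2$-extension information, forces a contradiction with an already-established differential (analogous to the contradiction in the proof of \cref{prop:sigmad3two}).

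The cleaner route, which I would try first, is to transport the integer-graded $d_9$ of \cref{prop:d9five}, $d_9(D^2 h_1)=D^{-4}g^2 c = k^2 D^2 xh_1$ (up to unit), across multiplication by the permanent cycle $\{x+y\}u_{\sigma_i}$ from \cref{lem: asigmaiP.C.}: this immediately gives $d_9(\{x+y\}D^2 h_1 u_{\sigma_i}) = k^2\{x+y\}D^2 x h_1 u_{\sigma_i} = k^2 \{x^2+y^2\}h_1 D^2 u_{\sigma_i}$ plus possibly lower-filtration error terms from the relation $\{x+y\}\{x+y\}=\{x^2+y^2\}$ (Koszul-sign and relation bookkeeping, which by the remark after \cref{prop:2bss-diff-usig} we may ignore). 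Then I would identify $\{x+y\}D^2 h_1 u_{\sigma_i}$ with $\{h_1^2+xh_1v_1\}Du_{\sigma_i}$ up to the $E_\infty$-relations — this is where one must be careful, since these classes live in the same bidegree $(10,2)$ and the identification requires checking there is no other surviving class in that spot and using the relation $\{h_1+xv_1\}u_{\sigma_i}\cdot x^2 - \{x^2+y^2\}u_{\sigma_i}\cdot h_1 = 0$ from \cref{table:2BSS_sigma_Einf_rel}. The main obstacle I anticipate is precisely this bookkeeping: matching the $2$-BSS names of source and target across the module action and the various hidden $h_1$- and $h_2$-extensions (\cref{lem:hiddenh1}), and confirming via the charts in \S\ref{section:charts} that the degrees $(10,2)\to(18,\text{?})$ of the $d_9$ land on $k^2\{x+y\}h_1^2 D^2 u_{\sigma_i}$ and not on some other class. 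Once the source and target are correctly identified, the differential itself is forced either by naturality of the module structure over the integer-graded spectral sequence or, failing that, by the contradiction argument using the vanishing line.
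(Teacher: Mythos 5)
There is a genuine gap. Your ``cleaner route'' does not prove the stated differential: multiplying the integer-graded $d_9$'s by the permanent cycle $a_{\sigma_i}=\{x+y\}u_{\sigma_i}$ only produces the differentials of \cref{prop:sigmad9three}, whose sources are $\{x+y\}h_1D^m u_{\sigma_i}$, not the class at hand. The proposed identification of $\{x+y\}D^2h_1u_{\sigma_i}$ with $\{h_1^2+xh_1v_1\}Du_{\sigma_i}$ is impossible on degree grounds alone: the former sits in stem $-1+1+16=16$, the latter in stem $10$, so they are not in the same bidegree (the relation $\{h_1+xv_1\}u_{\sigma_i}\cdot x^2-\{x^2+y^2\}u_{\sigma_i}\cdot h_1$ you invoke lives in stem $-1$ and is irrelevant here). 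There is also an arithmetic slip: $D^{-4}g^2c=k^2D^3xh_1$, not $k^2D^2xh_1$. The essential difficulty, which your plan does not overcome, is that $\{h_1^2+xh_1v_1\}Du_{\sigma_i}=\{h_1+xv_1\}u_{\sigma_i}\cdot Dh_1$ involves the module generator $\{h_1+xv_1\}u_{\sigma_i}$, which is not a permanent cycle, so no direct Leibniz/naturality argument over the integer-graded spectral sequence applies. Your fallback contradiction sketch (using $x^2$, \cref{lem:hiddenh1}, and the vanishing line) is only a plan; no contradiction is actually derived, and as written it cannot be checked.

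For comparison, the paper's proof goes through the Tate spectral sequence: starting from the already-established $d_{11}$-differential $d_{11}(x^3D^4u_{\sigma_i})=k^3\{x+y\}h_1D^5u_{\sigma_i}$ of \cref{cor:sigmad11one}, one divides by the invertible permanent cycle $g=kD^3$ in $Q_8$-TateSS$(\E_2)$ to see that $k^2\{x+y\}h_1D^2u_{\sigma_i}$ is hit by a $d_{11}$ there; hence its $h_1$-multiple $k^2\{x+y\}h_1^2D^2u_{\sigma_i}$ must die on or before the $E_{11}$-page, and for degree reasons the only possibility is the claimed $d_9$, which then transfers back to the HFPSS by \cref{lem:tateisorange}. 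If you want to salvage your approach, you would need an argument of this type (or an equally concrete degree-by-degree elimination) rather than the Leibniz-rule shortcut, which simply does not reach the source class of this proposition.
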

\begin{proof}
Because $kD^3=g$ is an invertible permanent cycle in  $Q_8$-TateSS$(\E_2)$, the $d_{11}$-differential in \cref{cor:sigmad11one} 
$$d_{11}(x^3D^4 u_{\sigma_i}) = k^3 \{x+y\}  h_1 D^5 u_{\sigma_i}$$
implies that in the $(*-\sigma_i)$-graded   $Q_8$-TateSS$(\E_2)$ we have
\[
d_{11}(k^{-1}x^3 Du_{\sigma_i})=(kD^3)^{-1}d_{11}(x^3D^4u_{\sigma_i})=k^2\{x+y\}h_1D^2 u_{\sigma_i}.
\]
Since $k^2\{x+y\}h_1D^2u_{\sigma_i}$ is hit by a $d_{11}$-differential in $Q_8$-TateSS$(\E_2)$, its $h_1$ extension,  
$k^2\{x+y\}h_1^2D^2u_{\sigma_i}$, has to be hit on or before the $E_{11}$-page. For degree reasons, this class $k^2\{x+y\}D^2h_1^2u_{\sigma_i}$ must be hit by the claimed $d_9$-differential in $Q_8$-TateSS$(\E_2)$.
By \cref{lem:tateisorange}, the first claimed $d_9$-differential also happens in $Q_8$-HFPSS$(\E_2)$. The second $d_9$-differential follows similarly. 
\end{proof}
We have the following $d_9$-differentials by the Leibniz rule and integer-graded $d_9$-differentials.

\begin{prop}\label{prop:sigmad9three}
We have the following $d_9$-differentials
\begin{enumerate}
    \item $d_9(\{x+y\}h_1D u_{\sigma_i})=k^2x^2 h_1 D^2u_{\sigma_i};$
    \item $d_9(\{x+y\}h_1D^2 u_{\sigma_i})=k^2x^2 h_1D^3u_{\sigma_i};$
    \item$d_9(\{x+y\} h_1 D^5 u_{\sigma_i})=k^2x^2 h_1 D^6 u_{\sigma_i};$
    
    \item $d_9(\{x+y\}h_1D^6 u_{\sigma_i})=k^2x^2 h_1D^7 u_{\sigma_i}.$
\end{enumerate}
\end{prop}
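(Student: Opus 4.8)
The plan is to deduce all four differentials from the corresponding integer-graded $d_9$-differentials by multiplying with the permanent cycle $a_{\sigma_i}=\{x+y\}u_{\sigma_i}$ from \cref{lem: asigmaiP.C.}, exactly as the sentence preceding the statement suggests. First I would rewrite the integer-graded differentials of \cref{prop:d9two}(1),(2) and \cref{prop:d9five}(1),(2) in $2$-BSS names: using $g=kD^3$ and $c=Dxh_1$ (\cref{table:classnames}), all four take the uniform shape
\[
d_9(D^n h_1)=k^2 D^{n+1}xh_1,\qquad n\in\{1,2,5,6\}.
\]
In particular each $D^n h_1$ is a $d_9$-cycle, so $d_3(D^n h_1)=d_5(D^n h_1)=d_7(D^n h_1)=0$ for these values of $n$, the vanishing of $d_5$ also following from $h_1h_2=0$.

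Since the $(*-\sigma_i)$-graded spectral sequence is a module over the integer-graded one and $\{x+y\}u_{\sigma_i}$ is a permanent cycle, multiplication by it commutes with every $d_r$. Hence, once we know that $\{x+y\}D^n h_1 u_{\sigma_i}$ survives to the $E_9$-page, the Leibniz rule gives
\[
d_9\big(\{x+y\}D^n h_1 u_{\sigma_i}\big)=\{x+y\}u_{\sigma_i}\cdot k^2 D^{n+1}xh_1=k^2 D^{n+1}h_1(x^2+xy)u_{\sigma_i},
\]
and the relation $xy=0$ of \cref{prop:2BSSE1} collapses the right-hand side to $k^2 x^2 h_1 D^{n+1}u_{\sigma_i}$, which is precisely the asserted target in each of the four cases.

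It then remains to verify that $\{x+y\}D^n h_1 u_{\sigma_i}$ genuinely survives to $E_9$. It is nonzero on the $E_2$-page by the module description in \cref{table:2BSS_sigma_Einf,table:2BSS_sigma_Einf_rel}; it supports no shorter differential since its potential $d_3$-, $d_5$-, and $d_7$-images all equal $\{x+y\}u_{\sigma_i}$ times $d_r(D^n h_1)=0$; and it is not hit by any $d_3$- or $d_5$-differential, these having already been completely enumerated in this grading, nor by a $d_7$. This last step is the only genuine obstacle: it is purely bookkeeping, but it relies on already having the full list of $d_3$-, $d_5$-, and $d_7$-differentials in the $(*-\sigma_i)$-graded $Q_8$-HFPSS, together with the fact (from \cref{prop:d9two,prop:d9five}) that each $D^n h_1$ first supports a differential on the $E_9$-page in the integer-graded computation. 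Granting this, the four differentials follow formally, and the Leibniz rule propagates them, along with the integer-graded $d_9$-differentials, to the rest of the $(*-\sigma_i)$-graded page.
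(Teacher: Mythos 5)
Your proposal is correct and follows essentially the same route as the paper: the paper likewise deduces these differentials from the integer-graded $d_9$-differentials of \cref{prop:d9two} and \cref{prop:d9five} by multiplying with the permanent cycle $\{x+y\}u_{\sigma_i}$ of \cref{lem: asigmaiP.C.} and applying the Leibniz rule via the module structure. The extra bookkeeping you add (the relation $xy=0$ and the survival of the sources to the $E_9$-page) is a harmless elaboration of what the paper leaves implicit.
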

\begin{proof}
We prove the first differential, and the proofs of the rest three differentials are similar. According to \cref{prop:d9two}, in the integer-graded  $Q_8$-HFPSS$(\E_2)$ we have
\[
d_9(Dh_1)=k^2xh_1D^2.
\]
Note that the class $\{x+y\}u_{\sigma_i}$ is a permanent cycle by \cref{lem: asigmaiP.C.}. Then the Leibniz rule implies
\[
d_9(\{x+y\}h_1Du_{\sigma_i})=\{x+y\}u_{\sigma_i}d_9(Dh_1)=k^2x^2 h_1 D^2u_{\sigma_i}.
\]
\end{proof}

\begin{cor}\label{cor:sigmad9three}
The classes $\{x+y\}D^2 u_{\sigma_i}$ at $(15,1)$ and $\{x+y\}D^6 u_{\sigma_i}$ at $(47,1)$ support $d_9$-differentials
\begin{enumerate}
    \item $d_9(\{x+y\}D^2 u_{\sigma_i})=k^2\{x^2+y^2\} D^3u_{\sigma_i};$
    \item $d_9(\{x+y\}D^6 u_{\sigma_i})=k^2\{x^2+y^2\} D^7u_{\sigma_i}$.
\end{enumerate}
\end{cor}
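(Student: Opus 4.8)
The plan is to mimic the contradiction arguments used for the integer-graded $d_9$-differentials (e.g.\ \cref{prop:d9two}, \cref{prop:d9five}) and for the $(*-\sigma_i)$-graded ones above, playing each part of \cref{cor:sigmad9three} against the corresponding part of \cref{prop:sigmad9three} through the Leibniz rule. Since $D^4$ is \emph{not} a permanent cycle, statement (2) does not follow from (1) by multiplication; instead (1) will be deduced from \cref{prop:sigmad9three}(2) and (2) from \cref{prop:sigmad9three}(4) by the identical recipe, so I only describe (1).

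First I would record the two inputs: $\{x+y\}u_{\sigma_i}$ is $a_{\sigma_i}$ and hence a permanent cycle (\cref{lem: asigmaiP.C.}), and $h_1$ is a permanent cycle (\cref{lemma:gPC}). Next I would check that $\{x+y\}D^2 u_{\sigma_i}$ at $(15,1)$ is a $d_7$-cycle: the vanishing of $d_3$ and $d_5$ is immediate from the Leibniz rule since $\{x+y\}u_{\sigma_i}$ is a permanent cycle and $D^2$ is a $d_3$-cycle, while $d_5(\{x+y\}D^2 u_{\sigma_i}) = \pm\{x+y\}u_{\sigma_i}\cdot d_5(D^2) = \pm 2\{x+y\}D^{-1}gh_2 u_{\sigma_i} = 0$ because $\{x+y\}u_{\sigma_i}$ is $2$-torsion; the absence of a $d_7$ follows by inspecting the $d_7$-differentials already determined in the $(*-\sigma_i)$-graded part together with degree reasons.

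The heart of the argument is then: if $\{x+y\}D^2 u_{\sigma_i}$ were a $d_9$-cycle, then, using $d_9(h_1)=0$, the Leibniz rule would give $d_9(\{x+y\}h_1 D^2 u_{\sigma_i}) = h_1\cdot d_9(\{x+y\}D^2 u_{\sigma_i}) = 0$, contradicting \cref{prop:sigmad9three}(2), which asserts $d_9(\{x+y\}h_1 D^2 u_{\sigma_i}) = k^2 x^2 h_1 D^3 u_{\sigma_i}\neq 0$. The same computation also rules out a differential of length $>9$ on $\{x+y\}D^2 u_{\sigma_i}$. Hence $\{x+y\}D^2 u_{\sigma_i}$ supports a nontrivial $d_9$; by degree reasons and the strong vanishing line of \cref{thm:sharpvanishingline}, its only available target is $k^2\{x^2+y^2\}D^3 u_{\sigma_i}$ at $(14,10)$, which gives the claimed differential. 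Part (2) is verbatim with $\{x+y\}D^6 u_{\sigma_i}$, \cref{prop:sigmad9three}(4), and target $k^2\{x^2+y^2\}D^7 u_{\sigma_i}$.

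The only real friction I anticipate is the chart bookkeeping that certifies $\{x+y\}D^2 u_{\sigma_i}$ and $\{x+y\}D^6 u_{\sigma_i}$ survive to the $E_9$-page and that the $d_9$-target is unique — that is, confirming against the figures of \cref{section:charts} that no shorter differential and no other landing spot is available. Once those degree checks are in place, the conceptual step, a Leibniz-rule collision with \cref{prop:sigmad9three}, is immediate.
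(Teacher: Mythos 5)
Your proof is correct and takes essentially the same route as the paper: the paper also establishes these differentials by a Leibniz-rule collision with \cref{prop:sigmad9three}, the only cosmetic difference being that it multiplies the hypothetical $d_9$-cycles by $D^{-1}h_1$ (a $9$-cycle by \cref{prop:9cycle}) and contradicts parts (1) and (3), whereas you multiply by the permanent cycle $h_1$ and contradict parts (2) and (4). The survival-to-$E_9$ and uniqueness-of-target points you flag are handled in the paper the same way, by degree reasons.
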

\begin{proof}
By \cref{prop:9cycle}, the class $D^{-1}h_1$ is a $9$-cycle. These two $d_9$-differentials hold since otherwise the classes $\{x+y\}Dh_1 u_{\sigma_i}$ and $\{x+y\}D^5 h_1 u_{\sigma_i}$ would be $9$-cycles by the Leibniz rule, which contradicts \cref{prop:sigmad9three}.
\end{proof}

To derive the last type of $d_9$-differential, we first need to show the following $d_{17}$-differential in the $(*-\sigma_i)$-graded part.

\begin{prop}\label{prop:sigmad17}
The class $\{h_1^2+xh_1v_1\}u_{\sigma_i}$ at $(2,2)$ supports a $d_{17}$-differential
\[
d_{17}(\{h_1^2+xh_1v_1\}u_{\sigma_i})=k^4\{ x+y\} h_1^2 D^2 u_{\sigma_i}.
\]
\end{prop}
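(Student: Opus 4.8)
The plan is to exhibit $\{h_1^2+xh_1v_1\}u_{\sigma_i}=h_1\cdot\{h_1+xv_1\}u_{\sigma_i}$ as the source of the asserted $d_{17}$ by working in the $(*-\sigma_i)$-graded $Q_8$-TateSS, where $g=kD^3$ is an invertible permanent cycle (\cref{rm:invertibleclass}, \cref{met:Tatemethod}).

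First I would check that $\{h_1^2+xh_1v_1\}u_{\sigma_i}$ survives to the $E_{17}$-page. Since $h_1$ is a permanent cycle (\cref{lemma:gPC}), the Leibniz rule and \cref{prop:sigmad3two} give $d_3(\{h_1^2+xh_1v_1\}u_{\sigma_i})=2kv_1^2h_1u_{\sigma_i}$, which already vanishes on the $E_2$-page by the relations of \cref{thm:2BSSsig} (compare the hidden $h_1$-extension of \cref{lem:hiddenh1}); for each $4\le r\le 16$ the only potential $d_r$-targets in the relevant bidegrees have already been killed by the differentials of \cref{prop:sigmad3one}, \cref{cor: sigmad5one}, \cref{cor:sigmad5two}, \cref{prop:sigmad9two}, \cref{prop:sigmad9three} and \cref{cor:sigmad9three}, and nothing below $\{h_1^2+xh_1v_1\}u_{\sigma_i}$ maps onto it, so the class is alive on $E_{17}$ and is never hit. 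This is a finite bidegree check, best carried out against the charts of \S\ref{section:charts}.

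Now I would show the class is not a permanent cycle and identify its differential. Multiply by $g^N$ with $N$ chosen so that $g^N\{h_1^2+xh_1v_1\}u_{\sigma_i}$, which sits in filtration $2+4N$, lands strictly above the strong vanishing line of filtration $23$ for the $Q_8$-TateSS (\cref{thm:sharpvanishingline}; see also \cref{thm:tatevanishing}). Since $g$ is a permanent cycle this class is still alive on $E_{17}$, and since $g$ is invertible it is hit by a differential if and only if $\{h_1^2+xh_1v_1\}u_{\sigma_i}$ is --- which, by the previous paragraph, it is not. Hence $g^N\{h_1^2+xh_1v_1\}u_{\sigma_i}$ must support a differential, and dividing by $g^N$ shows $\{h_1^2+xh_1v_1\}u_{\sigma_i}$ supports a differential as well. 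By inspecting the survivors one stem below in the $(*-\sigma_i)$-graded TateSS (again via \cref{thm:2BSSsig} and the charts), the only class it can hit is $k^4\{x+y\}h_1^2D^2u_{\sigma_i}$, of filtration $19$; this forces the differential to be a $d_{17}$. Finally, since the source (filtration $2$) and target (filtration $19$) are both in non-negative filtration, \cref{lem:tateisorange} transports the $d_{17}$ to the $Q_8$-HFPSS, after the routine check of \cref{met:Tatemethod} that no shorter differential out of $g^N\{h_1^2+xh_1v_1\}u_{\sigma_i}$ competes with it.

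The main obstacle is the bookkeeping in the two elimination steps: verifying that $\{h_1^2+xh_1v_1\}u_{\sigma_i}$ is genuinely alive on $E_{17}$ and never hit, and that in the target stem the unique survivor available as a target is $k^4\{x+y\}h_1^2D^2u_{\sigma_i}$. Because $D$ itself is not a permanent cycle, one must be careful when re-expressing the relevant products in terms of the permanent cycles $g$, $D^8$ (\cref{prop:periodicity}), $h_1$ and $\{x+y\}u_{\sigma_i}$ (\cref{lem: asigmaiP.C.}) while ruling out incoming differentials above the vanishing line.
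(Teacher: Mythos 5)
Your proposal is correct and is essentially the paper's own argument: the paper multiplies by $g^6D^{-8}$ (i.e.\ considers $k^6\{h_1^2+xh_1v_1\}D^{10}u_{\sigma_i}$ in filtration $26$), invokes the strong vanishing line, rules out that class being hit, divides by the invertible permanent cycle $g$ in the Tate spectral sequence, and then pins down the target by elimination. One caution on your deferred bookkeeping: both the ``never hit'' and the ``unique target'' checks must be performed in the $(*-\sigma_i)$-graded Tate spectral sequence, where negative-filtration sources exist once $g$ is inverted (they are not automatic from the filtration-$2$ position in the HFPSS); concretely, the paper rules out the candidate sources $k^3x^2h_1D^9u_{\sigma_i}$, $k^3\{x+y\}D^9u_{\sigma_i}$, $kx^2h_1D^8u_{\sigma_i}$ (using \cref{prop:sigmad9three} and \cref{lem: asigmaiP.C.}) and the alternative targets $kx^3Du_{\sigma_i}$ (the source is a $5$-cycle) and $k^5x^3D^3u_{\sigma_i}$ (it supports a $d_{11}$ by \cref{cor:sigmad11one}), which is exactly the finite check you left to the charts.
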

\begin{proof}
Consider the class $k^6\{h_1^2+xh_1v_1\} D^{10}u_{\sigma_i}$ in filtration $26$. By \cref{thm:sharpvanishingline} this class cannot survive to the $E_\infty$-page.

 After the $E_5$-page, all the potential sources that could support a differential hitting the class $k^6\{h_1^2+xh_1v_1\} D^{10}u_{\sigma_i}$ are $k^3x^2 h_1 D^9 u_{\sigma_i}$, $k^3\{x+y\}D^9 u_{\sigma_i}$ and $kx^2h_1D^8 u_{\sigma_i}$. We rule out all three possibilities one by one. The class $k^3x^2 h_1 D^9 u_{\sigma_i}$ is hit by the following $d_9$-differential in \cref{prop:sigmad9three}
\[
d_9(kxh_1 D^9 u_{\sigma_i})=kD^3 d_9(\{x+y\}h_1 D^6 u_{\sigma_i})=k^3x^2 h_1 D^9 u_{\sigma_i}.
\]
 The class $k^3\{x+y\}D^9 u_{\sigma_i}$ is a permanent cycle since $\{x+y\} u_{\sigma_i}$ is a permanent cycle by \cref{lem: asigmaiP.C.}. The class $kx^2h_1D^8$ is also a permanent cycle since it is hit by a known $d_9$-differential in the  $Q_8$-TateSS for $\E_2$ according to \cref{prop:sigmad9three}
\[
d_9(k^{-1}\{x+y\}D^7 h_1 u_{\sigma_i})=kx^2h_1D^8 u_{\sigma_i}.
\]
Therefore, the class $k^6\{h_1^2+xh_1v_1\} D^{10}u_{\sigma_i}$ must support a non-trivial differential. Since $kD^3=g$ is an  invertible permanent cycle in TateSS, the class $\{h_1^2+xh_1v_1\}u_{\sigma_i}=D^8 (kD^3)^{-6}k^6\{h_1^2+xh_1v_1\} D^{10}u_{\sigma_i}$ also has to support a non-trivial differential. 

Therefore, the class  $k^6\{h_1^2+xh_1v_1\} D^{10}u_{\sigma_i}$ has to support a non-trivial differential, so is the class $\{h_1^2+xh_1v_1\}u_{\sigma_i}$. For degree reasons, there are three possible targets which are $kx^3D$,
 $k^4xh_1^2 D^2 u_{\sigma_i}$ and $k^5x^3D^3 u_{\sigma_i}$. The class $\{h_1^2+xh_1v_1\}D^{-1}u_{\sigma_i}$ is a $5$-cycle for degree reasons, and the Leibniz rule implies
 \begin{equation*}
 \begin{aligned}
 d_5(\{h_1^2+xh_1v_1\}u_{\sigma_i})&=\{h_1^2+xh_1v_1\}D^{-1}u_{\sigma_i}d_5(D)\\
  &=\{h_1^2+xh_1v_1\}D^{-1}u_{\sigma_i}\cdot kDh_2=0
 \end{aligned}
 \end{equation*}
 So the class  $\{h_1^2+xh_1v_1\}u_{\sigma_i}$ is also a $5$-cycle, in other words, the class $kx^3D$ cannot receive a differential from the class $kx^3D$. On the other hand, the class $k^5x^3D^3 u_{\sigma_i}$ supports the  following $d_{11}$-differential by \cref{cor:sigmad11one}
\[
d_{11}(k^5x^3D^3 u_{\sigma_i})= (kD^3)^5 D^{-16} d_{11}(x^3 D^4 u_{\sigma_i})=k^8 D^4 \{x+y\}h_1 u_{\sigma_i}.
\]
Therefore, the class $\{h_1^2+xh_1v_1\}u_{\sigma_i}$ supports the desired $d_{17}$-differentials 
\[
d_{17}(\{h_1^2+xh_1v_1\}u_{\sigma_i})=k^4 \{x+y\} h_1^2 D^2u_{\sigma_i}.
\]
\end{proof}

It turns out that this is the only $d_{17}$-differential in one period of the $(*-\sigma_i)$-graded part of $Q_8$-HFPSS$(E_2)$.


\begin{prop}\label{cor:sigmad9one}
The classes $\{x^2+y^2\}D^3 u_{\sigma_i}$ at $(22,2)$ and $\{x^2+y^2\}D^7 u_{\sigma_i}$ at $(54,2)$ support $d_9$-differentials
\begin{enumerate}
    \item $d_9(\{x^2+y^2\}D^3 u_{\sigma_i})=k^2 x^3 D^4 u_{\sigma_i};$
    \item $d_9(\{x^2+y^2\}D^7 u_{\sigma_i})=k^2 x^3 D^8 u_{\sigma_i}.$
\end{enumerate}
\end{prop}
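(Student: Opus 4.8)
The plan is to prove (1) and then obtain (2) by the analogous argument, using the $32$-periodicity of the $C_4$-HFPSS exactly as in the proofs of \cref{prop:sigmad9two} and \cref{prop:sigmad9three} (the restrictions of $\{x^2+y^2\}D^3u_{\sigma_i}$ and $\{x^2+y^2\}D^7u_{\sigma_i}$ agree up to the periodicity class $\Delta_1^4$), so I will not spell the reduction out. For (1) the strategy is the one used throughout this section: first rule out every differential of length $<9$ on $\{x^2+y^2\}D^3u_{\sigma_i}$, then force a $d_9$ using the vanishing line of \cref{thm:sharpvanishingline} together with the comparison with the $Q_8$-TateSS.

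For the first step, note that $\{x^2+y^2\}Du_{\sigma_i}$ is a $2$-torsion permanent cycle by \cref{lem:x2y2P.C.} and \cref{thm:2BSSsig}. Writing $\{x^2+y^2\}D^3u_{\sigma_i}=\{x^2+y^2\}Du_{\sigma_i}\cdot D^2$ and using the Leibniz rule with $d_5(D^2)=2D^{-1}gh_2$ gives $d_5(\{x^2+y^2\}D^3u_{\sigma_i})=2\{x^2+y^2\}gh_2u_{\sigma_i}=0$, since $\{x^2+y^2\}u_{\sigma_i}$ is $2$-torsion; a $d_3$ is excluded because $D^2$ is a $3$-cycle outside the $bo$-pattern (\cref{prop:d3}), and a $d_7$ is excluded by inspecting the $(*-\sigma_i)$-graded $E_7$-page, where the relations of \cref{thm:2BSSsig} (in particular $\{x^2+y^2\}h_1^3u_{\sigma_i}=0$) leave no admissible target. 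Hence, if $\{x^2+y^2\}D^3u_{\sigma_i}$ supports any differential it is a $d_9$, and by degree reasons the only possible target is $k^2x^3D^4u_{\sigma_i}$.

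The remaining, and main, point is to exhibit this $d_9$, and I see two routes. The first is to transport a known $d_9$: since $g=kD^3$ is an invertible permanent cycle in the $Q_8$-TateSS (\cref{lemma:gPC}, \cref{rm:invertibleclass}), one multiplies one of the $d_9$-differentials of \cref{prop:sigmad9three} (or \cref{cor:sigmad9three}) by an appropriate power of $g^{\pm1}$ so that the target becomes $\{x^2+y^2\}D^3u_{\sigma_i}$ and the source sits in negative filtration; \cref{met:Tatemethod} and \cref{lem:tateisorange}, together with the first step, then produce the stated differential. The delicate point here is checking on the $E_2$-page that $k^2x^3D^4u_{\sigma_i}$ is the unique nonzero class in its bidegree and is not hit by any other differential, so that the source--target pairing is unambiguous; the hidden $h_1$ and $h_2$ extensions of \cref{lem:hiddenh1} are what make this bookkeeping nontrivial. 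The second route is by restriction: $\sigma_i$ restricts to the sign representation on $C_4\langle j\rangle$, so $\Res^{Q_8}_{C_4\langle j\rangle}(\{x^2+y^2\}D^3u_{\sigma_i})$ is a (nonzero) class in the $C_4$-HFPSS whose differential is recorded in \cite{BBHS20}; naturality bounds the length of the differential on $\{x^2+y^2\}D^3u_{\sigma_i}$, and combined with the first step this again isolates the $d_9$, just as in \cref{prop:sigmad3one}. Either way the genuinely technical work is the $(*-\sigma_i)$-graded $E_2$-page bookkeeping, while the spectral-sequence mechanics are routine.
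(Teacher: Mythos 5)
Your proposal never actually produces the differential: both of your routes for the ``main point'' break down, and the paper's key mechanism is missing. Route 1 is backwards. If you multiply a known $d_9$ (say $d_9(\{x+y\}D^2u_{\sigma_i})=k^2\{x^2+y^2\}D^3u_{\sigma_i}$ from \cref{cor:sigmad9three}) by powers of the invertible class $g=kD^3$ in the Tate spectral sequence so that the \emph{target} lands in filtration $2$ with the source in negative filtration, what \cref{met:Tatemethod} and \cref{lem:tateisorange} give you is that the target is a \emph{permanent cycle} in the HFPSS --- the opposite of what you want for a class that is supposed to \emph{support} a $d_9$. Moreover the bookkeeping does not even let you reach $\{x^2+y^2\}D^3u_{\sigma_i}$ this way: $g$-multiplication moves the target $k^2\{x^2+y^2\}D^3u_{\sigma_i}$ through $k^{2+m}\{x^2+y^2\}D^{3+3m+8n}u_{\sigma_i}$, and killing the $k$-power forces $D$-exponent $\equiv 5 \pmod 8$; this is exactly why $\{x^2+y^2\}Du_{\sigma_i}$ and $\{x^2+y^2\}D^5u_{\sigma_i}$ are permanent cycles (\cref{lem:x2y2P.C.}) while the $D^3$- and $D^7$-multiples are not, and it also shows you cannot transport a source from the $\{x+y\}$-family to the $\{x^2+y^2\}$-family by $g$-periodicity. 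Route 2 is unsubstantiated: \cref{RestrictionList} only records that the restriction of $\{x^2+y^2\}u_{\sigma_i}$ to $C_4\langle i\rangle$ (hence into the \emph{integer-graded} $C_4$-HFPSS, since $\sigma_i$ restricts trivially there) is nonzero, and that restricted class $\done^6u_{6\lambda}u_{4\sigma}a_{2\sigma}$ is a permanent cycle in the $C_4$-HFPSS, so naive naturality yields no bound at all; for your proposed restriction to $C_4\langle j\rangle$ you verify neither that the restriction is nonzero nor that it supports a differential of length at most $9$ in the $(*-\sigma)$-graded $C_4$-HFPSS, and nothing in the paper or in \cite{BBHS20} is cited to that effect.

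The actual content of the paper's proof, which your outline omits, is a hidden-extension argument: if $\{x^2+y^2\}D^3u_{\sigma_i}$ survived to $E_\infty$, then because its $C_4\langle i\rangle$-restriction carries the hidden $2$ extension of \cref{lem:hidden2C_4}, the class itself would have to support a hidden $2$ extension; but by degree reasons the only possible targets are $k^2\{x^2+y^2\}D^4u_{\sigma_i}$, $k\{yh_2+xh_1v_1\}D^3u_{\sigma_i}$ and $k\{h_1^2+xh_1v_1\}D^3u_{\sigma_i}$, all of which die (two by $d_5$'s, one by the $d_{17}$ of \cref{prop:sigmad17}); this contradiction is what forces the class to support a differential. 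Finally, your ``only possible target is $k^2x^3D^4u_{\sigma_i}$'' is too quick: after the $E_5$-page there is a second potential target, $k^5\{x+y\}h_1^2D^5u_{\sigma_i}$ (for a longer differential), which must be, and in the paper is, ruled out because it is hit via \cref{prop:sigmad17} and the Leibniz rule. Your reduction of (2) to (1) by $32$-periodicity of the $C_4$-HFPSS is fine and matches the paper, but as it stands the heart of the proof of (1) is missing.
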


\begin{proof}
According to \cref{RestrictionList}, the restriction of $\{x^2+y^2\}u_{\sigma_i}$ to the integer-graded   $C_4$-HFPSS for $\E_2$ is non-trivial. It implies the following  restriction by degree reasons
\begin{align*}
    \Res^{Q_8}_{C_4}(\{x^2+y^2\}D^3 u_{\sigma_i})&=\done^6 u_{6\lambda}u_{4\sigma}a_{2\sigma}.
\end{align*}
We now prove that the class $\{x^2+y^2\}D^3 u_{\sigma_i}$ supports a non-trivial differential by contradiction. Suppose that the class $\{x^2+y^2\}D^3 u_{\sigma_i}$  is a permanent cycle that survives to the $E_{\infty}$-page. Note that its $C_4$-restriction $\done^6 u_{6\lambda}u_{4\sigma}a_{2\sigma}$ has a hidden $2$ extension in $\pi_*\E_2^{hC_4}$ by \cref{lem:hidden2C_4}. Then $\{x^2+y^2\}D^3 u_{\sigma_i}$ also has a hidden $2$ extension in the  $E_{\infty}$-page. However, since hidden extensions and natural maps between spectral sequences will not decrease filtration, the potential target of the hidden $2$ extension from the class $\{x^2+y^2\}D^3 u_{\sigma_i}$ can only be $k^2\{x^2+y^2\}D^4 u_{\sigma_i}$, $k\{yh_2+xh_1v_1\}D^3 u_{\sigma_i}$ and $k\{h_1^2+xh_1v_1\}D^3$   by degree reasons. However,  the first class $k^2\{x^2+y^2\}D^4 u_{\sigma_i}$ supports a non-trivial $d_5$-differential by \cref{cor:sigmad5two}
\[
d_5(k^2\{x^2+y^2\}D^4 u_{\sigma_i})=k^3 xh_1^2D^4 u_{\sigma_i}.
\]
The second class $k\{yh_2+xh_1v_1\}D^3 u_{\sigma_i}$ is hit by a $d_5$-differential by \cref{cor: sigmad5one} 
\[
d_5(\{x+y\}D^3u_{\sigma_i})=k\{yh_2+xh_1v_1\}D^3 u_{\sigma_i}.
\]
The third class $k\{h_1^2+xh_1v_1\}D^3 u_{\sigma_i}$ supports a $d_{17}$-differential by \cref{prop:sigmad17}
\[
d_{17}(k\{h_1^2+xh_1v_1\}D^3 u_{\sigma_i})= k^5xh_1^2 D^5u_{\sigma_i}.
\]
Therefore, all the potential targets of the hidden $2$ extension from the class $\{x^2+y^2\}D^3 u_{\sigma_i}$ will not survive to the $E_{\infty}$-page. This is a contradiction. Hence the class $\{x^2+y^2\}D^3 u_{\sigma_i}$ must support a non-trivial differential.

After the $E_5$-page,  the only two potential targets are $k^2 x^3 D^4 u_{\sigma_i}$ and  $k^5\{x+y\}h_1^2 D^5u_{\sigma_i}$  by degree reasons. However, the class $k^5\{x+y\}h_1^2 D^5u_{\sigma_i}$ is hit by the following $d_{17}$-differential by \cref{prop:sigmad17} and the Leibniz rule
\[
d_{17}(k\{h_1^2+xh_1v_1\}D^3u_{\sigma_i})=kD^3d_{17}(\{h_1^2+xh_1v_1\}u_{\sigma_i}) =k^5\{x+y\}h_1^2 D^5u_{\sigma_i}.
\]
Then the first desired $d_9$-differential follows. The proof of the second $d_9$-differential in the statement is similar since the $C_4$-HFPSS for $\E_2$ is $32$-periodic.
\end{proof}

We can apply the norm method to get a $d_9$-differential directly (after the calculation of $E_3$-page) which is independent of the $d_9$ information in the integer-graded part.
\begin{prop}\label{prop:d9norm}

There is a normed  $d_9$-differential in $(*-\sigma_i)$-page
\[
d_9(\{x+y\}D^6 u_{\sigma_i})=k^2\{x^2+y^2\} D^7u_{\sigma_i}.
\]
\end{prop}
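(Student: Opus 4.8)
The plan is to obtain this differential by norming a $d_5$-differential up from $C_4$ to $Q_8$ via the norm differential theorem \cref{thm:normdiff}, so that the argument is genuinely independent of the integer-graded $d_9$-differentials of \cref{subsection:differentials}.

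First I would work with the subgroup $C_4\langle i\rangle\subset Q_8$; here $\Ind_{C_4\langle i\rangle}^{Q_8}(1)=1+\sigma_i$, so the reduced regular representation is $\bar\rho=\sigma_i$ and $a_{\bar\rho}=a_{\sigma_i}$. The restriction $\Res^{Q_8}_{C_4\langle i\rangle}(D)$ is $\Delta_1$ up to a unit in $\W$, and $\Delta_1$ supports a non-trivial $d_5$-differential in the $C_4\langle i\rangle$-HFPSS for $\E_2$ by \cite[Proposition~5.24]{BBHS20} (this is exactly the input used in \cref{prop:d5res}). Since $\Delta_1$ is invertible and $3$ is a unit $2$-locally, $\Delta_1^3$ also supports a non-trivial $d_5$-differential, say $d_5(\Delta_1^3)=y\ne 0$. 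Applying \cref{thm:normdiff} with $H=C_4\langle i\rangle$, $G=Q_8$, $|G/H|=2$ and $r=5$ then produces a differential
\[
d_9\bigl(a_{\sigma_i}\,N^{Q_8}_{C_4\langle i\rangle}(\Delta_1^3)\bigr)\;=\;N^{Q_8}_{C_4\langle i\rangle}(y)
\]
in the $Q_8$-HFPSS for $\E_2$.

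Next I would identify the source and target. The class $N^{Q_8}_{C_4\langle i\rangle}(\Delta_1^3)$ lies in $RO(Q_8)$-grading $24+24\sigma_i$ in filtration $0$; multiplying by $u_{4\sigma_i}^{6}$, which is a permanent cycle giving $(24-24\sigma_i)$-periodicity by \cref{rem:4sigmapc} and \cref{cor:u4sigma}, moves it into integer degree $48$, filtration $0$. Writing $c_j$ for conjugation by $j\in Q_8$, the double coset formula for the multiplicative norm gives $\Res^{Q_8}_{C_4\langle i\rangle}N^{Q_8}_{C_4\langle i\rangle}(\Delta_1^3)=\Delta_1^3\,c_j(\Delta_1^3)\doteq\Delta_1^6=\Res^{Q_8}_{C_4\langle i\rangle}(D^6)$, and since $\Res^{Q_8}_{C_4\langle i\rangle}$ is injective on $H^0(Q_8,\pi_{48}\E_2)=\W\{D^6\}$ and $\Res^{Q_8}_{C_4\langle i\rangle}(u_{4\sigma_i})=1$, we get $u_{4\sigma_i}^{6}\,N^{Q_8}_{C_4\langle i\rangle}(\Delta_1^3)\doteq D^6$. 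On the $E_2$-page $a_{\sigma_i}=\{x+y\}u_{\sigma_i}$ (see \cref{lem: asigmaiP.C.}), so multiplying the displayed differential through by the permanent cycle $u_{4\sigma_i}^{6}$ rewrites its source as $\{x+y\}D^6 u_{\sigma_i}$, up to a unit. The same bookkeeping places the target $u_{4\sigma_i}^{6}N^{Q_8}_{C_4\langle i\rangle}(y)$ at chart position $(46,10)$; a short inspection of the $E_3$-page, using the $d_3$- and $d_5$-differentials established above, shows that the only class surviving to the $E_9$-page in this bigrading is $k^2\{x^2+y^2\}D^7 u_{\sigma_i}$.

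The step I expect to be the main obstacle is ruling out the possibility that the normed target $N^{Q_8}_{C_4\langle i\rangle}(y)$ is zero, since \cref{thm:normdiff} by itself only produces the formula above. I would settle this by restricting the $d_9$-differential back to $C_4\langle i\rangle$: the double coset formula gives $\Res^{Q_8}_{C_4\langle i\rangle}N^{Q_8}_{C_4\langle i\rangle}(y)=y\cdot c_j(y)\doteq y^2=d_5(\Delta_1^3)^2$, and since $d_5(\Delta_1^3)=3\Delta_1^2 d_5(\Delta_1)$ with $\Delta_1$ invertible, this is non-zero once $d_5(\Delta_1)^2\ne 0$, a fact about the (known) $C_4\langle i\rangle$-HFPSS for $\E_2$; hence $N^{Q_8}_{C_4\langle i\rangle}(y)\ne 0$. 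Combined with the uniqueness of the nonzero class in the target bigrading, this yields $d_9(\{x+y\}D^6 u_{\sigma_i})\doteq k^2\{x^2+y^2\}D^7 u_{\sigma_i}$, as claimed.
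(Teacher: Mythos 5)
Your overall architecture is the same as the paper's (norm the known $C_4$-level $d_5$ up to $Q_8$ via \cref{thm:normdiff} and identify the resulting class at $(46,10)$), with one genuinely different choice: you norm the $d_5$ on $\Delta_1^3$, which lets you identify the source $a_{\sigma_i}N^{Q_8}_{C_4\langle i\rangle}(\Delta_1^3)u_{4\sigma_i}^6\doteq \{x+y\}D^6u_{\sigma_i}$ directly, whereas the paper norms the $d_5$ on $u_{2\lambda}$ and then shuffles with periodicity classes. That part of your argument is essentially fine (restriction on $H^0$ is injective, though note $H^0(Q_8,\pi_{48}\E_2)$ is much bigger than $\W\{D^6\}$; and $c_j(y)\doteq y$ needs the one-line remark that conjugation commutes with differentials and fixes $\Delta_1^3$ up to a unit).

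The genuine gap is in how you close the argument. \cref{thm:normdiff} only produces a \emph{predicted} differential: in Ullman's precise form the source survives to the $E_{9}$-page and its $d_9$ equals $N(y)$ \emph{as a class on that page}. So showing $N(y)\neq 0$ on the $E_2$-page — the only obstacle you address — is necessary but not sufficient: if the class $k^2\{x^2+y^2\}D^7u_{\sigma_i}$ at $(46,10)$ were killed by a shorter differential, the predicted equation would read $d_9(\{x+y\}D^6u_{\sigma_i})=0$ and the proposition would not follow. You must additionally rule out that this target dies before $E_9$; after the $d_3$- and $d_5$-pages the only competitor is a potential $d_7$ from $x^2h_1D^6u_{\sigma_i}$ at $(47,3)$, and eliminating it is exactly the final step of the paper's proof (such a $d_7$ would force $k^2x^2h_1D^7u_{\sigma_i}$ to die on or before the $E_7$-page, which is impossible for degree reasons). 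Your "short inspection of the $E_3$-page" establishes that $(46,10)$ contains a unique nonzero class, not that it survives to $E_9$, so this step must be added. Finally, your reduction of $N(y)\neq 0$ to $d_5(\Delta_1)^2\neq 0$ in the $C_4$-HFPSS is legitimate but that nonvanishing is not simply quotable; it does hold (by the gold relation $d_5(\Delta_1)^2\doteq 2\done^{6}u_{\lambda}u_{6\sigma}a_{5\lambda}$, a nonzero multiple of a $\mathbb Z/4$-class), and you should verify it rather than defer to it — the paper avoids this entirely by checking instead that $a_{\sigma_j}a_{\sigma_k}$ restricts to $a_{2\sigma}\neq 0$.
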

\begin{proof}
According to \cite[Theorem~11.13]{HHR17},  the class $u_{2\lambda}$ supports a non-trivial $d_5$-differential in $C_4$-HFPSS$(\E_2)$
\[
d_5(u_{2\lambda})=\done u_{\lambda} a_{2\lambda} a_{\sigma}.
\]
Then \cref{thm:normdiff} implies there is a predicted $d_9$-differential in $Q_8$-HFPSS$(E_2)$
\[
d_9(N_{C_4}^{Q_8}(u_{2\lambda})a_{\sigma_i})=N_{C_4}^{Q_8}(\done) N_{C_4}^{Q_8}(u_{\lambda})a_{2\mathbb{H}} a_{\sigma_j}a_{\sigma_k}.
\]
We claim the target of this predicted $d_9$-differential is non-trivial on the $E_2$-page. It suffices to show that the class $a_{\sigma_j}a_{\sigma_k}$ is non-trivial since $ N_{C_4}^{Q_8}(u_{\lambda})a_{2\mathbb{H}}$ is invertible in TateSS$(\E_2)$. We observe that 
\[
\Res^{Q_8}_{C_4}(a_{\sigma_j}a_{\sigma_k})=a_{2\sigma}
\]
where $a_{2\sigma}$ is non-trivial in $C_4$-HFPSS$(\E_2)$. This implies that $a_{\sigma_j}a_{\sigma_k}$ is also non-trivial. Therefore, the non-trivial class on the $E_2$-page $N_{C_4}^{Q_8}(\done) N_{C_4}^{Q_8}(u_{\lambda})a_{2\mathbb{H}} a_{\sigma_j}a_{\sigma_j}$ must be hit on or before the $E_9$-page. By multiplying this class with the following periodicity classes 
$$N_{C_4 \langle i\rangle}^{Q_8}(u_{4\lambda}u_{2\sigma})N_{C_4 \langle i\rangle}^{Q_8}(\done)^6 u_{4\sigma_i}^5 u_{4\sigma_j} u_{4\sigma_k}$$
we get a non-trivial class at $(46,10)$, which has to be the class $k^2\{x^2+y^2\} D^7u_{\sigma_i}$ (up to a unit) by degree reasons. Therefore, the class $k^2\{x^2+y^2\} D^7u_{\sigma_i}$ has to be hit on or before the $E_9$-page too. If this class is hit by a $d_7$-differential from the class $x^2h_1D^6\usi$, then the class $k^2x^2h_1D^7\usi$ has to be killed on or before the $E_7$-page. However, this is a contradiction by degree reasons. Therefore, the claimed $d_9$-differential follows.
\end{proof}


All $d_9$-differentials follow from \cref{prop:sigmad9two}, \cref{prop:sigmad9three}, \cref{cor:sigmad9three}, \cref{cor:sigmad9one} and the Leibniz rule.

\begin{prop}\label{prop:sigmad23}
The  classes $\{x+y\}h_1^2 D^2 u_{\sigma_i}$ at $(17,3)$, $\{x+y\}h_1  D^7 u_{\sigma_i}$ at $(56,2)$ support $d_{23}$-differentials
\begin{enumerate}
\item $d_{23}(\{x+y\}h_1^2 D^2 u_{\sigma_i})= k^6 \{x+y\} h_1D^5 u_{\sigma_i};$
\item $d_{23} (\{x+y\}h_1  D^7 u_{\sigma_i}) = k^6 \{x+y\}D^{10} u_{\sigma_i}.$
\end{enumerate}
\end{prop}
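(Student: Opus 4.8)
The two $d_{23}$-differentials in \cref{prop:sigmad23} should follow the by-now familiar template of this section: combine the sharp vanishing line (\cref{thm:sharpvanishingline}), the multiplicative structure over the integer-graded HFPSS (in particular the invertibility of $g=kD^3$ in the Tate spectral sequence), and the integer-graded $d_{23}$-differentials from \cref{cor:d23two} together with \cref{met:Tatemethod}. The first step is to identify, for each putative target $k^6\{x+y\}h_1 D^5 u_{\sigma_i}$ at $(3,27)$-ish gradings (after the convention shift) and $k^6\{x+y\}D^{10}u_{\sigma_i}$, that it is a permanent cycle lying above the filtration-$23$ vanishing line. Since $\{x+y\}u_{\sigma_i}=a_{\sigma_i}$ is a permanent cycle (\cref{lem: asigmaiP.C.}) and $g$, $k$, $D^{\pm 8}$ are permanent cycles, these targets are indeed permanent cycles of filtration $\geq 25$, so they must be hit.

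Next I would enumerate the potential sources via \cref{cor:split}-style bookkeeping of the $(*-\sigma_i)$-graded module structure and degree reasons, and eliminate all but the claimed ones. For (1), the relevant comparison is with the integer-graded $d_{23}$-differential $d_{23}(D^2 h_1^2) = D^{-13}g^6 h_1$ from \cref{cor:d23two}(1); multiplying by the permanent cycle $\{x+y\}u_{\sigma_i}$ and invoking the Leibniz rule would give $d_{23}(\{x+y\}h_1^2 D^2 u_{\sigma_i}) = k^6\{x+y\}h_1 D^5 u_{\sigma_i}$, \emph{provided} $\{x+y\}h_1^2 D^2 u_{\sigma_i}$ has not already supported a shorter differential. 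So the real content is showing that the source survives to $E_{23}$. For this I would rule out shorter differentials one at a time, as in the proofs of \cref{prop:sigmad17} and \cref{cor:sigmad9one}: any $d_r$-target with $r<23$ either is already hit by a known earlier differential (from the $d_5,d_9,d_{11},d_{17}$ lists above, pulled into the Tate spectral sequence by $g$-multiplication and \cref{lem:tateisorange}) or would itself support a differential, via the hidden $h_1$-extensions of \cref{lem:hiddenh1} or degree reasons. For (2), I would similarly compare with $d_{23}(D^5 h_1^3) = D^{-10}g^6 h_1^2$ from \cref{cor:d23two}(2), or argue directly that the target $k^6\{x+y\}D^{10}u_{\sigma_i}$ in filtration $26$ must be hit and that the only surviving potential source at the appropriate stem is $\{x+y\}h_1 D^7 u_{\sigma_i}$, using \cref{lem:sigma11cycle} (which says $\{x+y\}D^4 u_{\sigma_i}$ — hence also its $D^{\pm}$-translates — is at least an $11$-cycle) to push the length of any differential it supports past $11$, and then the previously established $d_{13}$-differential in \cref{prop:sigmad13} (referenced in the remark after \cref{lem:sigma11cycle}) to control the length further.

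The main obstacle, as in every higher-differential argument in this section, will be the \emph{source-survival} bookkeeping: one must verify that the proposed source is not killed by, and does not support, any differential of length $<23$. Because the target is forced to die by the vanishing line, a single overlooked shorter differential either from or landing on a neighbor would derail the argument. The cleanest route is to work in the Tate spectral sequence where $g$ is invertible, so that both the integer-graded $d_{23}$'s of \cref{cor:d23two} and all the shorter $(*-\sigma_i)$-graded differentials already established become available after $g$-translation; then \cref{lem:tateisorange} transports the conclusion back to the HFPSS. I expect the first differential to fall out almost immediately from the Leibniz rule applied to $d_{23}(D^2 h_1^2)=D^{-13}g^6 h_1$ times $a_{\sigma_i}$ once source-survival is checked, and the second to require the mild extra input of \cref{lem:sigma11cycle} and \cref{prop:sigmad13} to pin down that the only viable source is $\{x+y\}h_1 D^7 u_{\sigma_i}$.
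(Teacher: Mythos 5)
Your approach is essentially the paper's: the actual proof is a two-line Leibniz-rule argument, multiplying the integer-graded $d_{23}$-differentials by the permanent cycle $\{x+y\}u_{\sigma_i}=a_{\sigma_i}$ (\cref{lem: asigmaiP.C.}), exactly as you propose for part (1). The extra scaffolding you build (vanishing line, Tate-side $g$-inversion, source-survival bookkeeping) is not needed in the paper's argument, though it is harmless and would only make the survival of the source and the nonvanishing of the target explicit. The one substantive slip is in part (2): the correct integer-graded input is $d_{23}(D^7h_1)=k^6D^{10}$, i.e.\ the $D^{8}$-translate of $d_{23}(D^{-1}h_1)=D^{-16}g^6$ from \cref{prop:d13one}(3), and multiplying this by $a_{\sigma_i}$ gives the claimed differential directly; your suggested comparison class $d_{23}(D^5h_1^3)=D^{-10}g^6h_1^2$ from \cref{cor:d23two}(2) sits in the wrong stem, and its $a_{\sigma_i}$-multiple yields $d_{23}(\{x+y\}h_1^3D^5u_{\sigma_i})$, not the stated differential on $\{x+y\}h_1D^7u_{\sigma_i}$. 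Your fallback route for (2) via the vanishing line, \cref{lem:sigma11cycle}, and \cref{prop:sigmad13} could be made to work, but the cleaner fix is simply to use the right integer-graded $d_{23}$.
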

\begin{proof}
By \cref{cor:d23two} we have the following two $d_{23}$-differentials in the integer-graded part
\begin{align*}
d_{23}(D^2 h_1^2)=k^6 h_1 D^5, \text{~and~}
d_{23}(D^7 h_1)=k^6 D^{10}.
\end{align*}
Note that the class $\{x+y\}u_{\sigma_i}$ is a permanent cycle by \cref{lem: asigmaiP.C.}. Then the desired two differentials follow from these  $d_{23}$-differentials and the Leibniz rule.
\end{proof}

All $d_{23}$-differentials follow from \cref{prop:sigmad23} and the Leibniz rule.

\begin{lem}\label{lem:sigmapctwo}
The four classes $\{h_1^2+xh_1v_1\} D^2 u_{\sigma_i},\{h_1^2+xh_1v_1\}D^3 u_{\sigma_i}, \{h_1^2+xh_1v_1\} D^6 u_{\sigma_i}$ and $\{h_1^2+xh_1v_1\} D^7 u_{\sigma_i}$ are all permanent cycles.
\end{lem}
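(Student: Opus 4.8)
The plan is to show that for each $m\in\{2,3,6,7\}$ the class $\{h_1^2+xh_1v_1\}D^m u_{\sigma_i}$ supports no nontrivial differential; since it is a $d_r$-cycle for small $r$, this forces it onto the $E_\infty$-page. Because $D^8$ is an invertible permanent cycle (\cref{prop:periodicity}) and the $(*-\sigma_i)$-graded HFPSS is a module over the integer-graded one, it suffices to analyze the stem $8m$ for these four residues. First I would check that each of these classes is a $d_r$-cycle for $r\leq 7$: writing $\{h_1^2+xh_1v_1\}D^m u_{\sigma_i}=h_1\cdot\{h_1+xv_1\}D^m u_{\sigma_i}$ and combining $d_3(\{h_1+xv_1\}u_{\sigma_i})=2kv_1^2u_{\sigma_i}$ (\cref{prop:sigmad3two}) with $2h_1=0$ kills the $d_3$; writing it as $\{h_1^2+xh_1v_1\}u_{\sigma_i}\cdot D^m$, using $d_5(D)=D^{-2}gh_2$ (\cref{prop:d5}) together with the relation $h_2\cdot\{h_1+xv_1\}u_{\sigma_i}=0$ from \cref{thm:2BSSsig} kills the $d_5$, and the $d_7$ is handled the same way --- these are exactly the verifications already used to set up \cref{prop:sigmad9two} and \cref{prop:sigmad17}.

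The heart of the proof is showing that in the stem $8m$, no class of filtration $\geq 9$ is available to receive a differential from $\{h_1^2+xh_1v_1\}D^m u_{\sigma_i}$. I would first clear the low-filtration portion of each stem using the $bo$-pattern $d_3$'s that follow \cref{prop:sigmad3one}, the $d_3$ of \cref{prop:sigmad3two}, the $d_5$'s of \cref{cor: sigmad5one} and \cref{cor:sigmad5two}, and the $d_7$'s, together with the $h_1$- and $h_2$-multiplications read off the $E_2$-page from \cref{thm:2BSSsig}; this leaves only a short list of candidate targets in high filtration. Each candidate is then removed by exhibiting a differential off it or showing it is already hit, the inputs being the $d_9$'s of \cref{prop:sigmad9two}, \cref{prop:sigmad9three}, \cref{cor:sigmad9three} and \cref{cor:sigmad9one}, the $d_{17}$ of \cref{prop:sigmad17}, and the $d_{23}$'s of \cref{prop:sigmad23}. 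When the status of a candidate is not visible directly in the HFPSS I would pass to the $(*-\sigma_i)$-graded $Q_8$-$\mathrm{TateSS}$, transport one of these differentials along multiplication by an invertible permanent cycle ($g=kD^3$ or $D^8$) so that the candidate lies strictly above a Tate differential, and conclude by \cref{lem:tateisorange} and \cref{met:Tatemethod} that the candidate is a permanent cycle of the HFPSS, hence cannot absorb a differential.

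The delicate step, and the one I expect to be the main obstacle, is separating $m\in\{2,3,6,7\}$ from $m\in\{0,1,5\}$: for $m=1,5$ the class genuinely supports a $d_9$ onto $k^2\{x+y\}h_1^2 D^{m+1}u_{\sigma_i}$ (\cref{prop:sigmad9two}) and for $m=0$ it supports a $d_{17}$ onto $k^4\{x+y\}h_1^2 D^2u_{\sigma_i}$ (\cref{prop:sigmad17}), so for $m\in\{2,3,6,7\}$ one must prove that the corresponding would-be targets $k^2\{x+y\}h_1^2 D^{m+1}u_{\sigma_i}$ and $k^4\{x+y\}h_1^2 D^{m+2}u_{\sigma_i}$ are simply not present on the relevant pages --- either because they carry a shorter differential (as in the $d_5$-argument used inside the proof of \cref{cor:sigmad9one}) or because, after transporting by $g^{\pm1}$, they have already been hit. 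Pushing this through requires nailing down the $(*-\sigma_i)$-graded multiplicative structure in these stems, including possible hidden $h_1$- and $h_2$-extensions of the sort in \cref{lem:hiddenh1}, finely enough to describe the surviving $E_9$- and $E_{17}$-pages; once that is done the only remaining possibility is a $d_{23}$, which is ruled out by a final inspection of the chart, and the four classes are permanent cycles.
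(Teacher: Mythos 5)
Your overall strategy---rule out every potential target of a differential on $\{h_1^2+xh_1v_1\}D^m u_{\sigma_i}$ using degree reasons, the vanishing-line bound on differential lengths, and the previously established differentials---is the same as the paper's, but the decisive step is both deferred and, where you are specific, aimed at the wrong mechanism. The paper's proof (written out for $\{h_1^2+xh_1v_1\}D^7u_{\sigma_i}$, the other three cases being analogous) finds by inspection that after the $E_5$-page there are exactly two possible targets, $k^2\{x+y\}h_1^2D^8u_{\sigma_i}$ in filtration $11$ and $k^3x^3D^9u_{\sigma_i}$ in filtration $15$, and eliminates both because each \emph{supports} a nontrivial differential: the first supports the $d_{23}$ obtained from \cref{prop:sigmad23} by multiplying with the invertible permanent cycle $g^2$, and the second supports the $d_{11}$ obtained from \cref{cor:sigmad11one} by multiplying with $g^3$. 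A class that supports a nontrivial $d_r$ can never be hit on any page (it would not survive to support its differential if hit earlier, cannot be hit on page $r$ since $d_r\circ d_r=0$, and is gone afterwards), so neither class can receive a differential. Your plan instead proposes to show the would-be targets are ``simply not present on the relevant pages,'' either because they carry a \emph{shorter} differential or because they have already been hit after transporting by $g^{\pm 1}$. For the key candidate $k^2\{x+y\}h_1^2D^{m+1}u_{\sigma_i}$ both alternatives are false: it is not hit, and it survives all the way to the $E_{23}$-page---and this survival is precisely what makes it unavailable as a $d_9$-target. Following your plan literally, the possible $d_9$ on $\{h_1^2+xh_1v_1\}D^m u_{\sigma_i}$ for $m\in\{2,3,6,7\}$ could not be excluded.

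In addition, your candidate list is off: you only consider the analogues of the known $d_9$- and $d_{17}$-targets, $k^2\{x+y\}h_1^2D^{m+1}u_{\sigma_i}$ and $k^4\{x+y\}h_1^2D^{m+2}u_{\sigma_i}$, and miss the filtration-$15$ class $k^3x^3D^{m+2}u_{\sigma_i}$, which is a potential $d_{13}$-target; eliminating it requires the $d_{11}$-differentials of \cref{cor:sigmad11one}, an input your argument never invokes. Finally, the appeals to ``nailing down the $(*-\sigma_i)$-graded multiplicative structure\ldots finely enough'' and to ``a final inspection of the chart'' are exactly where the content of the lemma lies, so as written this is a plan rather than a proof, and at its one concrete point it would fail for the reason above.
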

\begin{proof}
After the $E_5$-page, the potential targets of $\{h_1^2+xh_1v_1\} D^7 u_{\sigma_i}$ are the classes $k^2\{x+y\}h_1^2D^8u_{\sigma_i}$ and $k^3x^3D^9u_{\sigma_i}$, since lengths of differentials in the $RO(Q_8)$-graded $Q_8$-HFPSS$(\E_2)$ are less than or equal to $23$ by \cref{thm:sharpvanishingline}. However, the class $k^2\{x+y\}h_1^2D^8u_{\sigma_i}$ supports a non-trivial $d_{23}$-differential by \cref{prop:sigmad23} and the class $k^3x^3D^9u_{\sigma_i}$ supports a non-trivial $d_{11}$-differential by \cref{cor:sigmad11one}. For similar reasons, the rest three classes are permanent cycles. 
\end{proof}

\begin{prop}\label{prop:sigmad11two}
There are four non-trivial $d_{11}$-differentials
\begin{enumerate}
  \item $d_{11}(x^2 h_1 D^2 u_{\sigma_i})=k^3\{h_1^2+xh_1v_1\}D^3 u_{\sigma_i};$
    \item $d_{11}(x^2h_1D^3 u_{\sigma_i})=k^3\{h_1^2+xh_1v_1\} D^4 u_{\sigma_i};$
    \item $d_{11}(x^2 h_1 D^6 u_{\sigma_i})=k^3\{h_1^2+xh_1v_1\}D^7 u_{\sigma_i};$
  
\item $d_{11}(x^2 h_1 D^7 u_{\sigma_i})=k^3\{h_1^2+xh_1v_1\}D^8 u_{\sigma_i}$.
\end{enumerate}
\end{prop}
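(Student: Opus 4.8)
The plan is to force these four $d_{11}$-differentials by the vanishing line method of \cref{thm:sharpvanishingline}, in combination with the permanent cycles supplied by \cref{lem:sigmapctwo} and the Tate-shuffling of \cref{met:Tatemethod}; this is the same strategy already used in \cref{prop:sigmad17} and \cref{cor:sigmad9one}. I will run the four cases in parallel, writing $D^{j}$ with $j\in\{2,3,6,7\}$ for the four powers of $D$ occurring in the sources, so that the corresponding targets are $k^{3}\{h_1^2+xh_1v_1\}D^{j+1}u_{\sigma_i}$. The first step is to note that each of the four targets is a permanent cycle. Using $k^{3}D^{j+1}=(kD^{3})^{3}D^{\,j-8}=g^{3}D^{-8}\cdot D^{\,j}$, together with the fact that $g$ is a permanent cycle (\cref{lemma:gPC}) and that $D^{\pm 8}$ is an invertible permanent cycle (\cref{prop:periodicity}), we may write
\[
k^{3}\{h_1^2+xh_1v_1\}D^{\,j+1}u_{\sigma_i}=\bigl(g^{3}D^{-8}\bigr)\cdot\bigl(\{h_1^2+xh_1v_1\}D^{\,j}u_{\sigma_i}\bigr),
\]
and $\{h_1^2+xh_1v_1\}D^{\,j}u_{\sigma_i}$ is a permanent cycle by \cref{lem:sigmapctwo}. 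Hence all four targets are permanent cycles.

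The second step forces each target to be hit. A target $k^{3}\{h_1^2+xh_1v_1\}D^{\,j+1}u_{\sigma_i}$ lies in filtration $14$; multiplying it by $g^{3}$, which is an invertible permanent cycle in the $Q_8$-TateSS for $\E_2$ (as in \cref{prop:sigmad9two}), we obtain the permanent cycle $k^{6}\{h_1^2+xh_1v_1\}D^{\,j+10}u_{\sigma_i}$ in filtration $26\geq 23$. By \cref{thm:sharpvanishingline}(2) this class must be hit by a differential; dividing that differential by $g^{3}$ in the $Q_8$-TateSS (\cref{met:Tatemethod}) and using that the target itself sits in positive filtration, \cref{lem:tateisorange} shows that $k^{3}\{h_1^2+xh_1v_1\}D^{\,j+1}u_{\sigma_i}$ is hit by a differential in the $Q_8$-HFPSS for $\E_2$.

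The third step identifies the source. Each target lies one stem below $x^2h_1D^{j}u_{\sigma_i}$, so a differential hitting it must originate in the stem of $x^2h_1D^{j}u_{\sigma_i}$ at some filtration $\le 12$. Inspecting the $E_5$-page of the $(*-\sigma_i)$-graded $Q_8$-HFPSS for $\E_2$ (see the charts in \S\ref{section:charts}), once the $bo$-pattern $d_3$'s, the $d_5$'s of \cref{cor: sigmad5one} and \cref{cor:sigmad5two}, and the $d_9$'s of \cref{prop:sigmad9three}, \cref{cor:sigmad9three} and \cref{cor:sigmad9one} have been accounted for, the class $x^2h_1D^{j}u_{\sigma_i}$, in filtration $3$, is the only surviving class in that stem that has not already been shown to support or to receive an earlier differential. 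Therefore $x^2h_1D^{j}u_{\sigma_i}$ supports the differential onto the target, and comparing the filtrations $3$ and $14$ this differential is precisely a $d_{11}$; in particular $x^2h_1D^{j}u_{\sigma_i}$ is automatically a $5$-, $7$- and $9$-cycle.

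I expect the principal obstacle to be the bookkeeping in the third step: one must list the finitely many classes that survive to the $E_5$-page in the relevant stem and verify, case by case, that every one of them except $x^2h_1D^{j}u_{\sigma_i}$ has already been disposed of by a differential proved earlier in \S\ref{section:sigma}. A secondary technical point is to justify the exact form of the target — that the module-generator combination $\{h_1^2+xh_1v_1\}$, rather than $h_1^2$ alone, appears on the right-hand side — which follows from the relations among the $(*-\sigma_i)$-graded generators in \cref{thm:2BSSsig} together with the hidden $h_1$-extension of \cref{lem:hiddenh1}.
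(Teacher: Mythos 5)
Your Step 2 contains a genuine gap. From the vanishing line you correctly get that the filtration-$26$ class $k^6\{h_1^2+xh_1v_1\}D^{j+10}u_{\sigma_i}$ is hit in the HFPSS by some $d_r$ with $r\leq 23$, and dividing by $g^3$ in the Tate spectral sequence shows the filtration-$14$ target $k^3\{h_1^2+xh_1v_1\}D^{j+1}u_{\sigma_i}$ is hit there. But \cref{lem:tateisorange} only transports differentials back to the HFPSS when the \emph{source} lies in non-negative filtration; after dividing by $g^3$ the source sits in filtration $14-r$, which is negative whenever $r>14$. In that case the Tate argument (as in \cref{met:Tatemethod}) only tells you the filtration-$14$ class is a permanent cycle in the HFPSS, not that it is hit — it could survive, being cancelled in the Tate spectral sequence by a class coming from homotopy orbits. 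Concretely, the scenarios you have not excluded are exactly long differentials on low-filtration classes killing the filtration-$26$ class, e.g.\ a potential $d_{19}$ on $kx^2h_1D^3u_{\sigma_i}$ at $(19,7)$ (for case (2)) or on $kx^2h_1D^6u_{\sigma_i}$ at $(43,7)$, and a potential $d_{13}$ on $k^3\{x+y\}D^7u_{\sigma_i}$ at $(43,13)$ (for case (4)); after dividing by $g^3$ these candidates land in negative filtration, so your Step 3 enumeration at filtration $\leq 12$ in the adjacent stem structurally cannot see them, and Step 3 anyway presupposes the conclusion of Step 2.

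The paper's proof closes precisely this hole by doing the source enumeration for the filtration-$26$ class itself in the HFPSS: for (2) the candidates are $k^3x^2h_1D^4u_{\sigma_i}$ (a $d_{11}$) and $kx^2h_1D^3u_{\sigma_i}$ (a $d_{19}$), and the latter is ruled out because it is a permanent cycle, being hit by a $d_9$ in the Tate spectral sequence via \cref{prop:sigmad9three}; for (4) the extra candidates $k^3\{x+y\}D^7u_{\sigma_i}$ and $kx^2h_1D^6u_{\sigma_i}$ are disposed of by \cref{prop:d9norm} and by the already-proved case (2) respectively — note this creates a dependency of (4) on (2) that your parallel treatment of the four cases also glosses over. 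Your Steps 1 and 3 are in the right spirit (and agree with the paper's use of \cref{lem:sigmapctwo} and degree bookkeeping), but to repair the argument you must enumerate and eliminate the low-filtration sources of the class above the vanishing line, rather than divide by $g^3$ first.
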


\begin{proof}

We first prove $(2)$. Consider the class $k^6\{h_1^2+xh_1v_1\}D^5 u_{\sigma_i}$ which is a permanent cycle by \cref{lem:sigmapctwo}. Since its filtration is greater than $23$, the horizontal vanishing line forces it to be killed. For degree reasons, after $E_5$-page, there are two potential sources: $k^3x^2 h_1 D^4u_{\sigma_i}$ and $kx^2h_1 D^3u_{\sigma_i}$. However, the class $kx^2h_1 D^3u_{\sigma_i}$ is a permanent cycle since it is killed by a $d_9$-differential in the associated TateSS according to \cref{prop:sigmad9three}. Therefore, we have
\[
d_{11}(k^3x^2h_1 D^4u_{\sigma_i})=k^6\{h_1^2+xh_1v_1\}D^5u_{\sigma_i}.
\]

Next, consider the class $k^6\{h_1^2+xh_1v_1\}D^8 u_{\sigma_i}$ which is also a permanent cycle by \cref{lem:sigmapctwo}. Similarly, the horizontal vanishing line forces it to be killed eventually. After $E_5$-page, for degree reasons, there are three potential sources: $k^3 x^2h_1D^7 u_{\sigma_i}, k^3\{x+y\}D^7 u_{\sigma_i}$ and $kx^2h_1D^6u_{\sigma_i}$. The second class $k^3\{x+y\}D^7 u_{\sigma_i}$ supports a $d_9$-differential by \cref{prop:d9norm}. The third class $kx^2h_1D^6u_{\sigma_i}$ supports a $d_{11}$-differential we just proved
\[
d_{11}(kx^2h_1D^6u_{\sigma_i})=kD^3d_{11}(x^2h_1D^3 u_{\sigma_i})=k^4\{h_1^2+xh_1v_1\} D^7 u_{\sigma_i}
\]
Therefore, we have
\[
d_{11}(k^3 x^2h_1D^7u_{\sigma_i})=k^3\{h_1^2+xh_1v_1\}D^8 u_{\sigma_i}.
\]

The rest two claimed $d_{11}$-differentials follow by similar arguments. 

\end{proof}

All $d_{11}$-differentials follow from \cref{cor:sigmad11one}, \cref{prop:sigmad11two} and the Leibniz rule.

\begin{prop}\label{prop:sigmad13}
The class $\{x+y\}D^4 u_{\sigma_i}$ at $(31,1)$ supports a $d_{13}$-differential
\[
d_{13}(\{x+y\}D^4 u_{\sigma_i})=k^3\{h_1^2+xh_1v_1\}D^5 u_{\sigma_i}.
\]
\end{prop}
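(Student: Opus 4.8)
The plan is to realize the claimed $d_{13}$ as a normed differential coming from the integer-graded $C_4\langle i\rangle$-HFPSS, and to use the horizontal vanishing line to guarantee that it is non-trivial. Recall from the proof of \cref{lem:sigma11cycle} that $\{x+y\}D^4 u_{\sigma_i}$ is an $11$-cycle and that, up to a unit in $\W(\F_4)$, it equals the product of $a_{\sigma_i}N_{C_4\langle i\rangle}^{Q_8}(u_{2\sigma})$ with a permanent periodicity class. By \cite[Remark~5.23]{BBHS20} the class $u_{2\sigma}$ supports a non-trivial $d_7$-differential in the $C_4\langle i\rangle$-HFPSS for $\E_2$. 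Since $\Ind_{C_4\langle i\rangle}^{Q_8}(1)=1+\sigma_i$, the reduced induced representation is $\bar\rho=\sigma_i$, so \cref{thm:normdiff} yields a predicted differential
\[
d_{13}\bigl(a_{\sigma_i}N_{C_4\langle i\rangle}^{Q_8}(u_{2\sigma})\bigr)=N_{C_4\langle i\rangle}^{Q_8}\bigl(d_7(u_{2\sigma})\bigr).
\]
Multiplying both sides by the same permanent periodicity class used in \cref{lem:sigma11cycle} and rewriting the norms with \cref{prop:auclassnorm}, the source becomes $\{x+y\}D^4 u_{\sigma_i}$ at $(31,1)$ and the target lands in bidegree $(30,14)$, which by inspection of the $E_2$-page contains only the class $k^3\{h_1^2+xh_1v_1\}D^5 u_{\sigma_i}$.

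Since $\{x+y\}D^4 u_{\sigma_i}$ is an $11$-cycle it survives to the $E_{13}$-page, so to upgrade the predicted differential to an actual one it suffices to show that $k^3\{h_1^2+xh_1v_1\}D^5 u_{\sigma_i}$ is not killed before the $E_{13}$-page. I would argue this in the $Q_8$-TateSS, where $g=kD^3$ is an invertible permanent cycle: multiplying the target by $g^3$ gives the class $k^6\{h_1^2+xh_1v_1\}D^{14}u_{\sigma_i}$ in filtration $26$, which cannot survive by \cref{thm:sharpvanishingline}. Examining the chart above the $E_5$-page, every class that could carry a differential into this bidegree is already known either to support an earlier differential (via the $d_9$-differentials of \cref{prop:sigmad9two}, \cref{prop:sigmad9three}, \cref{cor:sigmad9three}, \cref{cor:sigmad9one} and \cref{prop:d9norm}, the $d_{11}$-differentials of \cref{cor:sigmad11one} and \cref{prop:sigmad11two}, or the $d_{17}$-differential of \cref{prop:sigmad17}) or to be a permanent cycle. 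Hence $k^6\{h_1^2+xh_1v_1\}D^{14}u_{\sigma_i}$ is a permanent cycle that must itself be hit; dividing by $g^3$ shows that $k^3\{h_1^2+xh_1v_1\}D^5 u_{\sigma_i}$ is hit in the $Q_8$-TateSS, hence in the $Q_8$-HFPSS by \cref{lem:tateisorange}. Because $\{x+y\}D^4 u_{\sigma_i}$ is an $11$-cycle and, by a degree count, is the only possible source of a differential of length in $\{13,\dots,23\}$ landing on $k^3\{h_1^2+xh_1v_1\}D^5 u_{\sigma_i}$, this forces $d_{13}(\{x+y\}D^4 u_{\sigma_i})=k^3\{h_1^2+xh_1v_1\}D^5 u_{\sigma_i}$.

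The main obstacle is the bookkeeping in the dense $(*-\sigma_i)$-graded chart: one must check that $k^3\{h_1^2+xh_1v_1\}D^5 u_{\sigma_i}$ (and, after multiplication by $g^3$, $k^6\{h_1^2+xh_1v_1\}D^{14}u_{\sigma_i}$) is the unique class in its bidegree, and that no other class in the relevant filtration window can serve as source or target, using the full list of previously established differentials and permanent cycles. A secondary technical point is the degree computation identifying $N_{C_4\langle i\rangle}^{Q_8}(d_7(u_{2\sigma}))$, after multiplication by the periodicity class, with precisely $k^3\{h_1^2+xh_1v_1\}D^5 u_{\sigma_i}$; this is routine but must be done carefully, and if one prefers, the vanishing-line argument of the previous paragraph already pins down the target once one knows that $\{x+y\}D^4 u_{\sigma_i}$ supports a $d_{13}$.
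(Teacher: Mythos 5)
Your strategy is essentially the paper's (it combines the norm-predicted $d_{13}$ of its second proof with the vanishing-line/Tate bookkeeping of its first, the only novelty being that you norm $d_7(u_{2\sigma})$ rather than $d_7(u_{4\lambda})$), but as written it has two genuine gaps. The decisive one is in your second paragraph: from the assertion that every potential source of a differential \emph{into} the bidegree of $k^6\{h_1^2+xh_1v_1\}D^{14}u_{\sigma_i}$ either supports an earlier differential or is a permanent cycle, you conclude ``hence this class is a permanent cycle that must itself be hit.'' That does not follow: your premise only controls differentials landing in that bidegree, whereas \cref{thm:sharpvanishingline} says a class above filtration $23$ is hit \emph{or supports a differential}. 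To get ``must be hit'' you need to show separately that $k^3\{h_1^2+xh_1v_1\}D^5u_{\sigma_i}$ (equivalently its $g$-power translates) supports no differential; this is exactly the first claim of the paper's proof, obtained by multiplying by an invertible permanent cycle in the Tate spectral sequence so as to reduce to \cref{lem:sigmapctwo}. You never establish this or cite a substitute, so the final step ``this forces $d_{13}(\{x+y\}D^4u_{\sigma_i})=k^3\{h_1^2+xh_1v_1\}D^5u_{\sigma_i}$'' is unsupported, and your closing remark is circular, since ``once one knows that $\{x+y\}D^4u_{\sigma_i}$ supports a $d_{13}$'' is precisely what is to be proved.

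The second gap is in the norm step itself. \cref{thm:normdiff} gives $d_{13}\bigl(a_{\sigma_i}N_{C_4\langle i\rangle}^{Q_8}(u_{2\sigma})\bigr)=N_{C_4\langle i\rangle}^{Q_8}(d_7(u_{2\sigma}))$, which yields a nontrivial differential only if the normed target is nonzero on the $E_2$-page (and hence on $E_{13}$, once shorter incoming differentials are excluded); if it is the zero element, the theorem merely says the source is a $13$-cycle. Observing that the bidegree $(30,14)$ contains a single generator does not rule this out. The paper's second proof handles the analogous point for its input $d_7(u_{4\lambda})=\done\eta' u_{2\lambda}a_{3\lambda}$ by checking $\Res^{Q_8}_{C_4}N_{C_4}^{Q_8}(\eta')=\eta'^2\neq 0$; you would need a comparable nontriviality check for $N_{C_4\langle i\rangle}^{Q_8}(d_7(u_{2\sigma}))$. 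If you supply (i) the permanent-cycle claim for the target via \cref{lem:sigmapctwo} and the Tate trick, and (ii) either this nontriviality check or, dropping the norm altogether, the elimination of the competing $d_{11}$ source $x^2h_1D^4u_{\sigma_i}$ by the Tate $d_9$ coming from \cref{prop:sigmad9three}, your argument closes and coincides with the paper's.
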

\begin{proof}
We first claim the class $k^3\{h_1^2+xh_1v_1\}D^5 u_{\sigma_i}$ is a permanent cycle. In the $Q_8$-TateSS for $\E_2$, by multiplying it with $k^{-3}D^{-9}\cdot D^8$, we obtain $\{h_1^2+xh_1v_1\}D^4 u_{\sigma_i}$, which is a permanent cycle by \cref{lem:sigmapctwo}. So $k^3\{h_1^2+xh_1v_1\}D^5 u_{\sigma_i}$ is also a permanent cycle in the $Q_8$-HFPSS for $E_2$. 

Next we consider the class $k^6\{h_1^2+xh_1v_1\}D^6 u_{\sigma_i}=k^3\{h_1^2+xh_1v_1\}D^5 u_{\sigma_i}\cdot k^3 D^9\cdot D^{-8}$  above the vanishing line. By \cref{thm:sharpvanishingline} it must be hit by a differential since it is a permanent cycle. Then for degree reasons, the only two possible sources are $\{x+y\}D^4 u_{\sigma_i}$ and $x^2h_1 D^4u_{\sigma_i}$. Note that the class $x^2h_1 D^4$ is a permanent cycle since it is hit by a $d_9$-differential in $Q_8$-TateSS$(\E_2)$
\[
d_9(k^{-2}xh_1 D^3 u_{\sigma_i})=x^2h_1 D^4 u_{\sigma_i}.
\]
Therefore, the claimed $d_{13}$-differential must happen.
\end{proof}
This $d_{13}$-differential can also be deduced via the norm method.
\begin{proof}[Second proof of \cref{prop:sigmad13}]
According to \cite[Theorem~11.13] {HHR17}\cite[Corollary~3.14]{HSWX2018}, there is a $d_7$-differential in the $C_4$-HFPSS for $\E_2$
\[
d_7(u_{4\lambda})=\done \eta' u_{2\lambda}a_{3\lambda}.
\]
Then \cref{thm:normdiff} shows that there is a predicted $d_{13}$-differential
\[
d_{13}(N_{C_4}^{Q_8}(u_{4\lambda})a_{\sigma_i})=N_{C_4}^{Q_8}(\done)N_{C_4}^{Q_8}(\eta')N_{C_4}^{Q_8}(u_{2\lambda})a_{3\mathbb{H}}.
\]
According to \cite[Proposition~10.4 ~(viii)]{Schwede},
$
\Res^{Q_8}_{C_4}N_{C_4}^{Q_8}(\eta')=\eta'^2
$
is non-trivial. Then $N_{C_4}^{Q_8}(\eta')$ is non-trivial on the $E_2$-page and so is the class $N_{C_4}^{Q_8}(\done)N_{C_4}^{Q_8}(\eta')N_{C_4}^{Q_8}(u_{2\lambda})a_{3\mathbb{H}}$. By multiplying the non-trivial class $N_{C_4}^{Q_8}(\done)N_{C_4}^{Q_8}(\eta')N_{C_4}^{Q_8}(u_{2\lambda})a_{3\mathbb{H}}$ with the periodicity classes in \cref{cor:periodicity}, we get a non-trivial class at $(30,14)$ on the $E_2$-page, which has to be the class $k^3\{h_1^2+xh_1v_1\}D^5 u_{\sigma_i}$ by degree reasons. Therefore,  the class $k^3\{h_1^2+xh_1v_1\}D^5 u_{\sigma_i}$ must be hit on or before  the $E_{13}$-page. For degree reasons, the desired $d_{13}$-differential follows.
\end{proof}

All $d_{13}$-differentials follow from \cref{prop:sigmad13} and the Leibniz rule.

\cref{table:HPFSS_sigmai_diff} lists the differentials we have computed so far. They generate differentials via the
Leibniz rule. By inspection, these are all non-trivial differentials since the remaining classes are permanent cycles by \cref{met:Tatemethod}.

The result is presented in \cref{fig:sigmaEinf}. 

\subsection{Summary of differentials}
Differentials in $(*-\sigma)$-graded part are given by \cref{table:HPFSS_sigmai_diff}. All differentials follow from this list by multiplying permanent cycles and the Leibniz rule.

\begin{longtable}{llllll}
\caption{HPFSS differentials, $(*-\sigma_i)$-page
\label{table:HPFSS_sigmai_diff} 
} \\
\toprule
$(s,f)$ & $x$ & $r$ & $d_r(x)$ &Proof &\\
\midrule 
\endfirsthead
\multicolumn{5}{c}
{{Table \thetable{}. -- HPFSS differentials, $(*-\sigma_i)$-page (continued)}} \\
\toprule
$(s,f)$ & $x$ & $r$ & $d_r(x)$ &Proof &\\
\midrule 
\endhead

\hline
\multicolumn{5}{r}{{Continued on next page}} \\
\hline
\endfoot

\bottomrule \endlastfoot

 $(1,1)$& $\{h_1+xv_1\}u_{\sigma_i}$& 3& $2kv_1^2u_{\sigma_i}$ & \cref{prop:sigmad3two}\\
 $(4,0)$& $v_1^2u_{\sigma_i}$ &3& $h_1^3u_{\sigma_i}$&\cref{prop:sigmad3one} (restriction)\\
 \midrule
         $(7,1)$&   $\{x+y\}Du_{\sigma_i}$ &  5 &  $k\{yh_2+xh_1v_1\}Du_{\sigma_i}$&\cref{cor: sigmad5one} (module structure)\\
         $(14,2)$&   $\{x^2+y^2\}D^2 u_{\sigma_i}$ &  5&
         $kxh_1^2D^2u_{\sigma_i}$&\cref{cor:sigmad5two} (module structure)\\
     
         \midrule
          $(10,2)$ & $\{h_1^2+xh_1v_1\}Du_{\sigma_i}$ & 9 &$k^2\{x+y\}h_1^2D^2u_{\sigma_i}$& \cref{prop:sigmad9two}\\
         
          $(42,2)$ & $\{h_1^2+xh_1v_1\}D^5u_{\sigma_i}$ & 9 &$k^2\{x+y\}h_1^2D^6u_{\sigma_i}$& \cref{prop:sigmad9two}\\
         $(8,2)$ & $\{x+y\}h_1Du_{\sigma_i}$ & 9 & $k^2x^2h_1D^2u_{\sigma_i}$& \cref{prop:sigmad9three} (module structure) \\
         
                  $(40,2)$ & $\{x+y\}h_1D^5u_{\sigma_i}$ & 9 & $k^2x^2h_1D^6u_{\sigma_i}$& \cref{prop:sigmad9three}\\
         
            $(15,1)$ & $\{x+y\}D^2u_{\sigma_i}$ & 9 &$ k^2\{x^2+y^2\}D^3u_{\sigma_i}$&\cref{cor:sigmad9three} \\
         $(47,1)$ & $\{x+y\}D^6u_{\sigma_i}$ & 9 &$ k^2\{x^2+y^2\}D^7u_{\sigma_i}$& \cref{cor:sigmad9three}\\
        $(22,2)$&$\{x^2+y^2\}D^3u_{\sigma_i}$ &9& $k^2x^3D^4u_{\sigma_i}$&\cref{cor:sigmad9one} (hidden $2$ extension)\\
         $(54,2)$&$\{x^2+y^2\}D^7u_{\sigma_i}$ &9& $k^2x^3D^8u_{\sigma_i}$&\cref{cor:sigmad9one}   \\
         \midrule
         $(15,3)$ & $x^2h_1D^2 u_{\sigma_i}$ & 11 &$k^3\{h_1^2+xh_1v_1\}D^3u_{\sigma_i}$& \cref{prop:sigmad11two} (vanishing line) \\ 
            $(47,3)$ & $x^2h_1D^6 u_{\sigma_i}$ & 11 &$k^3\{h_1^2+xh_1v_1\}D^7u_{\sigma_i}$& \cref{prop:sigmad11two}\\ 
         $(23,3)$&$x^2h_1D^3u_{\sigma_i}$& 11& $k^3\{h_1^2+xh_1v_1\}D^4u_{\sigma_i}$ & \cref{prop:sigmad11two}\\
   $(55,3)$&$x^2h_1D^7u_{\sigma_i}$& 11& $k^3\{h_1^2+xh_1v_1\}D^8u_{\sigma_i}$ & \cref{prop:sigmad11two}\\
         $(29,3)$& $x^3D^4 u_{\sigma_i}$&11&$k^3xh_1D^5 u_{\sigma_i}$&\cref{cor:sigmad11one} (module structure)\\
          $(61,3)$& $x^3D^8 u_{\sigma_i}$&11&$k^3xh_1D^9 u_{\sigma_i}$&\cref{cor:sigmad11one}\\
    \midrule
        $(31,1)$ &$\{x+y\}D^4u_{\sigma_i}$ &13& $k^3\{h_1^2+xh_1v_1\}D^5u_{\sigma_i}$ &\cref{prop:sigmad13} (vanishing line\\
         &&&& or norm differential)\\
         \midrule
         $(2,2)$ & $\{h_1^2+xh_1v_1\}u_{\sigma_i}$ & 17 &$k^4\{x+y\}h_1^2D^2u_{\sigma_i}$& \cref{prop:sigmad17} (vanishing line)\\
      
        \midrule
          $(17,3)$ & $\{x+y\}h_1^2D^2 u_{\sigma_i}$ & 23 &$ k^6\{ x+y\}h_1 D^5$&\cref{prop:sigmad23} (module structure)
  \\ 
    $(56,2)$ &$\{x+y\}h_1D^7 u_{\sigma_i}$&23&$k^6\{x+y\}D^{10}u_{\sigma_i}$ &\cref{prop:sigmad23}\\

     \midrule
\end{longtable}

\newpage
\section{Charts and Tables}
\label{section:charts}
\subsection{Keys for the charts}

In all charts, a gray line denotes a multiplication. See the following table for the keys.

\begin{longtable}{ll}
\caption{keys for multiplications} \\
\toprule
line & meanings  \\
\midrule \endhead
\bottomrule \endfoot
vertical & $2$ multiplication\\
slope $1$ & $h_1$ multiplication \\
slope $1/3$ & $h_2$ multiplication \\
dashed (only in $2$BSS) & hidden extension \\
\midrule
\end{longtable}

The colored lines denote the differentials. We use different colors to distinguish different lengths.

\begin{longtable}{ll}
\caption{keys for classes} \\
\toprule
class & meaning  \\
\midrule \endhead
\bottomrule \endfoot
dot & $k$\\
blue dot & $k[\![j]\!]$\\
red dot & $k[\![j]\!]\{j\}$\\
square & $\mathbb W(k)$\\
\end{longtable}
Here $k$ is $\mathbb F_2$ for $G= SD_{16} \text{ or }G_{48}$, and is $\mathbb F_4$ for $G= Q_{8} \text{ or }G_{24}$; $j$ is $v_1^{12} D^{-3}$ for $G_{24}$ or $G_{48}$, and $v_1^{4} D^{-1}$ otherwise.

\begin{rem}
We elaborate more on boxes and dots connected by vertical lines in the same bidegree. Such a pattern denotes a $2$-adic presentation of a class. Namely, the bottom dot is generated by the generator and represents a $2$-torsion copy, the dot or box just above is generated by twice the generator, and so on.

For example, on the $E_\infty$-page of the integral degrees (\cref{fig:integerEinf}), in bigrading $(32,0)$ the bottom red dot represents the class $\mathbb W/2[\![v_1^{4} D^{-1}]\!]\{v_1^4D^3\}$ and the blue box above represents $\mathbb W[\![v_1^{4} D^{-1}]\!]\{2 D^4\}$; Note that there is a $2$ extension. Thus the
class at $(32,0)$ is $\mathbb W[\![v_1^{4} D^{-1}]\!]\{v_1^4D^3\}\oplus \mathbb W\{2D^4\}$.

Such presentations help to demonstrate where the differentials or extensions come from. For example, in \cref{fig:integerE2} in bigrading $(12,0)$, only the generator $v_1^6$ supports a non-trivial $d_3$-differential and $2v_1^6$ survives. This convention is due to Dan Isaksen. 

\end{rem}

\begin{rem}
We comment on the extensions between dots of different colors.
For example, in the bidegree $(24,0)$ and $(25,1)$ in \cref{fig:integerEinf}, there is an $h_1$ multiplication connecting a red and a blue dot. The red dot represents the class $\mathbb W/2[\![v_1^{4} D^{-1}]\!]\{v_1^4 D^2\}$ and the blue dot represents the class $\mathbb W/2[\![v_1^{4} D^{-1}]\!]\{h_1D^3\}$. The $h_1$ multiplication happens whenever it is indicated by the class names. Note that the class $\mathbb W/2\{h_1D^3\}$ is not $h_1$-divisible in this case since the source is missing.
\end{rem}

\subsubsection{2-BSS}\hfill

\cref{fig:2BSSE1} -- \cref{fig:sigma2BSS} are charts for the $2$-Bockstein spectral sequences. All three charts have $(8,0)$ periodicity by multiplying $D$ and $(-4,4)$ periodicity by multiplying $k$ (except the $v_1$ local classes in low filtration). We only depict part of the spectral sequence here, which contains a full periodic range.

In \cref{fig:2BSSE1}, a blue line indicates the multiplication by $x$, while an orange line indicates the multiplication by $y$.

Recall the $(*-\sigma_i)$-graded part and the integer-graded part have isomorphic $E_1$-pages. When interpret the chart as the $(*-\sigma_i)$-graded part, the name of a class at $(s,f)$ is its label multiplied by $u_{\sigma_i}$, and its degree is $(s+1-\sigma_i,f)$. For example, the class $1$ at $(0,0)$, when interpreted as an $(*-\sigma_i)$-graded part class, denotes $u_{\sigma_i}$ at $(1-\sigma_i,0)$ in the 2BSS.

\cref{fig:integer2BSS} and \cref{fig:sigma2BSS} show the $E_\infty$-page of 2BSS, for the integer-graded part and $(*-\sigma_i)$-graded part respectively.

\subsubsection{HFPSS}\hfill

\cref{fig:integerE2}--\cref{fig:integerEinf} depict the integer degree calculation of the integer-graded  $G$-HFPSS$(\E_2)$ for $G=Q_8$ or $SD_{16}$, and \cref{fig:sigmaE2}--\cref{fig:sigmaEinf} depict the $(*-\sigma_i)$-graded calculation. Both $E_2$-pages are $(8,0)$ periodic by multiplying $D$, and other pages are $(64,0)$ periodic by multiplying $D^8$. All charts are $(20,4)$ periodic by multiplying $kD^3$ (except the $v_1$ local classes in low filtration). 
The differentials are denoted by the colored lines with their length classified by the color. When the target or the source of the differential is out of range, we replace the line with an arrow. There are horizontal vanishing lines in filtration $23$ on $E_\infty$-pages.

\newpage
\begin{figure}[H]
\begin{center}
\makebox[0.95\textwidth][c]{
\includegraphics[trim={0cm, 0.8cm, 0cm, 0.8cm},clip,page=1,scale=1.4]{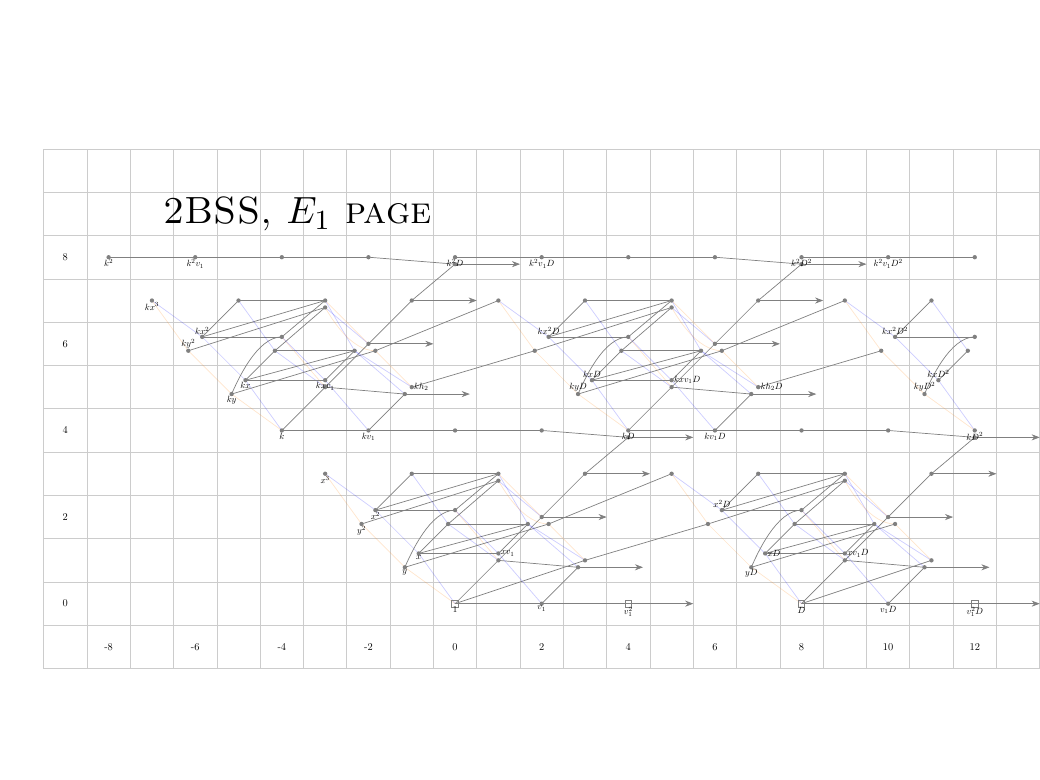}}
\caption{The $E_1$-page of the integer/$(*-\sigma_i)$-graded 2BSS.}
\label{fig:2BSSE1}
\end{center}
\end{figure}

\begin{figure}[H]
\begin{center}
\makebox[0.95\textwidth]{\includegraphics[trim={0cm, 0.8cm, 0cm, 0.8cm},clip,page=1,scale=1.4]{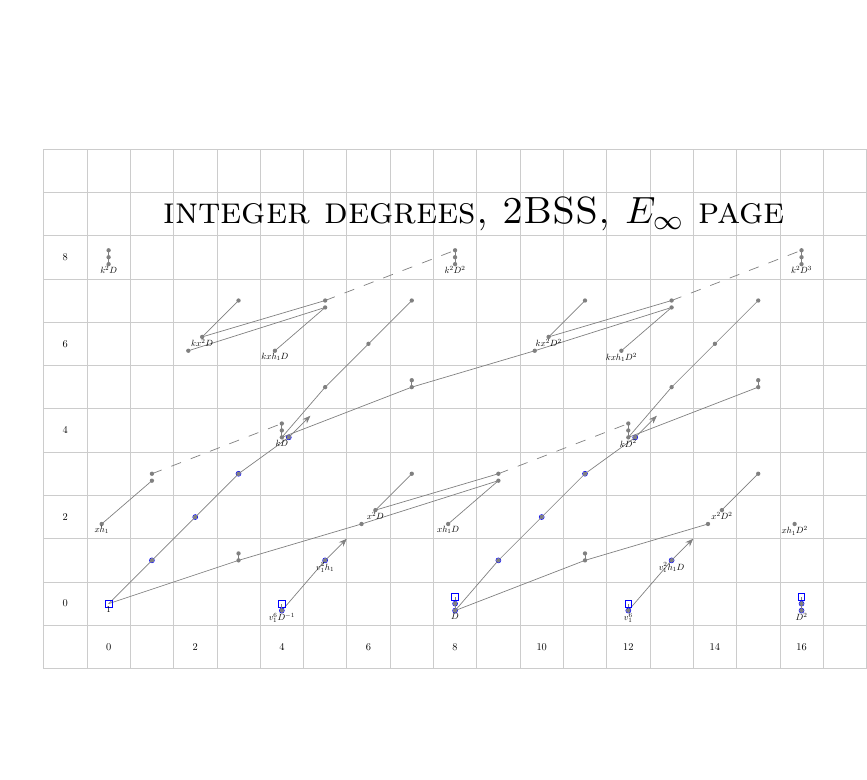}}
\caption{The $E_\infty$-page of the integer-graded 2BSS.The dotted lines are hidden  $h_2$ extensions.}
\label{fig:integer2BSS}
\end{center}
\end{figure}

\begin{figure}[H]
\begin{center}
\makebox[0.95\textwidth]{\includegraphics[trim={0cm, 0.8cm, 0cm, 0.8cm},clip,page=1,scale=1.4]{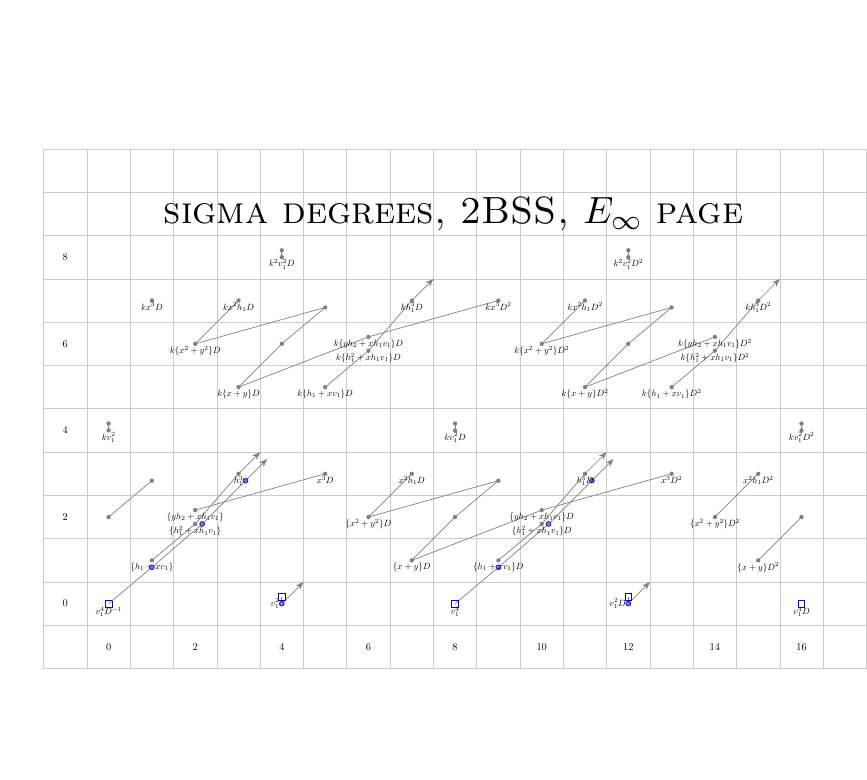}}
\caption{The $E_\infty$-page of the $(*-\sigma_i)$-graded 2BSS. The dotted lines are hidden $h_1$ and $h_2$ extensions.}
\label{fig:sigma2BSS}
\end{center}
\end{figure}


\begin{figure}[H]
\begin{center}
\makebox[0.95\textwidth]{\includegraphics[trim={0cm, 0.8cm, 0cm, 0.8cm},clip,page=1,scale=1.4]{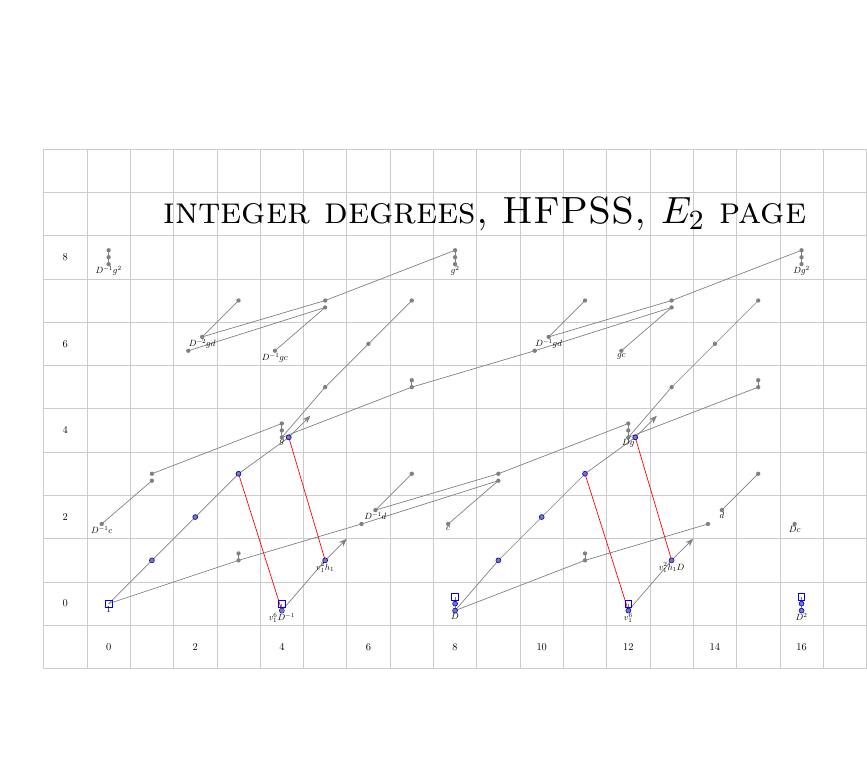}}
\caption{The $E_3$-page of the integer-graded $Q_8$-HFPSS($\E_2$). The {\color{red} red lines} are $d_3$-differentials.}
\label{fig:integerE2}
\hfill
\end{center}
\end{figure}

\begin{figure}[H]
\begin{center}
\makebox[0.95\textwidth]{\includegraphics[angle=90,trim={0cm, 0cm, 0cm, 0cm},clip,page=1,scale=0.7]{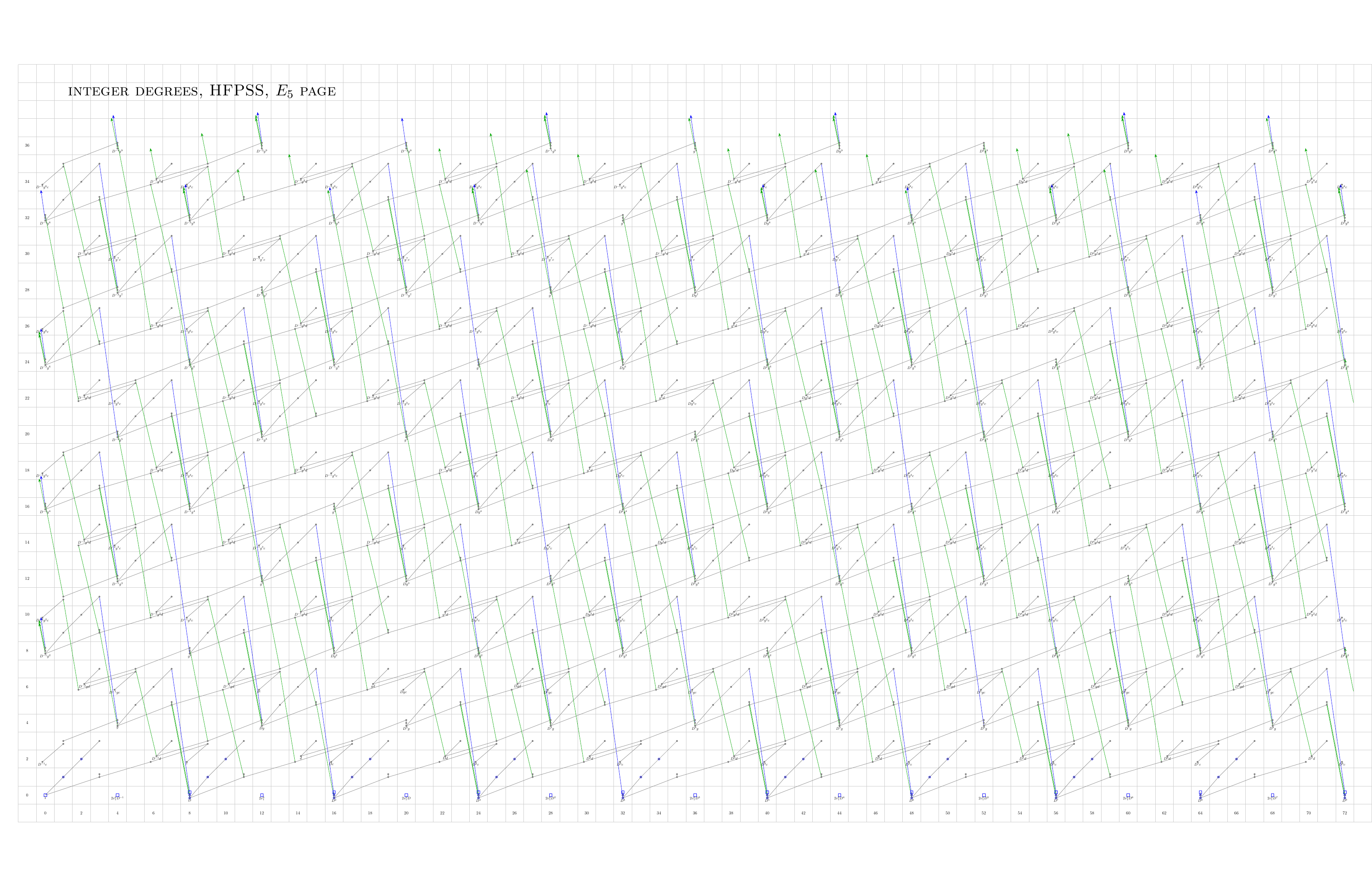}}
\caption{The $E_5$-page of the integer-graded $Q_8$-HFPSS($\E_2$). The {\color{darkgreen}green lines} are $d_5$-differentials. The {\color{blue} blue lines} are $d_7$-differentials.}
\label{fig:integerE5}
\hfill
\end{center}
\end{figure}
\newpage

\begin{figure}[H]
\begin{center}
\makebox[0.95\textwidth]{\includegraphics[angle=90,trim={0cm, 0cm, 0cm, 0cm},clip,page=1,scale=0.7]{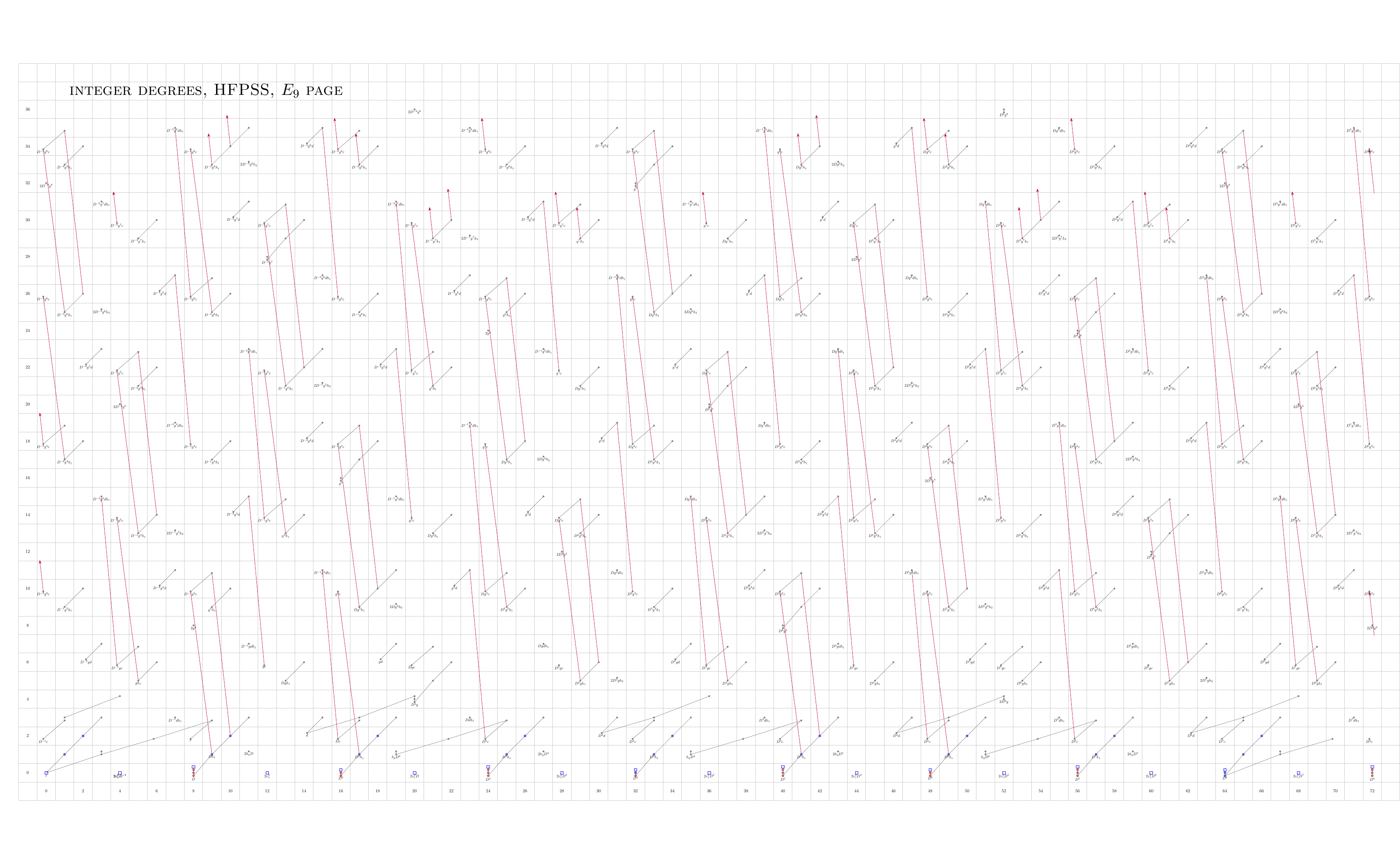}}
\caption{The $E_9$-page of the integer-graded $Q_8$-HFPSS($\E_2$). The {\color{purple}purple lines} are $d_9$-differentials.}
\label{fig:integerE9}
\hfill
\end{center}
\end{figure}

\begin{figure}[H]
\begin{center}
\makebox[0.95\textwidth]{\includegraphics[angle=90,trim={0cm, 0cm, 0cm, 0cm},clip,page=1,scale=0.7]{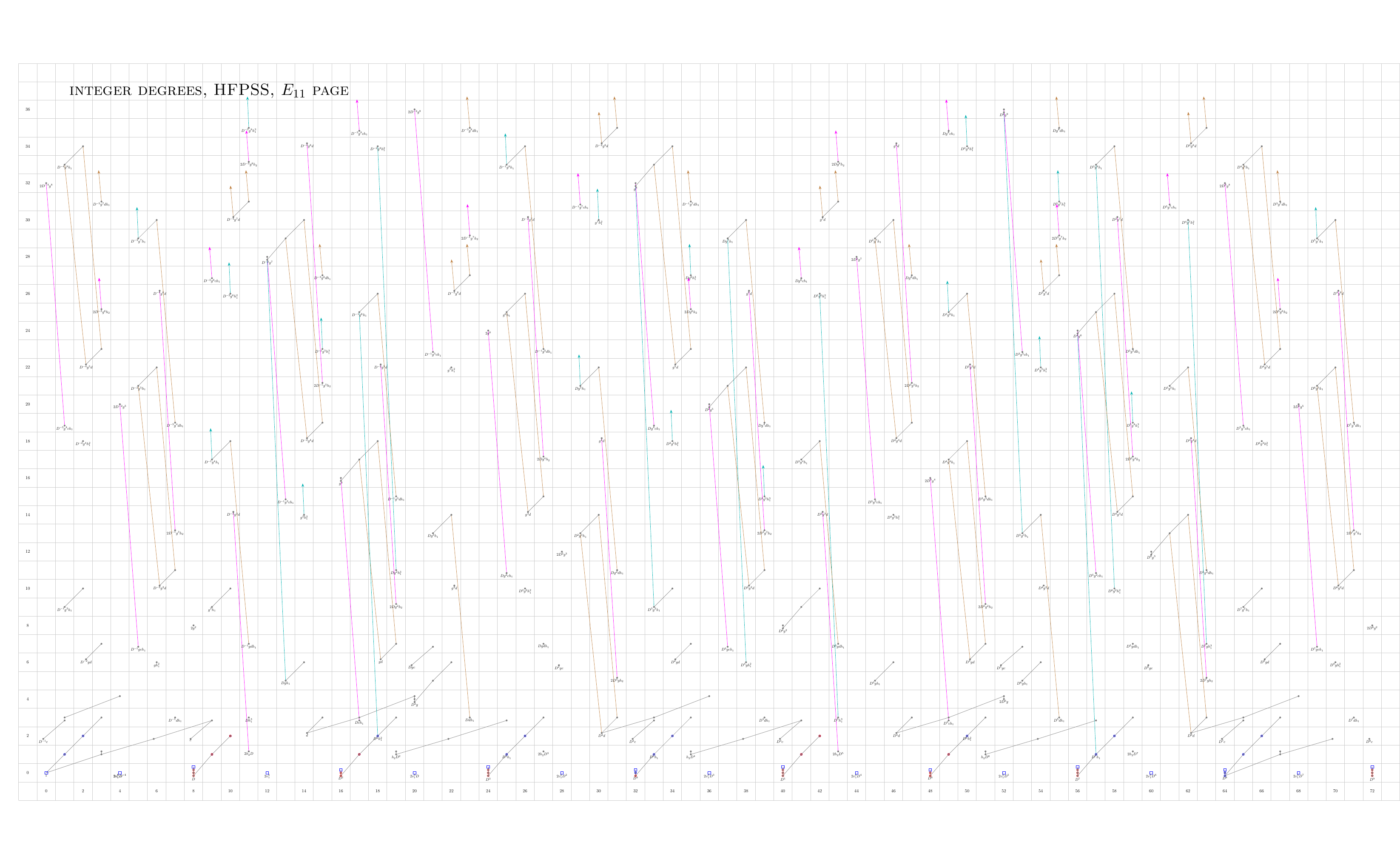}}
\caption{The $E_{11}$-page of the integer-graded $Q_8$-HFPSS($\E_2$). The {\color{brown} brown lines} are $d_{11}$-differentials. The {\color{magenta} magenta lines} are $d_{13}$-differentials. The {\color{darkgreen}green lines} are $d_{13}$-differentials.}
\label{fig:integerE11}
\hfill
\end{center}
\end{figure}

\begin{figure}[H]
\begin{center}
\makebox[0.95\textwidth]{\includegraphics[angle=90,trim={0cm, 0cm, 0cm, 0cm},clip,page=1,scale=0.7]{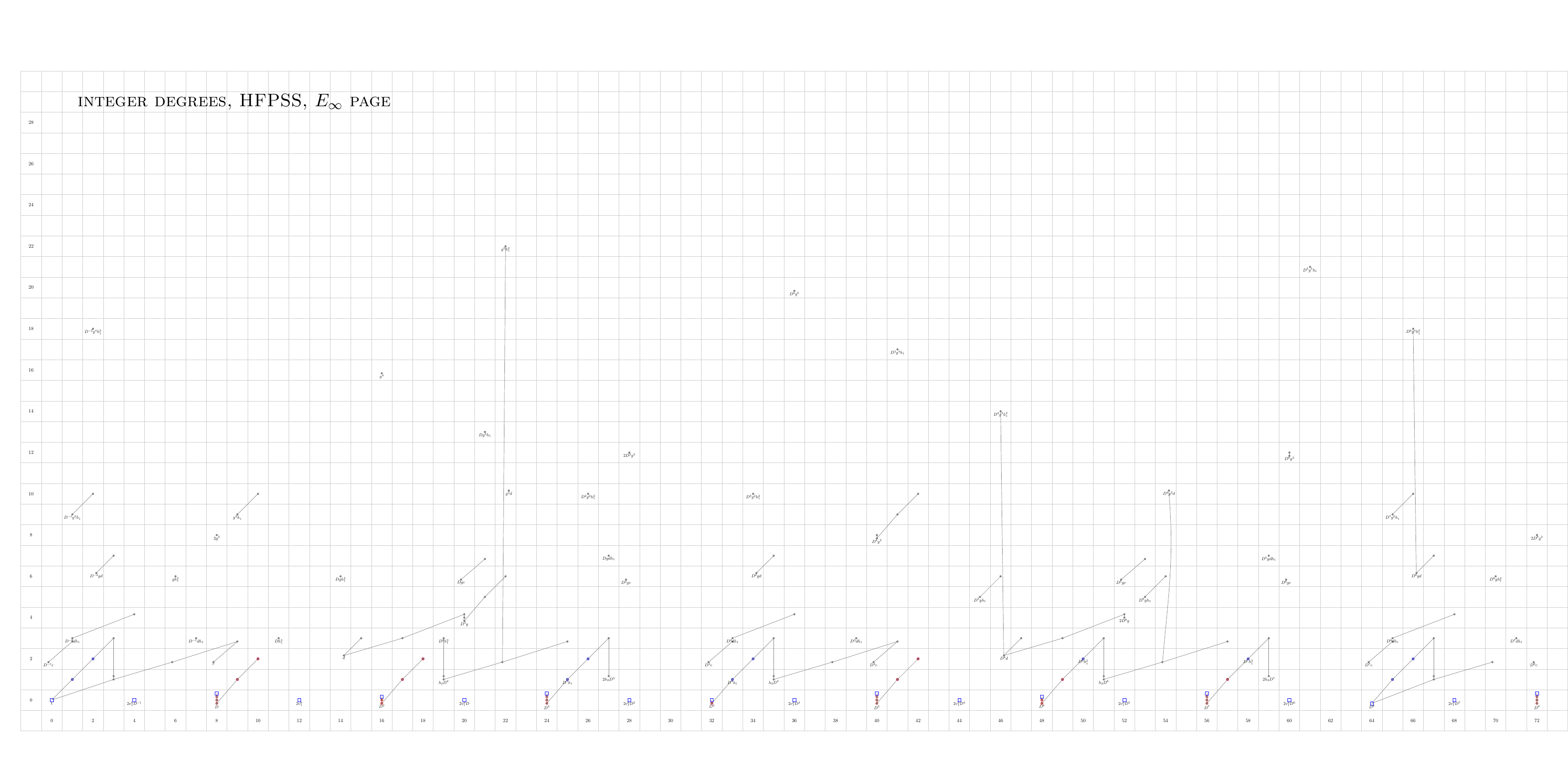}}
\caption{The $E_\infty$-page of the integer-graded $Q_8$/$SD_{16}$-HFPSS$(\E_2)$.}
\label{fig:integerEinf}
\hfill
\end{center}
\end{figure}

\newpage

\begin{figure}[H]
\begin{center}
\makebox[0.95\textwidth]{\includegraphics[angle=90, trim={0cm, 0cm, 58cm, 0cm},clip,page=1,scale=0.78]{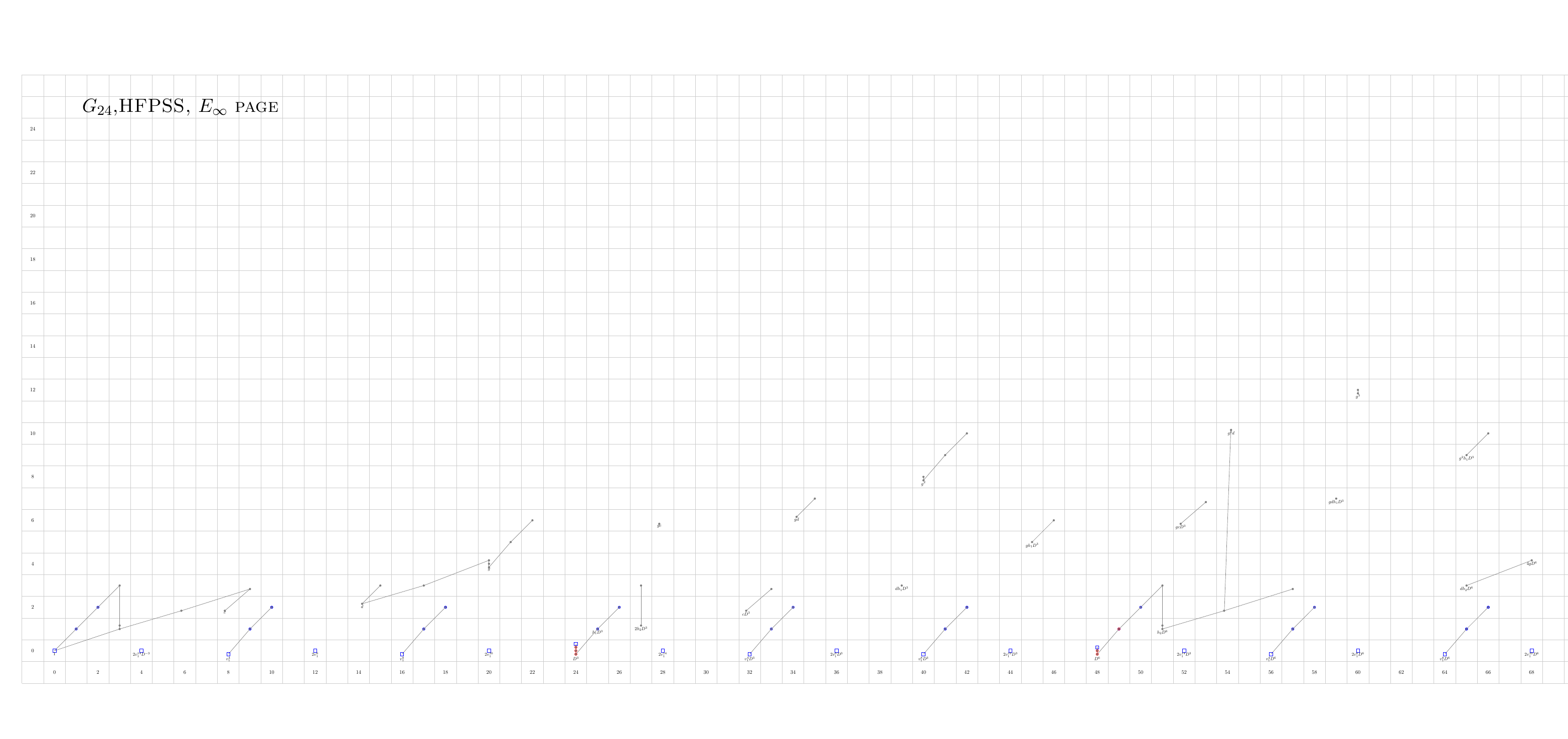}}
\caption{The $E_\infty$-page of the integer-graded $G_{24}$/$G_{48}$-HFPSS$(\E_2)$ (stem 0 -- 68).}
\label{fig:G24integerEinf1}
\hfill
\end{center}
\end{figure}

\begin{figure}[H]
\begin{center}
\makebox[0.95\textwidth]{\includegraphics[angle=90, trim={29cm, 0cm, 29cm, 0cm},clip,page=1,scale=0.78]{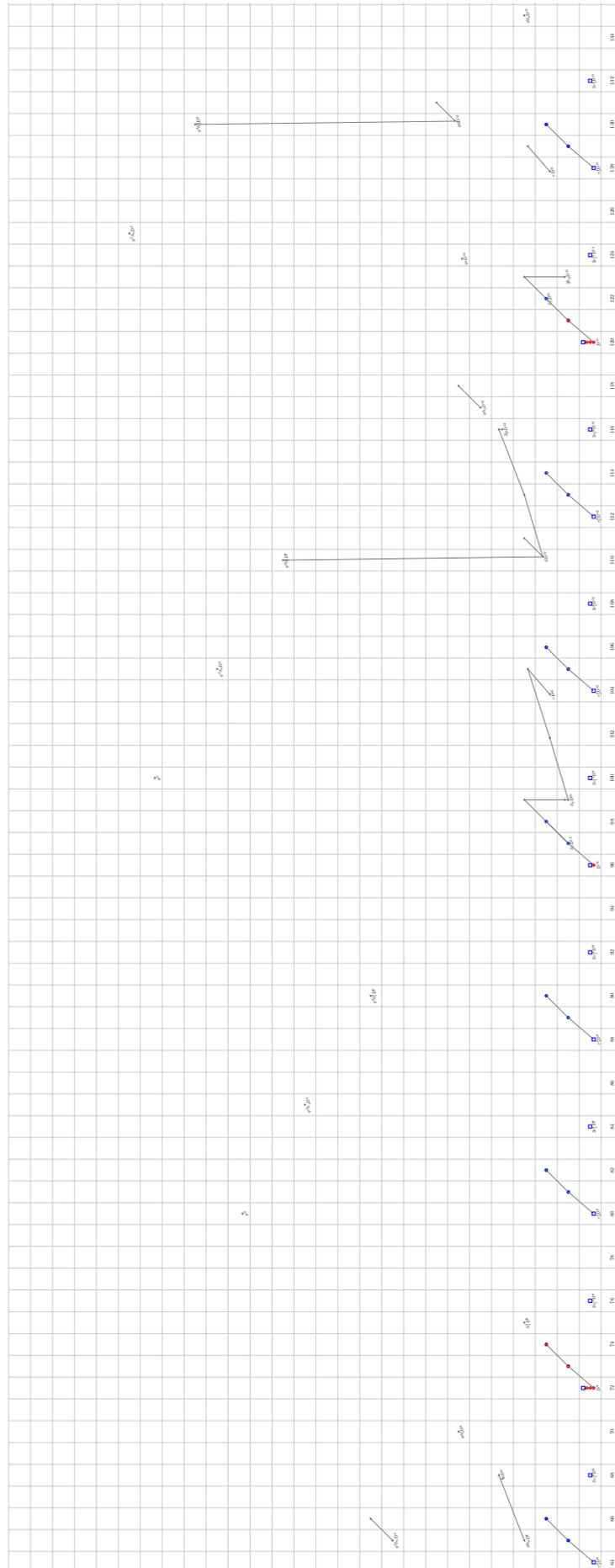}}\\
\caption{The $E_\infty$-page of the integer-graded $G_{24}$/$G_{48}$-HFPSS$(\E_2)$ (stem 64 -- 134).}
\label{fig:G24integerEinf2}
\hfill
\end{center}
\end{figure}

\begin{figure}[H]
\begin{center}
\makebox[0.95\textwidth]{\includegraphics[angle=90, trim={57cm, 0cm, 1cm, 0cm},clip,page=1,scale=0.78]{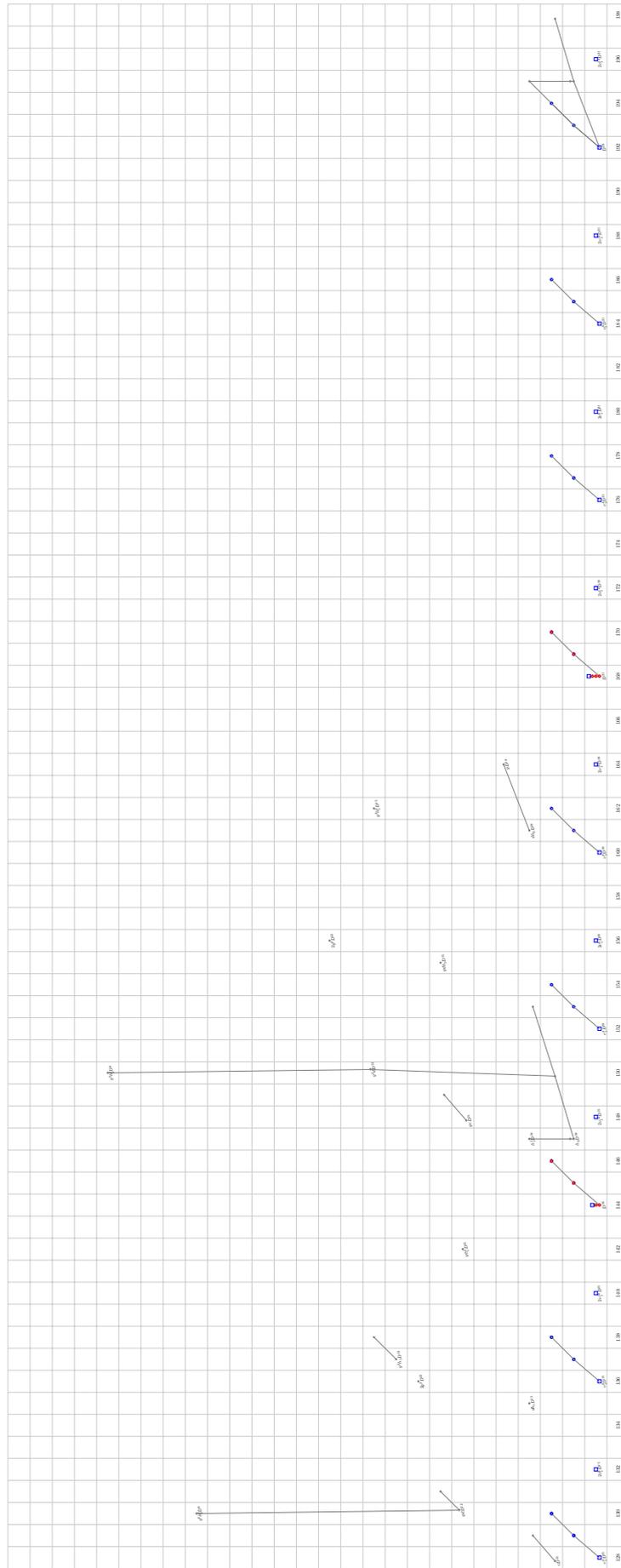}}
\caption{The $E_\infty$-page of the integer-graded $G_{24}$/$G_{48}$-HFPSS$(\E_2)$ (stem 128 -- 198).}
\label{fig:G24integerEinf3}
\hfill
\end{center}
\end{figure}

\newpage

\begin{figure}[H]
\begin{center}
\makebox[0.95\textwidth]{\includegraphics[trim={0cm, 0.8cm, 0cm, 0.8cm},clip,page=1,scale=1.4]{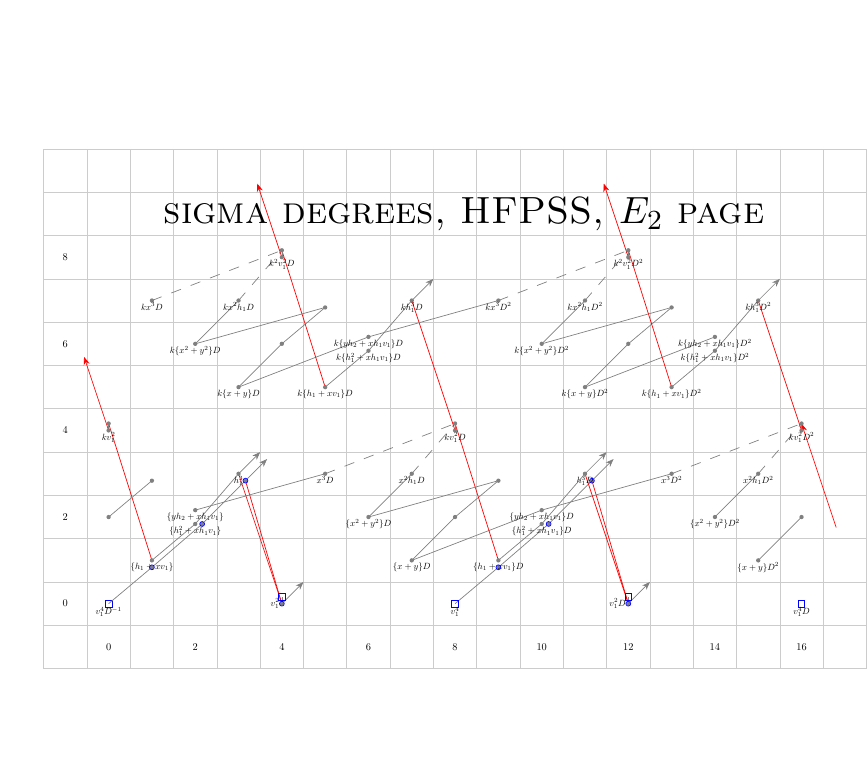}}
\caption{The $E_2$-page of the $(*-\sigma_i)$-graded $Q_8$-HFPSS($\E_2$). The {\color{red}red lines} are $d_3$-differentials.}

\label{fig:sigmaE2}
\hfill
\end{center}
\end{figure}

\begin{figure}[H]
\begin{center}
\makebox[0.95\textwidth]{\includegraphics[angle=90,trim={0cm, 0cm, 0cm, 0cm},clip,page=1,scale=0.7]{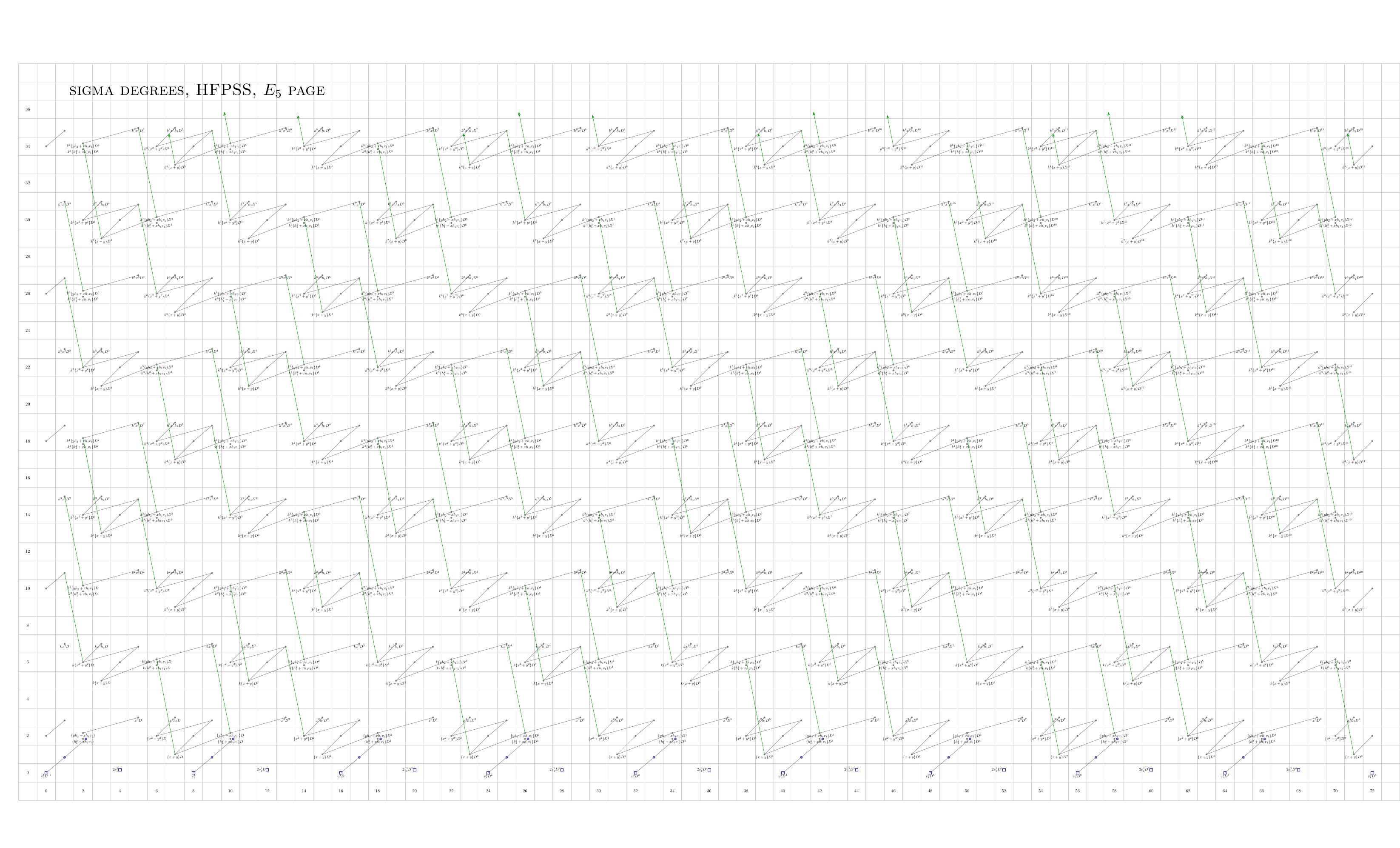}}
\caption{The $E_5$-page of the $(*-\sigma_i)$-graded $Q_8$-HFPSS($\E_2$). The {\color{darkgreen}green lines} are $d_5$-differentials.}
\label{fig:sigmaE5}
\hfill
\end{center}
\end{figure}

\begin{figure}[H]
\begin{center}
\makebox[0.95\textwidth]{\includegraphics[angle=90,trim={0cm, 0cm, 0cm, 0cm},clip,page=1,scale=0.7]{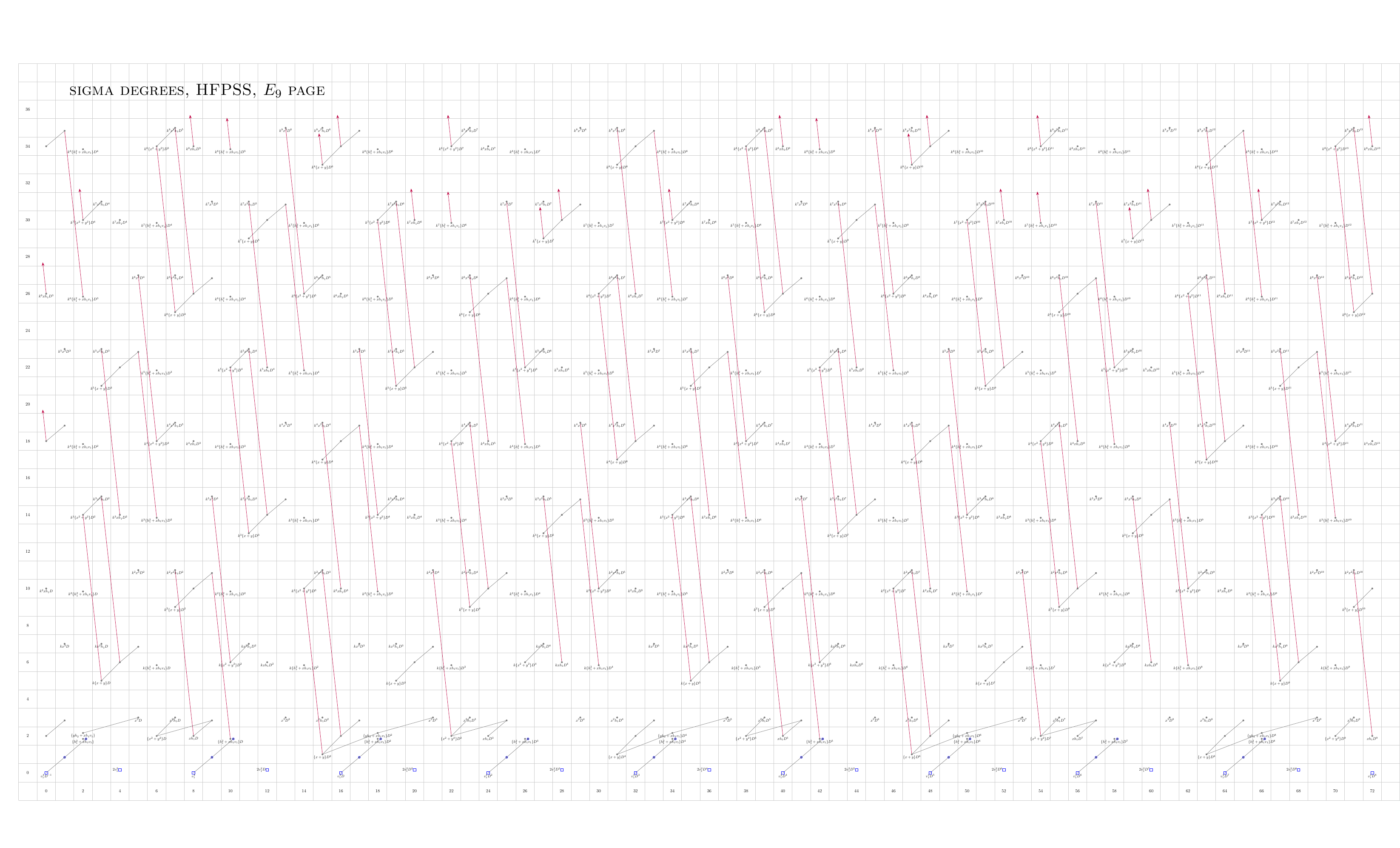}}
\caption{
The $E_9$-page of the $(*-\sigma_i)$-graded $Q_8$-HFPSS($\E_2$). The {\color{purple}purple lines} are $d_9$-differentials.}
\label{fig:sigmaE7}
\hfill
\end{center}
\end{figure}

\begin{figure}[H]
\begin{center}
\makebox[0.95\textwidth]{\includegraphics[angle=90,trim={0cm, 0cm, 0cm, 0cm},clip,page=1,scale=0.7]{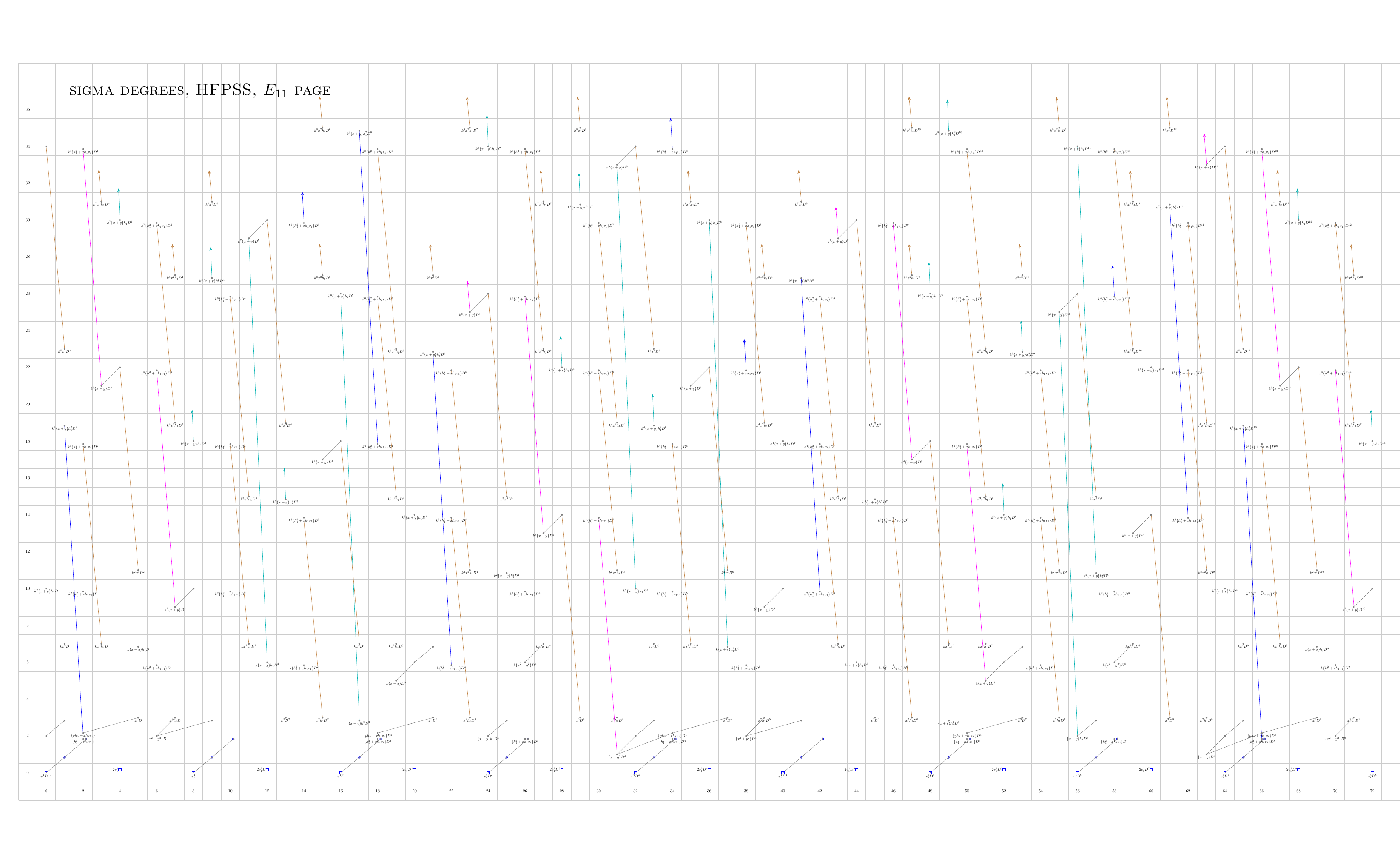}}
\caption{
The $E_{11}$-page of the $(*-\sigma_i)$-graded $Q_8$-HFPSS($\E_2$). The {\color{brown} brown lines} are $d_{11}$-differentials. The {\color{magenta} magenta lines} are $d_{13}$-differentials. The {\color{blue}blue lines} are $d_{17}$-differentials. The {\color{darkgreen}green lines} are $d_{13}$-differentials.
}
\label{fig:sigmaE9}
\hfill
\end{center}
\end{figure}

\begin{figure}[H]
\begin{center}
\makebox[0.95\textwidth]{\includegraphics[angle=90,trim={0cm, 0cm, 0cm, 0cm},clip,page=1,scale=0.7]{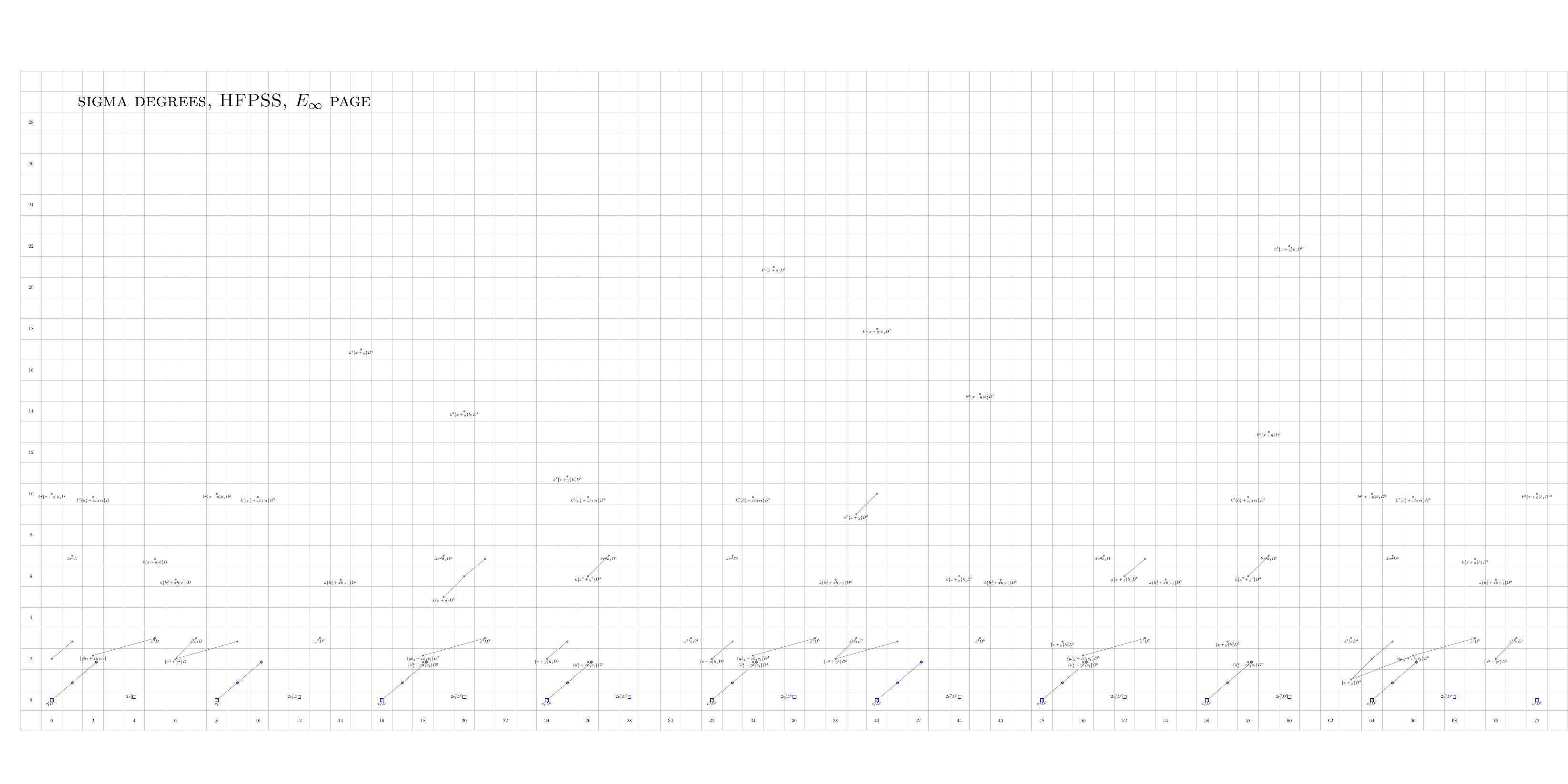}}
\caption{The $E_\infty$-page of the $(*-\sigma_i)$-graded $Q_8$/$SD_{16}$-HFPSS$(\E_2)$.}
\label{fig:sigmaEinf}
\hfill
\end{center}
\end{figure}

\appendix 

\section{Group Cohomology}\label{sec:groupcoho}

In this appendix, we collect and present examples of computations of group cohomology. There are two main applications: one is to calculate it as the input for the $E_2$-page of the integer- and $(*-\sigma_i)$-graded homotopy fixed points spectral sequences for $\E_2$, the other is to utilize restrictions, transfers, and norm maps for proofs of differentials. All the rests needed for our computation of the $Q_8$-HFPSS for $\E_2$ are listed in \cref{RestrictionList}.

Let $Q_8$ be presented as 
\[Q_8=\langle i,j\,|\,i^4,i^2=j^2,ijij^{-1}\rangle\] with its real representation ring $RO(Q_8) = \mathbb{Z}\{1, \sigma_i, \sigma_j, \sigma_k, \mathbb{H}\}$. To calculate $H^*(Q_8,A)$ we will use the following 4-periodic free $\mathbb{Z}[Q_8]$-resolution:
\[
0\leftarrow \Z\xleftarrow{\nabla}X_0\xleftarrow{d_0} X_1 \xleftarrow{d_1} X_2 \xleftarrow{d_2} ...,
\]
where $X_0=\mathbb{Z}[Q_8]\{a_0\}$, $\nabla(a_0)=1$, and for $k\geq 0$,
\[
\begin{array}{ll}
    X_{4k+1}=\mathbb{Z}[Q_8]\{b_{k,1},b_{k,2}\},\quad & d(b_{k,1})=(i-1)a_k,\,\\[2pt]
    & d(b_{k,2})=(j-1)a_k,\\[2pt]
    X_{4k+2}=\mathbb{Z}[Q_8]\{c_{k,1},c_{k,2}\}, \quad & d(c_{k,1})=(1+i)b_{k,1}-(1+j)b_{k,2},\\[2pt]
    & d(c_{k,2})=(1+ij)b_{k,1}+(i-1)b_{k,2},\\[2pt]
    X_{4k+3}=\mathbb{Z}[Q_8]\{e_k\}, \quad &  d(e_k)=(i-1)c_{k,1}-(ij-1)c_{k,2},\\[2pt]
    X_{4k+4}=\mathbb{Z}[Q_8]\{a_{k+1}\},\quad  & d(a_{k+1})=\textstyle \sum_{g\in Q_8} g\cdot e_{k}
\end{array}
\]

Suppose that $A$ is a $Q_8$-module, then $H^*(Q_8;A)$ is the cohomology of the cochain complex 
\begin{equation*}A \xrightarrow{d_0} A\oplus A 
\xrightarrow{d_1} A\oplus A \xrightarrow{d_2} A \xrightarrow{d_3} A\rightarrow ...\end{equation*}
where the differentials (by abuse of notation) are given by the following matrices
\[
    d_{4k}=\begin{pmatrix} i-1 \\ j-1 \end{pmatrix},
    \quad d_{4k+1}=\begin{pmatrix} 1+i & -1-j\\
        1+ij &-1+i \end{pmatrix}, 
        \quad d_{4k+2}=\begin{pmatrix} -1+i & 1-ij \end{pmatrix}, 
\]
and $d_{4k+3}=\sum_{g\in Q_8}g.$

We record here the group cohomology of $Q_8$ with trivial $\mathbb{Z}$ coefficients
\begin{equation*}
\begin{aligned}
    H^{4k+2}(Q_8,\mathbb{Z})&=\mathbb{Z}/2\oplus \mathbb{Z}/2,\\
H^{4k+4}(Q_8,\mathbb{Z})&=\mathbb{Z}/8,\\
H^{2q+1}(Q_8,\mathbb{Z})&=0,\\
\end{aligned}
\end{equation*}
where $k\geq 0, q\geq 0$, and the generator of $H^4(Q_8,\mathbb{Z})$ gives the 4-periodicity. 

In addition to the integer-graded  $Q_8$-HFPSS for $\E_2$, we also compute the $(*-\sigma_i)$-graded part. For this purpose, we study the structure of $\pi_* \E_2\otimes \sigma_i$ as a  $Q_8$-module,  which is given by the following analog of \cite[Lemma 4.6]{HM17} :

\begin{lem}\label{lem:twist}
Let $E$ be a $Q_8$-spectrum. Then \[\pi_*^e(E\wedge S^{1-\sigma_i})\cong \pi_*^e E \otimes \sigma_i\] as $Q_8$-modules.
\end{lem}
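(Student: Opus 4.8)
The statement $\pi_*^e(E \wedge S^{1-\sigma_i}) \cong \pi_*^e E \otimes \sigma_i$ is really a statement about how the underlying homotopy groups of a twist by a one-dimensional virtual representation inherit a $Q_8$-module structure, and the cleanest route is to unwind the definition of the $Q_8$-action on each side and compare. First I would recall that $\pi_*^e(E \wedge S^{1-\sigma_i})$ is computed on the underlying spectrum, so as a graded abelian group it is visibly $\pi_*^e(E \wedge S^{1-\sigma_i}) \cong \pi_{*-1+1}^e E = \pi_*^e E$, since underlyingly $\sigma_i$ is just the trivial one-dimensional real representation and $S^{1-\sigma_i}$ is underlyingly $S^0$. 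The content is therefore entirely in the $Q_8$-action. The plan is to make the identification $S^{1-\sigma_i} \wedge (-)$ into a twist at the level of the $Q_8$-action on homotopy groups by tracking how an element $g \in Q_8$ acts.

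The key computation: for $g \in Q_8$, the action of $g$ on $\pi_n^e(E \wedge S^{1-\sigma_i})$ is induced by $g$ acting diagonally on $E$ and on $S^{1-\sigma_i}$. On $S^{1-\sigma_i}$, the element $g$ acts by the identity if $g \in \ker(\sigma_i) = C_4\langle i \rangle$, and by the degree $-1$ map (i.e.\ the antipode, coming from the sign of $\sigma_i(g) = -1$) otherwise. Here one must be slightly careful because the twist is by $1 - \sigma_i$, not $-\sigma_i$: the trivial summand contributes nothing, so the sign is $(\pm 1)^{?}$ determined by $\sigma_i(g)$ alone. Concretely, under the canonical underlying identification $E \wedge S^{1-\sigma_i} \simeq E$, the action of $g$ transports to the action of $g$ on $\pi_*^e E$ multiplied by $\sigma_i(g) \in \{\pm 1\}$ (using that a degree $-1$ self-map of $S^0$ acts by $-1$ on homotopy groups). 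That multiplication-by-$\sigma_i(g)$ correction is precisely the definition of the $Q_8$-module $\pi_*^e E \otimes \sigma_i$ (with $\sigma_i$ regarded as the sign $\mathbb{Z}[Q_8]$-module whose underlying group is $\mathbb{Z}$ and on which $g$ acts by $\sigma_i(g)$). So I would: (1) fix the underlying equivalence $S^{1-\sigma_i} \wedge E \simeq E$ (for instance by choosing a nonequivariant trivialization of $S^{1-\sigma_i}$, or by smashing with the cofiber description $S^{1-\sigma_i}$ and using that $\sigma_i$ restricted to the trivial group is trivial); (2) for each $g \in Q_8$ compute the automorphism this equivalence conjugates $g_* $ into, observing it differs from $g_*$ on $\pi_*^e E$ by the scalar $\sigma_i(g)$; (3) conclude the isomorphism of $Q_8$-modules, naturally in $E$.

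The main obstacle is bookkeeping rather than conceptual: one must make precise the claim that "conjugating the $g$-action on $\pi_*^e E$ by the chosen trivialization of $S^{1-\sigma_i}$ multiplies it by $\sigma_i(g)$", because the trivialization is not $Q_8$-equivariant and one needs to verify the discrepancy is exactly the scalar and does not depend on further choices. The clean way to see this is to note that two trivializations of $S^{1-\sigma_i}$ over the trivial group differ by a unit of $\pi_0^e S^0 \cong \mathbb{Z}$, i.e.\ by $\pm 1$, and that $g$ acting on the set of trivializations permutes them by the same $\pm 1$ that records $\sigma_i(g)$; equivalently, one invokes the standard fact (exactly as in \cite[Lemma 4.6]{HM17}, whose proof this mirrors) that for a virtual representation $W$ of dimension $0$ and a $G$-spectrum $E$, the $G$-module $\pi_*^e(E \wedge S^W)$ is $\pi_*^e E$ twisted by the "orientation character" $g \mapsto \det(g \mid W)$, which for $W = 1 - \sigma_i$ is $g \mapsto \sigma_i(g)$. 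With that fact in hand the proof is a one-line specialization, and I would cite the $C_{p^n}$-analogue in \cite{HM17} and remark that the argument is identical, only the group and the one-dimensional representation have changed.
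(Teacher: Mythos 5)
Your proposal is correct and is essentially the paper's own argument: the paper gives no independent proof but simply invokes the analogue of \cite[Lemma~4.6]{HM17}, and your unwinding — underlying homotopy unchanged, with the $g$-action conjugated by a nonequivariant trivialization of $S^{1-\sigma_i}$ picking up exactly the sign $\sigma_i(g)$ — is precisely the standard argument behind that citation.
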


Recall that we defined $v_1=u_1u^{-1}$ and its $Q_8$-action was given in (\ref{eq:typeone}). By \cref{lem:E2completion}, we may first compute $H^*(Q_8,\W[u^{-1},v_1])$,  then invert $D$ and complete at $I=(2,u_1)$.

\begin{rem} If we define $s=i_*(u^{-1})$ and denote $u^{-1}$ by $t$, then the actions of $Q_8$ on $s,t$ are given by
\begin{equation*}\label{eq:typeone2}
    \begin{aligned}
    i_*(s)&=-t, & i_*(t)&=s\\
    j_*(s)&=-\zeta^2s+\zeta t, & j_*(t)&=\zeta s+\zeta^2 t\\
    k_*(s)&=\zeta s+\zeta^2 t, & k_*(t)&=\zeta^2 s-\zeta t
    \end{aligned}
\end{equation*}
For computational purposes, it is equivalent to replacing generators $u^{-1},v_1$ by $s,t$, and the form of the action turns out to be more compact.
\end{rem}

We first calculate the $0^\text{th}$cohomology ring. Behrens and Ormsby \cite{BO16} have determined the $C_4\langle i\rangle$-invariants:
\begin{prop} Let $b_2=s^2+t^2$, $b_4=s^3t-st^3$ and $\delta=s^2t^2$, then \[
H^0(C_4, \W[u^{-1},v_1])=\W[b_2,b_4,\delta]/(b_4^2-b_2^2\delta+4\delta^2).
\]
\end{prop}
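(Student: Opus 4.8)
The plan is to compute the ring of $C_4\langle i\rangle$-invariants of $\W[u^{-1},v_1]=\W[s,t]$ directly from the explicit action $i_*(s)=-t$, $i_*(t)=s$. First I would record the obvious relations among the three proposed generators: from the definitions $b_2=s^2+t^2$, $b_4=s^3t-st^3=st(s^2-t^2)$ and $\delta=s^2t^2$, a short computation gives $b_2^2=s^4+2s^2t^2+t^4$ and, writing $b_4=st(s-t)(s+t)$, one checks $b_4^2 = s^2t^2(s^2-t^2)^2 = \delta\bigl((s^2+t^2)^2-4s^2t^2\bigr)=b_2^2\delta-4\delta^2$, which is exactly the claimed relation $b_4^2-b_2^2\delta+4\delta^2=0$. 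One also verifies each of $b_2,b_4,\delta$ is $i_*$-fixed: $i_*(b_2)=t^2+s^2=b_2$; $i_*(\delta)=t^2s^2=\delta$; $i_*(b_4)=(-t)^3s-(-t)s^3 = -t^3s+ts^3 = s^3t-st^3=b_4$. So there is a ring map $\W[b_2,b_4,\delta]/(b_4^2-b_2^2\delta+4\delta^2)\to H^0(C_4,\W[s,t])$, and it remains to show it is an isomorphism.

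For injectivity and surjectivity I would argue degree by degree, grading $\W[s,t]$ by total polynomial degree in $s,t$ (so $\deg b_2 = 2$, $\deg b_4 = 4$, $\deg\delta=4$), since the $C_4$-action is degree-preserving. Since $\W$ is a torsion-free $2$-adically complete ring and everything in sight is finitely generated in each degree, it is harmless to pass to $\W\otimes\mathbb{Q}$ (equivalently, to work over a field of characteristic $0$ containing $\zeta$) for the bookkeeping, and then to check that the integral/$2$-adic lattices match — this last point needs a little care but is the kind of routine check one can carry out by exhibiting explicit $\W$-bases in low degree and invoking the relation to propagate. Over a $\mathbb{Q}$-algebra, $|C_4|$ is invertible, so $H^0$ is computed by the averaging (Reynolds) operator and one can compute $\dim$ of the degree-$n$ invariants via Molien's series $\tfrac14\sum_{g\in C_4}\det(1-gx)^{-1}$ for the $2$-dimensional representation of $C_4$ on $\W\{s,t\}$; I would compare the resulting Poincaré series with that of $\W[b_2,b_4,\delta]/(b_4^2-b_2^2\delta+4\delta^2)$, namely $\dfrac{1-x^8}{(1-x^2)(1-x^4)^2}=\dfrac{1+x^4}{(1-x^2)(1-x^4)}$, and check they agree. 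Agreement of Poincaré series plus injectivity of the map (which follows because $\{b_2,\delta\}$ is a regular sequence, the quotient ring is a hypersurface hence a free module of rank $2$ over $\W[b_2,\delta]$ with basis $\{1,b_4\}$, and one can see $b_2,\delta,b_4$ are algebraically as independent as that relation allows inside the domain $\W[s,t]$) forces the map to be an isomorphism.

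Concretely, the cleanest route to surjectivity is probably the explicit one: change coordinates to the $i_*$-eigenvectors $p=s+it$, $q=s-it$ (working over $\W[\zeta,\sqrt{-1}]$ momentarily, or just over $\mathbb{Q}(\zeta,\sqrt{-1})$), on which $i_*$ acts by $p\mapsto ip$, $q\mapsto -iq$ up to scalars; then the invariants are spanned by monomials $p^aq^b$ with $a\equiv b\pmod 4$, and one identifies $p^4, q^4, pq$ (the generators of that monoid algebra, subject to $p^4q^4=(pq)^4$) with explicit polynomials in $b_2,b_4,\delta$. Descending back to the real form recovers exactly $\W[b_2,b_4,\delta]/(b_4^2-b_2^2\delta+4\delta^2)$.

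\textbf{Expected main obstacle.} The genuinely delicate point is \emph{integrality}: Molien/eigenvector arguments compute the invariants over $\W[\tfrac12]$ (or $\mathbb{Q}$), but the statement is about the $\W$-lattice of invariants, and a priori the map $\W[b_2,b_4,\delta]/(\text{rel})\to H^0(C_4,\W[s,t])$ could fail to be surjective at the prime $2$ — there might be an invariant polynomial whose only expression in $b_2,b_4,\delta$ has denominators. I would handle this by an explicit finite check in each residue degree modulo $4$ (degrees $0,1,2,3$ in the polynomial grading suffice by periodicity of the monoid $\{(a,b):a\equiv b \bmod 4\}$ under adding $(4,0),(0,4),(1,1)$), producing in each case a $\W$-basis of invariants of that congruence type and matching it against $\W$-combinations of monomials in $b_2,b_4,\delta$; the hypersurface relation then does the rest. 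This is the part one must actually grind through, but it is a bounded computation, not an open-ended one.
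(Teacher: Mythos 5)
Your argument is correct in outline, but note that the paper does not actually prove this proposition: it simply quotes the determination of the $C_4\langle i\rangle$-invariants from Behrens--Ormsby \cite{BO16}, so your self-contained invariant-theory computation is a genuinely different (and more elementary) route. What your approach buys is independence from the external reference; what it costs is exactly the step you flag, the $2$-integrality of the lattice of invariants, and that step is the real content since $\sqrt{-1}\notin\W(\mathbb{F}_4)$ and the change of basis to the eigenvectors $p=s+\sqrt{-1}t$, $q=s-\sqrt{-1}t$ has determinant $-2\sqrt{-1}$, so the Molien/monoid bookkeeping only controls the invariants after inverting $2$ (your ``degrees $0,1,2,3$ suffice'' remark is phrased in the eigenbasis and so does not by itself give the integral statement). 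The cleanest way to close it avoids $p,q$ altogether: from $i_*(s^at^b)=(-1)^as^bt^a$, the degree-$n$ invariants ($n$ even) have the explicit $\W$-basis $\{s^at^b+(-1)^as^bt^a : a>b,\ a+b=n\}$ together with $(st)^{n/2}=\delta^{n/4}$ when $4\mid n$; factoring out $(st)^{\min(a,b)}$, and in the case $\min(a,b)$ odd also $b_4=st(s^2-t^2)$, each basis element reduces to a symmetric polynomial in $s^2,t^2$ (a power sum or a complete homogeneous symmetric polynomial), hence lies in $\W[b_2,\delta]$ by Newton's identities with integral coefficients. This gives surjectivity of $\W[b_2,b_4,\delta]/(b_4^2-b_2^2\delta+4\delta^2)\to H^0(C_4,\W[s,t])$ over $\W$, not just rationally; injectivity then follows either from your rank count (a degreewise surjection of free $\W$-modules of equal finite rank is an isomorphism, and your Poincar\'e/Molien comparison is right) or, more directly, from the substitution $t\mapsto -t$, which fixes $b_2,\delta$ and negates $b_4$, so a relation $f+gb_4=0$ with $f,g\in\W[b_2,\delta]$ forces $f=g=0$. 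Two further small points: the verification of the relation $b_4^2=b_2^2\delta-4\delta^2$ and of $i_*$-invariance of $b_2,b_4,\delta$ is exactly as you wrote it, and the identification $\W[u^{-1},v_1]=\W[s,t]$ you use implicitly is legitimate because $\zeta^2-\zeta$ is a unit in $\W$ (its square is $-3$), which is what the paper's Remark after the proposition is recording.
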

The $j$-actions on $b_2,b_4,\delta$ are the following:
\begin{equation*}
\begin{aligned}
    j_*(b_2)&=-b_2,\\
    j_*(b_4)&=-(2\zeta+1)b_2^2+7b_4+8(2\zeta+1)\delta,\\
    j_*(\delta)&=b_2^2+2(2\zeta+1)b_4-7\delta.
\end{aligned}
\end{equation*}

\begin{prop} We have the $0^\text{th}$ cohomology ring
\[
H^0(Q_8,\W[u^{-1},v_1])= \W[s_1,s_2,s_3]/(s_1^3=4(2\zeta+1)s_1^2s_2+16s_1s_2^2)
\]
where $s_1=b_2^2$, $s_2=b_4+(2\zeta+1)\delta$, and $s_3=b_2^3+2(2\zeta+1)b_4b_2-8b_2\delta.$ 
\end{prop}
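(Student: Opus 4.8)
The plan is to compute $H^0(Q_8, \W[u^{-1},v_1])$ as the $C_3$-fixed... wait, no --- here $Q_8$ already contains $C_4\langle i\rangle$ as an index-$2$ subgroup, so the plan is to pass from the already-known ring $H^0(C_4\langle i\rangle, \W[u^{-1},v_1]) = \W[b_2,b_4,\delta]/(b_4^2 - b_2^2\delta + 4\delta^2)$ to its sublattice of $j$-invariants, using the $j$-action formulas recorded just above the statement. Concretely, since $Q_8 = C_4\langle i\rangle \cup j\cdot C_4\langle i\rangle$ and $j^2 = i^2$ acts trivially on $u^{-1}$ up to the $C_4$-action already accounted for (one checks $j^2 = i^2 \in C_4\langle i\rangle$ acts as $-1$ on both $s,t$, hence trivially on $b_2,b_4,\delta$), we have $H^0(Q_8, \W[u^{-1},v_1]) = H^0(C_4\langle i\rangle, \W[u^{-1},v_1])^{\langle j\rangle/\langle j^2\rangle}$, i.e. the subring of the degree-$0$ $C_4$-cohomology fixed by the order-$2$ operator $j_*$.

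First I would record the $j_*$-eigenstructure. From the displayed formulas, $j_*(b_2) = -b_2$, so $b_2$ is anti-invariant; hence $b_2^2$ is invariant, which explains the generator $s_1 = b_2^2$. Next, $j_*$ acts on the $\W$-span of $b_4$ and $\delta$ (mod the $b_2^2$ term) by the matrix $\begin{pmatrix} 7 & 8(2\zeta+1) \\ 2(2\zeta+1) & -7\end{pmatrix}$ plus an inhomogeneous $b_2^2$-part. I would diagonalize the $j_*$-action on the relevant finitely generated $\W$-module spanned by monomials in $b_2,b_4,\delta$ of each weight, being careful that $j_*^2 = \mathrm{id}$ (a good consistency check on the formulas, using $\zeta^2+\zeta+1=0$, i.e. $(2\zeta+1)^2 = -3$). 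The invariant combination among degree-$(b_4,\delta)$ elements turns out to be $s_2 = b_4 + (2\zeta+1)\delta$ --- one verifies $j_*(s_2) = j_*(b_4) + (2\zeta+1)j_*(\delta)$ and collects terms to get $s_2 + (\text{multiple of } b_2^2)$, then adjusts; similarly $s_3 = b_2^3 + 2(2\zeta+1)b_4 b_2 - 8 b_2\delta$ is the invariant in the next weight involving an odd power of $b_2$ (note $s_3$ is $j_*$-invariant because it pairs the anti-invariant $b_2$ with the anti-invariant part of $\{b_4,\delta\}$). I expect these verifications to be the bulk of the routine work.

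The substantive step is to prove that $\W[s_1,s_2,s_3]$, modulo the single stated relation, is \emph{exactly} the invariant ring, not merely contained in it. For this I would argue as follows: $H^0(C_4\langle i\rangle, \W[u^{-1},v_1])$ is a free $\W$-module with an explicit monomial-type basis coming from $\W[b_2,b_4,\delta]/(b_4^2 - b_2^2\delta + 4\delta^2)$ (so a basis is $\{b_2^a \delta^c, b_2^a b_4 \delta^c\}$), and the order-$2$ operator $j_*$ decomposes this (after inverting $2$, or working integrally with more care) into $+1$ and $-1$ eigenspaces; the $+1$-eigenspace is the invariant ring, and I would identify a $\W$-basis of it and match it against the monomial basis of $\W[s_1,s_2,s_3]/(\text{relation})$ weight by weight --- e.g. by comparing Poincaré series in a grading where $\deg b_2 = 2$, $\deg b_4 = 4$, $\deg\delta = 4$, so $\deg s_1 = 4$, $\deg s_2 = 4$, $\deg s_3 = 6$, and the relation has degree $12$. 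The identity $s_1^3 = 4(2\zeta+1)s_1^2 s_2 + 16 s_1 s_2^2$ is then checked by a direct (if tedious) substitution using $b_4^2 = b_2^2\delta - 4\delta^2$ and $(2\zeta+1)^2 = -3$.

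The main obstacle I anticipate is the integral (rather than rational) bookkeeping: because $2$ is not invertible in $\W$, splitting into $j_*$-eigenspaces is not automatic, and one must check that the candidate invariants $s_1, s_2, s_3$ actually generate the invariants over $\W$ and that the displayed relation generates \emph{all} relations (there are no hidden $2$-torsion phenomena in this degree-$0$ piece). The cleanest way around this is probably to exhibit an explicit $\W$-linear splitting of the inclusion of the invariant submodule --- for instance via the ``transfer'' $x \mapsto \tfrac12(x + j_* x)$ on the part where it lands integrally, combined with a direct check on the finitely many low-weight generators --- and then invoke that $H^0(C_4\langle i\rangle, \W[u^{-1},v_1])$ is a domain to rule out spurious relations. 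If one prefers, one can instead cite or re-derive this $0$-th cohomology ring from the elliptic-curve description in \cite{Bea17} and merely verify the stated presentation matches; but the eigenspace computation above is self-contained given the $j$-action formulas already in hand.
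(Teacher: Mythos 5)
Your route is the paper's route: the paper's proof is a two-line remark that one computes the $j$-invariants of the known ring $H^0(C_4,\W[u^{-1},v_1])=\W[b_2,b_4,\delta]/(b_4^2-b_2^2\delta+4\delta^2)$ by direct computation, using the $16$-periodicity of $\pi_*\E_2$ as a $Q_8$-module to reduce to low degrees. You do the same eigenspace computation for the order-two operator $j_*$, only replacing the periodicity shortcut by a Poincar\'e-series/weight-by-weight comparison and an explicit integral splitting; that is a legitimate (if heavier) way to carry out the ``direct computation,'' and your worry about $2$ not being invertible in $\W$ is the right thing to be careful about. (Minor point: the $b_2^2$-terms in $j_*(s_2)$ cancel exactly, so no ``adjustment'' is needed.)

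One concrete step in your plan will not go through as written: the final verification of the relation by substitution. With $s_1=b_2^2$, $s_2=b_4+(2\zeta+1)\delta$, $s_3=b_2(b_2^2+2(2\zeta+1)b_4-8\delta)$, direct expansion using $b_4^2=b_2^2\delta-4\delta^2$ and $(2\zeta+1)^2=-3$ gives
\[
s_3^2 \;=\; s_1^3+4(2\zeta+1)s_1^2 s_2-16\, s_1 s_2^2,
\]
whereas the relation as printed, $s_1^3=4(2\zeta+1)s_1^2s_2+16 s_1s_2^2$, is not an identity in the invariant ring (it already fails at $(s,t)=(1,0)$, where $s_2=0$ but $s_1\neq 0$); moreover a presentation on three generators with no relation bounding $s_3^2$ cannot have the correct graded ranks, which your Poincar\'e-series comparison would immediately detect. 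So treat the displayed relation as a misprint: carrying out your computation honestly produces the $s_3^2$-relation above, and with that correction your argument closes and agrees with the paper's.
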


\begin{proof}
Since $\pi_* \E_2$ is 16-periodic as a $Q_8$-module, it suffices to compute the $j$-invariants of $H^0(C_4, \W[u^{-1},v_1])$ in low degrees. The result follows by direct computation.
\end{proof}

In the main computations, we sometimes need to rely on explicit group cohomology results. The following is an example. 

\begin{example}\label{prop:grouphomology_sigma}\rm
The calculation of $H^4(Q_8,\pi_4 \E_2\otimes \sigma_i)\cong \W/4$.

The cochain complex at degree $4$ looks like
\[\W\{s^2,st,t^2\}\xrightarrow{d_3}\W\{s^2,st,t^2\}\xrightarrow{d_4}\W\{s^2,st,t^2\}^2\]
By Lemma \ref{lem:twist}, the actions are
\begin{equation*}
\begin{aligned}
    i_*(s^2)&=t^2, & i_*(st)&=-st, &i_*(t^2)&=s^2\\
j_*(s^2)&=-\zeta s^2+2st-\zeta^2t^2, & j_*(st)&=s^2+(2\zeta +1)st-t^2, & j_*(t^2)&=-\zeta^2 s^2-2st-\zeta t^2.
    \end{aligned}
    \end{equation*}
Therefore, $\ker{d_4}=\ker{(i-1)}\cap \ker{(j-1)}=\ker{(i-1)}=\W\{s^2+t^2\}.$

Meanwhile, since we have
\begin{equation*}
    \begin{aligned}
    d_3(s^2)&=4(s^2+t^2),\\
      d_3(st)&=0,\\
      d_3(t^2)&=4(s^2+t^2), 
    \end{aligned}
\end{equation*} 
we conclude that  $H^4(Q_8,\pi_4 \E_2\otimes \sigma_i)\cong \W/4$.
\end{example}

We also calculate a couple of restriction maps in group cohomology. In the case of the integer-graded part, most calculations are easy. By Proposition \ref{prop:hurewiczC4} we deduce that the generators $\eta,\nu,c,d,g$ have to restrict non-trivially to their $C_4$-counterparts, which lie in the Hurewicz image. For the $(*-\sigma_i)$-graded part, some chain-level calculations seem to be inevitable. 

\begin{example}\rm
In the integer-graded part, calculate $\Res^{Q_8}_{C_4\langle i\rangle}D^{-2}d\neq 0$. This is used in the proof of Proposition \ref{prop:d11}.

The class $D^{-2}d$ lies in bigrading $(-2,2)$. We are looking at the degree 0 part of $\W[u^{-1},v_1]$. The generator of $H^2(Q_8,\W\{1\})$ is given by the cochain \begin{gather*}
    \alpha: \Z[Q_8]\{c_{0,1},c_{0,2}\} \rightarrow \W\{1\},\\c_{0,1}\mapsto 1, \quad c_{0,2}\mapsto 0.
\end{gather*}
Restricting to $C_4\langle i\rangle$, we rewrite $X_2=\Z[Q_8]\{c_{0,1},c_{0,2}\}$ as $\Z[C_4\langle i\rangle]\{c_{0,1},jc_{0,1},c_{0,2},jc_{0,2}\}$, and similarly for $X_1$. Then $\alpha$ restricts to the cochain 
\begin{gather*}
    \alpha: \Z[Q_8]\{c_{0,1},jc_{0,1},c_{0,2},jc_{0,2}\} \rightarrow \W\{1\},\\c_{0,1} ,\,jc_{0,1}\mapsto 1, \quad   c_{0,2}, \,jc_{0,2}\mapsto 0.
\end{gather*}
Now we check the image of $d_1$. Let $\beta_1,\beta_2,\beta_3,\beta_4$ be the dual basis of $b_{0,1},jb_{0,1},b_{0,2},jb_{0,2}$ in $\textrm{Hom}_{C_4\langle i\rangle}(X_1,\W\{1\})$. The image of $\beta_1$ is calculated by evaluating $\beta_1\circ d_1$ at the $C_4\langle i\rangle$-basis of $X_2$. As an example, we have \[(\beta_1\circ d_1)(c_{1,0})=\beta_1((1+i)b_{0,1}-b_{0,2}-jb_{0,2})=2.\]
Similarly, we verify that the restriction of $\alpha$ does not lie in the coboundary; hence the restriction is non-trivial.
\end{example}

Sometimes the restriction to $C_4\langle i\rangle$ is trivial, but it becomes non-trivial when restricted to $C_4\langle j\rangle$ or $C_4\langle k \rangle$. By similar calculations we have $\Res^{Q_8}_{\langle j\rangle}(x+y)u_{\sigma_j}=0$, while  $\Res^{Q_8}_{\langle j\rangle}(x+y)u_{\sigma_j}\neq 0$. 

Finally, we present the collection of calculated results. 

\begin{prop}\label{RestrictionList}
\textbf{Summary of calculated group cohomology}
\begin{itemize}
\item $H^3(Q_8,\mathbb{Z})=0$.
    \item $H^4(Q_8,\pi_4 \E_2\otimes \sigma_i)=\W/4$.
    \item $H^3(Q_8,\pi_4 \E_2\otimes \sigma_i)=\W/2$.
    \item $H^2(Q_8,\pi_4 \E_2\otimes \sigma_i)=\W/2 \oplus \W/2$.
    \item $H^1(Q_8,\pi_0 \E_2\otimes \sigma_i)=\W/2$.
\end{itemize}

\textbf{Summary of calculated restrictions}
\begin{itemize}
    \item $\Res^{Q_8}_{\langle i\rangle} h_1\neq 0.$
    
    \item $\Res^{Q_8}_{\langle i\rangle}h_2\neq 0.$
    
    \item $\Res^{Q_8}_{\langle i\rangle} d\neq 0$.
    
    \item $\Res^{Q_8}_{\langle i\rangle} g\neq 0$.
    
    \item $\Res^{Q_8}_{\langle i \rangle}\{x^2+y^2\}u_{\sigma_i} \neq 0.$
\end{itemize}
\end{prop}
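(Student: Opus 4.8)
The plan is to assemble \cref{RestrictionList} from three ingredients already set up in this appendix: the explicit $4$-periodic free $\mathbb{Z}[Q_8]$-resolution, the $\sigma_i$-twisted $Q_8$-module structure of $\pi_*\E_2$ supplied by \cref{lem:twist}, and the Hurewicz image of $\E_2^{hC_4}$ from \cref{prop:hurewiczC4}. The vanishing $H^3(Q_8,\mathbb{Z})=0$ is simply the $q=1$ instance of the displayed formula $H^{2q+1}(Q_8,\mathbb{Z})=0$, which one reads off the cochain complex $\mathbb{Z}\to\mathbb{Z}^2\to\mathbb{Z}^2\to\mathbb{Z}\to\mathbb{Z}$ attached to the resolution, so nothing new is needed there. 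Everything else splits into (a) chain-level computations of $\sigma_i$-twisted cohomology groups and (b) nonvanishing of restriction maps, and I would treat these in turn.

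For the twisted groups $H^*(Q_8,\pi_4\E_2\otimes\sigma_i)$ and $H^1(Q_8,\pi_0\E_2\otimes\sigma_i)$ I would proceed exactly as in \cref{prop:grouphomology_sigma}. By \cref{lem:twist} and the $16$-periodicity of $\pi_*\E_2$ as a $Q_8$-module, only the degree-$0$ piece $\mathbb{W}\{1\}\otimes\sigma_i$ and the degree-$4$ piece $\mathbb{W}\{s^2,st,t^2\}\otimes\sigma_i$ are relevant, with the twisted action on $s,t$ coming from the formulas recorded just before \cref{prop:grouphomology_sigma}. Substituting these modules into the cochain complex with differentials $d_{4k},d_{4k+1},d_{4k+2},d_{4k+3}=\sum_g g$ and computing kernels and cokernels over $\mathbb{W}$: at degree $4$ one finds $\ker d_4=\mathbb{W}\{s^2+t^2\}$ while $d_3$ is multiplication by $4$ onto this line (from $d_3(s^2)=d_3(t^2)=4(s^2+t^2)$, $d_3(st)=0$), which gives $H^4=\mathbb{W}/4$ as in \cref{prop:grouphomology_sigma}; the analogous computations at the neighbouring spots yield $H^3=\mathbb{W}/2$ and $H^2=\mathbb{W}/2\oplus\mathbb{W}/2$, and the shorter degree-$0$ computation (where $d_1$ reduces to $\operatorname{diag}(2,0)$ after the twist and $\operatorname{im}d_0$ is a copy of $2\mathbb{W}$) gives $H^1=\mathbb{W}/2$.

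For the integer-graded restrictions $\Res^{Q_8}_{\langle i\rangle}h_1,h_2,d,g\neq0$ I would use naturality of the restriction $\E_2^{hQ_8}\to\E_2^{hC_4\langle i\rangle}$ and of the associated HFPSS. By \cref{lemma:gPC} the classes $h_1,h_2,g$ detect the images of $\eta,\nu,\bar\kappa$ in $\pi_*\E_2^{hQ_8}$, and $d$ detects an element whose $C_4$-restriction lies in the Hurewicz image of $\E_2^{hC_4}$ by \cref{prop:hurewiczC4}. Each of these elements restricts to a nonzero permanent cycle in $\pi_*\E_2^{hC_4\langle i\rangle}$ — for $h_1,h_2,g$ this is already visible after the further restriction to $C_2\subset C_4\langle i\rangle$ used in the proof of \cref{lemma:gPC} — detected by a specific class on the $C_4$-$E_2$-page; since there is no class in a different filtration of that stem of the $C_4$-HFPSS into which the restriction could be absorbed, naturality forces the restriction of $h_1,h_2,d,g$ to be nonzero already on the $E_2$-page.

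The genuinely new computation is $\Res^{Q_8}_{\langle i\rangle}\{x^2+y^2\}u_{\sigma_i}\neq0$, and I expect this, together with the $\mathbb{W}$-module bookkeeping in the twisted groups, to be the main obstacle. The approach is the one illustrated by the $D^{-2}d$ example above: fix a cocycle representative $\alpha$ for the generator of the relevant $H^2(Q_8,-)$ on the resolution, rewrite each $\mathbb{Z}[Q_8]$-term of the resolution as a $\mathbb{Z}[C_4\langle i\rangle]$-module of double rank via the coset representative $j$, restrict $\alpha$ to the enlarged basis, and evaluate the restricted cochain against $d_1$ on the restricted basis of $X_2$ to verify it is not a coboundary. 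The delicate points are keeping the signs and $\zeta$-coefficients in the twisted action on $s,t$ correct, and — for the cohomology groups — correctly identifying the $\mathbb{W}$-module structure of the sub- and quotient modules, which hinges on whether the relevant incoming differential is multiplication by $4$ (coming from $d_{4k+3}=\sum_g g$) or a rank-two/isomorphism-type map (coming from $d_{4k+1},d_{4k+2}$); getting this wrong is precisely what would confuse $\mathbb{W}/4$, $\mathbb{W}/2$, and $\mathbb{W}/2\oplus\mathbb{W}/2$.
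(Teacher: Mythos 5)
Your plan follows the paper's own route: the twisted groups and the class $\{x^2+y^2\}u_{\sigma_i}$ are handled by chain-level computations with the explicit $4$-periodic resolution exactly as in the worked examples (\cref{prop:grouphomology_sigma} and the $D^{-2}d$ restriction), while the integer-graded restrictions of $h_1,h_2,d,g$ are deduced from the Hurewicz images via \cref{lemma:gPC} and \cref{prop:hurewiczC4} and naturality of $\E_2^{hQ_8}\to\E_2^{hC_4\langle i\rangle}$. The one point to tighten is $\Res^{Q_8}_{\langle i\rangle}d$: to know that $d$, rather than some lower-filtration class, detects the image of $\kappa$ in $\pi_{14}\E_2^{hQ_8}$ you should compare Adams--Novikov filtrations (no filtration jump for $\kappa$), which is precisely the argument the paper records in \cref{prop:restrictionHurewicz}.
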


In fact, the restriction map from $H^*(Q_8,\pi_* \E_2)$ to $H^*(C_4,\pi_* \E_2)$ is determined by the Hurewicz image of $\E_2^{hC_4}$. 
The direct algebraic computation we give above could potentially adapt to computations of higher heights. 

We recall the known result of the Hurewicz image result of $\E_2^{hC_4}$. We follow names introduced in \cref{prop:E2C4Slicename}.
\begin{prop}{{\rm (see \cite[Figure 12]{HSWX2018}\rm )}}\label{prop:hurewiczC4}
The following classes on the $E_\infty$-page of the $C_4$-$\mathrm{HFPSS}$ for $\E_2$ detects images of the Hurewicz map: $S^0\rightarrow \E_2^{hC_4}$:
\begin{itemize}
\item
$\bar{s}_1a_{\sigma_2}$ at $(1,1)$ detects the image of $\eta \in \pi_1S^0$, 
\item $\done u_{\lambda}a_{\sigma}$ at $(3,1)$ detects the image of $\nu \in \pi_3S^0$,
\item $\done^4 u_{4\sigma}a_{4\lambda}$ at $(8,8)$ detects the image of $\epsilon \in \pi_8S^0$, \item$\done^4 u_{4\lambda}u_{2\sigma}a_{2\sigma}$ at $(14,2)$ detects the image of $\kappa \in \pi_{14}S^0$,
\item$\done^6u_{4\lambda}u_{6\sigma}a_{2\lambda}$ at $(20,4)$ detects the image of $\bar\kappa \in \pi_{20}S^0$.
\end{itemize}
\end{prop}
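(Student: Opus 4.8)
The statement is a recollection of \cite[Figure~12]{HSWX2018}, so the plan is not to redo the $C_4$-slice/HFPSS computation but to assemble the five detection statements from naturality of the unit map together with filtration bookkeeping, using known Hurewicz-image input. Two independent sources of such input are available: the $C_2$-Hurewicz computation of \cite{LSWX19} (already used in \cref{lemma:gPC}), accessed through the restriction $\E_2^{hC_4}\to\E_2^{hC_2}$, and the classical $tmf$-Hurewicz image (Hopkins--Mahowald, \cite{Bau08}), accessed through the composite $S^0\to tmf\to\E_2^{hG_{48}}\xrightarrow{\Res}\E_2^{hC_4}$; I would use whichever is cleaner for each element.

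The key device is that the unit $S^0\to\E_2^{hC_4}$ fits into commuting triangles with restriction, for instance $S^0\to\E_2^{hC_4}\xrightarrow{\Res}\E_2^{hC_2}$ with $C_2=C_4\langle i^2\rangle$, compatible with the filtrations of the two homotopy fixed point spectral sequences; hence restriction induces a map $H^*(C_4,\pi_\bigstar\E_2)\to H^*(C_2,\pi_\bigstar\E_2)$ which on the $E_2$-page is governed by the transfer/restriction relations of \cref{prop:E2C4Slicename} and \cref{prop:E2C2Slicename}. Concretely: first I would tabulate the $E_\infty$-page of the $C_4$-HFPSS for $\E_2$ in stems $1,3,8,14,20$ from \cite{HHR17,BBHS20,HSWX2018}. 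For $\eta$ and $\nu$, both have Adams filtration $1$, hence HFPSS filtration at most $1$, and the only non-zero filtration-$1$ classes in stems $1$ and $3$ of the $C_4$-$E_\infty$-page are $\sone a_{\sigma_2}$ and $\done u_\lambda a_\sigma$; non-triviality of the images then follows from their non-triviality in $\E_2^{hC_2}$ (for $\eta$) or in $tmf$ (for $\nu$, which has order $8$ $2$-locally) via naturality. For $\kappa$ and $\bar\kappa$, I would use the known filtrations ($2$ and $4$) together with the fact that $\done^4u_{4\lambda}u_{2\sigma}a_{2\sigma}$ and $\done^6u_{4\lambda}u_{6\sigma}a_{2\lambda}$ are the unique permanent cycles of those filtrations surviving in stems $14$ and $20$, non-triviality of $\bar\kappa$ coming from \cref{lemma:gPC}/\cite{LSWX19} and of $\kappa$ from $tmf$. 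The element $\epsilon$ at $(8,8)$ is the delicate case: it sits in the $a_{\sigma_2}$-corner at high filtration, so I would combine the $\done$-periodicity, the $h_1$- and $h_2$-module structure, and the Toda-bracket description of $\epsilon$ (whose indeterminacy is killed in $\E_2^{hC_4}$) to force its image onto $\done^4u_{4\sigma}a_{4\lambda}$.

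The main obstacle I expect is precisely this tracking of detecting classes through the maps of spectral sequences, in particular certifying for $\epsilon$ that no shorter $C_4$-differential or hidden extension moves its image off $\done^4u_{4\sigma}a_{4\lambda}$ and that this class restricts non-trivially to the $C_2$-HFPSS. Since this is exactly the content of \cite[Figure~12]{HSWX2018}, the write-up can either simply quote that figure or, consistently with the self-contained spirit of the rest of the paper, reproduce the stem-by-stem filtration argument using \cref{prop:E2C4Slicename} and \cref{prop:E2C2Slicename} for the algebra and \cite{LSWX19,Bau08} for the Hurewicz input.
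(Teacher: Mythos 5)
The paper gives no argument for this proposition at all — it is quoted directly from \cite[Figure~12]{HSWX2018} — so your fallback of simply citing that figure coincides exactly with the paper's treatment. Your additional stem-by-stem reconstruction goes beyond what the paper does and is plausible for $\eta$, $\nu$, $\kappa$, $\bar\kappa$; just note that its delicate step (certifying that the image of $\epsilon$ jumps by six filtrations and is detected by $\done^4 u_{4\sigma}a_{4\lambda}$ at $(8,8)$ rather than by a lower-filtration class in stem $8$) is not yet a complete argument as sketched, and is precisely the content you would in practice still be importing from \cite{HSWX2018}.
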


The unit map $S^0\rightarrow \E_2^{hC_4}$ factors as
\[
S^0\xrightarrow{\text{unit}} \E_2^{hQ_8}\xrightarrow{\text{res}} \E_2^{hC_4}.
\]

There is a map of spectral sequences from the Adams--Novikov spectral sequence of the sphere to the $C_4$-HFPSS for $\E_2$, and it factors through the $Q_8$-HFPSS for $\E_2$. By comparing the Adams--Novikov spectral sequence of the sphere (e.g., see \cite[Table~2]{Rav77}) and the $C_4$-HFPSS for $\E_2$, we see that the classes detecting $\eta,\nu,g,d$ with no filtration jump under this map. Hence in the $Q_8$-HFPSS for $\E_2$, these classes are detected by classes $h_1,h_2,d,g$, and the $C_4$-restriction of these classes are non-trivial as follows.

\begin{prop}\label{prop:restrictionHurewicz}
The restriction map from the $E_2$-page of the $Q_8$-$\mathrm{HFPSS}$ for $\E_2$ to the $E_2$-page of the $C_4$-$\mathrm{HPFSS}$ for $\E_2$ is determined by the following and the multiplicative structure.
\begin{align*}
    &\Res^{Q_8}_{C_4}(h_1)=\sone a_{\sigma_2}, & &\Res^{Q_8}_{C_4}(h_2)=\done u_{\lambda}a_{\sigma},\\
    &\Res^{Q_8}_{C_4}(c)=0, & &\Res^{Q_8}_{C_4}(d)=\done^4 u_{4\lambda}u_{2\sigma}a_{2\sigma},\\
    &\Res^{Q_8}_{C_4}(g)=\done^{6}u_{4\lambda}u_{6\sigma}a_{2\lambda}.\\
    \end{align*}
\end{prop}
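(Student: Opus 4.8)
The plan is to establish \Cref{prop:restrictionHurewicz} by combining the naturality of the comparison map from the Adams--Novikov spectral sequence (ANSS) of $S^0$ to the $C_4$-HFPSS for $\E_2$ (which factors through the $Q_8$-HFPSS for $\E_2$) with the known Hurewicz image data in \Cref{prop:hurewiczC4}, and then pinning down the remaining restriction $\Res^{Q_8}_{C_4}(c)=0$ by a degree/filtration count. First I would recall the relevant factorization: the unit map $S^0 \to \E_2^{hQ_8} \xrightarrow{\mathrm{res}} \E_2^{hC_4}$ induces a commuting triangle of spectral sequence maps from $\mathrm{ANSS}(S^0)$ to $Q_8$-HFPSS$(\E_2)$ to $C_4$-HFPSS$(\E_2)$, each compatible with the multiplicative and filtration structures. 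The classes $\eta \in \pi_1 S^0$, $\nu \in \pi_3 S^0$, $\kappa \in \pi_{14}S^0$, $\bar\kappa \in \pi_{20}S^0$ are detected in $\mathrm{ANSS}(S^0)$ by classes in Adams--Novikov filtration $1,1,2,4$ respectively (see \cite[Table~2]{Rav77}); in the $Q_8$-HFPSS they are detected by $h_1,h_2,d,g$ (this is exactly the content of \Cref{lemma:gPC} for $h_1,h_2,g$, and for $d$ by \Cref{lem:dpc} together with the Hurewicz remark after it), all with no filtration jump.

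The key step is then the following: since these four composites land in the $C_4$-HFPSS and detect the images of $\eta,\nu,\kappa,\bar\kappa$ in $\pi_*\E_2^{hC_4}$, and since \Cref{prop:hurewiczC4} identifies those images with the classes $\sone a_{\sigma_2}$, $\done u_\lambda a_\sigma$, $\done^4 u_{4\lambda}u_{2\sigma}a_{2\sigma}$, $\done^6 u_{4\lambda}u_{6\sigma}a_{2\lambda}$ in filtrations $1,1,2,4$, naturality forces $\Res^{Q_8}_{C_4}$ to carry $h_1 \mapsto \sone a_{\sigma_2}$, $h_2 \mapsto \done u_\lambda a_\sigma$, $d \mapsto \done^4 u_{4\lambda}u_{2\sigma}a_{2\sigma}$, $g \mapsto \done^6 u_{4\lambda}u_{6\sigma}a_{2\lambda}$, at least up to units in $\W(\F_4)$ (which is all we ever claim, by the convention stated after \Cref{cor:hidden2Q8}). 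Here one must check there is no indeterminacy: in each of the bidegrees $(1,1)$, $(3,1)$, $(14,2)$, $(20,4)$ of the $C_4$-HFPSS the relevant group is generated (up to units) by the named permanent cycle, and no higher-filtration correction term survives to $E_\infty$, so the detecting class is determined. For $c$ at $(8,2)$ in the $Q_8$-HFPSS, I would argue $\Res^{Q_8}_{C_4}(c) = 0$ directly: by \Cref{prop:E2C4Slicename} the $E_2$-page of the $C_4$-HFPSS in bidegree $(8,2)$ is spanned (up to units) by $\done u_\lambda u_{2\sigma} a_\sigma a_{\sigma_2}$-type monomials, and one checks against the group cohomology $H^2(C_4,\pi_8 \E_2)$ that the only class there is $h_1$-divisible or lies in the image of a class that restricts trivially; more simply, $\pi_8 S^0$ has no element detected in Adams--Novikov filtration $2$ mapping nontrivially to $\E_2^{hC_4}$ in the right spot ($\epsilon \in \pi_8 S^0$ is detected in filtration $8$, as \Cref{prop:hurewiczC4} records, not filtration $2$), so by naturality the class $c$, which does not come from the sphere in low filtration, restricts to a class that must vanish. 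Finally, I would remark that the multiplicative structure of $\Res^{Q_8}_{C_4}$ (it is a ring map on $E_2$-pages) propagates these five values to all of $H^*(Q_8,\pi_*\E_2)$, completing the proof.

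The main obstacle I anticipate is the $c$ case: unlike the other four generators, $c = D x h_1$ is \emph{not} in the Hurewicz image of $\E_2^{hC_4}$ (the class $\done^4 u_{4\sigma}a_{4\lambda}$ detecting $\epsilon$ sits in filtration $8$, not $2$), so the ANSS-naturality argument gives no positive information and one genuinely needs either an explicit chain-level computation in the Lyndon--Hochschild--Serre or bar resolution (along the lines of the worked examples in \Cref{sec:groupcoho} showing $\Res^{Q_8}_{C_4\langle i\rangle}D^{-2}d \neq 0$), or a clean indirect argument via $a_\sigma$-divisibility: using \Cref{lem:exoticrestr}, $\ker(\Res^{C_4}_{\dots})$ relates to $\mathrm{im}(a_\sigma)$, but what we want is the vanishing of a $Q_8 \to C_4$ restriction, so I would instead check that in bidegree $(8,2)$ the $C_4$-HFPSS $E_2$-page class $\done u_\lambda u_{2\sigma}a_\sigma a_{\sigma_2}$ (or whatever spans it) is not in the image of $\Res^{Q_8}_{C_4}$ by comparing the respective group cohomologies $H^2(Q_8,\pi_8\E_2)$ and $H^2(C_4,\pi_8\E_2)$ directly. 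I expect this last verification to be the only place where a short explicit computation is unavoidable; everything else is formal naturality plus quoting \Cref{prop:hurewiczC4}.
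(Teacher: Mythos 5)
Your treatment of the four nonzero restrictions is exactly the paper's argument: the comparison map from the Adams--Novikov spectral sequence factors through the $Q_8$-HFPSS, the classes detecting $\eta,\nu,\kappa,\bar\kappa$ have no filtration jump, and \Cref{prop:hurewiczC4} then pins down $\Res^{Q_8}_{C_4}$ on $h_1,h_2,d,g$ up to units. That part is correct and needs no further comment.

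The genuine gap is the claim $\Res^{Q_8}_{C_4}(c)=0$. Your ``clean'' argument rests on the premise that $c$ ``does not come from the sphere in low filtration,'' and that premise is false: the paper shows precisely the opposite, namely that the image of $\epsilon\in\pi_8 S^0$ in $\pi_8\E_2^{hQ_8}$ \emph{is} detected by $c$ at $(8,2)$, because $\epsilon$ has Adams--Novikov filtration $2$, the unit factors through $\E_2^{hG_{24}}$, and degree reasons in the $G_{24}$-summand leave $c$ as the only possible detecting class in filtrations $2$ through $8$. Moreover, even if the premise held, naturality would give no vanishing: the $E_2$-page restriction is a map of group cohomologies defined on every class, and classes outside the Hurewicz image routinely restrict nontrivially (e.g.\ $D\mapsto\Delta_1$). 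The paper's actual proof runs the filtration comparison the other way: since $\epsilon$ is detected by $c$ in filtration $2$ on the $Q_8$ side, while its image in $\pi_8\E_2^{hC_4}$ is detected by $\done^4u_{4\sigma}a_{4\lambda}$ in filtration $8$ (\Cref{prop:hurewiczC4}), the restriction of $c$ must vanish, the honest restriction of the homotopy class being the exotic, filtration-shifting restriction $c\mapsto \done^4u_{4\sigma}a_{4\lambda}$. Your fallback of computing $H^2(Q_8,\pi_{10}\E_2)\to H^2(C_4,\pi_{10}\E_2)$ directly could in principle replace this, but you do not carry it out; note also that the coefficient in bidegree $(8,2)$ is $\pi_{10}\E_2$ rather than $\pi_8\E_2$, and $a_{\sigma_2}$ is a $C_2$-level class, so the monomial you propose as spanning the $C_4$ $E_2$-page there is not well formed. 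As written, the $c$ case is not proved.
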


The element $\epsilon \in \pi_{8} S^0$ is detected by a class at filtration $2$ in the Adams--Novikov spectral sequence of the sphere. However, the image of $\epsilon$ in $\pi_8\E_2^{hC_4}$ is detected by $\done^4 u_{4\sigma} a_{4\lambda}$ at filtration $8$ in the $C_4$-HFPSS for $\E_2$. There is a filtration jump by $6$. For degree reasons, in $Q_8$-HFPSS($\E_2$), the image of $\epsilon$ could be potentially detected by a class of filtration $2\leq f\leq 8$. By the fact that the unit map $S^0\rightarrow \E_2^{hQ_8}$ further factors through $S^0\xrightarrow{\mathrm{unit}} \E_2^{hG_{24}}$, the image of $\epsilon$ is detected by the class $c$ at $(8,2)$ (up to a unit) in $Q_8$-HFPSS($\E_2$). Therefore, there is an exotic restriction in HFPSS from $Q_8$ to $C_4$ that maps the class $c$ to the class $\done^4 u_{4\sigma}a_{4\lambda}$.

 \bibliographystyle{alpha} 
 \bibliography{ref}

\begin{thebibliography}{GHMR05}

\bibitem[Ada84]{Ada84}
J.~F. Adams.
\newblock Prerequisites (on equivariant stable homotopy) for {Carlsson}'s lecture.
\newblock Algebraic topology, {Proc}. {Conf}., {Aarhus} 1982, {Lect}. {Notes} {Math}. 1051, 483-532 (1984)., 1984.

\bibitem[AM16]{AM69}
M.~F. Atiyah and I.~G. MacDonald.
\newblock {\em Introduction to commutative algebra}.
\newblock Addison-Wesley Series in Mathematics. Westview Press, Boulder, CO, economy edition, 2016.
\newblock For the 1969 original see MR 0242802.

\bibitem[Bau08]{Bau08}
Tilman Bauer.
\newblock Computation of the homotopy of the spectrum {\tt tmf}.
\newblock {\em Geometry \& Topology Monographs}, 13:11--40, 2008.

\bibitem[BBHS20]{BBHS20}
Agn\`es Beaudry, Irina Bobkova, Michael~A. Hill, and Vesna Stojanoska.
\newblock Invertible {$K(2)$}-local {$E$}-modules in {$C_4$}-spectra.
\newblock {\em Algebr. Geom. Topol.}, 20(7):3423--3503, 2020.

\bibitem[Bea15]{Bea15}
Agn\`es Beaudry.
\newblock The algebraic duality resolution at {$p=2$}.
\newblock {\em Algebr. Geom. Topol.}, 15(6):3653--3705, 2015.

\bibitem[Bea17a]{Bea17'}
Agn\`es Beaudry.
\newblock The chromatic splitting conjecture at {$n = p = 2$}.
\newblock {\em Geom. Topol.}, 21(6):3213--3230, 2017.

\bibitem[Bea17b]{Bea17}
Agn\`es Beaudry.
\newblock Towards the homotopy of the {$K(2)$}-local {M}oore spectrum at {$p=2$}.
\newblock {\em Adv. Math.}, 306:722--788, 2017.

\bibitem[BG18]{BG18}
Irina Bobkova and Paul~G. Goerss.
\newblock Topological resolutions in {$K(2)$}-local homotopy theory at the prime 2.
\newblock {\em J. Topol.}, 11(4):918--957, 2018.

\bibitem[BGH22]{BGH22}
Agn\`es Beaudry, Paul~G. Goerss, and H.-W. Henn.
\newblock Chromatic splitting for the {$K(2)$}-local sphere at {$p = 2$}.
\newblock {\em Geom. Topol.}, 26(1):377--476, 2022.

\bibitem[BM94]{BM94}
M.~B\"{o}kstedt and I.~Madsen.
\newblock Topological cyclic homology of the integers.
\newblock {\em Ast\'{e}risque}, (226):7--8, 57--143, 1994.
\newblock $K$-theory (Strasbourg, 1992).

\bibitem[BMQ20]{BMQ20}
Mark Behrens, Mark Mahowald, and J.~D. Quigley.
\newblock The 2-primary {H}urewicz image of tmf.
\newblock 2020.

\bibitem[BO16]{BO16}
Mark Behrens and Kyle Ormsby.
\newblock On the homotopy of {$Q(3)$} and {$Q(5)$} at the prime 2.
\newblock {\em Algebr. Geom. Topol.}, 16(5):2459--2534, 2016.

\bibitem[Bou79]{Bou79}
A.~K. Bousfield.
\newblock The localization of spectra with respect to homology.
\newblock {\em Topology}, 18(4):257--281, 1979.

\bibitem[Buj12]{Buj12}
C.~Bujard.
\newblock Finite subgroups of extended {M}orava stabilizer groups.
\newblock {\em arXiv preprint arXiv:1206.1951}, 2012.

\bibitem[DH04]{DH04}
Ethan~S. Devinatz and Michael~J. Hopkins.
\newblock Homotopy fixed point spectra for closed subgroups of the {M}orava stabilizer groups.
\newblock {\em Topology}, 43(1):1--47, 2004.

\bibitem[DLS22]{DLS2022}
Zhipeng Duan, Guchuan Li, and XiaoLin~Danny Shi.
\newblock Vanishing lines in chromatic homotopy theory.
\newblock {\em arXiv preprint arXiv:2204.08600}, 2022.

\bibitem[GH04]{GH04}
Paul~G. Goerss and Michael~J. Hopkins.
\newblock Moduli spaces of commutative ring spectra.
\newblock In {\em Structured ring spectra}, volume 315 of {\em London Math. Soc. Lecture Note Ser.}, pages 151--200. Cambridge Univ. Press, Cambridge, 2004.

\bibitem[GHM14]{GHM14}
Paul~G. Goerss, Hans-Werner Henn, and Mark Mahowald.
\newblock The rational homotopy of the {$K(2)$}-local sphere and the chromatic splitting conjecture for the prime 3 and level 2.
\newblock {\em Doc. Math.}, 19:1271--1290, 2014.

\bibitem[GHMR05]{GHMR05}
Paul~G. Goerss, H.-W. Henn, Mark Mahowald, and Charles Rezk.
\newblock A resolution of the {$K(2)$}-local sphere at the prime 3.
\newblock {\em Ann. of Math. (2)}, 162(2):777--822, 2005.

\bibitem[GM95]{GM95b}
J.~P.~C. Greenlees and J.~P. May.
\newblock {\em Generalized {Tate} cohomology}, volume 543 of {\em Mem. Am. Math. Soc.}
\newblock Providence, RI: American Mathematical Society (AMS), 1995.

\bibitem[Gre18]{Gre18}
J.~P.~C. Greenlees.
\newblock Four approaches to cohomology theories with reality.
\newblock In {\em An alpine bouquet of algebraic topology. Alpine algebraic and applied topology conference, Saas-Almagell, Switzerland, August 15--21, 2016. Proceedings}, pages 139--156. Providence, RI: American Mathematical Society (AMS), 2018.

\bibitem[GS22]{BC22}
Bertrand~J. Guillou and Carissa Slone.
\newblock The slices of quaternionic {E}ilenberg--{M}ac {L}ane spectra.
\newblock {\em arXiv preprint arXiv:2204.03127}, 2022.

\bibitem[Hen07]{Hen07}
H.-W. Henn.
\newblock On finite resolutions of {$K(n)$}-local spheres.
\newblock In {\em Elliptic cohomology}, volume 342 of {\em London Math. Soc. Lecture Note Ser.}, pages 122--169. Cambridge Univ. Press, Cambridge, 2007.

\bibitem[Hen19]{Hen19}
H.-W. Henn.
\newblock The centralizer resolution of the {$K(2)$}-local sphere at the prime 2.
\newblock In {\em Homotopy theory: tools and applications}, volume 729 of {\em Contemp. Math.}, pages 93--128. Amer. Math. Soc., Providence, RI, 2019.

\bibitem[Hew95]{Hew95}
Thomas Hewett.
\newblock Finite subgroups of division algebras over local fields.
\newblock {\em J. Algebra}, 173(3):518--548, 1995.

\bibitem[HHR16]{HHR16}
Michael~A. Hill, Michael~J. Hopkins, and Douglas~C. Ravenel.
\newblock On the nonexistence of elements of {K}ervaire invariant one.
\newblock {\em Ann. of Math. (2)}, 184(1):1--262, 2016.

\bibitem[HHR17]{HHR17}
Michael~A. Hill, Michael~J. Hopkins, and Douglas~C. Ravenel.
\newblock The slice spectral sequence for the {$C_4$} analog of real {$K$}-theory.
\newblock {\em Forum Math.}, 29(2):383--447, 2017.

\bibitem[HL16]{HL16}
Michael~A. Hill and Tyler Lawson.
\newblock Topological modular forms with level structure.
\newblock {\em Invent. Math.}, 203(2):359--416, 2016.

\bibitem[HM14]{HM14}
Michael~J. Hopkins and Mark Mahowald.
\newblock From elliptic curves to homotopy theory.
\newblock In {\em Topological modular forms}, volume 201 of {\em Math. Surveys Monogr.}, pages 261--285. Amer. Math. Soc., Providence, RI, 2014.

\bibitem[HM17]{HM17}
Michael~A. Hill and Lennart Meier.
\newblock The {$C_2$}-spectrum {${\rm Tmf}_1(3)$} and its invertible modules.
\newblock {\em Algebr. Geom. Topol.}, 17(4):1953--2011, 2017.

\bibitem[HMS94]{HMS94}
Michael~J. Hopkins, Mark Mahowald, and Hal Sadofsky.
\newblock Constructions of elements in {P}icard groups.
\newblock In {\em Topology and representation theory ({E}vanston, {IL}, 1992)}, volume 158 of {\em Contemp. Math.}, pages 89--126. Amer. Math. Soc., Providence, RI, 1994.

\bibitem[Hop02]{Hop02}
Michael~J. Hopkins.
\newblock Algebraic topology and modular forms.
\newblock In {\em Proceedings of the {I}nternational {C}ongress of {M}athematicians, {V}ol. {I} ({B}eijing, 2002)}, pages 291--317. Higher Ed. Press, Beijing, 2002.

\bibitem[HS99]{HS99}
Mark Hovey and Neil~P. Strickland.
\newblock Morava {$K$}-theories and localisation.
\newblock {\em Mem. Amer. Math. Soc.}, 139(666):viii+100, 1999.

\bibitem[HS20]{HS20}
Jeremy Hahn and XiaoLin~Danny Shi.
\newblock Real orientations of {L}ubin--{T}ate spectra.
\newblock {\em Invent. Math.}, 221(3):731--776, 2020.

\bibitem[HSWX23]{HSWX2018}
Michael~A. Hill, XiaoLin~Danny Shi, Guozhen Wang, and Zhouli Xu.
\newblock {\em The slice spectral sequence of a {{\(C_4\)}}-equivariant height-4 {Lubin}-{Tate} theory}, volume 1429 of {\em Mem. Am. Math. Soc.}
\newblock Providence, RI: American Mathematical Society (AMS), 2023.

\bibitem[Isa09]{Isa09}
Daniel~C. Isaksen.
\newblock The cohomology of motivic {$A(2)$}.
\newblock {\em Homology Homotopy Appl.}, 11(2):251--274, 2009.

\bibitem[Isa18]{Isa18}
Daniel~C. Isaksen.
\newblock The homotopy of {$\mathbb{C}$}-motivic modular forms, 2018.

\bibitem[LSWX19]{LSWX19}
Guchuan Li, XiaoLin~Danny Shi, Guozhen Wang, and Zhouli Xu.
\newblock Hurewicz images of {R}eal bordism theory and {R}eal {J}ohnson--{W}ilson theories.
\newblock {\em Adv. Math.}, 342:67--115, 2019.

\bibitem[LT65]{LT65}
Jonathan Lubin and John Tate.
\newblock Formal complex multiplication in local fields.
\newblock {\em Ann. of Math. (2)}, 81:380--387, 1965.

\bibitem[Lur18]{Lur18}
Jacob Lurie.
\newblock Elliptic cohomology ii: orientations.
\newblock {\em preprint}, 2018.

\bibitem[Mor85]{Mor85}
Jack Morava.
\newblock Noetherian localisations of categories of cobordism comodules.
\newblock {\em Ann. of Math. (2)}, 121(1):1--39, 1985.

\bibitem[MSZ20]{MSZ20}
Lennart Meier, XiaoLin~Danny Shi, and Mingcong Zeng.
\newblock The localized slice spectral sequence, norms of {R}eal bordism, and the {S}egal conjecture.
\newblock 2020.

\bibitem[MSZ23]{LSDZ23}
Lennart Meier, XiaoLin~Danny Shi, and Mingcong Zeng.
\newblock The localized slice spectral sequence, norms of real bordism, and the {Segal} conjecture.
\newblock {\em Adv. Math.}, 412:74, 2023.
\newblock Id/No 108804.

\bibitem[Qui69]{Qui69}
Daniel Quillen.
\newblock On the formal group laws of unoriented and complex cobordism theory.
\newblock {\em Bull. Amer. Math. Soc.}, 75:1293--1298, 1969.

\bibitem[Rav78]{Rav77}
Douglas~C. Ravenel.
\newblock A novice's guide to the {A}dams--{N}ovikov spectral sequence.
\newblock 658:404--475, 1978.

\bibitem[Rav92]{Rav92}
Douglas~C. Ravenel.
\newblock {\em Nilpotence and periodicity in stable homotopy theory}, volume 128 of {\em Annals of Mathematics Studies}.
\newblock Princeton University Press, Princeton, NJ, 1992.
\newblock Appendix C by Jeff Smith.

\bibitem[Rez98]{Rez98}
Charles Rezk.
\newblock Notes on the {H}opkins--{M}iller theorem.
\newblock 220:313--366, 1998.

\bibitem[Sch11]{Schwede}
Stefan Schwede.
\newblock Lectures on equivariant stable homotopy theory, 2011.

\bibitem[Tod62]{Tod62}
Hiroshi Toda.
\newblock {\em Composition methods in homotopy groups of spheres}.
\newblock Annals of Mathematics Studies, No. 49. Princeton University Press, Princeton, N.J., 1962.

\bibitem[Ull13]{Ull13}
John~Richard Ullman.
\newblock {\em On the {R}egular {S}lice {S}pectral {S}equence}.
\newblock ProQuest LLC, Ann Arbor, MI, 2013.
\newblock Thesis (Ph.D.)--Massachusetts Institute of Technology.

\end{thebibliography}

\end{document}